\documentclass[a4paper,11pt]{article}

\usepackage{amssymb, amstext, amsmath, amsthm, latexsym, textcomp}

\newtheorem{defi}{Definition}[section]
\newtheorem{teor}[defi]{Theorem}
\newtheorem{lemma}[defi]{Lemma}
\newtheorem{prop}[defi]{Proposition}
\newtheorem{rem}[defi]{Remark}

\newtheorem{co}[defi]{Corollary}

\DeclareMathOperator{\ad}{ad}
\DeclareMathOperator{\st}{st}
\DeclareMathOperator{\Ad}{Ad}

\DeclareMathOperator{\prr}{Rad}

\DeclareMathOperator{\Ch}{char}
\DeclareMathOperator{\diag}{diag}

\DeclareMathOperator{\Spec}{Spec}
\DeclareMathOperator{\pideg}{p.i.deg}
\DeclareMathOperator{\Id}{Id}
\DeclareMathOperator{\Inder}{Inder}
\DeclareMathOperator{\Ker}{Ker}
\DeclareMathOperator{\Hom}{Hom}
\DeclareMathOperator{\End}{End}

\DeclareMathOperator{\Der}{Der}
\DeclareMathOperator{\Lie}{Lie}
\DeclareMathOperator{\Aut}{Aut}
\DeclareMathOperator{\Soc}{Soc}

\DeclareMathOperator{\CM}{CM}
\DeclareMathOperator{\Ann}{Ann}
\DeclareMathOperator{\Tor}{Tor}
\DeclareMathOperator{\Inn}{Inn}

\DeclareMathOperator{\Mder}{Mder}

\begin{document}

\date{}

\title{Algebraic derivations and radicals of 
automorphism groups of algebras} 

\author{A.~Yu.~Golubkov}

\maketitle

\begin{abstract} 
The paper presents conditions under which algebraic derivations of a semiprime 
associative ring are induced by inner derivations of its symmetric Martindale 
ring of quotients, and extends a theorem on the coincidence of the prime 
and weakly solvable radicals of subgroups of multiplicative groups of 
associative $PI$--algebras to automorphism groups of algebras associated with 
algebraic derivations.
\end{abstract}

\section{Introduction}

The writing of the paper is motivated by \cite{ADN} and the author's desire 
to supplement the line of results of \cite{Gol7}. In the 1st part, the 
description from \cite{Har2} of derivations of prime associative\linebreak rings with 
generalized identities that are linear over extended centroids is extended 
to algeb\-raic derivations of semiprime associative algebras with strict 
generalized identities and multiplicatively semiprime algebras, in the 2nd 
part, the series from \cite{Gol7} of group versions of the Amitsur --- Levitzki 
theorem on the equality of lower and upper nil-radicals of associative 
$PI$--algebras by subgroups of a number of automorphism groups of algebras 
generated by the exponents of algebraic derivations is expanded. This theorem 
has analogues for alternative and over rings with $1/2$ right-alternative and 
linear Jordan $PI$--algebras with the lower radical replaced by the 
Slin'ko --- McCrimmon and McCrimmon radicals (see Addition 1, \cite{Zel1}, 
Theo\-rem 4). Similar conclusions include the coincidence of locally and weakly 
solvable radicals $LS$ and $T$ on the class of algebras $\mathfrak{M}$, 
closed under taking ideals and homomorphic images, so that $LS$ is defined and 
special on $\mathfrak{M}$ (locally solvable algebras from $\mathfrak{M}$ form 
a radical subclass of $\mathfrak{M}$, $LS$--semisimple algebras from 
$\mathfrak{M}$ are subdirect products of their $LS$--semisimple prime 
quotient algebras) and the central closures of the prime $LS$--semisimple 
algebras from $\mathfrak{M}$ are locally finite-dimensional over their 
Martindale centroids. For ${R\in \mathfrak{M}}$ with finite-dimensional (of 
dimension at most ${n\geq 1}$) central closures of prime quotient algebras over 
Martindale centroids, ${LS(R)=T(R)}$ is the intersection of prime ideals of 
$R$, in the quotient algebras by which there are no non-zero solvable ideals 
(and the smallest of the ideals of $R$, the quotient algebras do not contain 
non-zero solvable ideals (of degree at most $n$)). An example is the 
equality of $LS$, $T$ and the prime radical $\prr$ of generalized special Lie 
algebras \cite{BP}, \cite{Gol6}, part 3. In \cite{Gol7}, analogues of 
the Amitsur --- Levitzki theorem are presented for subgroups of multiplicative 
groups of associative $PI$--algebras, which include the results of 
B.~I.~Plotkin and S.~A.~Pikhtil'kov. 

Everywhere below (in the absence of additional conditions) $F$ is any 
associative commutative ring with 1, all $F$--modules are unitary, 
simultaneously left and right with identical action of $F$ and the 
action of their endomorphisms will be written on the right, all 
$F$--algebras are\linebreak linear (quadratic Jordan algebras are mentioned in 
Addition 1), classes of $F$--algebras (groups) contain a zero algebra 
(unit group) and are closed under taking isomorphic copies. 

For any $F$--algebra $R$ and set ${B\subseteq R}$, $F B$, $\langle B\rangle$ 
and $(B)_R$ are the $F$--submodule, subalgebra and ideal of $R$ generated by 
$B$, $\End_F(R)$ is the endomorphism $F$--algebra of the $F$--module $R$, 
$\Aut_F(R)$ is the automorphism group of the $F$--algebra $R$, 
${M^R(B)=\langle t_x\mid t=l, r,\ x\in B\rangle}$ and, in particular, 
${M(R)=M^R(R)}$ (${M(R)'=F \Id_R+M(R)}$) is the \emph{algebra of multiplications} 
(\emph{with unity}) of $R$, ${\mathcal{E}(R)=\{I\lhd R\mid I\cap J\ne \{0\}\ \forall 
\{0\}\ne J\lhd R\}}$ is the set of essential ideals of $R$, $N(R)$, $K(R)$ and 
$Z(R)$ are the associative center, commutative center and center of $R$,  
\begin{gather*}
N(R)\ =\ \{x\in R\mid (x, R, R)=(R, x, R)=(R, R, x)=\{0\}\}\,,
\\
K(R)\ =\ \{x\in R\mid [x, R]=\{0\}\}\,, 
Z(R)\ =\ N(R)\cap K(R)\ =\ \{x\in R\mid l_x=r_x\in Z(M(R))\}\,,
\end{gather*}
${\Ann_t B=\{x\in R\mid (B)t_x=\{0\}\}, t=l, r}$, where 
${l_x: y\longmapsto x y}$ and ${r_x: y\longmapsto y x, x, y\in R}$, are the 
operators of left and right nultiplication by $x$, $\Id_M$ is the identical 
isomorphism of the module $M$, ${(x, y, z)=(x y) z-x (y z)}$ and 
${[x, y]=x y-y x}$ are the associator and ring commutator of ${x, y, z\in R}$ 
(the group commutator is ${[x, y]=x y x^{-1} y^{-1}}$). We denote the algebra 
obtained from $R$ by replacing the multiplication operation with ${[\ ,\ ]}$ 
by $R^{(-)}$, the $m$-th commutant and the $n$-th power 
($n$-th member of the lower central series) of the algebra (group) $R$ by 
$R^{(m)}$ and $R^n$, ${m\geq 0}$, ${n\geq 1}$, where, let us remind, to the 
associative $R$ there corresponds the Lie algebra $R^{(-)}$. If $R$ is a Lie algebra, then 
${\ad_x=r_x=-l_x}$, ${x\in R}$, ${\Ad(R)=M(R)}$, ${\Ad(R)'=M(R)'}$ (the choice 
of ${\ad=r}$ is related to the right action of $\End_F(R)$ on $R$ in the paper).

An algebra (a group) $R$ is \emph{prime} if ${I J\ne \{0\}}$ for all 
${\{0\}\ne I, J\lhd R}$ (${[I, J]\ne \{e\}}$ for all 
${\{e\}\ne I, J\lhd R}$, $e$ is the unity of $R$), and \emph{semiprime} if 
${I^2\ne \{0\}}$ for all ${\{0\}\ne I\lhd R}$ (${I^2=I^{(1)}\ne \{e\}}$ 
for all ${\{e\}\ne I\lhd R}$), ${P\lhd R}$ is a \emph{prime ideal} 
(\emph{normal subgroup}) of $R$ if $R/P$ is prime. The set $\Spec(R)$ of 
all prime ${P\lhd R}$ is the \emph{prime spectrum of $R$}, 
${\prr(R)=\bigcap\limits_{P\in \Spec(R)} P}$ is the \emph{prime radical of 
$R$}, $\prr(R)$ is the smallest of ${I\lhd R}$, $R/I$ is semiprime,
\[
\prr(R)\ =\ \bigcup_{\alpha\geq 0} B_{\alpha}(R)\ =\ 
\{x=x_0\in R\mid \forall\, x_{i+1}\in (x_i)_R^2,\ i\geq 0,\ 
0\in \{x_i\}_{i=0}^{\infty}\, (e\in \{x_i\}_{i=0}^{\infty})\}\,,
\]
where ${\{B_{\alpha}(R)\mid \alpha\geq 0\}}$ is the Baer chain for the mapping 
${B=B_1: R\longmapsto B(R)}$, $B(R)$ is the sum (product) of all ${I\lhd R}$, 
${I^2=\{0\}}$ (${[I, I]=\{e\}}$), $B_{\alpha}(R)$ is the preimage of 
$B(R/B_{\alpha-1}(R))$ in $R$ for the non-limit $\alpha$, 
${B_{\alpha}(R)=\bigcup\limits_{\beta< \alpha} B_{\beta}(R)}$ for the limit 
$\alpha$, and in the group case $(B)_R$ is a normal subgroup of $R$ generated 
by ${B\subseteq R}$ (normal closure of $B$ in $R$) \cite{ZShS}, 
Proposition 4, Theorem 6, p. 192, 193; \cite{Baer, Sch, Gol7}. 

Semiprime groups do not contain non-unitary nilpotent and solvable normal 
subgroups. It is possible to speak about the absence of non-zero nilpotent 
or (and) solvable ideals in semiprime algebras only in relation to 
individual classes of algebras. At the same time, in any algebra $R$ one can 
single out the sum $N(R)$ ($S(R)$) of all nilpotent (solvable) ideals and the 
smallest ideal $\prr_N(R)$ ($\prr_S(R)$) among ${I\lhd R}$, 
${N(R/I)=\{0\}}$ (${S(R/I)=\{0\}}$), which is equal to the union of a chain 
of ideals constructed similarly to the Baer chain for the mapping
${N: R\longmapsto N(R)}$ (${S: R\longmapsto S(R)}$), 
${\prr(R)\subseteq \prr_N(R)\subseteq \prr_S(R)}$ and for $R$ from the class 
of algebras closed under taking quotient algebras, whose non-zero nilpotent 
(solvable) ideals contain non-zero ideals with zero multiplication, 
${\prr(R)=\prr_N(R)}$ (${\prr(R)=\prr_S(R)}$).

The prime radical ${\prr: R\longmapsto \prr(R)}$, ${R\in \mathfrak{M}}$, 
on a class of $F$--algebras (groups) $\mathfrak{M}$ that is closed under 
taking ideals and homomorphic images is a radical in the Kurosh --- Amitsur 
sense on $\mathfrak{M}$ if and only if ${\prr=RN}$ on $\mathfrak{M}$, 
where the \emph{lower nil-radical} $RN$ is the lower radical defined by 
the class of $F$--algebras with zero multiplication (Abelian group) on 
the class of all $F$--algebras (groups). The examples in Addition 2 show 
that ${\prr\ne RN}$ on the class of Lie $F$--algebras if $F$ is not a 
$\mathbb{Q}$--algebra, and on the class of all groups. However, 
${\prr=RN}$ on the classes of alternative algebras and generalized 
special Lie algebras (Lie algebras with $PI$--algebras of multiplications) 
over any $F$ Addition 1, \cite{BP}, the class of finite groups.

In a free non-associative $F$--algebra $F\langle X\rangle$ (free group 
$F(X)$) with a set of free generators ${X=\{x_i\}_{i=1}^{\infty}}$, we 
choose elements ${\{g_k(x_1, \ldots, x_{2^k})\}_{k=0}^{\infty}}$, where 
${g_0(x_1)=x_1}$ and further by induction 
${g_{k+1}(x_1, \ldots, x_{2^{k+1}})=g_k(x_1, \ldots, x_{2^k}) 
g_k(x_{2^k+1}, \ldots, x_{2^{k+1}})}$ 
($g_{k+1}(x_1, \ldots, x_{2^{k+1}})=[g_k(x_1, \ldots, x_{2^k}), 
g_k(x_{2^k+1}, \ldots, x_{2^{k+1}})]$) for any ${k\geq 0}$. 
Following \cite{Parf}, we call an $F$--algebra (group) $R$ \emph{weakly 
solvable} if for any set ${B\subseteq R}$, ${|B|< \infty}$, there is 
${k=k(B)\geq 0}$, ${g_k(B)=\{0\}}$ ($\{e\}$), where 
${g_k(B)=\{g_k(b_1, \ldots, b_{2^k})\mid b_i\in B\}}$. Further in the 
text for groups ${g_k=g^{gr}_k, k\geq 0}$. The class of weakly 
solvable $F$--algebras (groups) is a radical subclass of the class of 
all $F$--algebras (groups), contains all locally solvable 
$F$--algebras (groups) and $F$--algebras (groups) that have normal 
series of subalgebras (subgroups) with weakly solvable quotients. 
The lower radical it defines on the class of all $F$--algebras 
(groups), the \emph{weakly solvable radical} $T$, is special  
\cite{Parf, Gol7}, \cite{Gol1}, Theorems 2.8, 3.8, Remark 3.10. 
On the class of alternative $F$--algebras $T$ is equal to the locally 
nilpotent radical $LN$ Addition 1.

An element of an algebra is \emph{nilpotent} if it generates a nilpotent 
subalgebra; \emph{nil-algebras} are algebras consisting of nilpotent 
elements. In the class of $F$--algebras with associative powers 
(associative one-generated subalgebras), nil-algebras form a radical 
subclass, the lower radical defined by it, the \emph{upper nil-radical} 
$\mathcal{N}$, is special on this class (extensions of $\mathcal{N}$ to 
the class of all $F$--algebras can be found in \cite{Gol1}). 

For right modules over an associative ring (algebra) $A$, the usual notations 
will be used: $\End(M)_A$ is the endomorphism ring of $M_A$, 
${\Hom(M, N)_A}$ is the homomorphism group of $A$--modules $M_A$ into $N_A$, 
${\Ann_A L=\{x\in A\mid L x=\{0\}\}}$ is the annihilator of the set 
${L\subseteq M_A}$. The submodule ${K_A\subseteq M_A}$ is \emph{essential} 
(\emph{large}) if ${K\cap K'\ne \{0\}}$ for all ${\{0\}\ne K'_A\subseteq M_A}$, 
and \emph{completely invariant} if ${\phi(K)\subseteq K}$ for all 
${\phi\in \End(M)_A}$. For any ${B\subseteq A}$, ${L\subseteq M_A}$, $L$ has 
no $B$--torsion if ${z B\ne \{0\}}$ (${B\cap \Ann_A z\ne B}$) for all 
${0\ne z\in L}$. In particular, for ${A=\mathbb{Z}}$, ${B=\{k\}}$, ${k\geq 1}$, 
\emph{algebras without $k$--torsion} are algebras without $k$--torsion in 
additive groups.   

In \cite{Gol10} we collect the necessary information about the constructions 
of the Martindale centroid $\CM(R)$, the extended centroid ${}_R C$, 
${\CM(R)\cong {}_R C}$, and the central closures $P(R)$, ${}_R Q$, 
${P(R)=\CM(R) R\cong {}_R Q={}_R C R}$, of a semiprime algebra $R$, 
the orthogonal completion of semiprime algebras and the right rings 
of quotients of a left exact (${\Ann_l A=\{0\}}$) associative ring $A$: its 
complete (maximal) right ring of quotients $Q(A)$, the right and symmetric 
Martindale rings of quotients $Q^r_m(A)$ and $Q^s_m(A)$, where up to 
isomorphism ${A\subseteq Q^s_m(A)\subseteq Q^r_m(A)\subseteq Q(A)=Q(Q(A))}$, 
$Q^s_m(A)$ and $Q^r_m(A)$ in this inclusion of subrings of $Q(A)$ have a 
common unity with $Q(A)$ and the center
\[
Z(Q^s_m(A))\ =\ Z(Q^r_m(A))\ =\ Z(Q(A))\ =\ \{q\in Q(A)\mid [q, A]=\{0\}\}\,.
\]
Like all rings of quotients of $A$, $Q(A)$, $Q^r_m(A)$, $Q^s_m(A)$ inherit 
its semiprimeness (primeness). If $A$ is a $F$--algebra, then $Q(A)$, 
$Q^r_m(A)$, $Q^s_m(A)$ are $F$--algebras and can be defined for $A$ as an 
$F$--algebra. If $A$ is semiprime, then ${Z(Q^s_m(A))={}_A C}$ and 
there is an isomorphism of the ${}_A C$--algebras 
${{}_A C A\subseteq Q^s_m(A)}$ and ${}_A Q$ that is identical on $A$. We 
will denote $Z(Q^s_m(A))$ by ${}_A C$ for the left exact, not necessarily 
semiprime $A$, and call ${}_A C$ the \emph{extended centroid of} $A$, 
the ${}_A C$--subalgebras ${{}_A C A, 
\hat{A}={}_A C+{}_A C A\subseteq Q^s_m(A)}$ the \emph{central closures of} 
$A$. This is consistent with the definitions of the extended centroid and 
central closure of a semiprime algebra and is justified by the fact that 
\[
Q(A)\ =\ Q({}_A C A)\ =\ Q(\hat{A})\,,\quad 
\hat{A}\ =\ {}_{\hat{A}} C \hat{A}\ =\ \hat{\hat A}\,,\quad 
{}_A C A\ =\ {}_{{}_A C A} C {}_A C A
\]
\cite{Ut}, (1.14). In particular, ${A=\hat{A}=Q^s_m(A)=Q^r_m(A)}$ for 
the simple $A$ with 1. 

The \emph{right classical ring of fractions} $Q_{cl}(A)$ of an associative 
ring $A$ is uniquely determined up to an isomorphism identical on $A$ 
by the conditions: $A$ is a subring of the ring $Q_{cl}(A)$ with 1, the 
non-zero-divisors of $A$ are invertible in $Q_{cl}(A)$, all elements of 
$Q_{cl}(A)$ have the form $x y^{-1}$, ${x, y\in A}$, $y$ is a 
non-zero-divisor of $A$. The existence of $Q_{cl}(A)$ is equivalent to 
the fulfillment in $A$ of the \emph{right Ore condition}: for any 
${x, y\in A}$, $y$ is a non-zero-divisor of $A$, there are 
${x', y'\in A}$, $y'$ is a non-zero-divisor of $A$, ${x y'=y x'}$ 
\cite{Lamb}, Proposition 1, p. 180, \cite{Fe}, Proposition 9.1, p. 477. 

If in $A$ with 1 and the maximality condition for direct sums of right ideals 
the essential right ideals contain non-zero-divisors, then $A$ is semiprime 
and ${Q_{cl}(A)=Q(A)}$ is classically semiprime. A ring with maximal 
conditions for direct sums of right ideals and right annihilator ideals is a 
\emph{right Goldie ring}. A ring $A$ with 1 has a right Artinian simple 
(classically semisimple) ring ${Q_{cl}(A)=Q(A)}$ if and only if $A$ is a 
prime (semiprime) right Goldie ring \cite{Fe}, Theorem 9.9, 
Corollary 9.10, Exercise 9.4.3, p. 483, 484, 481, \cite{Her}, Theorems 7.2.1--7.2.3, 
p. 168--171, \cite{Lamb}, Proposition 2, p. 180. If $A$ is a semiprime 
right Goldie ring with 1, then the decomposition into a finite direct sum 
${Q_{cl}(A)=Q_1\oplus \ldots \oplus Q_k}$, ${Q_i\cong M_{n_i}(D_i)}$, $D_i$ 
is a division ring, determines the decomposition of $A$ into a subdirect 
product of prime right Goldie rings ${e_i A\cong A/\Ann_A e_i}$, 
${a\longmapsto (e_1 a, \ldots, e_k a)}$, ${a\in A}$, where $M_n(B)$ is the 
ring of ${n\times n}$ matrices with coefficients from the ring $B$, ${n\geq 1}$, 
$e_i$ is the unity of ${Q_i=Q_{cl}(e_i A)}$, ${1=e_1+\ldots+e_k}$. 

An element $f$ of a free associative $F$--algebra $F_{Ass}\langle X\rangle$ 
with a set of free generators ${X=\{x_i\}_{i=1}^{\infty}}$ is called 
\emph{proper} if the coefficients of the irreducible record of $f$ generate 
in $F$ an ideal equal to $F$. An associative $F$--algebra $A$ is a 
\emph{$PI$--algebra} if the identity ${f=0}$ holds on $A$ for the proper 
${f\in F_{Ass}\langle X\rangle}$. Any $PI$--algebra is a $PI$--ring with the
identity ${\st_n^k=0}$ for some ${n, k\geq 1}$, where 
\[
\st_n\ =\ \st_n(x_1, \ldots, x_n)\ =\ \sum_{\sigma\in \mathfrak{S}_n} 
(-1)^{\sigma} x_{\sigma(1)}\cdots x_{\sigma(n)}
\]
is the standard polynomial of degree $n$, $\mathfrak{S}_n$ is the symmetric 
group of degree $n$, $(-1)^{\sigma}$ is the sign of ${\sigma\in \mathfrak{S}_n}$. 
On the alternative (right-alternative) $PI$--algebra in Addition 1 the identity 
${f=0}$ must hold, $f$ is an element of the free alternative (right-alternative) 
$F$--algebra $F_{Alt}\langle X\rangle$ ($F_{RAlt}\langle X\rangle$) with a 
proper image in $F_{Ass}\langle X\rangle$ under the action of the canonical 
epimorphism ${F_{Alt}\langle X\rangle (F_{RAlt}\langle X\rangle)\longrightarrow 
F_{Ass}\langle X\rangle}$, continuing the identification 
${x_i\longmapsto x_i}$, ${i\geq 1}$.

The \emph{polynomial degree} $\pideg A$ of an associative $PI$--algebra $A$ is 
the integer part $[\frac{n}2]$, where $n$ is the smallest of all ${m\geq 1}$, 
${\st_m=0}$ is the identity of $A/\prr(A)$, ${\pideg A=0}$ for ${A=\prr(A)}$ and  
${\pideg A=\sqrt{\max\limits_{A\ne P\in \Spec(A)} \dim_{\CM(A/P)} P(A/P)}}$
for ${A\ne \prr(A)}$.

If $A$ is a prime $PI$--algebra, then $A$ is a left and right Goldie ring, 
${Z(A)\ne \{0\}}$, its ring of quotients 
${A S^{-1}=Q_{cl}(A)=Q(A)=\hat{A}\cong P(A)}$ relative to 
${S=Z(A)\setminus \{0\}}$ is a finite-dimensional central simple algebra 
over ${Z(A S^{-1})=Z(A) S^{-1}={}_A C\cong \CM(A)}$ \cite{Pos, Mar, Row1}, 
\cite{Her}, Lemma 7.3.2, p. 173, \cite{Cab4}, Theorem 1.7, 
\cite{Ut}, (1.15), (5.2) or (5.1) (\cite{Lamb}, Proposition 7, p. 163), or 
\cite{Lamb}, Proposition 2, p. 180 (${A S^{-1}\cong M_n(D)}$, where $D$ is a 
division ring and ${n \sqrt{\dim_{Z(D)} D}=\pideg A}$, is equal to its socle); 
a generalization of these conclusions to semi\-prime $PI$--algebras was obtained 
in \cite{Fish, Mart1, Mar3, ArmS}. A semiprime ring $A$ with 1 and an exact 
module with Krull dimension is a finite subdirect product of prime rings, and 
if such an $A$ is a $PI$--ring, then $A$ is a left and right Goldie ring, 
${Q_{cl}(A)=Q(A)}$ is a $PI$--ring and satisfies the same homogeneous 
identities as $A$ \cite{BMar, Mar1, Mar2}. The right classical ring of 
quotients of a $PI$--algebra relative to a multiplicative subsemigroup (if it 
exists) is a $PI$--algebra \cite{B5}.
If an associative ring $A$ is semiprime, $Q(A)\langle X\rangle_{{}_A C}$ is a 
free product of ${}_A C$--algebras $Q(A)$ and ${}_A C_{Ass}\langle X\rangle$, 
${f\in Q(A)\langle X\rangle_{{}_A C}}$, ${\phi(f)=0}$ for any ring homomorphism 
${\phi: Q(A)\langle X\rangle_{{}_A C}\longrightarrow Q(A)}$, 
${\phi(X)\subseteq A}$, ${\phi|_{Q(A)}=\Id_{Q(A)}}$, then ${f=0}$ is a 
\emph{generalized identity of $A$}, $A$ is a \emph{$GPI$--ring}. If ${c f\ne 0}$ 
for all ${0\ne c\in {}_A C}$, then ${f=0}$ is a \emph{strict generalized 
identity}, $A$ is a $SGPI$--ring \cite{B4}. For the prime $A$ all 
generalized identities are strict (${}_A C$ is a field). If $A$ is semiprime, 
then $Q(A)$ and any of its dense $A$--submodules (in particular, $A$) satisfy the 
same generalized and differential identities with coefficients from $Q(A)$ 
\cite{B3, Chua, Lee}. A prime $GPI$--ring $A$ satisfies generalized identities 
with coefficients from $A$ \cite{Chua}.

\section{Algebraic derivations of semiprime algebras}

This section is devoted to the connections between algebraic derivations of 
semiprime algebras and inner derivations of their rings of quotients. 
Derivations of the $F$--algebra $R$ form a subalgebra $\Der(R)$ ($\Der_F(R)$, 
if $F$ is required) of the Lie algebra $\End_F(R)^{(-)}$. In addition to 
$\Der(R)$, our constructions will involve its ideals  
${\Mder(R)=\Der(R)\cap M(R)^{(-)}}$ and ${\Inder(R)=\Der(R)\cap \Lie(R)}$, 
where ${\Lie(R)=\langle t_x\mid t=l, r,\ x\in R\rangle\subseteq M(R)^{(-)}}$ 
($\Inder(R)$ and $\Lie(R)$ are Lie algebras of inner derivations and Lie 
multiplications of $R$). 

Since ${\phi\in \Der(R)}$ for any ${\phi\in \End_F(R)}$, 
${R^2\subseteq \Ker \phi}$, in case ${R^n=\{0\}}$, ${n\geq 3}$, 
\[
\sum_{i=1}^k t^{(i)}_{x_{i1}}\cdots t^{(i)}_{x_{i n-2}}\in \Mder(R)\quad 
(x_{ij}\in R,\ t^{(i)}=l, r,\ k\geq 1)\,,
\]
and, as a consequence, 
${\Inder(R)=\Mder(R)=M(R)^{(-)}=\{l_x+r_y\mid x, y\in R\}}$ for ${n=3}$. If $R$ 
is a Lie algebra, then ${\Inder(R)=\ad(R)=\{\ad_x\mid x\in R\}}$ and in the 
presence of a prime ${p> 0}$, 
${p R=\{0\}}$, ${\ad_x^p\in \Mder(R)\setminus \Inder(R)}$ (${D^p\in \Der(R)}$ 
for any $R$, ${p R=\{0\}}$, ${D\in \Der(R)}$).

If $A$ is an associative algebra without ${0\ne x\in A}$, ${A x A=\{0\}}$ (in 
particular, $A$ with 1 or (and) semiprime), then  
${\Inder(A)=\{\ad_x=r_x-l_x\mid x\in A\}}$, since ${l_x+r_y\in \Der(A)}$, 
${x, y\in A}$, implies ${l_z\in \Der(A)}$, ${z=x+y}$, 
${z a b=z a b+a z b}$, ${a z b=0}$ for all ${a, b\in A}$. If $A$ with 1 is 
prime, then ${\Mder(A)\subseteq \{D\in \Inder(\hat{A})\mid (A)D\subseteq A\}}$.
To do this, it is sufficient to note that for any 
${D=\sum\limits_{i=1}^k l_{x_i} r_{y_i}\in \Mder(A)}$, ${x_i, y_i\in A}$, 
${k\geq 1}$, ${1 D=\sum\limits_{i=1}^k x_i y_i=0}$ and for all ${x, y, z\in A}$
\begin{gather*}
x D\ =\ x D-x \sum_{i=1}^k x_i y_i\ =\ \sum_{i=1}^k [x_i, x] y_i\ =\ 
\sum_{i=1}^k x_i [x, y_i]\,,
\\
(x y) D\ =\ \sum_{i=1}^k x_i ([x, y_i] y+x [y, y_i])\ =\ 
\sum_{i=1}^k (x_i [x, y_i] y+x x_i [y, y_i])\,,
\\
0\ =\ \sum_{i=1}^k [x_i, x] [y, y_i]\ =\ \sum_{i=1}^k [x_i, x z] [y, y_i]\ =\ 
\sum_{i=1}^k [x_i, x] z [y, y_i]\,.
\end{gather*}
Without loss of generality, we will assume that $\{y_i+{}_A C\}$ are linearly 
independent over the field ${}_A C$, since, if necessary, $\{y_i\}$ can be 
replaced by the preimages in $A$ of the elements of the maximal linearly 
independent over ${}_A C$ subsystem 
${\{y_i+{}_A C\}\subseteq (A+{}_A C)/{}_A C}$, and we can go from $\{x_i\}$ to 
their ${}_A C$--linear combinations in $\hat{A}$. We fix ${x\in A}$ and the 
maximal ${}_A C$--linearly independent subsystem 
${\{[x_{i_j}, x]\}_{j=1}^n\subseteq \{[x_i, x]\}}$, 
${[x_i, x]=\sum\limits_{j=1}^n \alpha_{i j} [x_{i_j}, x]}$, 
${\alpha_{i j}\in {}_A C, \alpha_{i_k j}=\delta_{k j}}$, where 
${\delta_{ij}=1}$ for ${i=j}$, ${\delta_{ij}=0}$, otherwise. Then 
${\sum\limits_{j=1}^n [x_{i_j}, x] z 
\Bigl[y, \sum\limits_{i=1}^k \alpha_{i j} y_i\Bigr]=0}$ for all ${y, z\in A}$ 
and, in view of \cite{BMM}, Theorem 2(a) (\cite{ADN}, Theorem 2.7), 
${x_1, \ldots, x_k\in {}_A C}$, ${D\in \Inder(\hat{A})}$.

The invariance of the ideal $I$ of the algebra $R$ with respect to the action 
of $\Der(R)$ ($\Aut_F(R)$) will be denoted by ${I\lhd_{\Der} R}$ 
(${I\lhd_{\Aut} R}$). In particular, ${I\lhd_{\Der} R}$ for all 
${I=I^2\lhd R}$. If ${I\lhd_{\Der} R}$ and ${\Ann_t I=\{0\}}$ for ${t=l}$ 
or (and) ${t=r}$, then ${D\longmapsto D|_I}$, ${D\in \Der(R)}$, is an 
embedding ${\Der(R)\hookrightarrow \Der(I)}$, since ${D|_I=0}$ implies 
\[
0\ =\ y t_x D\ =\ y t_{x D}+y D t_x\ =\ y t_{x D}\ =\ x D\quad 
(x\in R,\ y\in I)\,.
\]
If $F$ is a field, ${\Ch F=0}$, then $S(R)$, $\prr_S(R)$, ${T(R)\lhd_{\Der} R}$ 
\cite{Gol9}, the observations after Remark 1.1.

An element with associative powers $x$ of an $F$--algebra $R$ is called 
\emph{integral over $F$} (\emph{algebraic} for the field $F$) if there is 
${{}_x f(t)=t^n+{}_x f_{n-1} t^{n-1}+\ldots+{}_x f_1 t\in F[t]}$, 
${{}_x f(x)=0}$, where $F[t]$ ($F[X]$) is the algebra of polynomials with 
coefficients from $F$ in the variable $t$ (set of commuting variables $X$). 
An endomorphism $\phi$ of a $F$--module $M$ is called \emph{locally finite} 
(\emph{of degree at most ${n\geq 1}$}) if for any ${x\in M}$ there is 
${{}_{x, \phi} f(t)=t^{n_{x, \phi}}+{}_{x, \phi} f_{n_{x, \phi}-1} 
t^{n_{x, \phi}-1}+\ldots+{}_{x, \phi} f_1 t\in F[t]}$, 
${x {}_{x, \phi} f(\phi)=0}$ (${n_{x, \phi}\leq n}$), and  
\emph{locally nilpotent} if for any ${x\in M}$ there is 
${n_{x, \phi}\geq 1, x \phi^{n_{x, \phi}}=0}$. In particular, for a finitely 
generated $M$ (or (and) ${|F|< \infty}$), the local finiteness  
(of degree at most $n$) of $\phi$ is equivalent to the integrity of $\phi$ in 
$\End_F(M)$. Integrity, local finiteness and local nilpotency of $\phi$ on 
any set ${B\subseteq M}$ are defined similarly. We call 
${D\in \Der(R)}$ \emph{algebraic} (\emph{of degree at most $n$}), 
\emph{strongly algebraic} and a \emph{nil-derivation} if $D$ is a 
locally finite (of degree at most $n$), integral and locally nilpotent 
endomorphism of the $F$--module $R$, respectively. 

Denote by $\Der_n(R)$, $\Der_{nil}(R)$, $\Der_{sa}(R)$ and $\Der_a(R)$ 
($\Der_{a, n}(R)$) the sets of all nilpotent, nil-, strongly algebraic 
and algebraic (of degree at most $n$) ${D\in \Der(R)}$, by 
$\Der_{a, bd}(R)=\bigcup\limits_{n> 1} \Der_{a, n}(R)$ the set of all 
algebraic of bounded degree ${D\in \Der(R)}$, ${\Der_{a, 1}(R)=\{0\}}$, by 
$\Der_{l, a, bd}(R)$ and $\Der_{1, a, bd}(R)$ the sets of all algebraic of 
bounded degree on finitely gene\-rated and one-generated subalgebras 
${D\in \Der(R)}$, by $\Der_{1, n}(R)$ and $\Der_{1, sa}(R)$ the 
sets of all nilpotent and strongly algebraic on one-generated subalgebras 
${D\in \Der(R)}$.

If $R$ is finite over $F$ (finitely generated as a $F$--module), then 
${\Der_a(R)=\Der_{sa}(R)}$. If $R$ is locally finite over $F$ 
(the finitely generated subalgebras of $R$ are finite over $F$), then  
${\Der_a(R)=\Der_{l, a, bd}(R)=\Der_{l, sa}(R)}$ is the set of all 
strongly algebraic on finitely generated subalgebras ${D\in \Der(R)}$. 
In each $\Der_*(R)$ we select sets ${\Inder_*(R)=\Inder(R)\cap \Der_*(R)}$, 
${\Mder_*(R)=\Mder(R)\cap \Der_*(R)}$ ($\Inder_{*, F}(R)$, $\Mder_{*, F}(R)$, 
if you need to specify $F$).

\begin{lemma}
If ${n\geq 1}$, $R$ is an algebra over a ring $F$ with $1/n!$, ${I\lhd R}$ and 
in the algebra $R/I$ there are no non-zero nilpotent (solvable) ideals of 
index (degree) $k$ for some ${k\geq n}$ (${2^k\geq n}$), then 
${(I)D\subseteq I}$ for all ${D\in \Der_{a, n}(R)}$.
\end{lemma}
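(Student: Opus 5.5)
Let $D\in \Der_{a,n}(R)$ and put $J=I+(I)D+\ldots+(I)D^{n-1}$. The plan is to show that the image of $J$ in $\bar R=R/I$ generates a nilpotent ideal of index at most $n\le k$ (resp.\ a solvable ideal of degree at most $k$, using $2^k\ge n$). By hypothesis that ideal is zero, so $(I)D\subseteq I$.

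\textbf{Step 1 (reducing to one element).} Since $D$ is locally finite of degree at most $n$, for every $x\in I$ the element $xD^n$ is an $F$-linear combination of $xD,\ldots,xD^{n-1}$. Hence $I_D=\sum_{j\ge 0}(I)D^j=J$ is a $D$-invariant $F$-submodule. The Leibniz rule gives $(xr)D^j=\sum_i\binom{j}{i}(x)D^i\,(r)D^{j-i}$. By induction on $j$, $(I)D^jR+R(I)D^j\subseteq \sum_{i\le j}(I)D^i$, so $J$ is an ideal of $R$ (here $I_D$ is the ideal generated by the $(I)D^j$). It is therefore enough to prove that in $\bar R$ the image $\bar J$ is a nilpotent ideal of index at most $n$. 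The binomial coefficients involved are harmless, but later steps need to divide by $j!$ for $j\le n$, and this is where $1/n!$ is used.

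\textbf{Step 2 (the key identity).} For $x_1,\ldots,x_n\in I$, a product of length $n$ in which the factors are $x_s D^{i_s}$, with arbitrary bracketing and possibly interleaved with factors from $R$, should lie in $I$ whenever $\sum i_s\le n-1$ \emph{or} the $i_s$ are suitably distributed. The model case is the following. Apply $D^{m}$ to a product $x_1\cdots x_n\in I^n\subseteq I$, and inductively assume that $(I)D^j\subseteq I+\ldots$ for smaller $j$, working modulo $I$. Expanding by Leibniz, the only term surviving modulo $I$ in $(x_1\cdots x_n)D^{n(n-1)}$ whose factors all carry high powers is the multinomial term $\frac{(n(n-1))!}{((n-1)!)^n}\,x_1D^{n-1}\cdots x_nD^{n-1}$. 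Terms containing some $x_sD^0=x_s\in I$ vanish modulo $I$.

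The main obstacle is organising this induction so that the intermediate terms, which contain factors $x_sD^{i}$ with $0<i<n-1$, are also controlled. I would first do a downward induction on the filtration $J_m=I+\ldots+(I)D^m$, proving that $\bar J_m$ is nilpotent modulo $\bar J_{m-1}$ with small index. I would then use the degree bound $n$ to rewrite every high power $D^{\ge n}$ via the minimal polynomial. Invertibility of $n!$, rather than of the larger multinomial coefficients, should suffice if one instead argues with $D$ acting on $\bar J_m/\bar J_{m-1}$, since $D$ induces a map $\bar J_{m-1}\to \bar J_m/\bar J_{m-1}$ that behaves like a derivation of a square-zero situation.

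\textbf{Step 3 (the solvable case).} The solvable version follows the same scheme, with $g_k$ in place of products of length $n$. Since $g_k$ has $2^k\ge n$ arguments, each Leibniz expansion of $g_k(x_1D^{i_1},\ldots)$ with total degree below the threshold again has a factor in $I$. Applying $D$ to $g_k(x_1,\ldots,x_{2^k})\in I$ the appropriate number of times and dividing by the coefficient (invertible since $1/n!\in F$) shows that $g_k(\bar J)=0$. Hence $\bar J$ is solvable of degree at most $k$, so $\bar J=0$, which gives $(I)D\subseteq I$.
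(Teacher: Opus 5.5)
Your proposal has a genuine gap: Step 2, the heart of the argument, is never carried out, and the model case you sketch does not hold. In the Leibniz expansion of $(x_1\cdots x_n)D^{n(n-1)}$ it is not true that modulo $I$ only the term with all exponents equal to $n-1$ survives. Every distribution of exponents that are all positive avoids a factor from $I$ and therefore survives, for example $x_1D^{n-2}\,x_2D^{n}\,x_3D^{n-1}\cdots$. You have no control over such terms modulo $I$.

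You also never explain why $(x_1\cdots x_n)D^{n(n-1)}$ should lie in $I$ at all, since $I$ is not assumed $D$-invariant. This is exactly where the degree bound must enter. In Step 1 you use algebraicity only to make $J$ $D$-invariant, which is beside the point.

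Finally, the coefficients you plan to divide by need not be invertible when only $1/n!\in F$. In Step 2 this is $\frac{(n(n-1))!}{((n-1)!)^n}$; for $n=3$ in characteristic $5$ it equals $90\equiv 0$. In Step 3 you apply $D^{2^k}$ to $g_k(x_1,\ldots,x_{2^k})$ with all $x_i\in I$, which requires $(2^k)!$; for $n=5$, $k=3$ in characteristic $7$ it vanishes. The filtration and ``square-zero'' idea is announced but not executed. Your reduction target, nilpotency of $I+\ldots+(I)D^{n-1}$ modulo $I$, is also harder than what is needed.

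The missing idea, which is the paper's whole proof, has three parts.

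\textbf{Smaller ideal.} Enlarge $I$ only to $J=I+(I)D$. This is an ideal because $(xD)r=(xr)D-x(rD)$ and $r(xD)=(rx)D-(rD)x$.

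\textbf{Mixed arguments.} Apply $D^n$ to a word with exactly $n$ undifferentiated arguments from $I$. For $x_1,\ldots,x_{n+l}\in I$, $l\geq 0$, and any bracketed word $w$ of length $n+l$, put $z=w(x_1,\ldots,x_n,x_{n+1}D,\ldots,x_{n+l}D)$.
\begin{itemize}
\item By Leibniz and pigeonhole, $zD^j\in I$ for all $j<n$.
\item The relation $zf(D)=0$ with $\deg f\leq n$, multiplied by $D^{n-\deg f}$, expresses $zD^n$ through the $zD^j$ with $j<n$. Hence $zD^n\in I$.
\item Modulo $I$, the only surviving terms of $zD^n$ are those in which each of $x_1,\ldots,x_n$ receives exactly one $D$. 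Therefore
\[
0\equiv zD^n\equiv n!\,w(x_1D,\ldots,x_{n+l}D)\pmod{I}.
\]
\end{itemize}

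\textbf{Conclusion.} Every product of at least $n$ elements of $(I)D$ lies in $I$. So $((I+(I)D)/I)^k=((I+(I)D)/I)^{(k)}=\{0\}$ for $k\geq n$ (resp.\ $2^k\geq n$), and the hypothesis forces $(I)D\subseteq I$. Only $n!$ is ever inverted. The mixed choice of arguments, $n$ from $I$ and the rest from $(I)D$, is also what makes the solvable case work without needing $(2^k)!$.
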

  
\begin{proof}
Due to ${w(x_1, \ldots, x_n, x_{n+1} D, \ldots, x_{n+l} D) D^n\equiv 
n! w(x_1 D, \ldots, x_{n+l} D)\equiv 0\pod I}$ for all ${x_i\in I}$, 
${l\geq 0}$ and regular non-associative words 
${w\in F\langle X\rangle}$ of length ${n+l}$, 
\[
((I+(I)D)/I)^k\ =\ ((I+(I)D)/I)^{(k)}\ =\ \{0\}\,,\quad (I)D\subseteq I\,.
\]
\end{proof}

\begin{co} 
If $F$ is an algebra over a field $\mathbb{F}$, ${\Ch \mathbb{F}=0}$, then the 
ideals of the $F$--algebra $R$, in the quotient algebras by which there are no 
non-zero nilpotent (solvable) ideals, are invariant under the action of 
$\Der_{a, bd}(R)$.
\end{co}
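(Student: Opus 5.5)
The corollary should follow directly from the preceding Lemma, once we check that its hypotheses hold for every $D\in \Der_{a, bd}(R)$. Fix an ideal $I\lhd R$ such that $R/I$ has no non-zero nilpotent (respectively solvable) ideals, and fix $D\in \Der_{a, bd}(R)$. By definition $\Der_{a, bd}(R)=\bigcup_{n>1}\Der_{a, n}(R)$, so $D\in \Der_{a, n}(R)$ for some $n\geq 2$. The goal is to apply the Lemma with this $n$ and obtain $(I)D\subseteq I$.

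\textbf{Step 1: invertibility of $n!$.} Since $F$ is an algebra over a field $\mathbb{F}$ with $\Ch \mathbb{F}=0$, the integer $n!$ is a non-zero element of $\mathbb{F}$ and hence invertible there. Its image under the structure map $\mathbb{F}\to F$ (which sends $1$ to $1$) is therefore invertible in $F$. So $1/n!\in F$, and the scalar hypothesis of the Lemma holds. Every $F$--algebra $R$ is then an algebra over a ring containing $1/n!$.

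\textbf{Step 2: the ideal hypothesis.} By assumption $R/I$ contains no non-zero nilpotent (solvable) ideals at all. In particular it contains none of index $k$, for any choice of $k\geq n$, and none of degree $k$, for any $k$ with $2^k\geq n$ (such $k$ exist, e.g.\ $k=n$). So the condition on $R/I$ in the Lemma is satisfied.

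\textbf{Step 3: conclusion.} The Lemma now gives $(I)D\subseteq I$. Since $D\in \Der_{a, bd}(R)$ and $I$ were arbitrary, every such ideal is invariant under $\Der_{a, bd}(R)$.

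There is no real obstacle here. The only point needing care is Step 1: the degree bound $n$ depends on $D$, so we need $1/n!$ in $F$ for every $n$ simultaneously. The characteristic zero field $\mathbb{F}$ provides exactly that, which is why the hypothesis is stated for $\mathbb{F}$ rather than for a fixed $n$.
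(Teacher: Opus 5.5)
Your proof is correct and follows the paper's approach. The paper gives no separate argument: the corollary is recorded as an immediate consequence of Lemma 2.1, applied to each $D\in\Der_{a,n}(R)$ with its own degree bound $n$. Two facts make that application valid, as you note: $1/n!\in F$ for every $n$ because $\Ch\mathbb{F}=0$, and the absence of all non-zero nilpotent (solvable) ideals in $R/I$ trivially gives the lemma's hypothesis.
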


\begin{co}
If $F$ is an algebra over a field $\mathbb{F}$, ${\Ch \mathbb{F}=0}$, then the 
ideals of the $F$--algebra $R$, in the quotient algebras by which there are no 
non-zero locally nilpotent (solvable) ideals, are invariant under the action 
of $\Der_{l, a, bd}(R)$, the ideals of $R$, in the quotient algebras by which 
there are non-zero nil-ideals, are invariant under the action of 
$\Der_{1, a, bd}(R)$.
\end{co}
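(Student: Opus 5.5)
The plan is to redo the proof of the Lemma inside a suitable finitely generated (resp. one-generated) subalgebra, where $D$ has bounded degree, instead of on all of $R$. Since $F$ is an $\mathbb{F}$-algebra with $\Ch\mathbb{F}=0$, every $n!$ is invertible in $F$, so no restriction on $n$ is needed. I read the second assertion as referring to quotients with \emph{no} non-zero nil-ideals; otherwise the statement seems meaningless.

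Fix $I\lhd R$ and $D\in\Der_{l,a,bd}(R)$ (resp. $D\in\Der_{1,a,bd}(R)$).

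\emph{Step 1: the set $J=I+(I)D$ is an ideal of $R$.} For $x\in I$ and $r\in R$,
\[
(xD)r=(xr)D-x(rD)\in (I)D+I,
\]
and symmetrically on the left. Since $(I)D$ is an additive subgroup, every element of $J/I$ is the image of a single $xD$ with $x\in I$. It therefore suffices to show that $J/I$ is locally nilpotent (locally solvable, resp. nil). The hypothesis on $R/I$ then forces $J=I$, i.e. $(I)D\subseteq I$.

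\emph{Step 2: the key congruence, in a subalgebra.} Take finitely many $x_1,\dots,x_m\in I$. Let $S$ be the subalgebra generated by $x_1,\dots,x_m,x_1D,\dots,x_mD$; it is finitely generated. By assumption there is $n=n_S$ such that every $s\in S$ satisfies
\[
s\,D^{n_s}=-\sum_{j=1}^{n_s-1} f_j\, s D^j
\]
for some $n_s\le n$, with no constant term. Since $sD^{j}=sD^{j}D^{n-n_s}$-type shifts would leave $S$, I will instead pad the relation to exact degree $n$ by multiplying the polynomial by $t^{n-n_s}$; this is harmless because the polynomial has no constant term, and it gives $sD^n\in\sum_{j=1}^{n-1}F\,sD^j$.

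Now take $w=w(y_1,\dots,y_n,z_1D,\dots,z_lD)\in S$, a regular non-associative word with $y_i,z_i\in\{x_1,\dots,x_m\}$. Two facts hold:

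1. For $j<n$, in the Leibniz expansion of $wD^j$ some $y_i$ stays undifferentiated. Hence $wD^j\in I$, and so $wD^n\in I$ by the padded relation.
2. In the Leibniz expansion of $wD^n$, every term except those in which each $y_i$ receives exactly one $D$ contains an undifferentiated $y_i$, hence lies in $I$. This gives
\[
wD^n\equiv n!\,w(y_1D,\dots,y_nD,z_1D,\dots,z_lD)\pmod I .
\]

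Combining the two, every word of length $\ge n$ in the elements $x_iD$ lies in $I$, because $n!$ is invertible in $F$. Thus the subalgebra of $R/I$ generated by the images of $x_1D,\dots,x_mD$ is nilpotent, and so $J/I$ is locally nilpotent. For the solvable case the same computation with the words $g_k$, $2^k\ge n$, gives $g_k=0$ on the images, i.e. local solvability.

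\emph{Step 3: the nil case.} For $D\in\Der_{1,a,bd}(R)$ and an element $xD+I\in J/I$ with $x\in I$, I want to work in $\langle x\rangle$. That subalgebra does not contain $xD$, so instead I take $s=x^n$ and use that the Leibniz expansion of $x^nD^n$ computed in $R$ gives
\[
x^nD^n\equiv n!\,(xD)^n\pmod I .
\]
I also need $x^nD^n\in I$. Here $D$ is of bounded degree on the one-generated subalgebra $\langle x\rangle\ni x^n$, so $x^nD^n$ is an $F$-combination of the $x^nD^j$, $j<n$, each of which lies in $I$.

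This last point — justifying that the degree bound applies to the specific elements $w$ (resp. $x^n$) in the chosen subalgebra, and that the lower-order terms $wD^j$ fall into $I$ — is the step I expect to need the most care. It is exactly where "algebraic on finitely generated / one-generated subalgebras" replaces the global degree bound of the Lemma. Powers associativity of $\langle x\rangle$ is needed here, and it is implicit in the definition of nil-ideals.
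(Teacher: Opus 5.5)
Your overall strategy is the paper's: run the Leibniz congruence of Lemma 2.1 inside a finitely generated (resp. one-generated) subalgebra on which $D$ has bounded degree, and conclude that the ideal $(I+(I)D)/I$ of $R/I$ is locally nilpotent (resp. nil). Your reading of the second assertion as ``no non-zero nil-ideals'' is the one the paper's proof uses.

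Steps 1 and 2 are correct. The only difference from the paper there is that you use mixed words $w(y_1,\ldots,y_n,z_1D,\ldots,z_lD)$, so you need the degree bound on $\langle x_i,x_iD\rangle$. The paper instead uses words $w(x_1,\ldots,x_l)$, $l\ge n$, all of whose entries are undifferentiated elements of $I$. That only needs the bound on $\langle x_1,\ldots,x_m\rangle$, and it carries over verbatim to the one-generated case. The solvable case also needs no separate $g_k$ argument: you have proved local nilpotency, and that already implies local solvability.

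The gap is in Step 3. There you obtain $(xD)^n\in I$ only for the one bracketing used in $x^n$, and then appeal to power-associativity. But the corollary is stated for an arbitrary $F$-algebra $R$. In the paper an element is nilpotent if it \emph{generates a nilpotent subalgebra}, and in a non-power-associative algebra a single vanishing power does not give this. Your remark also points at the wrong object: a one-power argument would need power-associativity of the subalgebra generated by $xD+I$ in $R/I$, and that does not follow from power-associativity of $\langle x\rangle$.

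The fix is the paper's formulation, and it uses only your own technique.

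Take any regular non-associative word $w$ of length $N\ge n$. Then $w(x,\ldots,x)\in\langle x\rangle$, so it has an annihilating polynomial of degree $n_w\le n$. Padding that polynomial by $t^{N-n_w}$ gives $w(x,\ldots,x)D^N\in\sum_{j<N}F\,w(x,\ldots,x)D^j$, and each of these terms lies in $I$. On the other hand, $w(x,\ldots,x)D^N\equiv N!\,w(xD,\ldots,xD)\pmod I$.

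Hence every product of at least $n$ copies of $xD+I$ vanishes. That is, $\langle xD+I\rangle^n=\{0\}$ in $R/I$, with no associativity assumption.
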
 

\begin{proof} 
If ${D\in \Der_{l, a, bd}(R)}$, ${I\lhd R}$, there are no non-zero locally 
nilpotent (solvable) ideals in the algebra $R/I$, then for any finitely 
generated subalgebra ${A\subseteq I}$ ($A$ can be conside\-red $D$--invariant, 
${D\in \Der_a(R)}$) $D$ is algebraic on $A$ of degree not higher than some 
${n\geq 1}$,\linebreak
${w(x_1, \dots, x_l) D^l\equiv l! w(x_1 D, \ldots, x_l D)\equiv 0\pod I}$ for 
all ${x_i\in \langle A\rangle}$, ${l\geq n}$, regular non-associative 
words ${w\in F\langle X\rangle}$ of length $l$, 
${\langle (A)D\rangle^n\subseteq I}$, ${(I)D\subseteq I}$. For 
${D\in \Der_{1, a, bd}(R)}$ and $R/I$ without non-zero nil-ideals the 
argument is similar for the one-generated $A$. 
\end{proof}

By definition, the center, associative and commutative centers of any algebra 
are invariant under the action of its derivations. 

\begin{rem}
For any $F$--algebra $R$ ${[\Der(R), \End(R)_{M(R)'}]\subseteq \End(R)_{M(R)'}}$.
\end{rem}

\begin{proof}
If ${D\in \Der(R)}$, ${\phi\in \End(R)_{M(R)'}}$, ${t=l, r}$, ${x\in R}$, then 
\[
0\ =\ [\phi, t_x] \ad_D\ =\ [[\phi, t_x], D]\ =\ 
[[\phi, D], t_x]+[\phi, t_{x D}]\ =\ [[\phi, D], t_x]\,, 
\]
${\ad_D\in \ad(\End_F(R)^{(-)})}$ and hence 
${[\Der_F(P(R)), \CM(R)]\subseteq \CM(R)=\End(P(R))_{M(R)'}}$ 
for the semiprime $R$.
\end{proof}

\begin{rem} 
If an $F$--algebra $R$ has no non-zero nilpotent elements and $c_n$--torsion, 
$c_n=\prod\limits_{k=1}^{n-1} \binom{2 k}k$, for some ${n\geq 2}$, then 
${(\langle x\rangle)D=0}$ for all ${x\in R}$, ${D\in \Der(R)}$, 
${(\langle x\rangle)D^n=0}$, and hence ${\Der_n(R)=\Der_{1, n}(R)=\{0\}}$ 
for $R$ without torsion.
\end{rem} 

\begin{proof}
An algebra has no non-zero nilpotent elements if and only if it has no non-zero 
elements with zero square. If ${2\leq k\leq n, 
(\langle x\rangle) D^k=\{0\},}$ ${y\in \langle x\rangle}$, ${y D^{k-1}\ne 0}$, 
then ${2 (k-1)\geq k}$, ${0=y^2 D^{2 (k-1)}=\binom{2 (k-1)}{k-1} (y D^{k-1})^2=
y D^{k-1}=0}$?! As a consequence, ${(\langle x\rangle)D=\{0\}}$. 
\end{proof}

\begin{rem}
If an $F$--algebra $R$ is semiprime, then the injective 
${\phi\in \End(R)_{M(R)'}}$ extend to invertible 
${\overline{\phi}\in \CM(R)}$. For a weakly Artinian $R$ 
(or (and) ${R=P(R)}$) the following conditions are equivalent: 
\begin{enumerate}

\item all ${0\ne \phi\in \End(R)_{M(R)'}}$ are injective; 

\item $\End(R)_{M(R)'}$ is a field; 

\item $R$ is not decomposable into a direct sum of ideals 
(prime).

\end{enumerate}
\end{rem}

\begin{proof} 
Each ${\phi\in \End(R)_{M(R)'}}$ is uniquely continued to
${\overline{\phi}\in \CM(R)}$, ${\Ker \phi=R\cap \Ker \overline{\phi}=\{0\}}$ 
is equivalent to ${\Ker \overline{\phi}=\{0\}}$ and there is ${\psi\in \CM(R)}$, 
${\overline{\phi}=\psi \overline{\phi}^2=\overline{\phi}^2 \psi}$ 
($R$ is an essential $M(R)'$--submodule of $P(R)$ and $\CM(R)$ is regular and 
commutative). If ${\Ker \phi=\{0\}}$, then 
$\Ker \psi=\Ker \overline{\phi} \psi=\{0\}$, 
${\overline{\phi} \psi (\Id_{P(R)}-\overline{\phi} \psi)=0}$, 
${\psi=\overline{\phi}^{-1}}$. Hence $\phi$ is injective if and only if 
$\overline{\phi}$ is invertible. From here and the invertibility of 
injective endomorphisms of Artinian modules it follows that 
${(1)\Longleftrightarrow (2)}$ for the weakly Artinian (with the minimality 
condition for ideals) $R$ or (and) ${R=P(R)}$. 
In any case ${(1)\Longrightarrow (3)}$, since for ${R=I_1\oplus I_2}$, 
${\{0\}\ne I_i\lhd R}$, ${\pi_i\in \End(R)_{M(R)'}}$, 
${\pi_i: x_1+x_2\longmapsto x_i, x_i\in I_i,}$ ${\Ker \pi_i=I_j}$, 
${j\in \{1, 2\}\setminus i}$. 

If $R$ is prime, then $P(R)$ is prime, ${\CM(R)=\CM(P(R))}$ is a field and 
$(1)$ is satisfied \cite{Raz}, Proposition 3.2, p. 44. In the non-prime 
$R$ there are ${\{0\}\ne I, J\lhd R}$, ${I\oplus J\subseteq R}$, 
where,\linebreak without loss of generality, ${I\oplus J\in \mathcal{E}(R)}$, 
${\pi: x+y\longmapsto x}$, ${x\in I}$, ${y\in J}$, can be continued to 
${\overline{\pi}=\overline{\pi}^2\in \CM(R)}$,  
${\CM(R) I\oplus \CM(R) J\subseteq P(R)=\overline{\pi}(P(R))\oplus 
(\Id_{P(R)}-\overline{\pi})(P(R))}$ for $\CM(R) I$, ${\CM(R) J\lhd P(R)}$, 
${\CM(R) I\subseteq \overline{\pi}(P(R))}$, 
${\CM(R) J\subseteq (\Id_{P(R)}-\overline{\pi})(P(R))}$. So, the primeness 
of $R$, the primeness of $P(R)$, and conditions (1), (2), (3) for $P(R)$ are 
equivalent.

For any ${\phi\in \End(R)_{M(R)'}}$, ${\Ker \phi, \phi(R)\lhd R}$ and  
\begin{gather*}
\{0\}\ =\ \Ker \phi\ \phi(R)\ =\ \phi(\Ker \phi\ R)\ =\ 
\phi(R)\, \Ker \phi\ =\ \phi(R\, \Ker \phi)\ =\ 
\Ker \phi\cap \phi(R)\,,
\\
\Ker \phi^2\ =\ \{x\in R\mid \phi x\in \Ker \phi\cap \phi(R)=\{0\}\}\ =\ 
\Ker \phi\,,\quad \phi\,:\ \phi(R)\hookrightarrow \phi(R)\,.
\end{gather*}
If ${\phi(R)=\phi^2(R)}$, then for any ${x\in R}$ there exists ${y\in R}$, 
${\phi x=\phi^2 y}$, ${x-\phi y\in \Ker \phi}$, and  
${R=\Ker \phi\oplus \phi(R)}$. The latter is true for an Artinian 
$M(R)'$--module $\phi(R)$ and therefore for a weakly Artinian $R$
(invertibility of injective endomorphisms of Artinian modules). Thus, 
${(1)\Longleftrightarrow (2)\Longleftrightarrow (3)}$ for the weakly 
Artinian $R$.
\end{proof}

If an $F$--algebra $R$ is semiprime, then the orders of the non-zero elements 
of the periodic part of its additive group 
${\Tor(R)=\{x\in R\mid \exists k> 0,\ k x=0\}}$ are square-free, 
$\Tor(R)=\bigoplus\limits_{p\in \mathcal{P}} R_p$,  
${R_p=\{x\in R\mid p x=\{0\}\}\lhd_{\Der} R}$, $\mathcal{P}$ is the set 
of all primes ${p> 0}$. The fulfillment of (1) from Remark 2.6 (for 
${\pi_p: x\longmapsto p x}$, ${x\in R}$, ${p\in \mathcal{P}}$) in $R$ and, 
in particular, the primeness of $R$ guarantee that $R$ has characteristic 
${p\geq 0}$, ${R=R_p}$, ${p\in \mathcal{P}}$, or ${\Tor(R)=\{0\}}$, ${p=0}$. 
We\linebreak complement $\Tor(P(R))$ and all $P(R)_p$, ${p\in \mathcal{P}}$, 
in $P(R)$ to an essential $M(R)'$--submodule 
${\Tor(P(R))\oplus M=P(R)_p\oplus M_p}$ with 
${M_p=M\oplus \bigoplus\limits_{p\ne q\in \mathcal{P}} P(R)_q}$,
extend the projectors 
\[
\Tor(P(R))\oplus M\ \longrightarrow\ \Tor(P(R))\,,\quad 
P(R)_p\oplus M_p\ \longrightarrow\ P(R)_p
\] 
uniquely to ${e=e^2, e_p=e_p^2\in \CM(R)}$, 
${p e_p=0}$, ${e_p(P(R))=P(R)_p}$, ${e_p e_q=\delta_{p q} e_p, 
p, q\in \mathcal{P}}$, and put ${e_0=e_0^2=\Id_{P(R)}-e}$, ${P(R)_0=e_0(P(R))}$, 
${e_p e=e_p}$, ${e_p e_0=0}$, ${p\in \mathcal{P}}$, ${\Tor(P(R)_0)=\{0\}}$. 
If ${0\ne x, e x\in P(R)}$, then there exists 
${\phi\in M(R)', 0\ne e x \phi\in \Tor(P(R))\oplus M}$, either 
$e x \phi\in \Tor(P(R))$ and there is 
${p\in \mathcal{P}}$, ${e_p x \phi=e_p (e x \phi)\ne 0}$, ${e_p x\ne 0}$, 
or one can choose ${k> 0}$, $0\ne e k x \phi\in M$, 
${e (e k x \phi)=e k x \phi=0?!}$ Therefore $\Tor(P(R))$ is an essential 
$M(R)'$--submodule of $e P(R)$, for ${x\in P(R)}$ ${e_q x=0}$ for all 
${q\in \mathcal{P}_0=\mathcal{P}\cup \{0\}}$ is equivalent to ${x=0}$, 
\begin{multline*}
\iota\,:\ P(R)\hookrightarrow \prod_{q\in \mathcal{P}_0} P(R)_q\cong 
O_E(P(R))\ =
\\ 
\{x\in O(P(R))\mid \exists \{x_q\}_{q\in \mathcal{P}_0}\subseteq P(R),\ 
e_q x=e_q x_q\ \forall\, q\in \mathcal{P}_0\}\,,
\end{multline*}
where ${\iota(x): q\longmapsto e_q x}$, ${x\in P(R)}$, ${q\in \mathcal{P}_0}$, 
$P(R)_q$ is a semiprime algebra of characteristic ${q\in \mathcal{P}_0}$, 
$O(P(R))$ is the orthogonal completion of $P(R)$ \cite{Gol10}. If the 
orders of elements of $\Tor(P(R))$ are bounded, then 
${|\{p\in \mathcal{P}\mid P(R)_p\ne \{0\}\}|< \infty}$, 
${e=\sum\limits_{p\in \mathcal{P}} e_p}$, 
${P(R)=\bigoplus\limits_{q\in \mathcal{P}_0} P(R)_q}$. Conditions 
${\Tor(R)=\{0\}}$, ${\Tor(P(R))=\{0\}}$ and ${\Tor(\CM(R))=\{0\}}$ 
are equivalent to each other, since ${\Tor(R)=R\cap \Tor(P(R))}$, 
${R_p=R\cap P(R)_p}$, ${p\in \mathcal{P}}$. 

For a semiprime $R$ with (1) from Remark 2.6 the local finiteness 
of ${\phi\in \End(R)_{M(R)'}}$ is equivalent to the integrity (algebraicity) 
of $\phi$ over $F$. If $F$ is an algebraically closed field, then, in view 
of the primeness of $\End(R)_{M(R)'}$, such $\phi$ is included in $F \Id_R$. 

If a semiprime $R$ is weakly Noetherian (with the maximality condition for 
ideals) and ${\phi\in \End(R)_{M(R)'}}$ is locally finite over $F$, then 
among the ideals ${\{\Ker f(\phi)\mid f(t)\in F_1[t]\}}$ one can choose the 
maximal ideal $\Ker h(\phi)$, where $F_1[t]$ is the set of all 
${f(t)\in F[t]}$ with leading coefficient 1, for any ${x\in R}$ there is 
${{}_{x, \phi} f(t)\in F_1[t]}$, ${{}_{x, \phi} f(\phi) x=0}$, 
$\Ker {}_{x, \phi} f(\phi)\cup \Ker h(\phi)\subseteq 
\Ker {}_{x, \phi} f h(\phi)=\Ker h(\phi)$, 
${h(\phi) x=0}$, ${h(\phi)=0}$, $\phi$ is integral over $F$.

\begin{lemma}
If $R$ is a semiprime $F$--algebra and ${D\in \Der(R)}$ with one of the 
conditions: 
\begin{enumerate} 

\item $\End(R)_{M(R)'}$ without torsion ($c_n$--torsion for some ${n\geq 1}$, 
${c_1=1}$),\\
${\ad_D\in \Der_{1, n}(\End(R)_{M(R)'})}$ (${(\End(R)_{M(R)'})\ad_D^n=\{0\}}$); 

\item ${1\in R}$, $Z(R)$ without torsion ($c_n$--torsion for some ${n\geq 1}$), 
${D\in \Der_{1, n}(Z(R))}$\\
(${(Z(R))D^n=\{0\}}$); 

\item $R$ is torsion-free and satisfies (1) from Remark 2.6 or (and) is weakly 
Noetherian, $\End(R)_{M(R)'}$ is integral over $F$, ${D\in \Der_{nil}(R)}$,

\end{enumerate}
then ${[D, \End(R)_{M(R)'}]=\{0\}}$.
\end{lemma}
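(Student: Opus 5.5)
The plan is to reduce all three cases to the centroid $C=\End(R)_{M(R)'}$, on which $\ad_D$ acts as a derivation. By Remark 2.4, ${[D, C]\subseteq C}$, so ${\delta=\ad_D|_C}$ is a derivation of the $F$--algebra $C$. For semiprime $R$, $C$ is commutative. It also has no non-zero nilpotent elements: if ${\phi\in C}$ and ${\phi^2=0}$, then ${\phi(R)\lhd R}$ and ${\phi(R)^2=\phi^2(R R)=\{0\}}$, so ${\phi(R)=\{0\}}$ and ${\phi=0}$. The goal in every case is therefore ${\delta=0}$.

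\emph{Case (1).} Here $C$ has no non-zero nilpotent elements and no torsion (respectively no $c_n$--torsion), and $\delta$ is nilpotent on one-generated subalgebras (respectively ${C\delta^n=\{0\}}$). Remark 2.5 applies directly to the algebra $C$ and its derivation $\delta$, and gives ${(\langle\phi\rangle)\delta=0}$ for all ${\phi\in C}$. Hence ${\delta=0}$.

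\emph{Case (2).} Since ${1\in R}$, the map ${\phi\longmapsto 1\phi}$ identifies $C$ with $Z(R)$, with ${\phi=l_{1\phi}=r_{1\phi}}$. For ${z\in Z(R)}$ and ${y\in R}$ I compute
\[
y[l_z, D]\ =\ (z y)D-z(y D)\ =\ (z D) y\,,
\]
so ${[l_z, D]=l_{z D}}$. The center $Z(R)$ is $D$--invariant, has no non-zero nilpotent elements (it embeds in $C$), and has the required torsion-freeness. Remark 2.5 applied to $Z(R)$ with ${D|_{Z(R)}}$ gives ${Z(R)D=\{0\}}$, hence ${[C, D]=\{0\}}$.

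\emph{Case (3).} First suppose (1) of Remark 2.6 holds. Then $C$ is a field by Remark 2.6. Since $R$ is torsion-free, the canonical image $\bar F$ of $F$ in $C$ generates a subfield $K$ of characteristic $0$. Because $D$ is $F$--linear, $\delta$ vanishes on $\bar F$, and therefore on $K$. Take ${\phi\in C}$; it is integral over $F$, so it has a minimal polynomial $f$ over $K$, which is separable since ${\Ch K=0}$. Applying $\delta$ to ${f(\phi)=0}$ and using commutativity of $C$ gives ${f'(\phi)\,\phi\delta=0}$. As ${f'(\phi)\ne 0}$ is invertible in the field $C$, we get ${\phi\delta=[\phi, D]=0}$.

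Now suppose instead that $R$ is weakly Noetherian. I would decompose $P(R)$ into a finite direct sum of prime ideals, which corresponds to orthogonal idempotents ${e_1, \ldots, e_m\in C}$ with ${\sum e_i=1}$. Any idempotent $e$ of a commutative algebra satisfies ${e\delta=2e(e\delta)}$; multiplying by $e$ gives ${e(e\delta)=0}$, hence ${e\delta=0}$. So $D$ commutes with each $e_i$ and preserves each prime component. On each component the centroid is a field, and the field argument above applies componentwise.

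The main obstacle is exactly this reduction in the weakly Noetherian case: obtaining the finite decomposition into prime components, and checking that $D$ (extended to $P(R)$) preserves each component, with torsion-freeness and integrality over $F$ holding there. If the finite decomposition is not available directly, the fallback is to exploit the hypothesis ${D\in\Der_{nil}(R)}$. Expand ${[\phi, D^m]=\sum_{k\ge 1}\binom{m}{k}\,(\phi\delta^k) D^{m-k}}$ to show that $\delta$ is locally nilpotent on $C$. Then Remark 2.5, applied to the torsion-free reduced algebra $C$, again gives ${\delta=0}$.
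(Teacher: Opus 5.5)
Your cases (1) and (2) are correct and coincide with the paper's proof: Remark 2.5 is applied to the commutative reduced algebra $C=\End(R)_{M(R)'}$, resp.\ to $Z(R)\cong C$. Case (3), however, does not go through as written, for three reasons.

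(a) Condition (1) of Remark 2.6 does not make $C$ a field. Remark 2.6 gives (1)$\Leftrightarrow$(2) only for weakly Artinian $R$ or $R=P(R)$. In general (1) only makes $C$ a domain; for example $R=F=\mathbb{Z}$. This is easy to repair. Take $0\ne h\in (F\Id_R)[t]$ of minimal degree with $h(\phi)=0$. Then $0=h(\phi)\delta=h'(\phi)\,(\phi\delta)$, and $h'(\phi)\ne 0$ by minimality and torsion-freeness. Since $C$ has no zero divisors, $\phi\delta=0$.

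(b) In the weakly Noetherian case, the idempotents $e_i$ splitting $P(R)$ lie in $\CM(R)$, not in $C$. For the Noetherian ring $R=\{(a,b)\in\mathbb{Z}^2\mid a\equiv b\pmod 2\}$ one has $C=R$, which has no non-trivial idempotents, while $P(R)=\mathbb{Q}\times\mathbb{Q}$. So $\delta=\ad_D|_C$ is not even defined on the $e_i$. You would first have to extend $D$ to $P(R)$ and justify the finite decomposition, and you do neither.

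(c) The fallback is wrong as stated. Remark 2.5 needs $(\langle\phi\rangle)\delta^n=\{0\}$, not mere local nilpotency. A locally nilpotent derivation of a torsion-free reduced commutative algebra need not vanish: $d/du$ on $\mathbb{Q}[u]$ is a counterexample. Upgrading pointwise nilpotency to nilpotency on $\langle\psi\rangle$ is the real content of the paper's proof. It uses the integrality of $\psi$ to make $(\psi^p\ad_D^{p(k-1)})x=\frac{(p(k-1))!}{((k-1)!)^p}(\psi\ad_D^{k-1})^p x$ vanish for large $p$, and torsion-freeness to cancel the coefficient. It then uses condition (1) of Remark 2.6, or the ACC on ideals such as $\bigcap_{i\geq q}\Ker \psi\ad_D^i$, to pass from a single $x\in R$ to the operator. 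Only after that does it reduce (3) to case (1).

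Your separability idea can nevertheless be completed without any decomposition, and then it is a genuinely different and stronger argument than the paper's. The ring $C$ is commutative and reduced. It is also torsion-free: if $k\phi=0$, then $k(\phi x)=0$, so $\phi x=0$ for all $x\in R$. Moreover $\delta$ vanishes on $F\Id_R$.

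Now let $\mathfrak{p}$ be a minimal prime of $C$ and $a\in\mathfrak{p}$. There is $s\in C\setminus\mathfrak{p}$ with $sa=0$. Hence $0=s\,((sa)\delta)=s^2(a\delta)$, so $a\delta\in\mathfrak{p}$. Applying the same fact to $a=p\Id_R$ with $p$ prime shows that no prime number lies in $\mathfrak{p}$. Thus $\delta$ induces a derivation of the domain $C/\mathfrak{p}$, which has characteristic $0$. The minimal-degree argument above then gives $\phi\delta\in\mathfrak{p}$. Intersecting over all minimal primes gives $\phi\delta=0$.

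This proves (3) for every $D\in\Der(R)$. It uses neither $D\in\Der_{nil}(R)$, nor condition (1) of Remark 2.6, nor weak Noetherianity. The paper's proof, by contrast, genuinely needs the nil-hypothesis for its binomial expansions, and needs (1) or the ACC to globalize.
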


\begin{proof} 
The inclusions of p. 1 and 2 refers to the restrictions of $\ad_D$ to 
$\End(R)_{M(R)'}$ and $D$ to $Z(R)$. In view of the commutativity and 
semiprimeness (primeness for the prime $R$) of $\End(R)_{M(R)'}$ 
\cite{Raz}, Propositions 3.1, 3.2, p. 43, 44 
(${\End(R)_{M(R)'}\hookrightarrow \CM(R)}$), p. 1 follows immediately 
from Remark 2.5. If ${1\in R}$, then for any ${\psi\in \End(R)_{M(R)'}}$
\[
\psi x\ =\ \psi (1 t_x)\ =\ (\psi 1) t_x\ =\ x t_{\psi 1}\quad 
(x\in R,\ t=l, r)\,,
\]
${\psi 1\in Z(R)}$, ${\psi=t_{\psi 1}}$ (${\phi\longmapsto \phi 1}$, 
${\phi\in \End(R)_{M(R)'}}$, is an isomorphism of $\End(R)_{M(R)'}$ and 
$Z(R)$), ${\psi \ad_D=t_{(\psi 1) D}}$, ${t=l, r}$, and hence p. 2 
follows from the semiprimeness $Z(R)$ and Remark 2.5 (here the semiprimeness 
of $R$ with 1 can be replaced by the semiprimeness of $Z(R)$). 
If ${D\in \Der_{nil}(R)}$, then for any ${\psi\in \End(R)_{M(R)'}}$, 
${x\in R}$ there are ${n, m\geq 1}$, $x D^n=(\psi (x D^i)) D^m=0$, 
${i=0, \ldots, n-1}$, and for all ${l\geq n+m-1}$ 
\[
(\psi \ad_D^l) x\ =\ \biggl(\sum_{i=0}^l \binom{l}i (-1)^i D^i 
\psi D^{l-i}\biggr) x\ =\ \sum_{i=0}^l \binom{l}i (-1)^i (\psi (x D^i)) D^{l-i}
\ =\ 0\,,
\]
where $\ad_D$ acts on $\End(R)_{M(R)'}$ as a $F$--subalgebra of $\End_F(R)$, 
and the action on the element is written in the previously adopted order. 
For ${(\psi \ad_D) x\ne 0}$ there exists ${2\leq k< m+n-1}$, 
${0=(\psi \ad_D^l) x\ne (\psi \ad_D^{k-1}) x}$ for all ${l\geq k}$,
\begin{multline*}
(\psi^p \ad_D^{p (k-1)}) x\ =\ \frac{(p (k-1))!}{((k-1)!)^p} 
(\psi \ad_D^{k-1})^p x\ =
\\
\sum_{i=0}^{\max\{n-1,\ p (k-1)\}} 
\binom{p (k-1)}i (-1)^i (\psi^p(x D^i)) D^{p (k-1)-i}\quad
(p\geq 1)
\end{multline*}
and therefore for a locally finite $\psi$ one can choose ${p\geq 1}$, 
${(\psi \ad_D^{k-1})^p x=0}$. If $R$ with (1) from Remark 2.6 and 
${x\ne 0}$, then ${(\psi \ad_D^{k-1})^p=\psi \ad_D^{k-1}=0}$?! 
For the weakly Noetherian $R$ there is ${s\geq 1, 
R=\bigcup\limits_{q\geq 1} I_q=I_s, 
I_q=\bigcap\limits_{i\geq q} \Ker \psi \ad_D^i\lhd R, 
I_q\subseteq I_{q+1}}$. For ${s\geq 2, I_{s-1}\ne I_s}$, the 
previous reasoning with ${k=s}$ implies the presence of 
${t\geq 1}$, ${R=\bigcup\limits_{l\geq 1} J_l=J_t}$, 
${J_l=\Ker (\psi \ad_D^{s-1})^l\lhd R}$, ${J_l\subseteq J_{l+1}}$ 
and ${(\psi \ad_D^{s-1})^t=\ad_D^{s-1} \psi=0}$, but ${I_{s-1}\ne R}$?! 
So, ${R=I_1}$, ${\psi \ad_D=0}$, and in p. 3 
${[D, \End(R)_{M(R)'}]=\{0\}}$. 
The conclusion of p. 3 is also possible from p. 1, since in this case\linebreak 
${\ad_D\in \Der_{1, n}(\End(R)_{M(R)'})}$. Indeed, for any locally 
finite ${\psi\in \End(R)_{M(R)'}}$ and ${x\in R}$ one can find 
${p, q\geq 1}$, ${(\langle \psi\rangle (x D^i)) D^p=
((\langle \psi\rangle)\ad_D^t) x=\{0\}}$ for all 
${i\geq 0, t\geq q}$ (see above). For $R$ with (1) from Remark 2.6 
this implies ${(\langle \psi\rangle)\ad_D^q=\{0\}}$. For 
the weakly Noetherian $R$ there is ${j\geq 1}$, 
${R=\bigcup\limits_{k\geq 1} H_k=H_j}$, 
${H_k=\bigcap\limits_{l\geq 1,\ m\geq k} \Ker \psi^l \ad_D^m\lhd R}$, 
${H_k\subseteq H_{k+1}}$, and ${(\langle \psi\rangle)\ad_D^j=\{0\}}$. 
\end{proof}
                    
\begin{lemma} 
If a field $\mathbb{F}$ is algebraically closed, ${\Ch \mathbb{F}=0}$ and 
an $\mathbb{F}$--algebra $R$ has no non-zero nilpotent elements, then 
${\Der_{1, sa}(R)=\Der_{1, n}(R)=\{0\}}$. As a consequence, if $R$ is a 
semi-\linebreak prime $\mathbb{F}$--algebra and ${D\in \Der(R)}$ with one 
of the conditions: 
\begin{enumerate}

\item ${\ad_D\in \Der_{1, sa}(\End(R)_{M(R)'})}$; 

\item ${1\in R}$ and ${D\in \Der_{1, sa}(Z(R))}$; 

\item $R$ is weakly Noetherian, $\End(R)_{M(R)'}$ is algebraic over 
$\mathbb{F}$, ${D\in \Der_a(R)}$, 

\end{enumerate}
then ${[D, \End(R)_{M(R)'}]=\{0\}}$. 
\end{lemma}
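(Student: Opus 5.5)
First I would show ${\Der_{1, sa}(R)=\{0\}}$; since $\Der_{1, n}(R)\subseteq \Der_{1, sa}(R)$, this gives ${\Der_{1, n}(R)=\{0\}}$ as well. Fix ${D\in \Der_{1, sa}(R)}$ and ${x\in R}$, and put ${A=\langle x\rangle}$. Replacing $A$ by the sum $\sum_{i\geq 0} (A)D^i$ and the subalgebra it generates, I may treat $A$ as a $D$--invariant commutative associative subalgebra on which $D$ is integral, so ${f(D)|_A=0}$ for some ${f(t)\in \mathbb{F}[t]}$ with ${f\neq 0}$. Because $\mathbb{F}$ is algebraically closed, $A$ decomposes into generalized eigenspaces ${A=\bigoplus_{\lambda} A_\lambda}$ with ${A_\lambda=\Ker (D-\lambda)^m|_A}$. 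The Leibniz formula ${(y z)(D-\lambda-\mu)^N=\sum \binom{N}{i} (y (D-\lambda)^i)(z (D-\mu)^{N-i})}$ gives ${A_\lambda A_\mu\subseteq A_{\lambda+\mu}}$. Only finitely many $\lambda$ occur, and $\Ch \mathbb{F}=0$. So if ${\lambda\neq 0}$, the sums ${p\lambda}$ exit the support for large $p$, hence ${y^p=0}$ for every ${y\in A_\lambda}$. Since $R$ has no non-zero nilpotent elements, ${A_\lambda=\{0\}}$ for ${\lambda\neq 0}$, and therefore $D$ is nilpotent on $A$: ${(A)D^m=\{0\}}$.

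Now Remark 2.5 applies. In characteristic $0$ the algebra has no $c_m$--torsion, so ${(A)D=\{0\}}$ and in particular ${x D=0}$. As $x$ was arbitrary, ${D=0}$.

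For the consequences, the point is that each of the algebras involved is a commutative algebra without nilpotent elements. For condition 1, $\End(R)_{M(R)'}$ is a commutative semiprime $\mathbb{F}$--algebra \cite{Raz}, so it has no nilpotent elements. By Remark 2.4, $\ad_D$ is a derivation of it, and ${\ad_D\in \Der_{1, sa}}$ then forces ${\ad_D=0}$ there by the first part. For condition 2, ${Z(R)\cong \End(R)_{M(R)'}}$ via ${\psi\longmapsto \psi 1}$, as in the proof of Lemma 2.7, and the same argument shows ${D|_{Z(R)}=0}$, hence ${\psi \ad_D=t_{(\psi 1)D}=0}$. For condition 3, fix ${\psi\in \End(R)_{M(R)'}}$ and ${x\in R}$. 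The relevant finite-dimensional pieces are $D$--invariant finite-dimensional subspaces containing $x$, together with the algebraic element $\psi$. On these, $\ad_D$ is algebraic on the finite-dimensional span of ${\{\psi^j D^i\}}$ restricted to such a subspace. Weak Noetherianity then gives a uniform degree, exactly as in the last paragraph of the proof of Lemma 2.7. This places $\ad_D$ in $\Der_{1, sa}(\End(R)_{M(R)'})$, which reduces condition 3 to condition 1.

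I expect condition 3 to be the main obstacle: obtaining genuine integrality of $\ad_D$ on $\langle \psi\rangle$ from the mere local finiteness of $D$. I would first try the stabilization of kernels via ${H_k=\bigcap \Ker \psi^l f(\ad_D)^k}$ under the weakly Noetherian hypothesis, mimicking the proof of Lemma 2.7, with polynomials $f$ replacing powers.
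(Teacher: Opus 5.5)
Your handling of conditions 1 and 2 agrees with the paper. Your outline for condition 3 also follows the paper's route, provided you do two things. First, pass to the finite-dimensional quotient of $W=\sum_{i,j}\mathbb{F}\,\psi^j\ad_D^i$ modulo the elements annihilating the whole $D$-orbit of $x$; $\ad_D$ acts on that quotient. Second, use the ascending chain condition on the ideals $T_u=\bigcap_{l\geq 1,\,v\in u\mathbb{F}[t]}\Ker\psi^l v(\ad_D)$, indexed by all polynomials $u$ and directed because $T_u\cup T_v\subseteq T_{uv}$.

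The genuine gap is in the first part, on which everything else rests. The hypothesis $D\in\Der_{1, sa}(R)$ supplies an annihilating polynomial only on one-generated subalgebras. It does give $f(D)=0$ on the $D$-invariant subspace $V=\sum_i(\langle x\rangle)D^i$. But the subalgebra $B$ generated by $V$ is not one-generated, and on $B$ you only know that $D$ is locally finite. So your claim that $D$ is integral on $B$, and with it ``only finitely many $\lambda$ occur'', is unsupported. Worse, it is essentially the conclusion in disguise. If $y\in V$ and $yD=\lambda y$, then every nonzero word of length $p$ in $y$ is an eigenvector of $D$ in $B$ with eigenvalue $p\lambda$. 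So for $\lambda\neq 0$, a finite spectrum on $B$ already amounts to the nilpotency of $y$.

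Separately, $B$ need not be commutative or associative. The lemma concerns arbitrary linear algebras, and even for associative $R$ the elements $x$ and $xD$ need not commute. This is harmless: your weight argument kills all words of length $p$ in $y$, not just $y^p$, and that is what nilpotency means here.

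The missing idea is to apply the hypothesis to the one-generated subalgebra of an eigenvector, not to an enlarged subalgebra. Take the following:
\begin{itemize}
\item $f$ of minimal degree with $(\langle x\rangle)f(D)=\{0\}$;
\item a root $\alpha$ of $f$, with $f=(t-\alpha)f_\alpha$;
\item $0\ne y\in(\langle x\rangle)f_\alpha(D)$, which is nonzero by minimality (note $y$ need not lie in $\langle x\rangle$), so that $yD=\alpha y$.
\end{itemize}
Every word $w(y,\ldots,y)$ of length $n$ satisfies $w(y,\ldots,y)D=n\alpha\, w(y,\ldots,y)$, so $\langle y\rangle$ is spanned by eigenvectors. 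Since $y$ is not nilpotent, nonzero words occur in infinitely many lengths $n_k$. A polynomial $h$ with $(\langle y\rangle)h(D)=\{0\}$ must then vanish at every $n_k\alpha$, which in characteristic $0$ forces $\alpha=0$. Hence $f=t^n$, $(\langle x\rangle)D^n=\{0\}$, and Remark 2.5 gives $D=0$; this is exactly the paper's argument.

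Your generalized-eigenspace version can be repaired the same way. For $y$ in the generalized eigenspace $V_\lambda$, words of length $p$ in $y$ are generalized eigenvectors for $p\lambda$, and $h(D)$ is injective on these whenever $h(p\lambda)\neq 0$. So integrality of $D$ on $\langle y\rangle$ alone already forces $\langle y\rangle^N=\{0\}$ for large $N$ when $\lambda\neq 0$.
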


\begin{proof}
Let $R$ has no non-zero nilpotent elements and 
${D\in \Der_{1, sa}(R)}$. For any ${0\ne x\in R}$ we choose 
${f(t)=\prod\limits_{i=1}^n (t-\alpha_i)\in \mathbb{F}[t]}$ of the 
minimal degree ${n\geq 1}$ among all ${0\ne g(t)\in \mathbb{F}[t]}$, 
${(\langle x\rangle)g(D)=\{0\}}$, ${\alpha\in \{\alpha_i\}\subset \mathbb{F}}$, 
${0\ne y\in (\langle x\rangle)f_{\alpha}(D)}$, 
${f(t)=(t-\alpha) f_{\alpha}(t)}$, ${y D=\alpha y}$, a set of regular 
non-associative words 
${\{w_k\}_{k=1}^{\infty}\subset \mathbb{F}\langle X\rangle}$ of lengths 
$n_k$, ${n_{k+1}> n_k}$, $y_k=w_k(\underbrace{y, \ldots, y}_{n_k})\ne 0$ 
(${\langle y\rangle^l\ne \{0\}}$, ${l\geq 1}$). 
Then ${y_k D=n_k \alpha y_k}$ and ${y_k h(D)=h(n_k \alpha) y_k=0}$ for any 
${0\ne h(t)\in \mathbb{F}[t]}$, ${(\langle y\rangle)h(D)=\{0\}}$, 
${k\geq 1}$. Therefore ${\alpha=0, (\langle x\rangle)D^n=\{0\}, D=0}$ 
(see Remark 2.5). From this and Lemma 2.7 follows p. 1, 2. 

For any $\mathbb{F}$--algebra $R$, ${D\in \Der_a(R)}$, locally finite 
${\psi\in \End(R)_{M(R)'}}$ and ${x\in R}$ 
\[
W\ =\ \sum\limits_{i, j\geq 0} \mathbb{F} (\psi^j \ad_D^i)\subseteq 
\End(R)_{M(R)'}\,,\quad 
V\ =\ \sum\limits_{i, j, k\geq 0} \mathbb{F} D^i \psi^j D^k
\subseteq \End_{\mathbb{F}}(R)\,, 
\]
there is ${s\geq 0}$,
\begin{gather*}
W'\ =\ W\cap \bigcap_{l\geq 0} \Ann_{\End(R)_{M(R)'}} x D^l\ =\ 
W\cap \bigcap_{l=0}^s \Ann_{\End(R)_{M(R)'}} x D^l\,,\quad 
(W')\ad_D\subseteq W'\,,
\\
W/(W\cap \Ann_{\End(R)_{M(R)'}} x D^l)\cong W(x D^l)\ =\ 
\{w (x D^l)\mid w\in W\}\subseteq V(x D^l)\,,
\\
W/W'\hookrightarrow \bigoplus\limits_{l=0}^s 
W/(W\cap \Ann_{\End(R)_{M(R)'}} x D^l)\,,\quad 
\dim_{\mathbb{F}} W/W'\leq 
\sum\limits_{l=0}^s \dim_{\mathbb{F}} V(x D^l)< \infty\,,
\end{gather*}
and for ${A\in \End_{\mathbb{F}}(W/W')}$, 
${A: w+W'\longmapsto w \ad_D+W'}$, ${w\in W}$, there exists 
${0\ne p(t)\in \mathbb{F}[t]}$, 
${q(A)=0, ((\langle \psi\rangle)q(\ad_D))(x)=\{0\}}$ for all 
${q(t)\in p(t) \mathbb{F}[t]}$. Hence under the conditions of 
p. 3 ${R=T_z}$ for the maximal $T_z$ among all 
${T_u=\bigcap\limits_{l\geq 1,\ v(t)\in u(t) \mathbb{F}[t]} 
\Ker \psi^lv(\ad_D)\lhd R}$, ${0\ne u(t)\in \mathbb{F}[t]}$ 
${(T_u\cup T_v\subseteq T_{u v}, 0\ne u(t), v(t)\in \mathbb{F}[t]), 
(\langle \psi\rangle)z(\ad_D)=\{0\}}$ and p. 3 boils down to p. 1.
\end{proof}

\begin{rem}
If $A$ is an associative $F$--algebra and ${x\in A}$ is algebraic over $F$, 
then $\ad_x\in \Der_{sa}(A)$.
\end{rem}

\begin{proof}
Without loss of generality, we can assume that $A$ with $1$. We select 
in $F$ a subring $H$ with 1 generated by the coefficients of 
${f(t)=t^n+f_{n-1} t^{n-1}+\ldots+f_0\in F[t]}$, ${f(x)=0}$. Since, by 
Hilbert's basis theorem, $H$ is Noetherian, the finitely generated 
$H$--module 
\[
M\ =\ {}_H\langle x\rangle^1\mathbin{\otimes_H} {}_H\langle x\rangle^1=
\sum\limits_{0\leq i, j\leq n-1} H x^i\otimes x^j\,,
\]
${{}_H\langle x\rangle^1=\sum\limits_{i\geq 0} H x^i}$, ${x^0=1}$, 
is Noetherian and all its submodules, including 
${V=\sum\limits_{k\geq 0} H x_k}$ for  
${x_k=\sum\limits_{i=0}^k \binom{k}i (-1)^i x^i\otimes x^{k-i}}$, 
are finitely generated,
${x_m+g_{m-1} x_{m-1}+\ldots+g_0=0}$ for some ${m\geq 1}$, 
${g_i\in H}$. For any ${y\in A}$ 
\[
\phi_y\,:\ \sum_{0\leq i, j\leq n-1} h_{ij} x^i\otimes x^j\longmapsto 
\sum_{0\leq i, j\leq n-1} h_{ij} x^i y x^j\quad (h_{ij}\in H) 
\]
is an epimorphism of $H$--modules $M$ and  
${\sum\limits_{0\leq i, j\leq n-1} H x^i y x^j\subseteq A}$ 
\cite{Fe}, p. 11.1, p. 513. So, 
\[
\phi_y(x_m+g_{m-1} x_{m-1}+\ldots+g_0)\ =\ y g(\ad_x)\ =\ 0
\quad (y\in A)\,,
\]
${g(\ad_x)=0}$, where ${g(t)=t^m+g_{m-1} t^{m-1}+\ldots+g_0\in F[t]}$.
\end{proof}

\begin{rem}
If $A$ is a semiprime associative ring, ${x\in A}$ and $\ad_x$ is 
strongly algebraic over ${}_A C$ of degree at most ${n\geq 1}$, 
then $x$ is algebraic over ${}_A C$ of degree at most $n$. 
\end{rem}

\begin{proof}
Without loss of generality, we can assume that ${A=\hat{A}}$, ${Z(A)={}_A C}$, 
${f(\ad_x)=0}$, ${f(t)=t^n+f_{n-1} t^{n-1}+\ldots+f_0\in {}_A C[t]}$. Then for 
some integer combinations ${c_{ij}\in {}_A C}$ of the coefficients of $f(t)$ 
\[
0\ =\ (-1)^n y f(\ad_x)\ =\ 
x^n y+\sum_{0\leq i< n,\ 0\leq j\leq n-i} c_{ij} x^i y x^j
\quad (y\in A)
\]
and, due to \cite{ADN}, Corollary 2.9, there is 
${g(t)=t^n+c_{n-1} t^{n-1}+\ldots+c_0\in {}_A C[t]}$, ${g(x)=0}$.
\end{proof}
           
Remarks 2.9, 2.10 contain the equivalence for a semiprime associative ring 
$A$ of the algebraicity of ${x\in A}$ and the strong algebraicity of $\ad_x$ 
over ${}_A C$ from \cite{BGr}, Proposition 1, \cite{ChuaL}, Theorem 1.4. If $A$ is 
torsion-free, then the niplotency of $\ad_x$ is equivalent to the nilpotency\linebreak 
of ${x-\alpha}$ for some ${\alpha\in {}_A C}$ \cite{LeeA}, Theorem 1.3, 
\cite{ADN}, Corollary 4.5 (torsion-freeness in $A$ is used only in deriving 
the nilpotency of ${x-\alpha}$ for nilpotent $\ad_x$ and is replaced 
by $n!$--torsion-freeness, $n$ is the nilpotency index of $\ad_x$; 
${\ad_x^{2 n-1}=0}$ for ${x^n=0}$). If $R$ is an $F$--algebra with 
${\Ann R=\Ann_l R\cap \Ann_r R=\{0\}}$, ${D\in \Der(R)}$, then from 
${\ad_D\in \Der_{sa}(M(R))}$, ${f(\ad_D)=0}$ for some ${f(t)\in F_1[t]}$, 
it follows that ${D\in \Der_{sa}(R)}$, ${t_x f(\ad_D)=t_{x f(D)}=0}$ for all 
${x\in R}$, ${f(D)=0}$. In particular, if ${\ad_D^n=0}$ for some 
${n\geq 1}$, then ${D^n=0}$.

Each derivation of a left exact associative ring $A$ can be uniquely extended 
to a derivation of its rings of quotients $Q(A)$, $Q^r_m(A)$ and $Q^s_m(A)$ 
\cite{Lee}, Lemma 2, \cite{Har1}, preliminaries. Following 
\cite{Har2, Har3}, we will call ${D\in \Der(A)}$ 
\emph{strictly algebraic over a subring ${H\subseteq Q(A)}$} if there exist 
${n\geq 1}$, ${h_1, \ldots, h_n\in H}$ such that 
${\sum\limits_{i=0}^n D^i l_{h_i}=0}$ on $A$ and the ideal 
$(h_1, \ldots, h_n)_H$ has a zero annihilator in $A$. Due to the coincidence 
of the differential identities of the semiprime $A$ and $Q(A)$ with 
coefficients from $Q(A)$ \cite{Lee}, the condition of strict 
algebraicity of ${D\in \Der(A)}$ over $H$ is inherited (with the same $\{h_i\}$) 
by the extension of $D$ to $Q(A)$. The extension to $Q(A)$ of any nilpotent 
(strongly algebraic over ${}_A C$) ${D\in \Der(A)}$ is nilpotent (strongly 
algebraic over ${}_A C$) with the same nilpotency index 
(annihilating normalized polynomial without free term over ${}_A C$) 
\cite{ChuaL}, preliminaries. For the torsion-free semiprime $A$, the 
extension to $Q^s_m(A)$ of any strictly algebraic ${D\in \Der(A)}$ over 
$Q^s_m(A)$ is inner for $Q^s_m(A)$, ${D=\ad_q}$ on $Q^s_m(A)$ for some 
${q\in Q^s_m(A)}$ (algebraic over ${}_A C$ for strongly algebraic $D$ over 
${}_A C$) \cite{Har2}, Corollary 1, \cite{Har3}, Theorem 20. The 
realization of nilpotent ${D\in \Der(A)}$ as inner for nilpotent elements 
of $Q^s_m(A)$ is considered in \cite{Grz}, Corollary 8, \cite{ChuaL}, 
Theorem 3.6. The study of the properties of elements of semiprime rings 
corresponding to nilpotent and strongly algebraic inner derivations is the 
subject of \cite{ADN, ChuaL}. 

A semiprime algebra $R$ with a semiprime (prime) algebra $M(R)$ is 
called \emph{multiplicatively semiprime} (\emph{prime}), hereinafter a 
\emph{$m.s.p.$--algebra} (\emph{$m.p.$--algebra}). Each 
$m.p.$--algebra is prime. From the proof of Theorem 4.3, \cite{Cab3}, 
it follows that 
\[
P(M(R))\cong M(P(R))\,,\quad \CM(M(R))\ =\ 
\CM(R) \Id_{P(M(R))}\cong \CM(R)
\]
for any $m.s.p.$--algebra $R$ (in the original version 
${P(M(R)')\cong M(P(R))'}$ (for the $\CM(R)$--algebra $P(R)$), 
${\CM(M(R)')\cong \CM(R)}$ for $R$ with semiprime $M(R)'$ (and $M(R)$)).

If $R$ is a $m.s.p.$--algebra, then any ${D\in \Der_F(R)}$ continues to 
${\overline{D}\in \Der_F(P(R))}$ by the rule:\linebreak 
${(\phi x) \overline{D}=\phi (x D)+(\phi \ad_D^{\tau}) x}$, ${x\in R}$, 
${\phi\in \CM(R)}$, where ${\tau: \CM(R)\longrightarrow {}_{M(R)} C}$ is a 
$F$--isomor\-phism of ${\CM(R)\cong \CM(M(R))}$ and ${}_{M(R)} C$, 
$\ad_D$ is the extension of ${\ad_D\in \Der_F(M(R))}$ to an element of 
$\Der_F(Q(M(R)))$ ($\Der_F(Q^s_m(M(R)))$), 
${\ad_D^{\tau}: \phi\longmapsto \tau^{-1}((\tau \phi)\ad_D)}$, 
${\phi\in \CM(R)}$, ${\ad_D^{\tau}\in \Der_F(\CM(R))}$ (correctness follows 
from ${t_y \ad_D=t_{y \overline{D}}}$, ${t=l, r}$, ${y\in P(R)}$). The 
linearity of $\overline{D}$ over $\CM(R)$ is equivalent to the linearity of 
$\ad_D$ over ${}_{M(R)} C$.

If $R$ is an algebra with ${\Ann R=\{0\}}$, then ${x\longmapsto (l_x, r_x)}$, 
${x\in R}$, is a bijection, for any ${D\in \Der(R)}$, ${x\in R}$, $x D$ is 
completely determined by ${(l_{x D}=l_x \ad_D, r_{x D}=r_x \ad_D)}$. 

\begin{co}
If $R$ is a torsion-free $m.s.p.$--algebra, ${D\in \Der(R)}$, $\ad_D$ 
is strictly algebraic over $Q^s_m(M(R))$, then ${\ad_D=\ad_Q}$ on 
$Q^s_m(M(R))$ for some ${Q\in Q^s_m(M(R))}$. To a strongly algebraic 
$D$ over $\CM(R)$ there corresponds an algebraic $Q$ over ${}_{M(R)} C$. 
\end{co}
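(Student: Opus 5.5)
The plan is to apply the Harris result quoted above (\cite{Har2}, Corollary 1, \cite{Har3}, Theorem 20) to the associative ring $A=M(R)$ and to the derivation $\ad_D\in \Der(M(R))$. That result says: for a torsion-free semiprime ring $A$, the extension to $Q^s_m(A)$ of a derivation strictly algebraic over $Q^s_m(A)$ is inner, and the implementing element is algebraic over ${}_A C$ when the derivation is strongly algebraic over ${}_A C$. We must therefore check three things: $M(R)$ is a semiprime associative ring, it is left exact, and it is torsion-free.

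Semiprimeness is exactly the definition of an $m.s.p.$--algebra. For left exactness, $\Ann_l M(R)=\{0\}$ holds automatically in a semiprime associative ring. For torsion-freeness, suppose $k\phi=0$ for some $\phi\in M(R)$ and $k\geq 1$. Then $k(x\phi)=0$ for every $x\in R$. Since $R$ is torsion-free, $x\phi=0$ for all $x$, so $\phi=0$. Hence $M(R)$ is torsion-free, and the first assertion follows directly: $\ad_D$ extends uniquely to $Q^s_m(M(R))$ (\cite{Lee}, Lemma 2), and it equals $\ad_Q$ there for some $Q\in Q^s_m(M(R))$.

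For the second assertion, let $D$ be strongly algebraic over $\CM(R)$, say $f(D)=0$ with $f(t)=t^n+f_{n-1}t^{n-1}+\ldots+f_1 t$ and $f_i\in \CM(R)$. Using the extension $\overline{D}$ to $P(R)$ constructed above and the isomorphisms $P(M(R))\cong M(P(R))$ and $\CM(M(R))\cong \CM(R)$, we transport the coefficients through $\tau$ to ${}_{M(R)}C$. We then show that $\ad_D$ satisfies an integral polynomial over ${}_{M(R)}C$ on $M(R)$. The key identity is $t_x\ad_D^k=t_{xD^k}$ for $t=l,r$, combined with the Leibniz expansion of $(t_{x_1}\cdots t_{x_m})\ad_D^k$. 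Each product $t_{x_1}\cdots t_{x_m}$ lies in the tensor-type module generated by the $\langle D\rangle$-orbits of the $x_i$, on which $D$ acts integrally. By the argument of Remark 2.9 (Noetherian coefficient subring, here a suitable finitely generated subring of the centroid), $\ad_D$ is then integral on $M(R)$ of bounded degree. It follows that $\ad_D$ is strongly algebraic, hence strictly algebraic, since a monic polynomial has leading coefficient $1$, whose annihilator is zero.

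Harris's theorem then gives that $Q$ is algebraic over ${}_{M(R)}C$; alternatively, Remark 2.10 applied to $\hat{A}$ with $A=M(R)$ gives the same conclusion. I expect the main obstacle to be this transfer of strong algebraicity from $D$ to $\ad_D$. The issue is a compatibility question: when $\CM(R)$ is used for the coefficients, we need $\ad_D^{\tau}$ to vanish, or at least to be controlled, so that $\overline{D}$ is $\CM(R)$-linear and the coefficient transport through $\tau$ is legitimate. I would handle this by interpreting ``strongly algebraic over $\CM(R)$'' as including the $\CM(R)$-linearity of $\overline{D}$. By the remark preceding the corollary, this is equivalent to $\ad_D$ being linear over ${}_{M(R)}C$.
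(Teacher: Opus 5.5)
Your plan matches how the paper obtains Corollary~2.11 from the preceding discussion. The first assertion is fine as you give it: you apply \cite{Har2}, Corollary~1, \cite{Har3}, Theorem~20, to the semiprime ring $M(R)$ (torsion-free because $R$ is, exactly as you check) and to the derivation $\ad_D$. For the second assertion you use $\CM(R)\cong \CM(M(R))\cong {}_{M(R)} C$ and the equivalence of $\CM(R)$--linearity of $\overline{D}$ with ${}_{M(R)} C$--linearity of $\ad_D$. Reading ``strongly algebraic over $\CM(R)$'' as integrality of the $\CM(R)$--linear endomorphism $\overline{D}$ of $P(R)$ is the right interpretation; the conclusion $\ad_D=\ad_Q$ forces that linearity anyway.

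The gap is in how you obtain a \emph{single} polynomial for $\ad_D$. You write $(t_{x_1}\cdots t_{x_m})\ad_D^k=\sum_{k_1+\ldots+k_m=k}\frac{k!}{k_1!\cdots k_m!}\, t_{x_1 D^{k_1}}\cdots t_{x_m D^{k_m}}$ and work in the module spanned by the $D$--orbits of the $x_i$. This amounts to letting $s_1+\ldots+s_m$ act on $\bigotimes_{i=1}^m H[s_i]/(f(s_i))$. That element is integral of degree up to $n^m$, so for each monomial you get an annihilating polynomial whose degree grows with $m$. This is only local finiteness of $\ad_D$ on $M(R)$, i.e.\ the statement $\ad_D\in\Der_a(M(R))$ used in the proof of Corollary~2.16. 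Strict algebraicity in Kharchenko's sense needs one relation $\sum_i \ad_D^i l_{h_i}=0$ valid on all of $M(R)$, so as written this step does not supply the hypothesis of the theorem you invoke.

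The fix is to apply Remark~2.9 to $\overline{D}$ itself, as the paper later does with $\End_{\mathbb{F}}(R)$. The element $\overline{D}$ lies in the associative $\CM(R)$--algebra $\End_{\CM(R)}(P(R))$ and is annihilated by the monic $f$. Moreover, $\ad_{\overline{D}}$ is the difference of the commuting operators $\psi\longmapsto\psi\overline{D}$ and $\psi\longmapsto\overline{D}\psi$, both annihilated by $f$. The module ${}_H\langle\overline{D}\rangle^1\otimes_H {}_H\langle\overline{D}\rangle^1$ of Remark~2.9 does not depend on the element $\psi$ being differentiated. It yields a monic $g\in\CM(R)[t]$, of degree at most $n^2$, with $g(\ad_{\overline{D}})=0$ on all of $\End_{\CM(R)}(P(R))$. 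Now restrict to the $\ad_{\overline{D}}$--invariant subalgebra $M(P(R))\cong P(M(R))$ and then to $M(R)$, transporting coefficients by $\tau$. This shows $\ad_D$ is strongly, hence strictly, algebraic over ${}_{M(R)} C$.

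One smaller point: the $Q$ produced by Kharchenko's theorem lies in $Q^s_m(M(R))$, not necessarily in $\hat{M(R)}$. If you want to use Remark~2.10 instead of the parenthetical part of that theorem, first extend the relation to $Q^s_m(M(R))$ (via \cite{ChuaL}, as recalled before the corollary). Then apply Remark~2.10 to the semiprime ring $Q^s_m(M(R))$, whose extended centroid is again ${}_{M(R)} C$.
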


Let $A$ be a semiprime associative ring, ${B={}_A C A}$. Any 
${\phi\in \Hom(I, A)_A}$, ${I\lhd A}$, can be extended by ${}_A C$--linearity to  
${\overline{\phi}\in \Hom({}_A C I, B)_B}$, since for 
${\sum\limits_i c_i x_i=0}$,  ${c_i\in {}_A C}$, ${x_i\in I}$, there exist 
${J\in \mathcal{E}(A)}$, ${\sum\limits_i c_i J\subseteq A}$, and  
${\Bigl(\sum\limits_i c_i (\phi x_i)\Bigr) J=
\phi\Bigl(\Bigl(\sum\limits_i c_i x_i\Bigr) J\Bigr)=\{0\}}$, 
${\sum\limits_i c_i \phi x_i=0}$. Here  
${\Bigl(\sum\limits_i c_i I\Bigr) J+J \Bigl(\sum\limits_i c_i I\Bigr)
\subseteq I}$ and ${\Bigl(\sum\limits_i c_i y_i\Bigr) J, 
J \Bigl(\sum\limits_i c_i y_i\Bigr)\ne \{0\}}$ for all 
${\sum\limits_i c_i y_i\ne 0}$, ${y_i\in I}$. So,  
$Q^r_m(A)$ can be identified with a ${}_A C$--subalgebra of $Q^r_m(B)$ using 
${\iota: [(\phi, I)]\longmapsto [(\overline{\phi}, {}_A C I)]}$, 
${I\in \mathcal{E}(A)}$, ${\phi\in \Hom(I, A)_A}$,
\[
\{{}_A C I\mid I\in \mathcal{E}(A)\}\subseteq \mathcal{E}(B)\,,\quad 
\{A\cap I=A\cap {}_A C (A\cap I)\mid I\in \mathcal{E}(B)\}\subseteq 
\mathcal{E}(A)\,. 
\]
Since for each ${q\in Q^s_m(A)}$ there is ${I\in \mathcal{E}(A)}$, 
${q I+I q\subseteq A}$, ${q {}_A C I+{}_A C I q\subseteq B}$, 
${q\in Q^s_m(B)}$, ${Q^s_m(A)\subseteq Q^s_m(B)}$ (see introduction of 
\cite{Gol10}). 

If ${I\lhd A}$, ${J\in \mathcal{E}(I)}$, then 
${J^3\subseteq (J)_A^3\subseteq J}$, ${J^3, (J)_A^3\in \mathcal{E}(I)}$, 
for any ${\phi\in \Hom(J, I)_I}$, ${z\in J}$, ${y\in I}$, ${x\in A}$ holds  
${\phi(z y x)=(\phi z) y x=\phi(z y) x}$, 
${\phi|_{J I}\in \Hom(J I, I)_A}$, ${\phi|_{(J)_A^3}\in \Hom((J)_A^3, A)_A}$, 
${[(\phi, J)]=[(\phi, (J)_A^3)]}$ in $Q^r_m(I)$, 
${\iota_{I, A}: [(\phi, J)]\longmapsto [(\phi', (J)_A^3\oplus K)]}$ is 
an embedding of $Q^r_m(I)$ into $Q^r_m(A)$ for  
${K\lhd A, (J)_A^3\oplus K\in \mathcal{E}(A)}$ (equivalent to 
${I\oplus K\in \mathcal{E}(A)}$ ($\{0\}=I\cap K\cap (J)_A^3=I\cap K$)), 
${\phi'|_{(J)_A^3}=\phi|_{(J)_A^3}}$, ${\phi'(K)=\{0\}}$.
If ${y, z\in I}$,
${[(l_y, I)] [(\phi, J)]=[(l_y \phi, J I)]=[(l_z, I)]}$ and so 
${(l_y \phi-l_z)(J I)=\{0\}}$, then 
${[(l_{x+y}, A)] [(\phi', (J)_A^3\oplus K)]=
[(l_y \phi', (J)_A^3 A\oplus K A)]=[(l_z, A)]}$ for all ${x\in A}$, 
${x I=\{0\}}$
(${(J)_A^3 A\subseteq (J)_A^3\subseteq I J I}$, ${z K=\{0\}}$). 
For ${q=[(\phi, J)]\in Q^s_m(I)}$, ${H\in \mathcal{E}(I)}$, ${H q+q H\subseteq I, q'=\iota_{I, A} q}$ 
it follows that ${((H)_A^3\oplus K) q'+q' ((H)_A^3\oplus K)
\subseteq A}$, ${q'\in Q^s_m(A)}$ and  
${\iota_{I, A}: Q^s_m(I)\hookrightarrow Q^s_m(A)}$. 
                                      
If ${0, 1\ne e=e^2\in {}_A C}$, then for all ${I\in \mathcal{E}(B)}$, 
${\phi\in \Hom(I, B)_B}$, 
\[
e [(\phi, I)]\ =\ [(\phi, I)] e\ =\ [(\phi l_e, I)]\ =\ 
[(\phi l_e, e I\oplus (1-e) B)]\ =\ \iota_{e I, B} [(\phi, e I)]\,,
\]
${[(\phi, e I)]\in Q^r_m(e B), e I\in \mathcal{E}(e B)}$. For any 
${\psi\in \Hom(J, e B)_{e B}, J=e J\lhd e B, y\in J, x\in B}$ holds 
${\psi(y x)=\psi(y e x)=(\psi y) e x=(\psi y) x, J\lhd B, 
\Hom(J, e B)_{e B}=\Hom(J, e B)_B}$. For ${J\in \mathcal{E}(e B)}$ 
we have ${J\oplus (1-e) B\in \mathcal{E}(B)}$ and
${\iota_{e B, B} [(\psi, J)]=[(\psi l_e, J\oplus (1-e) B)]}$. For 
${x, y\in B}$, $[(\phi, I)]\in Q^r_m(B)$ from  
${[(l_x, B)] [(\phi, I)]=[(l_y, B)]}$ follows 
${[(l_{e x}, e B)] [(\phi, e I)]=[(l_{e y}, e B)]}$, the latter is equivalent 
to ${[(l_{e x+(1-e) z}, B)] [(\phi l_e, e I\oplus (1-e) B)]=
[(l_{e y}, B)]}$ for all ${z\in B}$. Therefore in this case 
${e Q^*_m(B)=\iota_{e B, B}(Q^*_m(e B))}$, ${*=r, s}$. 

Since for any ${0\ne q\in Q^r_m(A)}$ there is ${I\in \mathcal{E}(A)}$, 
${\{0\}\ne q I\subseteq A}$, ${A\cap e A\in \mathcal{E}(e A)}$, 
for any ${I\lhd A}$ either ${e I\ne \{0\}, e I\cap A\ne \{0\}}$ or 
${I=(1-e) I.}$ If ${I\in \mathcal{E}(A),}$ then ${e I\ne \{0\}}$ 
($I (A\cap e A)\ne \{0\}$), ${e I, I\cap e I\in \mathcal{E}(e A)}$, 
${I\cap e I\in \mathcal{E}(A\cap e A)}$, since  
\begin{gather*}
\{0\}\ne J\cap A\ =\ J\cap A\cap e A\lhd A, e A\,,\quad  
\{0\}\ne J\cap A\cap I\ =\ J\cap I\cap e I\quad (\{0\}\ne J\lhd e A)\,,
\\
\{0\}\ne (H)_A^3\cap I\cap e I\subseteq H\cap I\cap e I\quad 
(\{0\}\ne H\lhd A\cap e A)
\end{gather*}
(${A J+J A\subseteq J}$, ${(H)_A=(H)_{e A}}$). For  
${H\in \mathcal{E}(A\cap e A)}$, 
${(H)_A\oplus (A\cap (1-e) A)\in \mathcal{E}(A)}$, due to 
${(H)_A\in \mathcal{E}(e A)}$, 
${A\cap (1-e) A\in \mathcal{E}((1-e) A)}$. For ${\{0\}\ne J\lhd B}$, 
${\{0\}\ne H\lhd {}_A C (A\cap e A)}$ similar reasoning gives 
${{}_A C e I, {}_A C (I\cap e I)\in \mathcal{E}(e B)}$, 
${{}_A C (I\cap e I)\in \mathcal{E}({}_A C (A\cap e A))}$.

For any ${I\in \mathcal{E}(A)}$, ${\phi\in \Hom(I, A)_A}$, 
${H\in \mathcal{E}(A\cap e A)}$, ${\psi\in \Hom(H, A\cap e A)_{A\cap e A}}$, 
in view of ${\phi(I\cap e I)=\overline{\phi}(I\cap e I)=e \phi(I\cap e I)
\subseteq A\cap e A}$, 
\begin{multline*}
e [(\phi, I)]\ =\ [(\phi, I)] e\ =\ 
[(\phi, I)] [(l_e, (A\cap e A)\oplus (A\cap (1-e) A))]\ =
\\ 
[(\phi l_e, (A\cap e A) I\oplus (A\cap (1-e) A) I)]\ =\ 
[(\phi l_e, (I\cap e I)\oplus (A\cap (1-e) A))]\ =\ 
\\
\iota_{A\cap e A, A} [(\phi, I\cap e I)]\,,
\end{multline*}
${[(\phi, I\cap e I)]\in Q^r_m(A\cap e A)}$, 
${\iota_{A\cap e A, A}[(\psi, H)]=
[(\psi l_e, (H)_A^3\oplus (A\cap (1-e) A))]}$. If ${x, y\in A}$ and 
${[(l_x, A)] [(\phi, I)]=[(l_y, A)]}$, then 
${[(l_x, A\cap e A)] [(\phi l_e, I\cap e I)]=[(l_y, A\cap e A)]}$, where for 
${x=e x}$ holds ${[(l_x, A)]=e [(l_x, A)]}$, 
${[(l_y, A)]=e [(l_y, A)]=[(l_{e y}, (A\cap e A)\oplus (A\cap (1-e) A))]}$, 
${y=e y}$. If ${x, y\in A\cap e A}$, 
${[(l_x, A\cap e A)] [(\psi, H)]=[(l_y, A\cap e A)]}$, then 
\[
[(l_{x+z}, A)] [(\psi l_e, (H)_A^3\oplus (A\cap (1-e) A))]\ =\ 
[(l_y, A)]\quad (z\in A\cap e A)\,.
\]
Thus, ${e Q^*_m(A)=\iota_{A\cap e A, A}(Q^*_m(A\cap e A))}$, ${*=r, s}$
         
If $A$ is prime and contains a non-zero minimal (smallest non-zero) ideal 
$I=\Soc(A_{M(A)'})=\bigcap\limits_{J\in \mathcal{E}(A)} J\ne \{0\}$, then 
\begin{enumerate}

\item $I$ is a simple ring (by Andrunakievich's lemma,  
${(J)_A^3\subseteq J}$ for any ${J\lhd I\lhd A}$); 

\item ${\Hom(I, A)_I=\Hom(I, A)_A=\Hom(I, I)_I=\Hom(I, I)_A}$, 
${\End(I)_I=\End(I)_A}$ (${I=I^2}$\\ and 
${\phi(a a' b)=(\phi a) a' b=\phi(a a') b}$ 
for all ${\phi\in \Hom(I, A)_I}$, ${a, a'\in I}$, ${b\in A^1=F\oplus A}$); 

\item ${Q^r_m(A)\cong Q^r_m(I)\cong \End(I)_I}$ 
(${[(\phi, I)]\longmapsto \phi}$, ${\phi\in \Hom(I, A)_A}$); 

\item ${Q^s_m(I)\cong Q^s_m(A)}$, since 
\[
Q^s_m(I)\cong \{\phi\in \End(I)_I\mid L(I) \phi\subseteq L(I)\}\ =\ 
\{\phi\in \End(I)_I\mid L(I) \phi\subseteq L(A)|_I\}\cong Q^s_m(A)\,,
\]
${I=I^2}$ and for any ${a\in I}$, ${\phi\in Q^s_m(A)}$ there is ${b\in A}$, 
${l_a \phi=l_b|_I}$, ${l_{a' a} \phi=l_{a' b}}$ for all ${a'\in I}$, where 
${L(A)=\langle l_x\mid x\in A\rangle=\{l_x\mid x\in A\}}$ is the ring of 
left multiplications of $A$.

\end{enumerate}
Since $I$ is the smallest non-zero ideal of $B$ (${I=I\cap c I}$ for all 
${0\ne c\in {}_A C}$, ${I={}_A C I}$), here $A$ can be replaced by $B$.

By Martindale's theorem, \cite{Mart}, Theorem 3 (together with \cite{Chua}), 
any non-zero prime associative $GPI$--ring $A$ has a primitive central 
closure ${B={}_A C A}$ whose socle ${I=\Soc(B_B)}$ is the smallest non-zero 
ideal and the smallest dense right ideal of $B$, 
\[
Q(A)\ =\ Q(B)\cong \End(I)_B\cong \End(B e)_{e B e}\,,\quad 
e B e\cong \End_B(B e)\,,
\] 
where ${0\ne e=e^2}$ is a generator of the minimal right (left) ideal $B e$ 
($e B$), $e B e$ is a division ring that is finite-dimensional over its 
center ${Z(e B e)=e {}_A C\cong {}_A C}$ \cite{Lamb}, Propositions 4, 7, 
p. 109, 163, \cite{Jain}, Theorems 2.1, 3.1 ($B$ and the simple ring $I$ 
have common minimal left and right ideals, ${I=\Soc(I_I)}$). In the proof of 
Theorem 2, \cite{Har2} (taking into account the presence of characteristics 
in the prime rings) it was established that in this case 
\[
\{D\in {}_A C \Der(A)\mid ({}_A C)D=\{0\}\}\ =\ 
{}_A C \Der(A)\cap \Inder(Q^s_m(A))\,,
\]
where $\Der(A)$ is identified with the subring of $\Der(Q^s_m(A))$ that is 
formed by uniquely defined extensions of elements of $\Der(A)$ to 
$Q^s_m(A)$. Using \cite{B4}, the latter can be transferred to semi\-prime 
$SGPI$--rings.

\begin{prop}
If $A$ is a semiprime associative $SGPI$--ring, then 
\[
\{D\in {}_A C \Der(A)\mid ({}_A C)D=\{0\}\}\ =\ {}_A C \Der(A)\cap 
\Inder(Q^s_m(A))\,.
\]
\end{prop}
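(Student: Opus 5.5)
The inclusion $\supseteq$ is immediate. If $D\in {}_A C\Der(A)$ coincides on $Q^s_m(A)$ with $\ad_q$ for some $q\in Q^s_m(A)$, then $({}_A C)D=({}_A C)\ad_q=\{0\}$, because ${}_A C=Z(Q^s_m(A))$. The real content is therefore the inclusion $\subseteq$. I would derive it from the prime case quoted above from the proof of Theorem~2 of \cite{Har2}, using the reduction of semiprime $SGPI$--rings to prime $GPI$--rings from \cite{B4}.

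\emph{Step 1: set-up.} Take $D=\sum_i c_i D_i$ with $c_i\in {}_A C$, $D_i\in\Der(A)$, and $({}_A C)D=\{0\}$. Extend each $D_i$ to $Q^s_m(A)$, so $D$ is an ${}_A C$--linear derivation of $Q^s_m(A)$. Since $A$ is semiprime, ${}_A C$ is a von Neumann regular commutative ring. By \cite{B4}, the strict generalized identity $f$ of $A$ gives a finite orthogonal decomposition $1=\sum_{j=1}^m e_j$ into idempotents $e_j\in {}_A C$. This decomposition can be refined (via the orthogonal completion $O(Q^s_m(A))$ from \cite{Gol10}) so that for every maximal ideal $\mathfrak m$ of ${}_A C$ the prime quotient $Q^s_m(A)/\mathfrak m Q^s_m(A)$ is a prime $GPI$--ring of bounded complexity. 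The bound comes from the degree of $f$: by Martindale's theorem the division rings $eBe$ have bounded dimension over their centres, or else the socle has a controlled form.

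\emph{Step 2: localize and apply the prime result.} The paper has shown $e\,Q^s_m(A)=\iota_{A\cap eA,A}(Q^s_m(A\cap eA))$ for idempotents $e\in{}_A C$. Using this, I would pass to the pieces $e_jA$ and then to the prime components $A/(\mathfrak m\cap A)$, whose central closures are primitive with nonzero socle. On each prime component $\bar A$, the derivation $D$ induces an element of ${}_{\bar A}C\Der(\bar A)$ that vanishes on ${}_{\bar A}C$. The prime version then gives $\bar D=\ad_{q_{\mathfrak m}}$ with $q_{\mathfrak m}\in Q^s_m(\bar A)$.

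\emph{Step 3: glue.} This is the main obstacle. The elements $q_{\mathfrak m}$ are determined only modulo the centre, and they must be assembled into one $q\in Q^s_m(A)$, not merely an element of $Q(A)$ or of the orthogonal completion. I would make the choice uniform as follows. In the prime case one may take $q=\sum_k x_k D\,y_k$ with $\sum_k x_ky_k=e$ a fixed idempotent of the socle, normalized by $eqe\in e{}_A C$. Using the bounded degree of $f$, such a formula can be written uniformly over ${}_A C$ on each $e_jA$. A candidate $q$ is then defined globally on each $e_j$--piece by the same expression, with coefficients taken in ${}_A C$. Finally I would check that $q$ lies in $Q^s_m(A)$, i.e.\ that $qI+Iq\subseteq A$ for some essential ideal $I$. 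This holds because $[q,x]=xD\in A$ for $x\in A$, and because $q$ multiplies the essential ideal generated by the socle-type elements into $A$.

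Since $D-\ad_q$ vanishes modulo every maximal ideal of ${}_A C$, it vanishes on $Q^s_m(A)$, as no nonzero element of a semiprime ring lies in all the $\mathfrak m Q^s_m(A)$ (the standard orthogonal-completeness argument). Hence $D=\ad_q\in\Inder(Q^s_m(A))$.
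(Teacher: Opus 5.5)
\textbf{There is a genuine gap.} Your inclusion $\supseteq$ is correct, but $\subseteq$ is not proved. The element $q$ appears only in Step~3, and there you give a plan rather than an argument.

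The normalization you rely on is impossible in general. An inner derivation determines $q$ only modulo ${}_A C$, so the only freedom is $q\mapsto q+c$, which moves $eqe$ by $ec$. For the prime $PI$--ring $A=\mathbb{H}$ (real quaternions, socle $A$, $e=1$) and $D=\ad_i$, every $q$ with $D=\ad_q$ lies in $i+\mathbb{R}$, and none has $eqe\in e\,{}_A C=\mathbb{R}$. No uniform expression for $q$ over ${}_A C$ is actually written down. The ``bounded complexity'' that is supposed to make it uniform is not justified either: what the paper extracts from \cite{B4} is only a local statement, namely a nonzero idempotent $u$ under a given nonzero central idempotent with $uBu$ finitely generated and centrally separable over $u\,{}_A C$.

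Step~2 has a further mismatch. You apply the prime result to $\bar A=A/(A\cap\mathfrak m Q^s_m(A))$ and obtain $q_{\mathfrak m}\in Q^s_m(\bar A)$. This ring is not related in any usable way to the stalk $Q^s_m(A)/\mathfrak m Q^s_m(A)$, where a global $q$ would live. For example, take $A=\bigoplus_{i\geq1}K_i$ with fields $K_i$ and a maximal $\mathfrak m\supseteq\bigoplus_{i\geq1}K_i$ of ${}_A C=\prod_{i\geq1}K_i$. Then $\bar A=\{0\}$, while the stalk is an ultraproduct of the $K_i$. So the local data cannot be compared with, or assembled into, a global $q$. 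Moreover, $({}_A C)D=\{0\}$ does not by itself give $({}_{\bar A}C)\bar D=\{0\}$, which the prime result requires. Your final membership check is the harmless part: if $q\in Q(A)$ and $[q,A]\subseteq A$, then $q\in Q^s_m(A)$.

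The missing idea is that one need not pass to prime components at all, and the paper does not.
\begin{itemize}
\item It splits $B={}_A C A$ only by characteristic, $B_p=e_pB$ with orthogonal idempotents $e_p\in{}_A C$, $p\in\mathcal P_0$.
\item It proves directly that each $D_p=D|_{B_p}$ is inner on $Q^s_m(A_p)$, using Kharchenko's theorem on differential identities of semiprime rings \cite{Har2}.
\item Under the support idempotent $v_p$ of $D_p$, \cite{B4} gives $u_p=u_p^2\ne0$ with $u_pBu_p$ finitely generated and centrally separable over $u_p\,{}_A C$.
\item A central polynomial of $u_pBu_p$ yields a generalized polynomial $l(x)=\sum_i a_ixb_i$ with values in $u_p\,{}_A C$. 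Since $({}_A C)D=\{0\}$, this gives the differential identity $l(x)(l(y)D_p)-l(y)(l(x)D_p)=0$.
\item If $D_p$ were not inner, Kharchenko's theorem would allow replacing $xD_p$, $yD_p$ by independent variables. Setting $y=0$ would then force $l(x)^2=0$ on $A_p$, contradicting $l(y_n)^2\ne0$.
\item The only gluing left is along the orthogonal idempotents $e_p$. This is automatic because $Q^s_m(A)$ is orthogonally complete: $q=\sum^{\perp}_p e_pq_p$.
\end{itemize}
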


\begin{proof}
In view of the observations after Remark 2.6 (the formulation of Theorem 1, 
\cite{Har2}) and Corollary 2.11, the ring ${B={}_A C A}$ contains ideals 
${B_p=e_p B\lhd_{\Der} B}$ of characteristics ${p\in \mathcal{P}_0}$ that 
are semiprime as rings, ${0\ne e_p=e_p^2\in {}_A C}$, for any ${x\in B}$ 
${e_p x=0}$ for all ${p\in \mathcal{P}_0}$ is equivalent to ${x=0}$ and up 
to isomorphism 
\[
Q^*_m(A)\subseteq Q^*_m(B)\,,\quad 
e_p Q^*_m(A)\ =\ Q^*_m(A_p)\subseteq e_p Q^*_m(B)\ =\ Q^*_m(B_p)
\quad (*=r, s,\ p\in \mathcal{P}_0)\,,
\]
where ${A_p=A\cap e_p A=A\cap B_p}$ and for ${p\in \mathcal{P}}$ 
${e_p Q^*_m(A)=Q^*(A)_p\subseteq e_p Q^*_m(B)=Q^*_m(B)_p}$ 
(if ${0\ne q\in Q^r_m(B)_p}$, then there is  
${I\in \mathcal{E}(B)}$, ${\{0\}\ne q I\subseteq B_p}$, 
${(q-e_p q) I=\{0\}}$, ${q=e_p q}$; ${p e_p=0}$).

Let ${D\in {}_A C \Der(A)}$, ${({}_A C)D=\{0\}}$, 
${D_p=D|_{B_p}=e_p D|_{B_p}\in \Der(B_p)}$ for all ${p\in \mathcal{P}_0}$. 
There is a unique ${v_p=v_p^2=e_p v_p\in {}_A C}$, 
${v_p \Ann_{B_p} ((B_p)D_p)_{B_p}=\{0\}}$, ${v_p x=x}$ for all 
${x\in ((B_p)D_p)_{B_p}}$ (see properties of central closures for 
${B\cong P(A)}$, \cite{Har4}, Remark 2, \cite{ADN}, Proposition 2.10,
\cite{BMM},\linebreak Theorem 2.3.9(i); ${1-e_p\in \Ann_{B_p}((B_p)D_p)_{B_p}}$).
If ${D_p\ne 0}$, then, according to Theorems 2, 3, \cite{B4}, 
${\{0\}\ne B'_p=v_p B=v_p B_p\lhd B}$ contains ${u_p=u_p^2=v_p u_p\ne 0}$ 
such that ${{}_p A=u_p B u_p\cap A}$ is a semiprime $PI$--ring, 
\[
Q({}_p A)\ =\ u_p B u_p\ =\ u_p B_p u_p\ =\ u_p B'_p u_p\ =\ u_p Q(A) u_p
\] 
is a centrally separable algebra over ${Z(u_p B_p u_p)=u_p {}_A C}$, finitely 
generated as a $u_p {}_A C$--module. Having chosen for $u_p B u_p$ a central 
polynomial ${f(x_1, \ldots, x_n)}$ multilinear in some $\{x_{i_j}\}$, 
${n\in \{i_j\}}$, for definiteness, and ${y_1, \ldots, y_n\in u_p B u_p}$, 
${f(y_1, \ldots, y_n)\ne 0}$, we can use one of the constructions of the proof 
of Theorem 2, \cite{Har2}, for
\[
l\,:\ x\longmapsto f(y_1, \ldots, y_{n-1}, x)\ =\ \sum_i 
a_i x b_i\in u_p {}_A C\quad (x\in u_p B u_p)\,,
\]
${a_i, b_i\in u_p B u_p}$. As in \cite{Har2}, due to 
${(u_p c)D_p=(u_p D_p) c}$, ${c\in {}_A C}$, for all ${x, y\in B}$ 
\begin{gather*}
l(x) D_p\ =\ \sum_i ((a_i D_p) x b_i+a_i x (b_i D_p))+
\sum_i a_i (x D_p) b_i\ =\ t(x)+l(x D_p)\,,
\\ 
l(x) (l(y) D_p)-l(y) (l(x) D_p)\ =\ 
l(x) (t(y)+l(y D_p))-l(y) (t(x)+l(x D_p))\ =\ 0\,.
\end{gather*}
If ${D_p\not\in \Inder(Q^s_m(A_p))}$, then by Theorem 2, \cite{Har2}, 
$A_p$ satisfies the generalized identity 
\[
l(x) (t(y)+l(z_1 v_p))-l(y) (t(x)+l(z_2 v_p))\ =\ 0
\]
and, consequently, ${v_p l(x) l(z)=l(x) l(z)=l(x)^2=0}$ for all 
${x, z\in A_p}$ (${y=0}$), but ${l(y_n)^2\ne 0}$ ($u_p {}_A C$ is semiprime)?! 
Thus, there exist ${q_p\in Q^s_m(A_p)}$, ${D_p=\ad_{q_p}}$, and ${D=\ad_q}$ 
for a unique ${q={\sum\limits_{p\in \mathcal{P}_0}}^{\perp} e_p q_p\in 
Q^s_m(A)=O(Q^s_m(A))}$, ${e_p q=e_p q_p}$ for all ${p\in \mathcal{P}_0}$,
\begin{multline*}
\{D\in {}_A C \Der(A)\mid ({}_A C)D=\{0\}\}\ =\ 
{}_A C \Der(A)\cap \Inder(Q^s_m(A))\ =
\\ 
({}_A C \Der(A)\cap \Inder(Q(A)))|_{Q^s_m(A)}\,,
\end{multline*}
$\Der(A)$ on the right-hand side of the equality is identified with the subring of 
$\Der(Q(A))$ consisting of uniquely defined extensions of elements 
of $\Der(A)$ to $Q(A)$ \cite{Har2}, Lemma 4, \cite{B6}, Lemma 1, \cite{BMich}, 
Proposition 8.7, \cite{Lee}, Lemma 2 (\cite{Lamb}, Exercise 10, p. 167).
\end{proof}

Taking into account Remark 2.4, we can deduce from here 

\begin{co}
If $R$ is a $m.s.p.$--algebra, $M(R)$ is a $SGPI$--algebra and ${D\in \Der(R)}$, 
then ${[\overline{D}, \CM(R)]=\{0\}}$ is equivalent to 
${\ad_D\in {}_{M(R)} C \Der(M(R))\cap \Inder(Q^s_m(M(R)))}$.
\end{co}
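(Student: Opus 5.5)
We will apply Proposition 2.12 to the semiprime associative ring $A=M(R)$, which is an $SGPI$--ring by hypothesis. The work is to translate the condition $[\overline{D},\CM(R)]=\{0\}$ into a condition about $\ad_D$ and ${}_{M(R)} C$. Remark 2.4 is the natural bridge, since it gives $[\Der_F(P(R)),\CM(R)]\subseteq \CM(R)$.

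\emph{Step 1: placing $\ad_D$.} Since $D\in\Der(R)$, the map $\ad_D\colon\phi\longmapsto[\phi,D]$ preserves $M(R)$, because $t_x\ad_D=t_{xD}$ for $t=l,r$. Hence $\ad_D\in\Der(M(R))\subseteq{}_{M(R)} C\Der(M(R))$. It extends uniquely to $Q(M(R))$ and $Q^s_m(M(R))$, as noted before Corollary 2.11.

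\emph{Step 2: centroid condition versus extended-centroid condition.} Recall the construction preceding Corollary 2.11. There $\overline{D}$ is defined by $(\phi x)\overline{D}=\phi(xD)+(\phi\ad_D^{\tau})x$, and the text observes that $\overline{D}$ is $\CM(R)$--linear exactly when $\ad_D$ is ${}_{M(R)} C$--linear. We make this precise:
\[
[\overline{D},\phi]=0\ \forall\,\phi\in\CM(R)\iff \ad_D^{\tau}=0\iff (\tau\phi)\ad_D=0\ \forall\,\phi\in\CM(R)\iff ({}_{M(R)} C)\ad_D=\{0\}.
\]
Remark 2.4 guarantees that $[\overline{D},\phi]$ lands in $\CM(R)$, so $[\overline{D},\phi]=\phi\ad_D^{\tau}$ up to sign. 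The last equivalence uses that $\tau$ is an isomorphism of $\CM(R)$ onto ${}_{M(R)} C$, which holds because $\CM(M(R))=\CM(R)\Id_{P(M(R))}$ for an $m.s.p.$--algebra.

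\emph{Step 3: applying Proposition 2.12.} By Step 2, $[\overline{D},\CM(R)]=\{0\}$ is equivalent to $\ad_D\in\{E\in{}_{M(R)} C\Der(M(R))\mid({}_{M(R)} C)E=\{0\}\}$. Proposition 2.12 with $A=M(R)$ identifies this set with ${}_{M(R)} C\Der(M(R))\cap\Inder(Q^s_m(M(R)))$, and the claim follows. One detail: for an inner $\ad_Q$ the derivation automatically kills the center ${}_{M(R)} C=Z(Q^s_m(M(R)))$, which is the easy direction built into the proposition.

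\emph{Main obstacle.} The hard step is the rigorous version of Step 2. We must check that the derivation $\ad_D^{\tau}$ on $\CM(R)$, obtained by transporting through $\tau$, really equals the commutator action $\phi\longmapsto[\phi,\overline{D}]$ on $\CM(R)\subseteq\End(P(R))$. To do this we first compute $t_y\ad_D=t_{y\overline{D}}$ for $y=\phi x\in P(R)$. We then use that $\tau\phi$ acts on $P(M(R))\cong M(P(R))$ as multiplication by $\phi$. Together these give $(\tau\phi)\ad_D=\tau([\phi,\overline{D}])$, after which injectivity of $\tau$ finishes the argument.
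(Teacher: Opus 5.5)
Your proposal is correct and follows the paper's route: the paper likewise obtains Corollary 2.13 from Proposition 2.12 applied to $A=M(R)$. It uses Remark 2.4 together with the observation, made when $\overline{D}$ is constructed before Corollary 2.11, that $\CM(R)$--linearity of $\overline{D}$ is equivalent to ${}_{M(R)} C$--linearity of the extension of $\ad_D$ to $Q^s_m(M(R))$. The step you call the main obstacle is essentially built into that construction: by the defining formula $(\phi x)\overline{D}=\phi(xD)+(\phi\ad_D^{\tau})x$ and the fact that $\ad_D^{\tau}\in\Der_F(\CM(R))$, the commutator of $\phi\in\CM(R)$ with $\overline{D}$ acts on $P(R)$ as multiplication by $\phi\ad_D^{\tau}$, up to sign.
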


In particular, if ${R=P(R)}$ is a $m.s.p.$--algebra, $M(R)$ is a $SGPI$--algebra, 
${D\in \Der(R)}$ under the conditions of Lemmas 2.7, 2.8, then  
${\ad_D\in {}_{M(R)} C \Der(M(R))\cap \Inder(Q^s_m(M(R)))}$, for a 
$m.p.$--algebra $R$, p. 3 of Lemma 2.8 is omitted as obvious. 

Anticipating Theorem 2.14, we note that 
\[
x D^n t_y\ =\ \sum_{i=0}^n \binom{n}i (-1)^{n-i} x t_{y D^{n-i}} D^i
\]
for any algebra $R$, ${D\in \Der(R)}$, ${x, y\in R}$, ${t=l, r}$, ${n\geq 1}$, 
since 
\begin{multline*}
x t_y D^n\ =\ \sum_{i=0}^n \binom{n}i (x D^i) t_{y D^{n-i}}\ =\
x D^n t_y+\sum_{i=0}^{n-1} \binom{n}i 
\biggl(\sum_{j=0}^i \binom{i}j (-1)^{i-j} x t_{y D^{n-j}} D^j\biggr)\ =
\\
x D^n t_y+\sum_{j=0}^{n-1} \biggl(\sum_{i=j}^{n-1} \binom{n}i \binom{i}j 
(-1)^{i-j} \biggr) x t_{y D^{n-j}} D^j\ =
\end{multline*}
\begin{multline*}
x D^n t_y+\sum_{j=0}^{n-1} \binom{n}j 
\biggl(\sum_{i=j}^{n-1} \binom{n-j}{i-j} (-1)^{i-j} \biggr) 
x t_{y D^{n-j}} D^j\ =
\\
x D^n t_y+\sum_{j=0}^{n-1} \binom{n}j 
\biggl(\sum_{l=0}^{n-1-j} \binom{n-j}l (-1)^l\biggr) x t_{y D^{n-j}} D^j\ =\ 
x D^n t_y-\sum_{j=0}^{n-1} \binom{n}j (-1)^{n-j} x t_{y D^{n-j}} D^j\,.
\end{multline*}
By analogy with \cite{Row}, Proposition 1.3.1, p. 14, for any finitely generated 
$F$--modules $M$, $M'$ we can realize ${\Hom_F(M, M')}$ as a quotient module of 
a submodule of a finitely generated $F$--module. It is sufficient to select in 
$M$ and $M'$ finite systems of generators $\{e_i\}_{i=1}^n$ and 
$\{e'_j\}_{j=1}^m$ and establish ${\Hom_F(M, M')\cong L/N}$,  
\begin{gather*}
L\ =\ \biggl\{(f_{ij})\in F^{n\times m}\biggl| 
\sum_j \biggl(\sum_i h_i f_{ij}\biggr) e'_j=0\ 
\forall\, \{a_i\}\subseteq F,\ \sum_i h_i e_i=0\biggl\}\,,
\\ 
N\ =\ \biggl\{(h_{ij})\in F^{n\times m}\biggl| 
\sum_j h_{ij} e'_j=0,\ i=1, \ldots, n\biggl\}\,, 
\end{gather*}
by associating to each ${\phi\in \Hom_F(M, M')}$ a class ${(f_{ij})+N\in L/N}$ 
for any representations ${e_i \phi=\sum\limits_j f_{ij} e'_j, i=1, \ldots, n, 
(f_{ij})\in L}$ (for ${(f_{ij})\in L,  
\phi_{(f_{ij})}: \sum\limits_i f_i e_i\longmapsto 
\sum\limits_j \Bigl(\sum\limits_i f_i f_{ij}\Bigr) e'_j, f_i\in F}$, 
${\phi_{(f_{ij})}\in \Hom_F(M, M')}$, ${\phi_{(f_{ij})}=\phi_{(f'_{ij})}}$ is 
equivalent to ${(f_{ij})-(f'_{ij})\in N}$).

\begin{teor}
If a left exact associative $F$--algebra $A$ is right Noetherian (with one of 
the conditions: has no ${}_A C$--torsion; weakly Noetherian; finitely 
cogenerated as a $M(A)$--module) and the set $\mathcal{D}_r(A)$ 
(${\mathcal{D}_{ir}(A)=\{I\lhd A\mid I\in \mathcal{D}_r(A)\}}$) of dense 
right ideals (ideals with zero left annihilators) of $A$ is closed under 
countable intersections, then the algebraicity of ${D\in \Der(A)}$ over 
$F$ inherits its extension to $Q(A)$ (${\hat{A}={}_A C+{}_A C A}$).
\end{teor}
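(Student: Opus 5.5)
We have to show: if $D\in\Der_a(A)$, then its extension (still denoted $D$) to $Q(A)$, respectively to $\hat{A}$, is locally finite over $F$. Take $q\in Q(A)$ and represent it as $q=[(\phi,I)]$ with $I\in\mathcal{D}_r(A)$ (respectively $I\in\mathcal{D}_{ir}(A)$), $\phi\in\Hom(I,A)_A$. For $x\in I$ the extension satisfies
\[
(q x)D\ =\ (qD)x+q(xD)\,.
\]
By the identity before the theorem (with $t=l$, read in $Q(A)$),
\[
q D^n l_y\ =\ \sum_{i=0}^n\binom{n}{i}(-1)^{n-i}(q\,l_{yD^{n-i}})D^i\,.
\]
So $(qD^n)y$ is an $F$--combination of the elements $(q\cdot yD^{j})D^i$.

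The first step is to build an ideal $I_\infty$ on which $D$ is controlled. Put
\[
I_k=\{y\in I\mid yD^j\in I,\ q(yD^j)\in A\ \ \forall\, j\leq k\}
\]
and $I_\infty=\bigcap_k I_k$. I would check inductively that each $I_k$ is a dense right ideal (respectively an ideal with zero left annihilator). The closure of $\mathcal{D}_r(A)$ (of $\mathcal{D}_{ir}(A)$) under countable intersections then gives $I_\infty\in\mathcal{D}_r(A)$, and $I_\infty$ is $D$--invariant. Density of $I_k$ should come from the standard fact that $\{y\in A\mid yD\in J\}$ contains $J'\cap(\text{something dense})$ when $J$ is dense, using $(yJ)D\subseteq (yD)J+yJD$. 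This is the step where the countable intersection hypothesis enters, and I expect it to be the main technical obstacle: I need to verify that preimages of dense right ideals under $D$ remain dense (up to intersection with $J$), which mimics Lee's construction of the extension of derivations.

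The second step uses Noetherianity. Fix finitely many generators $y_1,\ldots,y_s$ of a right ideal. Since $A$ is right Noetherian, the right ideal $\sum_{j\geq0}(I_\infty\cap\ldots)$ generated by all $y D^j$ is finitely generated. I would rather work directly in the $F$--module $M=\sum_{j}F(qD^j)$ and show it is finitely generated. Consider the map
\[
\Theta\colon M\longrightarrow\prod_{y}A,\qquad m\longmapsto (my)_y,
\]
which is injective because $I_\infty$ is dense, so $m I_\infty=0$ forces $m=0$. Because $A$ is right Noetherian, $I_\infty=y_1A+\ldots+y_sA$. Each $y_r$ lies in a finitely generated $D$--invariant $F$--submodule $V_r$, since $D$ is algebraic, and $q(V_r)\subseteq A$ also lies in a finite $D$--stable $F$--module $W_r$. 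By the displayed formula, $(qD^n)y_r\in\sum_i W_r D^i=W_r$, so $M$ embeds $F$--linearly into $\bigoplus_r W_r$ via $m\mapsto(my_r)_r$. This map is injective because $my_r=0$ for all $r$ gives $mI_\infty=0$.

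The third step concludes local finiteness. The submodule $M$ of the finitely generated module $\bigoplus_r W_r$ over $F$ need not be finitely generated for general $F$; this is where the alternative hypotheses enter. Either I realise $\Hom$'s as subquotients of finitely generated modules, as in the remark preceding the theorem, or I reduce to a Noetherian subring $H\subseteq F$ generated by the coefficients of the annihilating polynomials, as in Remark 2.9. Over $H$, the module $\bigoplus_r W_r$ is finitely generated, hence Noetherian, so $M_H$ is finitely generated. Therefore $D$ acting on $M_H$ satisfies a monic polynomial by the Cayley--Hamilton/determinant trick, and $q$ is annihilated by $g(D)$ with $g$ monic without free term. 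The variant for $\hat{A}$ is identical with ideals from $\mathcal{D}_{ir}(A)$ and elements $c\in{}_AC$; there $D$ acts on $\hat{A}$ through $(c)D$ and $(cx)D=(cD)x+c(xD)$. The torsion-freeness, weak Noetherian or finitely cogenerated alternatives serve only to guarantee that $m\mapsto(my_r)_r$ separates points when ideals replace right ideals.
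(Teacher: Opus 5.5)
Modulo Step 1 (see below), your argument for the right Noetherian case is sound, and it organizes the finiteness step differently from the paper.

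\textbf{How the two routes differ.} The paper works pointwise. For each $x\in I(q)=\bigcap_{i\ge 0}(qD^i)^{-1}A$ it chooses a Noetherian subring $H\subseteq F$ so that the following are finitely generated: $W'=\sum_iH(xD^i)$, $W=\sum_{i,j}H(qD^i)(xD^j)$, $\Hom_H(W',W)$, and the image of $V=\sum_iH(qD^i)$ under $v\mapsto l_v|_{W'}$. This yields a monic $f_{q,x}$ with $q\,h(D)\,x=0$ for all $h\in f_{q,x}F[t]$. Right Noetherianity enters only afterwards, as the ACC on the right ideals $J_u=\bigcap_{v\in uF[t]}(I(q)\cap\Ker l_{qv(D)})$. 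Their maximal member must be $I(q)$, so $q\,w(D)\,I(q)=\{0\}$ and hence $q\,w(D)=0$.

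You instead use right Noetherianity as finite generation of the $D$-stable dense right ideal, and evaluate at its generators. This embeds $\sum_jH(qD^j)$ into a finitely generated $H$-module. It avoids both the realization of $\Hom_H(W',W)$ and the maximality argument. What the paper gains from its pointwise polynomials $f_{q,x}$ is that they are reused directly for all three $\hat{A}$ alternatives. You, by contrast, must produce a finite separating family case by case.

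\textbf{The $\hat{A}$ alternatives.} This is where your proposal is not yet a proof.

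Your remark in Step 3 that the alternative hypotheses are needed for finite generation over $F$ is mistaken. Finite generation is settled entirely by the Noetherian subring $H$. The alternatives instead replace right Noetherianity for $q=c\in{}_A C$.

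For such $c$ your evaluation map does work, but only because $M=\sum_jF(cD^j)\subseteq{}_A C$ is central (derivations of $Q(A)$ preserve its center). You never invoke this fact. With it:
\begin{itemize}
\item in the weakly Noetherian case, generators of $I_\infty$ as a two-sided ideal separate $M$;
\item when there is no ${}_A C$-torsion, a single $0\ne y\in I_\infty$ separates $M$.
\end{itemize}
For finitely cogenerated $A$ you give no argument at all. You need to apply finite cogeneration to the ideals $N_yQ(A)\cap A$, where $N_y=\{c'\in{}_A C\mid c'y=0\}$ and $y\in I_\infty$. Every $z$ in their intersection satisfies $zI_\infty=\{0\}$, so the intersection is zero. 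Hence some finite subfamily $y_1,\ldots,y_s$ already has zero intersection. Now take any $c'\in\bigcap_rN_{y_r}$ and $K\in\mathcal{D}_{ir}(A)$ with $c'K\subseteq A$. Then $c'K\subseteq\bigcap_r(N_{y_r}Q(A)\cap A)=\{0\}$, so $c'=0$. This is the paper's $J(q)$ argument in your language.

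\textbf{Step 1.} You leave Step 1 open. With your definition you would have to prove that $\{y\in J\mid yD\in J\}$ is dense for dense $J$. This is true, since it contains $\sum_{y\in J}y\,(yD)^{-1}J$, but it is unnecessary. Take instead, as the paper does, $I_k=\bigcap_{i\le k}(qD^i)^{-1}A$. Then:
\begin{itemize}
\item each $I_k$ is dense automatically;
\item $D$-invariance of $I(q)$ follows from $(qD^i)(xD)=((qD^i)x)D-(qD^{i+1})x$;
\item $q(xD^n)\in A$ for $x\in I_n$ follows from the identity preceding the theorem.
\end{itemize}

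\textbf{Passage to $\hat{A}$.} Going from $A$ and ${}_A C$ to $\hat{A}$ still needs the routine Leibniz observation that $\sum_{i,j}H(cD^i)(xD^j)$ is finitely generated over $H$.
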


\begin{proof}
The uniquely defined continuation of ${D\in \Der(A)}$ to $Q(A)$ sets 
the rule: $(q D)(x y)=((q x) D-q (x D)) y$, ${q, y\in Q(A), 
x\in q^{-1} A=\{a\in A\mid q a\in A\}\in \mathcal{D}_r(A)}$ 
\cite{Lee}, Lemma 2. For any\linebreak ${q\in Q(A)}$, ${k\geq 0}$ we set 
${I_k(q)=\bigcap\limits_{i=0}^k {(q D^i)}^{-1} A\in \mathcal{D}_r(A)}$ 
\cite{Ut}, (1.7) or \cite{Lamb}, Lemma 3, p. 161 and\linebreak  
${I(q)=\bigcap\limits_{i=0}^{\infty} {(q D^i)}^{-1} A}$, 
${I(q)\in \mathcal{D}_r(A)}$ 
(${I(q)\in \mathcal{D}_{ir}(A)}$ for ${q\in {}_A C=Z(Q(A))}$, since 
${q D^i\in {}_A C}$, ${(q D^i)^{-1} A\in \mathcal{D}_{ir}(A)}$). 
According to the previous 
\[
q (x D^n)\ =\ \sum_{i=0}^n \binom{n}i (-1)^{n-i} ((q D^{n-i}) x)D^i\in A\quad 
(x\in I_n(q),\ n\geq 0)
\]
and, as a consequence, 
\[
q D^n x\ =\ \sum_{i=0}^n \binom{n}i (-1)^{n-i} (q (x D^{n-i}))D^i\in A\quad 
(x\in I_n(q),\ n\geq 0)\,.
\]
From this we immediately get that for ${D\in \Der_{nil}(A)}$ and any ${x\in I(q)}$ 
there is ${n_{q, x}\geq 1}$, ${0=q D^n x=q D^{n_{q, x}} f(D)}$ for all 
${n\geq n_{q, x}}$, ${f(t)\in F[t]}$. If ${D\in \Der_a(A)}$, then for any 
${x\in I(q)}$ there is a finitely generated Noetherian subring ${H\subseteq F}$ 
with ${1\in F}$ such that the $H$--modules
\[
W\ =\ \sum_{i, j\geq 0} H (q (x D^j))D^i\ =\ 
\sum_{i, j\geq 0} H (q D^i)(x D^j)\,,\quad 
W'\ =\ \sum_{i\geq 0} H (x D^i)
\]
and, consequently, ${\Hom_H(W', W)}$, $\alpha(V)$ for 
${V=\sum\limits_{i\geq 0} H(q D^i)}$, ${\alpha\in 
\Hom_H(V, \Hom_H(W', W))}$, ${\alpha: v\longmapsto l_v|_{W'}}$, ${v\in V}$,
are finitely generated and Noetherian (Hilbert's basis theorem; Noetherian 
property of finitely generated modules over Noetherian rings; the observation 
before Theorem 2.14). Hence there exists $f_{q, x}(t)\in H_1[t]$, 
${(q f_{q, x}(D))(W')=\{0\}}$, and ${q h(D) x=0}$ for all 
${h(t)\in f_{q, x}(t) F[t]}$, ${f_{q, x}(t)=t^{n_{q, x}}}$ for 
${D\in \Der_{nil}(A)}$. If $A$ has no ${}_A C$--torsion (in particular,\linebreak 
it is prime), ${q\in {}_A C}$ and ${0\ne x\in I(q)}$, then this is equivalent 
to ${q f_{q, x}(D)=0}$. If $A$ is right\linebreak Noetherian (weakly Noetherian 
and ${q\in {}_A C}$), then we can choose among the right ideals\linebreak (ideals) 
${J_u=\bigcap\limits_{v(t)\in u(t) F[t]} (I(q)\cap \Ker l_{q v(D)}), 
u(t)\in F_1[t]}$, the maximal ideal $J_w$ and, in view of 
${J_u\cup J_v\subseteq J_{u v}}$, ${u(t), v(t)\in F_1[t]}$, 
${J_w=I(q)\in \mathcal{D}_r(A)}$, ${q w(D)=0}$ 
(for ${D\in \Der_{nil}(A)}$ you should replace $\{J_u\}_{u\in F_1[t]}$ with 
$\{J_i\}_{i\geq 1}$, ${J_i=\bigcap\limits_{j\geq i} (I(q)\cap l_{q D^j})}$, 
${J_i\subseteq J_{i+1}}$, and find the maximal 
${J_n=\bigcup\limits_{i\geq 1} J_i=I(q)}$, ${q D^n=0}$). 

A finite cogeneration of a $M(A)$--module $A$ is the presence in any system 
of ideals of $A$\linebreak with zero intersection of a finite subsystem 
with zero intersection. For $A$ with such a condition, ${q\in {}_A C}$ and 
\[
J(q)\ =\ \bigcap\limits_{x\in I(q)} 
((q f(D)\mid f(t)\in f_{q, x}(t) F[t])_{Q(A)}\cap A)
\] 
from ${J(q) I(q)=\{0\}}$, ${I(q)\in \mathcal{D}_{ir}(A)}$ it follows that for 
some ${m\geq 1}$, ${x_i\in I(q)}$  
\[
\{0\}\ =\ J(q)\ =\ \bigcap\limits_{i=1}^m 
(q h(D)\mid h(t)\in f_{q, x_i}(t) F[t])_{Q(A)}\,,
\]
${q g(D)=0}$, ${g(t)=\prod\limits_{i=1}^m f_{q, x_i}(t)\in F_1[t]}$. It remains 
to note that ${D\in \Der_a(\hat{A})}$ (${D\in \Der_{nil}(\hat{A})}$) is 
equivalent to ${D\in \Der_a(A), \Der_a({}_A C)}$
(${D\in \Der_{nil}(A), \Der_{nil}({}_A C)}$).
\end{proof}

If an algebra ${R\ne \{0\}}$ is subdirectly irreducible (has a smallest non-zero 
ideal) or (and) is weakly Artinian, then $R$ is a finitely cogenerated 
$M(R)$--module. From the finite cogene\-ration of $R$ as a $M(R)$--module it 
immediately follows that 
${\Soc(R_{M(R)'})=\bigcap\limits_{I\in \mathcal{E}(R)} I\in \mathcal{E}(R)}$. 
If $R$ is semiprime, then ${\Soc(R_{M(R)'})\in \mathcal{E}(R)}$ is equivalent to 
the inclusion in any non-zero ideal of $R$ of its minimal ideal 
\cite{Lamb}, Proposition 1, exercise 7, p. 102, 107. A semiprime algebra 
$R$ without $\CM(R)$--torsion is prime (see Remark 2.6). 
If $R$ is semiprime and weakly Noetherian (Artinian) or finitely cogenerated as 
a $M(R)$--module, then the direct sum  
${\bigoplus\limits_{p\in \mathcal{P}_0} P(R)_p\subseteq P(R)}$ is finite. 
Applying to Theorem 2.14 p. 2 of Lemmas 2.7, 2.8 and Proposition 2.12, we obtain, 
taking into account ${1\in \hat{A}}$, ${{}_A C={}_{\hat{A}} C=Z(\hat{A})}$,
${\Der_a(R)=\Der_{1, sa}(R)}$, ${\Der_{nil}(R)=\Der_{1, n}(R)}$ for any 
$F$--algebra $R$ with finite over $F$ one-generated subalgebras, 
${\mathcal{D}_{ir}(A)=\mathcal{E}(A)}$ for any semiprime associative algebra $A$,

\begin{co}
If an associative $SGPI$--algebra $A$ over a ring $F$ is semiprime, torsion-free, 
with one of the conditions: prime; weakly Noetherian; finitely cogenerated as 
a $M(A)$--module, and $\mathcal{E}(A)$ is closed under countable intersections, 
${}_A C$ is integral over $F$, ${D\in \Der_{nil}(A)}$ or (and) ${D\in \Der_a(A)}$, 
$F$ is an algebraically closed field of ${\Ch F=0}$, then 
$D\in {}_A C \Der(A)\cap \Inder(Q^s_m(A))$. 
\end{co}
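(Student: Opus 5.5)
The plan is to reduce the statement to Proposition 2.12: we show that the derivation induced by $D$ on the central closure annihilates ${}_A C$. Then $D$ lies in ${}_A C \Der(A)$ with $({}_A C)D=\{0\}$, which is exactly what Proposition 2.12 needs.

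First, pass from $A$ to $\hat{A}={}_A C+{}_A C A\subseteq Q^s_m(A)$. The hypotheses of Theorem 2.14 have to be checked in each case.
\begin{enumerate}
\item $A$ is left exact, because it is semiprime.
\item $A$ is torsion-free.
\item In the prime case ${}_A C$ is a field, so $A$ has no ${}_A C$--torsion.
\item The weakly Noetherian and finitely cogenerated cases are listed directly in Theorem 2.14.
\item Since $A$ is semiprime, $\mathcal{D}_{ir}(A)=\mathcal{E}(A)$, so closure of $\mathcal{E}(A)$ under countable intersections is the closure condition required for $\mathcal{D}_{ir}(A)$.
\end{enumerate}
Theorem 2.14 then gives that the extension of $D$ is algebraic (respectively a nil-derivation) on $\hat{A}$. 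In particular its restriction to ${}_A C=Z(\hat{A})$ is algebraic or locally nilpotent there.

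Next, work on $Z(\hat{A})$. Note $1\in\hat{A}$, and by the closing remark of Theorem 2.14 the condition on $\hat{A}$ is equivalent to the same condition on $A$ and on ${}_A C$. Since ${}_A C$ is integral over $F$, every one-generated subalgebra of ${}_A C$ is finite over $F$. Hence on ${}_A C$ we have $\Der_a=\Der_{1,sa}$ and $\Der_{nil}=\Der_{1,n}$, so $D|_{{}_A C}$ belongs to $\Der_{1,n}(Z(\hat{A}))$ or to $\Der_{1,sa}(Z(\hat{A}))$.

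In the nil case, apply p.~2 of Lemma 2.7 to $\hat{A}$. Here $Z(\hat{A})={}_A C$ is torsion-free, since $\Tor(\CM(A))=\{0\}$ follows from $\Tor(A)=\{0\}$. The lemma gives $[D,\End(\hat{A})_{M(\hat{A})'}]=\{0\}$, and since $\psi\mapsto\psi 1$ identifies this endomorphism ring with $Z(\hat{A})$, we get $({}_A C)D=\{0\}$. In the algebraic case with $F$ algebraically closed of characteristic $0$, use p.~2 of Lemma 2.8 in the same way. Semiprimeness of $\hat{A}$ gives that $Z(\hat{A})$ has no nonzero nilpotents, and Lemma 2.8 then gives $\Der_{1,sa}(Z(\hat{A}))=\{0\}$, so again $({}_A C)D=\{0\}$.

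Finally, $D$ extended to $Q^s_m(A)$ is an element of ${}_A C\Der(A)$ that kills ${}_A C$. Since $A$ is a semiprime $SGPI$--ring, Proposition 2.12 yields $D\in {}_A C\Der(A)\cap\Inder(Q^s_m(A))$.

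The main obstacle I expect is the first step: checking that each listed alternative really matches a hypothesis of Theorem 2.14, and that the conclusion there covers the $\hat{A}$-version with the restriction to ${}_A C$. In particular, the finitely cogenerated case only gives the statement for $q\in{}_A C$, which is exactly what we need. A second point of care is that Lemma 2.8 requires the base field itself to be algebraically closed; there $F=\mathbb{F}$, and ${}_A C$ is viewed as an $F$--algebra with derivation $D|_{{}_A C}$, which is strongly algebraic on one-generated subalgebras because those are finite over $F$.
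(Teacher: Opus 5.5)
Your proposal is correct and follows essentially the same route as the paper. Theorem 2.14 (using $\mathcal{D}_{ir}(A)=\mathcal{E}(A)$) transfers algebraicity or nilness of $D$ to $\hat{A}$ and hence to ${}_A C=Z(\hat{A})$. Integrality of ${}_A C$ over $F$ then puts $D$ in $\Der_{1,sa}(Z(\hat{A}))$ or $\Der_{1,n}(Z(\hat{A}))$. Next, p.~2 of Lemmas 2.7, 2.8 (with $1\in\hat{A}$) yields $({}_A C)D=\{0\}$, and Proposition 2.12 concludes. Your explicit checks are the details the paper leaves implicit: prime $A$ has no ${}_A C$--torsion, ${}_A C\cong\CM(A)$ is torsion-free, and $Z(\hat{A})$ has no nonzero nilpotents.
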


The algebraicity of ${}_A C$ over an algebraically closed field 
${F=\overline{F}}$ is the embeddability of ${}_A C$ in the direct 
product $\prod\limits_{a\in I} F_a$, ${F_a=F}$, with the condition 
${|\{\pi_a c\mid a\in I\}|< \infty}$ for all ${c\in {}_A C}$, 
${\pi_a: \prod\limits_{a\in I} F_a\longrightarrow F_a}$.
For any $F$--algebra $R$, ${B\subseteq R}$, ${D\in \Der(R)}$ from  
${D\in \Der_a(B)}$ it follows that ${D\in \Der_a(\langle B\rangle)}$, 
since for each ${x\in \langle B\rangle}$ there exist a finitely 
generated Noetherian subring ${H\subseteq F}$ with ${1\in F}$ and a 
finitely generated $H$--submodule ${M\subseteq \langle B\rangle}$, 
${(M)D\subseteq M}$ such that the $H$--module  
${\sum\limits_{i\geq 0} H x D^i\subseteq M}$ is finitely generated 
(Hilbert's basis theorem; from ${D\in \Der_{nil}(B)}$ it also 
follows that ${D\in \Der_{nil}(\langle B\rangle)}$).   

\begin{co}
If $R$ is a $m.s.p.$--algebra over a ring $F$, $M(R)$ with one of the 
conditions: prime ($R$ is a $m.p.$--algebra); weakly Noetherian; finitely 
cogenerated as a $M(M(R))$--module, and $\mathcal{E}(M(R))$ is closed under 
countable intersections, then ${\overline{D}\in \Der_F(P(R))}$ inherits the 
condition ${D\in \Der_a(R)}$ (${D\in \Der_{nil}(R)}$). If $R$ is 
torsion-free, $\CM(R)$ is integral over $F$, $M(R)$ is a $SGPI$--algebra, 
${D\in \Der_{nil}(R)}$ (${D\in \Der_a(R)}$, $F$ is an algebraically closed 
field, ${\Ch F=0}$), then ${\ad_D\in {}_{M(R)} C \Der(M(R))\cap \Inder(Q^s_m(M(R)))}$.
\end{co}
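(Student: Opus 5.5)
The plan is to reduce everything to the associative algebra $M(R)$ and the derivation $\ad_D\in\Der_F(M(R))$, and then apply Theorem 2.14 and Corollary 2.15 to $A=M(R)$.

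\emph{Algebraicity of $\ad_D$.} First I will check that $D\in\Der_a(R)$ (respectively $D\in\Der_{nil}(R)$) implies $\ad_D\in\Der_a(M(R))$ (respectively $\ad_D\in\Der_{nil}(M(R))$). The relation $t_x\ad_D=t_{xD}$, $t=l,r$, shows that $\ad_D$ restricted to the generating set $B=\{t_x\mid t=l,r,\ x\in R\}$ of $M(R)$ is locally finite, with the same annihilating polynomials as $D$. The observation after Corollary 2.15 (from $\Der_a(B)$ to $\Der_a(\langle B\rangle)$, and likewise for $\Der_{nil}$) then gives the claim on $M(R)=\langle B\rangle$.

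\emph{Extension to quotients.} Since $R$ is an $m.s.p.$--algebra, $M(R)$ is a semiprime, hence left exact, associative algebra. Under each of the three alternatives (prime, which gives no ${}_{M(R)}C$--torsion; weakly Noetherian; finitely cogenerated as an $M(M(R))$--module), together with the closure of $\mathcal{E}(M(R))=\mathcal{D}_{ir}(M(R))$ under countable intersections, Theorem 2.14 applies. It says that the extension of $\ad_D$ to $\widehat{M(R)}={}_{M(R)}C+{}_{M(R)}C\,M(R)$ is algebraic (nil). From the proof of that theorem, $q\,w(\ad_D)=0$ for every $q\in{}_{M(R)}C$.

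\emph{Transfer to $P(R)$.} I transport this through the isomorphism $\tau:\CM(R)\to{}_{M(R)}C$ and the formula $(\phi x)\overline{D}=\phi(xD)+(\phi\ad_D^{\tau})x$. For $y=\sum\phi_i x_i\in P(R)$, the iterates $y\overline{D}^k$ lie in the $F$--module spanned by $(\phi_i\ad_D^{\tau,j})(x_iD^l)$. Both index families are locally finite, since $\ad_D^{\tau}$ is algebraic (nil) on $\CM(R)$ and $D$ is algebraic (nil) on $R$. A Hilbert basis argument over a finitely generated Noetherian subring $H\subseteq F$, as in the proof of Remark 2.9, then yields a monic polynomial annihilating $y$. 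In the nil case, $\overline{D}^{N}y=0$ by the Leibniz-type binomial expansion. This settles the first assertion.

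\emph{Second assertion.} Assume $R$ is torsion-free, $\CM(R)$ is integral over $F$, and $M(R)$ is $SGPI$. The isomorphisms $\CM(M(R))\cong\CM(R)$ and ${}_{M(R)}C\cong\CM(R)$ show that ${}_{M(R)}C$ is integral over $F$. Torsion-freeness of $R$ gives that of $\CM(R)$, hence of $M(R)\subseteq\End_F(R)$. Corollary 2.15 applied to $A=M(R)$ and $\ad_D$ then yields $\ad_D\in{}_{M(R)}C\Der(M(R))\cap\Inder(Q^s_m(M(R)))$. Alternatively, one can use Lemmas 2.7(1), 2.8(1) to get $[\overline{D},\CM(R)]=0$ and then invoke Corollary 2.13.

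The main obstacle is the transfer to $P(R)$: it requires controlling the interplay of $\ad_D^{\tau}$ on $\CM(R)$ with $D$ on $R$ simultaneously. A secondary point is checking that the $\CM(R)$--part is genuinely algebraic; this comes from the statement $q\,w(\ad_D)=0$ for $q\in{}_{M(R)}C$ inside the proof of Theorem 2.14.
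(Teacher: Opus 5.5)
Your proposal is correct and follows the paper's proof. You show $\ad_D\in\Der_a(M(R))$ (resp. $\Der_{nil}(M(R))$) via $t_x\ad_D=t_{xD}$, apply Theorem 2.14 to $M(R)$, and transport the result to $\overline{D}$ through $\CM(R)\cong{}_{M(R)}C$; the only addition is that you spell out the Leibniz/Hilbert-basis transfer to $P(R)$, which the paper leaves implicit. For the second assertion, applying Corollary 2.15 to $A=M(R)$ packages the same ingredients the paper cites (Lemmas 2.7, 2.8, Proposition 2.12 and the observations before Corollary 2.15), and your alternative via Lemmas 2.7(1), 2.8(1) and Corollary 2.13 is exactly the paper's route.
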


\begin{proof}
It is sufficient to notice that ${\ad_D\in \Der_a(M(R))}$ 
(${\ad_D\in \Der_{nil}(M(R))}$) for any $D\in \Der_a(R)$ 
${(D\in \Der_{nil}(R))}$, ${\CM(R)\cong \CM(M(R))\cong {}_{M(R)} C}$ 
(see above and before Corollary 2.11), and apply Theorem 2.14. 

The second statement follows from p. 1 (2) of Lemmas 2.7, 2.8, Proposition 2.12, 
Corollary 2.13 and the observations before Corollary 2.15.
\end{proof}
                  
\begin{co}
If $R$ is a $m.p.$--algebra without torsion over a ring $F$, $M(R)$ 
is a $GPI$--algebra with a non-zero minimal ideal, the 
field $\CM(R)$ is integral over $F$, ${D\in \Der_{nil}(R)}$, then 
${\ad_D\in {}_{M(R)} C \Der(M(R))\cap \Inder(Q^s_m(M(R)))}$. 
\end{co}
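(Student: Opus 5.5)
The plan is to reduce the statement to the second assertion of Corollary 2.16, with $M(R)$ in the role of a prime algebra. The condition ``$\mathcal{E}(M(R))$ is closed under countable intersections'' comes for free from the minimal ideal. The remaining work is checking that the hypotheses of Lemma 2.7, p. 1, together with Proposition 2.12 and Corollary 2.13, are satisfied.

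First, since $R$ is an $m.p.$--algebra, $M(R)$ is a prime associative algebra. Let $I\ne \{0\}$ be its minimal ideal. In a prime ring every non-zero ideal $J$ satisfies $I J\ne \{0\}$, and $I J\subseteq I\cap J$. Minimality of $I$ then gives $I\subseteq J$, so $I=\Soc(M(R)_{M(M(R))'})=\bigcap_{J\in \mathcal{E}(M(R))} J$. Consequently any intersection of essential ideals, countable or not, contains $I$. An ideal containing $I$ meets every non-zero ideal, because every non-zero ideal contains $I$. Hence $\mathcal{E}(M(R))$ is closed under arbitrary intersections. Moreover $M(R)$ is subdirectly irreducible, hence finitely cogenerated as an $M(M(R))$--module. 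This is also clear directly from primeness, which is already one of the alternatives in Corollary 2.16.

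Next, by the first part of Corollary 2.16 (i.e.\ Theorem 2.14 applied to $A=M(R)$, which is prime and so has no ${}_{M(R)} C$--torsion), $\ad_D\in \Der_{nil}(M(R))$. The same argument as in Theorem 2.14 shows that the extension of $\ad_D$ to $Q(M(R))$ is locally nilpotent on ${}_{M(R)} C$. Since $M(R)$ is a prime $GPI$--algebra, its generalized identities are strict, so it is an $SGPI$--algebra. Now ${}_{M(R)} C\cong \CM(R)$ is a field, integral over $F$, and torsion-free because $R$ is torsion-free. A field has no non-zero nilpotents, and it has no $c_n$--torsion. Hence Remark 2.5 (through Lemma 2.7, p. 1, applied to $\ad_D$ acting on ${}_{M(R)} C$) gives $({}_{M(R)} C)\ad_D=\{0\}$.

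Then Proposition 2.12 applies to $A=M(R)$ and $\ad_D$, giving $\ad_D\in {}_{M(R)} C\Der(M(R))\cap \Inder(Q^s_m(M(R)))$; equivalently, Corollary 2.13 applies with $[\overline{D},\CM(R)]=\{0\}$.

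I expect the main obstacle to be the step showing that the extension of the nil-derivation $\ad_D$ acts locally nilpotently on the extended centroid. This needs Theorem 2.14's argument with $q\in {}_{M(R)} C$ and $I(q)\in\mathcal{D}_{ir}(M(R))$, and it is exactly where countable intersections of essential ideals are used. I would handle it by noting that $I(q)$ is an intersection of essential ideals, so $I(q)\supseteq I\ne\{0\}$. Choosing any $0\ne x\in I$, primeness (no ${}_{M(R)} C$--torsion) turns $q\,\ad_D^{n_{q,x}}x=0$ into $q\,\ad_D^{n_{q,x}}=0$.
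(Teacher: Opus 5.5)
Your argument is essentially the paper's, transported from $P(R)$ to $M(R)$. The paper uses the minimal ideal the same way: it is the smallest non-zero ideal, so $M(R)$ is subdirectly irreducible, has no ${}_{M(R)} C$--torsion, and $\mathcal{E}(M(R))$ is closed under intersections. It then applies Theorem 2.14 (Corollary 2.16) to get $\overline{D}\in\Der_{nil}(P(R))$, p.~3 of Lemma 2.7 to get $[\overline{D},\CM(R)]=\{0\}$, and finally Corollary 2.13. You instead get local nilpotency of $\ad_D$ on ${}_{M(R)} C$ from Theorem 2.14 and apply Proposition 2.12 directly to $A=M(R)$, which amounts to the same thing through $\tau$. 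A small slip: $\ad_D\in\Der_{nil}(M(R))$ is the trivial input of Theorem 2.14, not its output; the output is nil-ness on ${}_{M(R)} C$, as you then say.

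\textbf{The gap.} The appeal to Remark 2.5 via p.~1 of Lemma 2.7 is not justified as written. Both require $\ad_D\in\Der_{1, n}({}_{M(R)} C)$, i.e.\ for each $c$ a single exponent $n$ with $(\langle c\rangle)\ad_D^n=\{0\}$. Theorem 2.14 only gives $c\,\ad_D^{n_c}=0$ for each element separately, and the exponents $n_{c^k}$ of the infinitely many powers of $c$ need not be bounded a priori. The proof of Remark 2.5 uses $y^2D^{2(k-1)}=0$, which is exactly such a uniform bound.

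\textbf{How to close it with integrality.} You list the integrality of $\CM(R)\cong{}_{M(R)} C$ over $F$ but never use it, and this is exactly where it is needed; the paper does this step inside the proof of p.~3 of Lemma 2.7. If $c^d+f_{d-1}c^{d-1}+\ldots+f_1c=0$, then $\langle c\rangle=\sum_{i=1}^{d}Fc^i$. Since the extension of $\ad_D$ is $F$--linear, $n=\max_{1\leq i\leq d}n_{c^i}$ works. With that line, Remark 2.5 applies to the field ${}_{M(R)} C$ of characteristic $0$, and the rest of your proof goes through.

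\textbf{A direct alternative.} Let $\delta\ne 0$ be a locally nilpotent derivation of a field of characteristic $0$. Pick $x$ with $x\delta^{m-1}\ne 0=x\delta^m$ for some $m\geq 2$, and put $z=x\delta^{m-1}$ and $w=(x\delta^{m-2})z^{-1}$. Then $w\delta=1$, and $(w^{-1})\delta^k=(-1)^k k!\,w^{-k-1}\ne 0$ for all $k$, a contradiction. This version does not even use integrality.
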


\begin{proof}
The minimal non-zero ${I\lhd M(R)}$ is the smallest non-zero ideal 
of $M(R)$, ${}_{M(R)} C M(R)$ and is equal to the socle of 
${}_{M(R)} C M(R)$ \cite{Mart}, Theorem 3, \cite{Lamb}, 
Proposition 4, p. 109 and the observations after Corollary 2.11, 
$M(R)$ is subdirectly irreducible and has no ${}_{M(R)} C$--torsion 
(${{}_{M(R)} C\cong \CM(M(R))\cong \CM(R)}$ is a field). It remains to 
apply Theorem 2.14 (Corollary 2.16), p. 3 of Lemma 2.7 and Corollary 2.13.
\end{proof}

\begin{co}
If $R$ is a $m.p.$--algebra over a ring $F$, $M(R)$ is a $PI$--algebra, 
${D\in \Der(R)}$, ${[\overline{D}, \CM(R)]=\{0\}}$, then  
${\ad_D=\ad_Q}$ on ${Q(M(R))=\hat{M(R)}}$ for some ${Q\in \hat{M(R)}}$, $D$ 
is strongly algebraic over $\CM(R)$ of degree at most 
$\dim_{\CM(R)} P(R)$ and $Q$ is algebraic over ${}_{M(R)} C$ of degree at most 
${\dim_{{}_{M(R)} C} \hat{M(R)}=(\dim_{\CM(R)} P(R))^2}$. 
\end{co}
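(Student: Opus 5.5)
Since $R$ is an $m.p.$--algebra, $M(R)$ is a prime $PI$--algebra. By the facts recalled in the introduction (Posner's theorem together with \cite{Ut}), its central closure $\hat{M(R)}={}_{M(R)} C\, M(R)$ is a finite-dimensional central simple algebra over the field ${}_{M(R)} C\cong \CM(R)$, and
\[
Q(M(R))\ =\ Q^s_m(M(R))\ =\ \hat{M(R)}\,.
\]
Moreover $P(M(R))\cong M(P(R))$ (see the discussion before Corollary 2.11). So $\hat{M(R)}$ is identified with the algebra of multiplications of the $\CM(R)$--algebra $P(R)$.

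The plan is first to translate the hypothesis ${[\overline{D}, \CM(R)]=\{0\}}$ into ${}_{M(R)} C$--linearity of the extension of $\ad_D$ to $Q(M(R))$. This uses the remark that the linearity of $\overline{D}$ over $\CM(R)$ is equivalent to the linearity of $\ad_D$ over ${}_{M(R)} C$, since $\ad_D^{\tau}=0$. A prime $PI$--algebra is a $GPI$--algebra, hence an $SGPI$--algebra. Corollary 2.13 (equivalently Proposition 2.12 for $A=M(R)$) therefore gives
\[
\ad_D\ \in\ {}_{M(R)} C \Der(M(R))\cap \Inder(Q^s_m(M(R)))\,,
\]
that is, $\ad_D=\ad_Q$ on $\hat{M(R)}$ for some $Q\in \hat{M(R)}$. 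Alternatively, and independently of \cite{Har2}, one could invoke the Skolem--Noether theorem: every ${}_{M(R)} C$--linear derivation of a finite-dimensional central simple algebra is inner.

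Next I will bound the algebraicity degrees. Put $d=\dim_{\CM(R)} P(R)$, so $\dim_{{}_{M(R)} C}\hat{M(R)}=d^2$. The element $Q$ lies in a $d^2$--dimensional algebra over the field ${}_{M(R)} C$, so by Cayley--Hamilton for left multiplication (or for the regular representation), $Q$ is algebraic of degree at most $d^2$. For $D$ I work on $P(R)$, where $\overline{D}$ is a $\CM(R)$--linear endomorphism of a $d$--dimensional $\CM(R)$--vector space. Cayley--Hamilton gives a monic $\chi(t)\in \CM(R)[t]$ of degree at most $d$ with $\chi(\overline{D})=0$.

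To get a polynomial without free term, as the definition of strong algebraicity requires, I will note that $\overline{D}$ is a derivation and so is not invertible when $P(R)^2\neq\{0\}$. Hence $\overline D$ has a nonzero kernel, and its minimal polynomial $\mu(t)$ has zero constant term. Indeed $\mu(0)\neq 0$ would make $\overline D$ invertible, but $\overline{D}$ kills nothing only if \dots. This needs care: a derivation can be injective. I will instead use $t\mu(t)$, but that has degree $d+1$. The cleaner route is to use the remark after Remark 2.10. With $Q$ algebraic of degree $m$, $\ad_Q=l_Q-r_Q$ is annihilated by a polynomial built from the tensor of minimal polynomials, and for $1\in \hat{M(R)}$ we have $1\,\ad_D=0$. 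This makes $\ad_D$ non-injective, and through $t_{x f(D)}=t_x f(\ad_D)$ it transfers to $D$. Then the minimal polynomial of $\overline{D}$ on $P(R)$ already has zero constant term: since the identity map $\Id_{P(R)}\in M(P(R))'$ is commuted by $\ad_{\overline D}$, we have $\mu(\overline D)=0$ with $\mu$ of degree $\le d$, and the constant term $\mu(0)$ is forced to vanish because $\mu(\ad_D)$ evaluated at $\Id$ equals $\mu(0)\Id$ and $t_{x\mu(D)}=t_x\mu(\ad_D)$. I expect this normalization step, securing ``no free term'' within the degree bound $d$, to be the main technical obstacle. If it fails, I will fall back on the statement with degree at most $d$ interpreted via $\mu(t)$ and argue separately that $\mu(0)=0$ using Lemma 2.8-type eigenvalue arguments.

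Finally, the restriction of $\overline D$ to $R$ is $D$, so $D$ is strongly algebraic over $\CM(R)$ of degree at most $d$. This completes the plan.
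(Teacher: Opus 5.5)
Your main line is the paper's: identify $\hat{M(R)}=Q(M(R))=Q^s_m(M(R))$, get $\ad_D=\ad_Q$ from Corollary 2.13 (Skolem--Noether also works), bound $Q$ by a dimension count, and bound $D$ by Cayley--Hamilton for the $\CM(R)$--linear operator $\overline{D}$ on $P(R)$, restricted to $R$. The ``normalization step'' is a real error, however, and must be removed.

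The claim $\mu(0)=0$ for the minimal polynomial $\mu$ of $\overline{D}$ is false. Derivations of simple finite-dimensional algebras can be invertible, even in characteristic $0$. Let $A$ be the $\mathbb{C}$--algebra with basis $e_1,e_2,e_{-1},e_{-2}$ and nonzero products $e_1e_1=e_2$, $e_2e_{-1}=e_1$, $e_{-1}e_{-1}=e_{-2}$, $e_{-2}e_1=e_{-1}$. The operators $l_{e_i}, r_{e_i}$ produce the matrix units $e_1\mapsto e_2$, $e_2\mapsto e_1$, $e_1\mapsto e_{-1}$, $e_{-1}\mapsto e_1$, $e_{-1}\mapsto e_{-2}$, $e_{-2}\mapsto e_{-1}$. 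These generate $\End_{\mathbb{C}}(A)$, so $A$ is an $m.p.$--algebra with $M(A)=\End_{\mathbb{C}}(A)$, $\CM(A)=\mathbb{C}$, $P(A)=A$ and $d=4$. Since $e_\alpha e_\beta\in\mathbb{C}e_{\alpha+\beta}$, the map $D=\diag(1,2,-1,-2)$ is a derivation, with $\mu(t)=(t^2-1)(t^2-4)$ and $\mu(0)=4$.

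Your argument breaks because $t_{x\mu(D)}=t_x\mu(\ad_D)$ only says that $\mu(\ad_D)$ kills the generators $t_x$. $\Id$ is not of this form and $\mu(\ad_D)$ is not multiplicative, so nothing forces $\mu(0)\Id=0$. Likewise, $\Id\,\ad_D=0$ says nothing about $\Ker D$.

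In this example every monic polynomial without free term annihilating $D$ has degree $\geq 5>d$. So under the literal no-free-term definition the bound for $D$ is false, and neither your detour nor the fallback via Lemma 2.8 can work. The statement has to be read as the paper actually uses it: monic polynomials in $F_1[t]$ with free term allowed, as in the remarks after Remark 2.10 and in the proofs of Remarks 2.9, 2.10. Under that reading the characteristic polynomial of $\overline{D}$ already finishes the proof.

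There is also a missing justification. You assert $d<\infty$ and $\dim_{{}_{M(R)} C}\hat{M(R)}=d^2$, which is part of the claim, without argument, and $\hat{M(R)}\cong M(P(R))$ alone does not give it. The paper uses Theorem 1.7 of \cite{Cab4}: for an $m.p.$--algebra with $PI$--algebra $M(R)$, $P(R)$ is a simple finite-dimensional $\CM(R)$--algebra. It is therefore an exact irreducible $M(P(R))$--module with centralizer $\CM(R)$, and the density theorem gives $\hat{M(R)}\cong M(P(R))=\End_{\CM(R)}(P(R))\cong M_d(\CM(R))$. Under this identification $\overline{D}$ itself lies in $M(P(R))$, and $\psi\longmapsto[\psi,\overline{D}]$ is the unique extension of $\ad_D$. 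So one may even take $Q=\overline{D}$, which makes the inner-ness immediate.
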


\begin{proof}
For the $m.p.$--algebra $R$, the $\CM(R)$--algebra $P(R)$ is simple and finite 
dimensional if and only if $M(R)$ is a $PI$--algebra, in which case 
\[
\hat{M(R)}\cong P(M(R))\cong M(P(R))\ =\ 
\End_{\CM(R)}(P(R))\cong M_{\dim_{\CM(R)} P(R)}(\CM(R))
\]
\cite{Cab4}, Theorem 1.7, \cite{Lamb}, Proposition 3, p. 92 (the density 
theorem) taking into account the exactness and irreducibility of the 
$M(P(R))$--module $P(R)$. Application of Corollary 2.13\linebreak and Remark 2.10 
(remarks after it) completes the proof.
\end{proof}

\begin{co}
If $R$ is a $m.p.$--algebra with 1 over a ring $F$, $M(R)$ is a $PI$--algebra 
and ${D\in \Der(R)}$ with one of the conditions: ${D\in \Der_{1, sa}(Z(R))}$, 
$F$ is an algebraically closed field, ${\Ch F=0}$; $R$ is torsion-free, 
${D\in \Der_{1, n}(Z(R))}$; $R$ has no $c_n$--torsion, ${(Z(R))D^n=\{0\}}$ for 
some ${n\geq 1}$, then ${\ad_D=\ad_Q}$ on $\hat{M(R)}$ for some 
${Q\in \hat{M(R)}}$, $D$ is strongly algebraic over $\CM(R)$ of degree at most 
$\dim_{\CM(R)} P(R)$ and $Q$ is algebraic over ${}_{M(R)} C$ of degree at most 
$(\dim_{\CM(R)} P(R))^2$.
\end{co}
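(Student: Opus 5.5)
The plan is to reduce the statement to Corollary 2.18. Everything then comes down to verifying ${[\overline{D}, \CM(R)]=\{0\}}$ under each of the three listed hypotheses. Since $R$ has $1$, the map ${\phi\longmapsto \phi 1}$ from the proof of Lemma 2.7 identifies $\End(R)_{M(R)'}$ with $Z(R)$, with ${\psi=t_{\psi 1}}$ and ${\psi \ad_D=t_{(\psi 1) D}}$. Thus each condition on $D|_{Z(R)}$ translates into the corresponding condition on $\ad_D$ restricted to $\End(R)_{M(R)'}$.

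\textbf{Step 1: commutation with $\End(R)_{M(R)'}$.} We show ${[D, \End(R)_{M(R)'}]=\{0\}}$ case by case.
\begin{enumerate}
\item If ${D\in \Der_{1, sa}(Z(R))}$ and $F$ is algebraically closed of characteristic $0$, apply p.~2 of Lemma 2.8. Here $R$ is prime, being a $m.p.$--algebra, so $Z(R)$ is a commutative domain with no non-zero nilpotent elements. Lemma 2.8 gives ${D|_{Z(R)}=0}$.
\item If $R$ is torsion-free and ${D\in \Der_{1, n}(Z(R))}$, use p.~2 of Lemma 2.7, once $Z(R)$ is known to be torsion-free.
\item If $R$ has no $c_n$--torsion and ${(Z(R))D^n=\{0\}}$, use the bracketed variant of p.~2 of Lemma 2.7.
\end{enumerate}
In every case semiprimeness of $Z(R)$ is inherited from the primeness of $R$ via Remark 2.5, and the torsion hypotheses on $R$ pass to its subring $Z(R)$.

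\textbf{Step 2: passing to $\CM(R)$.} We must upgrade ${[D, \End(R)_{M(R)'}]=\{0\}}$ to ${[\overline{D}, \CM(R)]=\{0\}}$. Since $R$ is prime with $1$, $\CM(R)$ is the field of fractions of ${\End(R)_{M(R)'}\cong Z(R)}$; indeed, by Remark 2.6 every non-zero element of $\End(R)_{M(R)'}$ is injective and extends to an invertible element of $\CM(R)$. Moreover, with $M(R)$ a $PI$--algebra, $P(R)$ is the central localization of $R$. By Remark 2.4, ${\ad_{\overline D}}$ is a derivation of the commutative ring $\CM(R)$. It vanishes on $Z(R)$, and a derivation vanishing on a subring vanishes on all quotients $ab^{-1}$ by the quotient rule. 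So ${[\overline{D}, \CM(R)]=\{0\}}$.

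\textbf{Step 3: conclusion.} Corollary 2.18 now yields ${\ad_D=\ad_Q}$ on ${\hat{M(R)}}$ for some $Q$, together with the stated degree bounds for $D$ and for $Q$.

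The main obstacle is Step 2, specifically making rigorous that $\CM(R)$ is generated as a field of fractions by the image of $Z(R)$. For a prime $PI$ multiplication algebra I would invoke Posner--Rowen for $M(R)$: ${}_{M(R)}C$ is the fraction field of $Z(M(R))$, and elements of $Z(M(R))$ restrict to elements of $\End(R)_{M(R)'}$. If that route becomes murky, the fallback is to argue directly that ${\{c\in\CM(R)\mid c\,\ad_{\overline D}=0\}}$ is a subfield containing $Z(R)$.
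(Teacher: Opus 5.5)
Your proposal is correct and is essentially the paper's proof. The paper uses p.~2 of Lemmas 2.7, 2.8 to get $[D,\End(R)_{M(R)'}]=\{0\}$, where $\End(R)_{M(R)'}=Z(M(R))\cong Z(R)$. It then uses the Posner--Rowen fact that ${}_{M(R)}C\cong\CM(R)$ is the field of quotients of $Z(M(R))$, together with the quotient rule, to kill the derivation on the whole extended centroid, and finishes with Corollary 2.18. The only difference is where the quotient rule is run: the paper applies it to $\ad_D$ on ${}_{M(R)}C$ and passes back via Corollary 2.13, while you apply it to $\ad_{\overline D}$ on $\CM(R)$ via Remark 2.4, which is equivalent.

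One caution: the fraction-field step genuinely needs the $PI$ hypothesis on $M(R)$. Primeness and $1$ alone do not give it, so your first sentence of Step 2 is not justified by Remark 2.6 alone. Likewise, your fallback (showing the constants of $\ad_{\overline D}$ form a subfield containing $Z(R)$) does not replace that input.
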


\begin{proof}
It is sufficient to note that ${}_{M(R)} C$ is the field of quotients of 
\[
Z(M(R))\ =\ \End(R)_{M(R)'}\ =\ \{l_z=r_z\mid z\in Z(R)\}\cong Z(R)\,,
\]
${\{0\}=[\End(R)_{M(R)'}, D]=({}_{M(R)} C)\ad_D}$ 
(see the proof of Lemma 2.8, p. 2 of Lemmas 2.7, 2.8;\linebreak 
${0=1 \ad_D=(c c^{-1})\ad_D=c (c^{-1}\ad_D)=c^{-1}\ad_D}$, 
${0\ne c\in Z(M(R))}$), and apply Corollaries 2.13, 2.18.
\end{proof}

Since ${\tau_I: \phi\longmapsto \phi|_I}$, ${\phi\in \CM(R)}$, is an isomorphism 
of $\CM(R)$ and $\End(I)_{M(R)'}$ for any semiprime algebra $R$ and completely 
invariant essential $M(R)'$--submodule ${I\subseteq P(R)}$, p. 1, 3 of Lemmas 
2.7, 2.8 can be formulated for ${I=\CM(R) I\in \mathcal{E}(R)}$, 
${(I)D\subseteq I}$. For such $I$ and $D$ with 
${\{z\in P(R)\mid z I=I z=\{0\}\}=\{0\}}$ one can determine  
${\tilde{D}\in \Der_F(P(R))}$ by setting 
${(\phi x)\tilde{D}=\phi(x D)+(\phi \ad_{D|_I}^{\tau_I}) x}$, 
${\phi\in \CM(R)}$, ${x\in R}$, where 
${\ad_{D|_I}^{\tau_I}: \phi\longmapsto \tau_I^{-1}((\tau_I \phi)\ad_{D|_I})}$,\linebreak 
${\ad_{D|_I}^{\tau_I}\in \Der_F(\CM(R))}$, ${\ad_{D|_I}\in \Der(\End_F(I))}$ 
(correctness follows from ${t_y \ad_{D|_I}|_I=t_{y \tilde{D}}|_I}$, 
${t=l, r}$, ${y\in P(R)}$), and ${[D, \End(I)_{M(R)'}]=\{0\}}$ is equivalent to 
${[\tilde{D}, \CM(R)]=\{0\}}$. If $R$ is a $m.s.p.$--algebra, then  
${\overline{D}=\tilde{D}}$, due to 
${(P(R) I+I P(R))D'=(P(R))D' I+I ((P(R)) D')=\{0\}}$ for 
${D'=\overline{D}-\tilde{D}}$. 
  
In the context of Lemmas 2.7, 2.8 it is appropriate to highlight a number of 
considerations from \cite{Chung, LeeL} applicable to derivations of any prime 
algebra, not necessarily associative. Without making any changes to the proof 
of Proposition 2, \cite{Chung}, it can be immediately written as 

\begin{lemma}
If $I$ is a right ideal of a $F$--algebra $R$ with ${\Ann_r I=\{0\}}$, 
${D\in \Der(R)}$, ${n\geq 1}$ and ${(I)D^n=\{0\}}$, then ${D^{2 n-1}=0}$.
\end{lemma}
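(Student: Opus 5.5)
The plan is to exploit the right ideal property: $a y\in I$ for all $a\in I$, $y\in R$. We also use the hypothesis $\Ann_r I=\{0\}$, which by the paper's convention means $I x=\{0\}$ implies $x=0$. It therefore suffices to prove $a\,(y D^{2n-1})=0$ for all $a\in I$, $y\in R$.

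First, fix $a\in I$, $y\in R$, $N\geq n$. Since $ay\in I$ and $(I)D^n=\{0\}$, the Leibniz rule gives
\[
0\ =\ (a y)D^N\ =\ \sum_{i=0}^{N}\binom{N}{i}(a D^i)(y D^{N-i})\ =\ \sum_{i=0}^{n-1}\binom{N}{i}(a D^i)(y D^{N-i})\,,
\]
because $a D^i=0$ for $i\geq n$. Next, fix a total degree $T=2n-1$ and, for $N=n,\ldots,2n-1$, substitute $y D^{T-N}$ for $y$; this is allowed since $T-N\geq 0$. Put $u_i=(a D^i)(y D^{T-i})$ for $i=0,\ldots,n-1$. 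The $u_i$ do not depend on $N$, and we obtain the $n$ linear relations
\[
\sum_{i=0}^{n-1}\binom{N}{i}u_i\ =\ 0\quad (N=n,\ldots,2n-1)
\]
with integer coefficients, in the $n$ unknowns $u_0,\ldots,u_{n-1}\in R$.

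The key step, and the only real obstacle, is that the integer matrix $\bigl(\binom{N}{i}\bigr)_{n\leq N\leq 2n-1,\ 0\leq i\leq n-1}$ has determinant $\pm1$. This matters because no torsion-freeness is assumed, so we need invertibility over $\mathbb{Z}$ and not merely over $\mathbb{Q}$. I would prove it by row reduction using Pascal's rule $\binom{N+1}{i}-\binom{N}{i}=\binom{N}{i-1}$. Successively differencing the rows turns the matrix into a triangular one with entries $\binom{n}{0}=1$ on the diagonal. Equivalently, the rows are the coordinate vectors of the polynomials $\binom{t}{i}$ evaluated at $n$ consecutive integers, and consecutive differences realize a unimodular change of basis.

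With unimodularity in hand, multiplying by the integer inverse matrix gives $u_i=0$ for all $i$. In particular $u_0=a\,(y D^{2n-1})=0$ for every $a\in I$. Hence $I\,(y D^{2n-1})=\{0\}$, so $y D^{2n-1}\in\Ann_r I=\{0\}$ for every $y\in R$, and $D^{2n-1}=0$. Note that no associativity of $R$ is used: only the Leibniz rule for the product of $a$ and $y$ is needed.
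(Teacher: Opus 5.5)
Your proof is correct and complete. The Leibniz expansions of $(ay)D^N$ for $N=n,\ldots,2n-1$, after the shift $y\mapsto yD^{2n-1-N}$, give $n$ integer relations among the $u_i=(aD^i)(yD^{2n-1-i})$. The coefficient matrix is unimodular; in fact its determinant is $1$, since writing $N=n+j$, Vandermonde's convolution $\binom{n+j}{i}=\sum_k\binom{j}{k}\binom{n}{i-k}$ factors it as a lower times an upper unitriangular integer matrix. Hence $u_0=a(yD^{2n-1})=0$ with no torsion hypotheses.

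The paper does not write out an argument. It only states that the proof of Proposition~2 of \cite{Chung} carries over unchanged to non-associative $F$--algebras, which rests on your closing observation that only the Leibniz rule and $IR\subseteq I$ are used.

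If you prefer to avoid the determinant, the inversion identity the paper records just before Theorem~2.14 performs your elimination explicitly. For $a\in I$, $y\in R$,
\[
a\,(yD^{2n-1})\ =\ \sum_{i=0}^{2n-1}\binom{2n-1}{i}(-1)^{2n-1-i}\bigl((aD^{2n-1-i})\,y\bigr)D^i\,,
\qquad
(aD^j)\,y\ =\ \sum_{k=0}^{j}\binom{j}{k}(-1)^{j-k}\bigl(a\,(yD^{j-k})\bigr)D^k\,.
\]
In the first sum, the terms with $i\leq n-1$ vanish because $aD^{2n-1-i}=0$. By the second identity, each term with $i\geq n$ is an integer combination of elements of $(I)D^{k+i}=\{0\}$, since $a(yD^{j-k})\in I$ and $k+i\geq n$. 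This version also shows why $2n-1$ is the threshold: it is the least $m$ for which every $i\in\{0,\ldots,m\}$ satisfies $m-i\geq n$ or $i\geq n$. That is exactly the degree at which your linear system becomes square.
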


\begin{lemma}
If $R$ is a $F$--algebra without $Z(R)$--torsion, ${D\in \Der(R)}$ and 
${(R)D^n\subseteq Z(R)}$ for some ${n\geq 1}$, then ${R=Z(R)}$ or (and) 
${D^{2 n}=0}$ (${D^{n+1}=0}$ for $R$ without $c_n$--torsion and 
${D^n=0}$ for $R$ without $\binom{2 n}n c_n$--torsion).
\end{lemma}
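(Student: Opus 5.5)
The plan is to use only two facts: $Z(R)$ is $D$--invariant, and the hypothesis says that every $x D^m$ with $m\geq n$ lies in the centre. The centre is invariant under derivations, as remarked after Corollary 2.3; together with ${(R)D^n\subseteq Z(R)}$ this gives ${(R)D^m\subseteq Z(R)}$ for all ${m\geq n}$. Since $Z(R)=N(R)\cap K(R)$, for ${c\in Z(R)}$ and ${x,y,u\in R}$ the left multiplication $l_c$ commutes with commutators and associators:
\[
[c x, y]\ =\ c [x, y]\,,\qquad (c x, y, u)\ =\ c (x, y, u)\,.
\]
The hypothesis is then the family of polynomial relations ${[x D^n, y]=0}$ and ${(x D^n, y, u)=(y, x D^n, u)=(y, u, x D^n)=0}$ for all ${x, y, u\in R}$. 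I will manipulate these by substituting products ${c x}$ with central $c$.

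\textbf{Substitution step.} Fix ${w\in R}$ and put ${c_i=w D^{n+i}\in Z(R)}$ for ${i\geq 0}$. Substituting $c_0 x$ for $x$ and expanding by the Leibniz formula (legitimate because each $c_i$ is central), I get ${(c_0 x) D^n=\sum_{i=0}^n \binom{n}{i} c_i\, (x D^{n-i})}$. This element is central, and so is the $i=0$ summand $c_0\,(x D^n)$. Taking the commutator with $y$ therefore gives
\[
\sum_{i=1}^{n} \binom{n}{i}\, c_i\, [x D^{n-i}, y]\ =\ 0\,,
\]
and the same identity holds with $[\ ,y]$ replaced by any of the three associator slots. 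I then iterate: substitute $c_0^k x$, and also products of several $c_j$'s times $x$, for $x$. Each such substitution yields a linear system in the unknowns $[x D^{n-i}, y]$ whose coefficients are integer multiples of products of the $c_j$. Eliminating the lower-order unknowns one at a time, in the manner of the descent in Proposition~2 of \cite{Chung} (Lemma 2.20), I aim to reach an identity of the form
\[
N\, (w D^{2n})^{s}\, [x, y]\ =\ 0\qquad (x, y, w\in R)\,,
\]
together with its associator analogues. Here $N$ is a product of binomial coefficients: $N=c_n$ after a careful elimination, and $N=\binom{2n}{n} c_n$ for the sharpest version.

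\textbf{Conclusion.} By absence of $Z(R)$--torsion, and by semiprimeness of the commutative ring $Z(R)$ (nilpotent central elements vanish by the same torsion condition), I conclude the following for each fixed $w$: either ${w D^{2n}=0}$, or all commutators and associators of $R$ vanish, i.e.\ ${R=Z(R)}$. If ${R\ne Z(R)}$, this forces ${D^{2n}=0}$. The refined statements come from redoing the elimination more economically once the appropriate torsion is excluded. With no $c_n$--torsion, the relation ${(w D^{n+1})^2=\ldots}$ obtained at the stage $i=1$ can be handled as in the proof of Remark 2.5, giving ${D^{n+1}=0}$. With no $\binom{2n}{n} c_n$--torsion, one further application of ${0=(y^2)D^{2n}=\binom{2n}{n}(y D^n)^2}$ to central $y D^n$-type elements yields ${D^n=0}$.

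\textbf{Main obstacle.} The hard part is the elimination bookkeeping: choosing the sequence of substitutions $c\cdot x$ so that the coefficients surviving at each stage are exactly the binomial products making up $c_n$ and $\binom{2n}{n}c_n$. It must also be checked that, in the non-associative setting, only the nucleus and commutation properties of $Z(R)$ are used. I would first try the case ${n=1}$ by hand to fix the pattern, and then set up the induction on the highest surviving power of $D$.
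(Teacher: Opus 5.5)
Your opening computation is correct. Since $Z(R)=N(R)\cap K(R)$, commutators and associators are $Z(R)$--linear in each argument, so substituting $(wD^n)x$ for $x$ gives $\sum_{i=1}^n\binom{n}{i}(wD^{n+i})[xD^{n-i},y]=0$. But everything after that is deferred, and the deferred elimination is the whole content of the lemma. The route you sketch for it is also not the one that works.

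Substituting $c_0^kx$, or products of your $c_j$ times $x$, only produces further relations in the same $n$ unknowns $[xD^m,y]$, $0\leq m<n$. What collapses the system is the substitution $x\mapsto xD^j$, which kills every term with $n-i+j\geq n$. For $j=n-1,n-2,\ldots,0$ the relations become triangular with diagonal coefficient $wD^{2n}$. Multiplying by powers of $wD^{2n}$ gives $(wD^{2n})^{n-m}[xD^m,y]=0$, and finally $(wD^{2n})^n[x,y]=0$ with integer coefficient $1$. Your expected $N=c_n$ (or $\binom{2n}{n}c_n$) is not harmless. The assertion $D^{2n}=0$ carries no hypothesis on integer torsion, and $Z(R)$ may have characteristic $p\mid c_n$ (e.g.\ $p=2$, $n=2$); there an identity $N(wD^{2n})^s[x,y]=0$ says nothing.

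The refinements have a genuine gap. All your central multipliers lie in the subalgebra generated by $(R)D^n$, so your scheme can at best give $(R)D^{2n}=\{0\}$. From that alone, Remark 2.5 applied to the domain $Z(R)$ requires excluding $c_{2n}$--torsion, not $c_n$--torsion. Your linear system also contains no relation of the kind Remark 2.5 exploits. What is needed is $(Z(R))D^n=\{0\}$, and that requires an arbitrary central multiplier.

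This is the paper's key step. For $z\in Z(R)$, all Leibniz terms of $(z\,(xD^{n-1}))D^n$ except $(zD^n)(xD^{n-1})$ are central, so $(zD^n)(xD^{n-1})\in Z(R)$. By absence of $Z(R)$--torsion, either $(Z(R))D^n=\{0\}$ or $(R)D^{n-1}\subseteq Z(R)$. Induction on $n$ then ends either with $R=Z(R)$ or with some $k\leq n$ such that $(R)D^k\subseteq Z(R)$ and $(Z(R))D^k=\{0\}$. In the latter case:
\begin{itemize}
\item $D^{2k}=0$ follows at once.
\item Remark 2.5 for $Z(R)$ (using $c_k\mid c_n$) gives $(Z(R))D=\{0\}$, and so $D^{n+1}=0$.
\item Only then is $0=(x^2)D^{2n}=\binom{2n}{n}(xD^n)^2$ valid, because the remaining Leibniz terms vanish by $D^{n+1}=0$.
\end{itemize}
Equivalently, your own elimination goes through if you replace $wD^n$ by an arbitrary $z\in Z(R)$. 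It then yields $(zD^n)^n[x,y]=0$ and the associator analogues, hence $(Z(R))D^n=\{0\}$ unless $R=Z(R)$, without the induction.
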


\begin{proof}
We can assume that ${Z(R)\ne \{0\}}$. Since ${z x\ne 0}$ for all 
${0\ne z\in Z(R), 0\ne x\in R}$, $Z(R)$ has no zero divisors, 
$R$ has characteristic ${p\geq 0}$. Due to 
\[                   
(z (x D^{n-1}))D^n\ \equiv\ (z D^n)(x D^{n-1})\ \equiv\ 0\pod{Z(R)}
\quad (z\in Z(R),\ x\in R)\,,
\]
${(Z(R))D^n=\{0\}}$ or (and) ${(R)D^{n-1}\subseteq Z(R)}$ (from 
${z x\in Z(R)}$, ${0\ne z\in Z(R)}$, ${x\in R}$, it follows that 
${z [x, R]=z (x, R, R)=z (R, x, R)=z (R, R, x)=\{0\}}$ and ${x\in Z(R)}$). 
If ${(Z(R))D^n=\{0\}}$, then ${D^{2 n}=0}$, in $R$ without $c_n$--torsion 
${(Z(R)) D=\{0\}}$ (see Remark 2.5), ${D^{n+1}=0}$, and without 
${c_n \binom{2 n}n}$--torsion 
${0=x^2 D^{2 n}=\binom{2 n}n (x D^n)^2=x D^n}$ for all ${x\in R}$,
${D^n=0}$. It remains to apply induction on $n$.
\end{proof}

\begin{lemma}
If $R$ is a $F$--algebra without $Z(R)$--torsion, ${D\in \Der(R)}$, 
${f(t)=\prod\limits_i (t-a_i)}$, ${a_i\in F}$, has the smallest degree among 
${g=\prod\limits_j (t-b_j)\in F[t]}$, ${b_j\in F}$, ${(R)g(D)\subseteq Z(R)}$, 
and ${n=|\{a_i+\Ann_F R\}|}$, ${k_i=|\{j\mid a_j-a_i\in \Ann_F R\}|}$, 
${k=\min\limits_i k_i}$, then for $R$ without $c_k n$--tor\-sion ${R=Z(R)}$ or 
(and) ${(Z(R))D=\{0\}}$, ${D f(D)=0}$.
\end{lemma}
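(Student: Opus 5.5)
The plan is to mimic the proof of Lemma 2.21, reducing everything to the behaviour of $D$ on the commutative domain $Z(R)$.

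\textbf{Preliminary reductions.} Assume ${R\ne Z(R)}$ and ${Z(R)\ne \{0\}}$. As in Lemma 2.21, $Z(R)$ has no zero divisors and $R$ has some characteristic ${p\geq 0}$. The basic fact used repeatedly is: if ${0\ne c\in Z(R)}$, ${x\in R}$ and ${c x\in Z(R)}$, then ${x\in Z(R)}$. We may replace the $a_i$ by representatives modulo $\Ann_F R$. Then ${f(t)=\prod_{i=1}^n (t-a_i)^{k_i}}$ with pairwise distinct $a_i$ modulo $\Ann_F R$. By definition, $Z(R)$ is $D$-invariant.

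\textbf{Generalized Leibniz rule.} For ${z\in Z(R)}$, ${x\in R}$ we have ${(z x)(D-a)=(z D) x+z (x(D-a))}$. Iterating this over the factors of $f$ gives
\[
(z x) f(D)\ =\ \sum_{S} (z D^{|S|})\, \Bigl(x \prod_{i\notin S}(D-a_{i})\Bigr)\,,
\]
where $S$ runs over subsets of the multiset of roots. The left side and the ${S=\emptyset}$ term lie in $Z(R)$, so the remaining sum lies in $Z(R)$.

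\textbf{Main obstacle: showing ${(Z(R))D=\{0\}}$.} I would argue by contradiction, in the spirit of Remark 2.5 and Lemma 2.8.
\begin{enumerate}
\item Fix ${z\in Z(R)}$ with ${z D\ne 0}$.
\item Fix a root $a$ of multiplicity exactly $k$, and write ${f=(t-a) f_a}$. By minimality of $\deg f$ there is ${x\in R}$ with ${x f_a(D)\notin Z(R)}$.
\item In the identity above, collect the terms with ${|S|=1}$. There are $\deg f$ such terms, grouped as $k_i$ equal copies for each root $a_i$, and they give ${(z D)\sum_i k_i\, x f_{a_i}(D)}$.
\item Show that the terms with ${|S|\geq 2}$ also lie in $Z(R)$. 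For this I would use an induction on $\deg f$, applied to the derivation acting on the elements ${x\prod_{i\in T}(D-a_i)}$, as in the induction step of Lemma 2.21.
\item Then ${(z D)\sum_i k_i\, x f_{a_i}(D)\in Z(R)}$. Since ${z D\ne 0}$ is a non-zero-divisor, ${\sum_i k_i\, x f_{a_i}(D)\in Z(R)}$.
\item Replace $x$ by ${x\prod_{j\ne i}(D-a_j)^{k_j}}$ to isolate a single root. Separating the distinct roots introduces Vandermonde-type factors, which are controlled by the absence of $n$-torsion. Removing the multiplicity $k$ introduces the coefficient $c_k$ from Remark 2.5, which is where the hypothesis of no $c_k n$-torsion is used.
\item The outcome is ${x f_a(D)\in Z(R)}$, contradicting the choice of $x$.
\end{enumerate}
The delicate part is exactly this separation of roots: one has to verify that the only integer coefficients that appear are divisors of powers of $c_k n$.

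A fallback for this step, if the direct separation fails, is to show that $D$ restricted to $Z(R)$ is annihilated by a polynomial in $F[t]$, and then argue as in Lemma 2.8. There, the eigenvector argument on the domain $Z(R)$, together with the fact that $Z(R)$ has no nonzero nilpotent elements, forces ${(Z(R))D=\{0\}}$ via Remark 2.5.

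\textbf{The final claim ${D f(D)=0}$.} Once ${(Z(R))D=\{0\}}$ is established, this is immediate. For every ${x\in R}$ we have ${x f(D)\in Z(R)}$, so ${x\, D f(D)=(x f(D))D=0}$, using that $f(D)$ and $D$ commute.
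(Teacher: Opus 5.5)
Your proposal has a genuine gap at step 4. For a general $x$, the terms with $|S|\geq 2$ are $(zD^{|S|})\bigl(x\prod_{i\notin S}(D-a_i)\bigr)$. The polynomials $\prod_{i\notin S}(t-a_i)$ have degree $<\deg f$, so by the very minimality of $f$ they do not map $R$ into $Z(R)$. Nothing forces these terms, or their sum, into $Z(R)$ while $zD\neq 0$.

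The suggested ``induction on $\deg f$'' has nothing to induct on. The polynomial $f$ is minimal for the whole algebra $R$, and no smaller instance of the lemma applies to the elements $x\prod_{i\in T}(D-a_i)$. Without step 4, steps 5--7 never start, and the root separation in step 6 is also left unproved.

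The fallback does not close the gap either. It presupposes an annihilating polynomial for $D|_{Z(R)}$, which is exactly what has to be produced. It also relies on the eigenvalue argument of Lemma 2.8, which needs characteristic $0$. Here only $c_k n$-torsion is excluded, so positive characteristic is allowed. For example, in $\mathbb{F}_p[s]$ the derivation $s\,d/ds$ satisfies $D^p=D$ and has the nonzero eigenvalue $1$ on $s$.

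The missing idea is to choose $x$ so that the higher terms collapse. Let $a_1,\dots,a_n$ be the distinct roots and put $h_i(t)=f(t)/(t-a_i)$. By minimality, pick $x$ with $y=x h_i(D)\notin Z(R)$. Then $y(D-a_i)=xf(D)\in Z(R)$. Since $Z(R)$ is $D$-invariant, $y\,g(D)\equiv g(a_i)\,y \pmod{Z(R)}$ for every $g\in F[t]$, and the Leibniz expansion gives
\[
(zy)f(D)\ \equiv\ \bigl(z f(D+a_i\Id_R)\bigr)\,y\pmod{Z(R)}\,.
\]
The left side is central and $y\notin Z(R)$, so your basic fact yields $z f(D+a_i\Id_R)=0$. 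Hence $D|_{Z(R)}$ is annihilated by every translate $f(t+a_i)$, $i=1,\dots,n$.

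This is also where the torsion hypotheses actually enter. Pass to the domain $F/\Ann_F R$, over which $R$ is torsion-free; so the differences $a_i-a_j$ need no hypothesis, contrary to your Vandermonde remark. Suppose a nonzero $a$ were a common root of all translates. Then $j\mapsto i_j$ with $a_{i_j}-a_j=a$ would be a permutation of the roots, and summing gives $na=0$, which is impossible without $n$-torsion. So the gcd of the translates over the quotient field is $t^k$. Clearing denominators gives $(Z(R))D^k=\{0\}$. Remark 2.5, applied to the domain $Z(R)$ with no $c_k$-torsion, then gives $(Z(R))D=\{0\}$, and your final deduction $Df(D)=0$ goes through.
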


\begin{proof}
We can assume that ${Z(R)\ne \{0\}}$ has no zero divisors, $F$ is an 
integrity domain, $R$ is a $F$--module without torsion (if necessary, one can move 
from $F$ to ${F/\Ann_F R}$, $\Ann_F R=\Ann_F Z(R)$), 
${f(t)=\prod\limits_{i=1}^n (t-a_i)^{k_i}=
t^m+f_{m-1} t^{m-1}+\ldots+f_0}$, ${a_i\ne a_j}$ for ${i\ne j}$, ${f_m=1}$, 
$m=k_1+\ldots+k_n$, ${(R)h_i(D)\not\subseteq Z(R)}$, 
${h_i(t)=(t-a_i)^{k_i-1} \prod\limits_{1\leq i\ne j\leq n} (t-a_j)^{k_j}}$, 
${i=1, \ldots, n}$. 
Then for all ${i=1, \ldots, n}$, ${x\in R}$, ${z\in Z(R)}$ from 
${x D h_i(D)\equiv \alpha_i x h_i(D)\pod {Z(R)}}$ it follows that 
\begin{multline*}
(z (x h_i(D)))f(D)\ =\ 
\sum_{l=0}^m f_l \sum_{j=0}^l \binom{l}j (z D^j) (x h_i(D) D^{l-j})\ \equiv
\\
\Bigl(\sum_{l=0}^m f_l z (D+a_i \Id_R)^l\Bigr) (x h_i(D))\ \equiv\ 
(z f(D+a_i \Id_R)) (x h_i(D))\ \equiv\ 0\pod {Z(R)}\,,
\end{multline*}
${(Z(R))f(D+a_i \Id_R)=\{0\}}$. Note that  
${\bigcap\limits_{i=1}^n A_i=\emptyset}$, 
${A_i=\{a_j-a_i\}_{1\leq i\ne j\leq n}}$, since otherwise one 
can find $i_j$, ${j=1, \ldots, n}$, 
${a_{i_j}-a_j=a}$, ${i_j\ne i_{j'}}$ when ${j\ne j'}$, 
${n a=\sum\limits_{j=1}^n (a_{i_j}-a_j)=0}$, ${a=0}$?! 
Therefore, up to associativity over the field of quotients of $F$, the 
greatest common divisor of ${\{f(t+a_i)\}}_{i=1}^n$ is equal to $t^k$, 
${(Z(R))D^k=(Z(R))D=\{0\}}$ (see Remark 2.5). 
\end{proof}

In conclusion of this section, we present the necessary information to transfer 
what has been said to the case of a $F$--algebra ${(R, \alpha)}$ with an 
automorphism or anti-automorphism $\alpha$ of order 2. Let's start with the 
fact that $\alpha$ induces an 
automorphism ${\alpha_M\in \Aut_F(\End_F(R))}$ of order 2, 
${\psi \alpha_M=\alpha \psi \alpha}$, ${\psi\in \End_F(R)}$,  
${t_x \alpha_M=t_{x \alpha}}$ for the automorphism $\alpha$ and  
${l_x \alpha_M=r_{x \alpha}}$, ${r_x \alpha_M=l_{x \alpha}}$ for the involution 
$\alpha$, ${t=l, r}$, ${x\in R}$. The semiprimeness $R$ is equivalent to its 
$\alpha$--semiprimeness (the absence of ${\{0\}\ne I=(I)\alpha\lhd R}$, 
${I^2=\{0\}}$ (${I=(I)\alpha\lhd R}$ is an 
$\alpha$--ideal $R$, ${I \lhd_{\alpha} R}$); if ${J\lhd R}$, ${J^2=\{0\}}$, 
then ${((J)\alpha)^2=(J\cap (J)\alpha)^2=\{0\}}$, 
${J\cap (J)\alpha, J+(J)\alpha\lhd_{\alpha} R}$, and in the case of 
$\alpha$--semiprime $R$ ${\{0\}=J\cap (J)\alpha=(J+(J)\alpha)^2=J+(J)\alpha}$). 

The ring $Q^s_m(A)$ of a semiprime associative $F$--algebra ${(A, \alpha)}$ can 
be defined by replacing $\mathcal{E}(A)$ with 
\begin{multline*}
\mathcal{E}(A)_{\alpha}\ =\ \{I\lhd_{\alpha} A\mid I\in \mathcal{E}(A)\}\ =
\\
\{I\lhd_{\alpha} A\mid I\cap J\ne \{0\}\ \forall\, \{0\}\ne J\lhd_{\alpha} A\}\ =\ 
\{I\cap (I)\alpha\mid I\in \mathcal{E}(A)\}
\end{multline*}
(for all ${I\in \mathcal{E}(A)}$ and ${\{0\}\ne J\lhd A}$ 
${\{0\}\ne (I\cap J)\alpha=(I)\alpha\cap (J)\alpha, 
I\cap (I)\alpha\cap (J)\alpha}$ and therefore 
${\{0\}\ne I\cap (I)\alpha\cap J}$, ${I\cap (I)\alpha\in \mathcal{E}(A)}$; 
if ${I\lhd_{\alpha} A}$, ${\{0\}\ne I\cap J}$ for any 
${\{0\}\ne J\lhd_{\alpha} A}$, then ${I\in \mathcal{E}(A)}$, since otherwise 
${\{0\}=I\cap K=I\cap (K)\alpha=I\cap (K+(K)\alpha)\ne \{0\}}$ for some 
$\{0\}\ne K\lhd A$?!), and $\alpha$ can be extended to an automorphism of 
order 2 (involution) of $Q^s_m(A)$, setting 
${q \alpha=[(\phi, I)], \phi x=(q (x \alpha))\alpha}$ 
(${\phi x=((x \alpha) q)\alpha}$), ${q\in Q^s_m(A)}$, 
${I\in \mathcal{E}(A)_{\alpha}}$, ${q I+I q\subseteq A}$, ${x\in I}$, for the 
automorphism (involution) $\alpha$, the continuations 
${\Der_{\alpha}(A)=\{D\in \Der(A)\mid D=D \alpha_M\}}$ to $Q^s_m(A)$ are 
included in $\Der_{\alpha}(Q^s_m(A))$ for the continuation of $\alpha$ to 
$Q^s_m(A)$ and $\alpha_M$ corresponding to it.
If ${(A, \alpha)}$ is a semiprime associative $SGPI$--ring, then by 
Proposition 2.12
\[
\{D\in {}_A C_{\alpha} \Der_{\alpha}(A)\mid ({}_A C)D=\{0\}\}\ =\ 
{}_A C \Der_{\alpha}(A)\cap 
\{\ad_q\mid q\in Q^s_m(A),\ q+\delta_{\alpha} q \alpha\in {}_A C\}
\]
with ${\delta_{\alpha}=-1}$ and ${\delta_{\alpha}=1}$ for the automorphism 
and involution $\alpha$, ${{}_A C_{\alpha}=\{c\in {}_A C\mid c=c \alpha\}}$ 
(taking into account ${{}_A C=\alpha({}_A C)}$), since for ${D=\ad_q}$, 
${q, x\in Q^s_m(A)}$ 
\[
x D \alpha \ =\ x \alpha D\ =\ 
[x, q] \alpha\ =\ [x \alpha, q]\ =\ -\delta_{\alpha} [x \alpha, q \alpha]\,.
\]
For $A$ without 2--torsion, 
${\{D\in {}_A C_{\alpha} \Der_{\alpha}(A)\mid ({}_A C_{\alpha})D=\{0\}\}}$ is 
described in the same way, since 

\begin{rem}
If ${(A, \alpha)}$ is a 2--torsion-free semiprime associative $F$--algebra, 
${D\in \Der_{\alpha}(A)}$, ${({}_A C_{\alpha})D=\{0\}}$, 
then ${({}_A C)D=\{0\}}$.
\end{rem}

\begin{proof}
It is sufficient to note that for all ${x\in {}_A C}$, ${i\geq 1}$ 
\begin{gather*}
0\ =\ (x+x \alpha)D\ =\ x D+x D \alpha\,,
\\ 
0\ =\ (x (x \alpha))D\ =\ (x D)(x \alpha)+x (x D \alpha)\ =\ 
(x D) (x \alpha)-x (x D)\ =\ (x D)(x \alpha - x)\,,
\\
0\ =\ 2 (x D^2) (x D)\ =\ (x D^2) (x D)\ =\ (x D^{i+1}) (x D^i)\,,
\\
0\ =\ ((x D^2)^3 (x D))D\ =\ (x D^2)^4+3 (x D^2)^2 (x D^3) (x D)\ =\ 
(x D^2)^4\ =\ x D^2
\end{gather*}
(semiprimeness of ${}_A C$), and apply Remark 2.5. 
\end{proof}

If ${(R, \alpha)}$ is a $m.s.p.$--algebra, then the auto\-morphism (involution) 
$\alpha$ extends to an automorphism (involution) of $P(R)$ by the rule: 
${(\phi x) \overline{\alpha}=(\phi \alpha_M^{\tau})(x \alpha)}$, ${x\in R}$, 
${\phi\in \CM(R)}$, where ${\tau: \CM(R)\longrightarrow {}_{M(R)} C}$
is a $F$--isomorphism of ${\CM(R)\cong \CM(M(R))}$ and ${}_{M(R)} C$, 
$\alpha_M$ is an\linebreak extension of $\alpha_M$ to ${Q^s_m(M(R)), 
\alpha_M^{\tau}: \phi\longmapsto \tau^{-1}((\tau \phi)\alpha_M), 
\phi\in \CM(R), \alpha_M^{\tau}\in \Aut_F(\CM(R))}$ of order 2, 
${(\psi \alpha_M^{\tau}) x=(\psi (x \alpha))\alpha}$, ${x\in R}$,
${\psi\in \End(R)_{M(R)'}}$ (correctness follows from 
$t_y \alpha_M=t_{y \overline{\alpha}}$, ${t=l, r}$, ${y\in P(R)}$). Each 
${D\in \Der_{\alpha}(R)}$ corresponds to 
${\overline{D}\in \Der_{\overline{\alpha}}(P(R))}$, the linearity of 
$\overline{D}$ over $\CM(R)$ 
(${\CM(R)_{\overline{\alpha}}=\{\phi\in \CM(R)\mid 
\tau \phi\in {}_{M(R)} C_{\alpha_M}\}}$) is equivalent to the linearity of 
the extension ${\ad_D\in \Der_{\alpha_M}(M(R))}$ to  
${\ad_D\in \Der_{\alpha_M}(Q^s_m(M(R)))}$ over ${}_{M(R)} C$ 
(${}_{M(R)} C_{\alpha_M}$). Consequently, if ${(R, \alpha)}$ is a 
$m.s.p.$--algebra, $M(R)$ is a $SGPI$--algebra, ${D\in \Der_{\alpha}(R)}$, 
then ${[\overline{D}, \CM(R)]=\{0\}}$ 
(${[\overline{D}, \CM(R)_{\overline{\alpha}}]=\{0\}}$ for $R$ without 
2--torsion) is equivalent to the including of $\ad_D$ in 
\[
{}_{M(R)} C_{\alpha_M} \Der_{\alpha_M}(M(R))\cap 
\{\ad_Q\mid Q\in Q^s_m(M(R)),\ 
Q-Q \alpha_M\in {}_{M(R)} C\}\,.
\]

Lemmas 2.7, 2.8 can be reformulated for ${D\in \Der_{\alpha}(R)}$, 
\[
\End(R)_{M(R)', \alpha_M}\ =\ 
\{\phi\in \End(R)_{M(R)'}\mid \phi=\phi \alpha_M\}\,,\quad 
Z(R)_{\alpha}\ =\ \{z\in Z(R)\mid z=z \alpha\}
\] 
and ${[D, \End(R)_{M(R)', \alpha_M}]=\{0\}}$ under the appropriate condition, 
the conditions of indecomposability into a direct sum of ideals and weak 
Noetherian property are replaced by their analogues for $\alpha$--ideals. 
Theorem 2.14 gets the following version: 
if ${(A, \alpha)}$ is a semiprime associative $F$--algebra with one the 
conditions: has no ${}_A C_{\alpha}$--torsion (and, in particular, 
is $\alpha$--prime); weakly Noetherian for $\alpha$--ideals; finitely 
cogenerated as a ${\langle M(A), \alpha\rangle}$--module, and  
$\mathcal{E}(A)_{\alpha}$ is closed under countable intersections, then the 
algebraicity ${D\in \Der_{\alpha}(A)}$ over $F$\linebreak inherits its extension to 
${\hat{A}_{\alpha}={}_A C_{\alpha}+{}_A C_{\alpha} A}$, the extension of 
${D\in \Der_{\alpha}(A)\cap \Der_{nil}(A)}$ to $\hat{A}_{\alpha}$ is included 
in ${\Der_{\alpha}(\hat{A}_{\alpha})\cap \Der_{nil}(\hat{A}_{\alpha})}$.

In the context of the prime part of Proposition 2.12, it should be noted that 
there are other realizations of derivations of primitive associative rings 
with non-zero socle in terms of inner derivations \cite{JacSR}, Theorem 14.2, 
p. 130.

\section{Radicals of groups of automorphisms of algebras}

The description of the radicals of linear groups in terms of the radicals 
of coefficient rings from \cite{Gol7} can be supplemented by a scheme for 
constructing radicals of algebras based on radicals of groups, similar to 
Lemma 4.1 from \cite{Gol6}.

\begin{lemma}
Let $\mathfrak{M}$ and $\mathfrak{N}$ be classes of $F$--algebras and groups 
closed under taking homomorphic images, ideals and, respectively, normal 
subgroups, $\mathcal{T}$ be a radical in the sense of Kurosh --- Amitsur on 
$\mathfrak{N}$, $\alpha$ be a mapping that associates the algebras 
${R\in \mathfrak{M}}$ with the groups ${\alpha(R)\in \mathfrak{N}}$ and has 
the properties: 
\begin{enumerate}

\item there is a correspondence ${I\longmapsto \alpha(I)_R\lhd \alpha(R)}$, 
${I\lhd R}$, such that ${\alpha(I)_R=\mathcal{T}(\alpha(I)_R)}$ if  
${\alpha(I)=\mathcal{T}(\alpha(I))}$,  
${\alpha(R)=\prod\limits_{a\in A} \alpha(I_a)_R}$ for 
${R=\sum\limits_{a\in A} I_a}$, ${I_a\lhd R}$, 
${\alpha(I_a)=\mathcal{T}(\alpha(I_a))}$; 
    
\item to any homomorphism of algebras ${\psi: R\longrightarrow R'}$ there 
corresponds an epimorphism (a homomorphism for the hereditary $\mathcal{T}$) of 
groups ${\psi_{\alpha}: \alpha(R)\longrightarrow \alpha(\psi(R))}$ such that 
\[
(\Ker \psi_{\alpha}\, \alpha(\Ker \psi)_R)/\alpha(\Ker \psi)_R\ =\ 
\mathcal{T}((\Ker \psi_{\alpha}\, \alpha(\Ker \psi)_R)/\alpha(\Ker \psi)_R)
\]
(and $\psi_{\alpha}$ is an epimorphism if 
${\alpha(Ker \psi)_R=\mathcal{T}(\alpha(\Ker \psi)_R)}$).

\end{enumerate}
Then ${\mathfrak{R}_{\mathcal{T}, \alpha}=\{R\in \mathfrak{M}\mid 
\alpha(R)=\mathcal{T}(\alpha(R))\}}$ is a radical subclass of 
$\mathfrak{M}$.
\end{lemma}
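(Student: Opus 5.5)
We check the standard characterization of radical subclasses in the Kurosh --- Amitsur sense on $\mathfrak{M}$. Write $\mathfrak{R}=\mathfrak{R}_{\mathcal{T}, \alpha}$. We need three things: $\mathfrak{R}$ is closed under homomorphic images; $\mathfrak{R}$ is closed under sums of ideals, including ascending unions of chains (inductivity); and $\mathfrak{R}$ is closed under extensions. The facts used about $\mathcal{T}$ are only the usual ones for a Kurosh --- Amitsur radical on $\mathfrak{N}$. The class $\{G\mid G=\mathcal{T}(G)\}$ is closed under homomorphic images and extensions. Every normal subgroup of $G$ that is generated by $\mathcal{T}$-radical normal subgroups lies in $\mathcal{T}(G)$. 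For hereditary $\mathcal{T}$, normal subgroups of radical groups are radical.

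\emph{Homomorphic images.} Let $R\in\mathfrak{R}$ and $\psi: R\to R'$. By property 2 we have a map $\psi_\alpha:\alpha(R)\to\alpha(\psi(R))$, which is an epimorphism in the general case. Then $\alpha(\psi(R))$ is a homomorphic image of the radical group $\alpha(R)$, hence radical, so $\psi(R)\in\mathfrak{R}$. In the hereditary case $\psi_\alpha$ is only a homomorphism, and this is the delicate point. Here I would use that $\alpha(\Ker\psi)_R\lhd\alpha(R)$ is radical, by heredity of $\mathcal{T}$. The parenthetical clause of property 2 then makes $\psi_\alpha$ an epimorphism, and the argument proceeds as before.

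\emph{Sums of ideals.} Suppose $R=\sum_{a\in A} I_a$ with $I_a\lhd R$ and $I_a\in\mathfrak{R}$, i.e.\ $\alpha(I_a)=\mathcal{T}(\alpha(I_a))$. Property 1 gives $\alpha(I_a)_R=\mathcal{T}(\alpha(I_a)_R)$, and $\alpha(R)=\prod_a\alpha(I_a)_R$. A product of radical normal subgroups lies in $\mathcal{T}(\alpha(R))$, so $\alpha(R)=\mathcal{T}(\alpha(R))$. This covers unions of chains as a special case.

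\emph{Extensions.} Let $I\lhd R$ with $I, R/I\in\mathfrak{R}$, and take $\psi: R\to R/I$ the canonical map, so $\Ker\psi=I$. By property 1, $N=\alpha(I)_R$ is radical. By property 2, $\psi_\alpha$ is then an epimorphism onto the radical group $\alpha(R/I)$. Therefore $\alpha(R)/\Ker\psi_\alpha$ is radical. Next, $(\Ker\psi_\alpha N)/N$ is radical by property 2, and since $N$ is radical, $\Ker\psi_\alpha N$ is radical by closure under extensions. Finally, $\alpha(R)/(\Ker\psi_\alpha N)$ is an image of $\alpha(R)/\Ker\psi_\alpha$, hence radical. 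Applying extension-closure once more gives that $\alpha(R)$ is radical, so $R\in\mathfrak{R}$.

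\emph{Main obstacle.} The main obstacle is ensuring that the parenthetical clauses are used correctly in the hereditary case for homomorphic images. We also need the classes to be closed so that all objects involved lie in $\mathfrak{M}$ and $\mathfrak{N}$. That closure is guaranteed by the hypotheses on $\mathfrak{M}$ and $\mathfrak{N}$ together with $I\lhd R$ and $\alpha(I)_R\lhd\alpha(R)$.
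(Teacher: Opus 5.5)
Your proof is correct and follows the paper's approach. The paper defers the sum and extension checks to Lemma 4.1 of \cite{Gol6} and spells out only the hereditary homomorphic-image case, which it handles exactly as you do: heredity makes $\alpha(\Ker\psi)_R\lhd\alpha(R)$ radical, so the parenthetical clause of condition 2 makes $\psi_\alpha$ surjective. Your explicit checks of closure under sums of ideals and under extensions, using conditions 1 and 2, supply the parts the paper only cites.
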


\begin{proof} The only difference from the proof of Lemma 4.1, \cite{Gol6}, 
is the need to check the closedness of the class 
$\mathfrak{R}_{\mathcal{T}, \alpha}$ under taking homomorphic images for the 
hereditary radical $\mathcal{T}$, but in this case for any epimorphism 
${\psi: R\longrightarrow R'}$, ${R\in \mathfrak{R}_{\mathcal{T}, \alpha}}$, 
$\alpha(\Ker \psi)_R=\mathcal{T}(\alpha(\Ker \psi)_R)\lhd 
\alpha(R)=\mathcal{T}(\alpha(R))$ and so, $\psi_{\alpha}$ is an epimorphism 
and ${\alpha(R')=\psi_{\alpha}(\alpha(R))=\mathcal{T}(\alpha(R'))}$, 
${R'\in \mathfrak{R}_{\mathcal{T}, \alpha}}$. 

In condition 2, we can limit ourselves to requirement of surjectivity of 
$\psi_{\alpha}$ for ${R\in \mathfrak{R}_{\mathcal{T}, \alpha}}$ and 
canonical epimorphisms ${\psi: R\longrightarrow R/I}$, ${I\lhd R}$, 
${I, R/I\in \mathfrak{R}_{\mathcal{T}, \alpha}}$, with equality for 
$\psi_{\alpha}$ from condition 2 only in the latter case, without using 
the heredity of $\mathcal{T}$.
\end{proof}

The version of Lemma 4.1, \cite{Gol6}, given here is applicable to Lemma 4.1 
itself, both in condition 1 (${\gamma(R)=\sum\limits_{a\in A} \gamma(I_a)_R}$ 
for ${R=\sum\limits_{a\in A} I_a}$, ${I_a\lhd R}$, 
${\gamma(I_a)=\mathcal{T}(\gamma(I_a))}$) and condition 2.

For example, if $\mathfrak{M}$ and $\mathfrak{N}$ are the classes of all 
associative $F$--algebras and groups, 
\[
\alpha(A)\ =\ U(A^1)\cap (1+A)\ =\ 1+J(A)\,,\quad 
\alpha(I)_A\ =\ \alpha(I)\ =\ 1+I\cap J(A)\quad (I\lhd A\in \mathfrak{M})\,,
\]
$U(A^1)$ is the group of invertible elements of the algebra ${A^1=F\oplus A}$ 
obtained by the standard adjunction of 1 to $A$, $J(A)$ is the set 
of quasiregular elements of $A$, then any homomorphism of algebras 
${\psi: A\longrightarrow A'}$, ${A, A'\in \mathfrak{M}}$, induces a 
homomorphism of groups 
\[
\psi_{\alpha}\ =\ \psi^1|_{\alpha(A)}\,:\ 
\alpha(A)\longrightarrow \alpha(A')\,,\quad 
\Ker \psi_{\alpha}\ =\ 1+J(\Ker \psi)\ =\ \alpha(\Ker \psi)\,,
\]
${\psi^1(f+x)=f+\psi x, f\in F, x\in A, \psi_{\alpha}(\alpha(A))=\alpha(\psi(A))}$ 
for ${\Ker \psi=\Ker \psi^1=J(\Ker \psi)}$, for  
${A=\sum\limits_a I_a}$, ${I_a=J(I_a)\lhd A}$, one has
${\alpha(A)=\prod\limits_a \alpha(I_a)}$, ${A=J(A)}$. Therefore, to any radical 
of groups $\mathcal{T}$ and a subclass ${\mathfrak{M}'\subseteq \mathfrak{M}}$ 
that is closed under taking ideals and homomorphic images, such that all 
${A\in \mathfrak{M}'}$, ${\alpha(A)=\mathcal{T}(\alpha(A))}$, are quasiregular 
(${A=J(A)}$), there corresponds a radical $\mathcal{T}'$ on $\mathfrak{M}'$ 
with the class of $\mathcal{T}'$--radical algebras 
${\mathfrak{R}_{\mathcal{T}, \alpha}\cap \mathfrak{M}'}$. The latter is 
true, in particular, for the class of all quasiregular associative 
$F$--algebras and any radical of groups $\mathcal{T}$. These conclusions are 
generalized to the mapping ${}_n\alpha$ (${{}_1\alpha=\alpha}$), ${n\geq 1}$, 
\[
{}_n\alpha(A)\ =\ GL_n(A^1)\cap (E+M_n(A))\ =\ E+J(M_n(A))\,,\quad 
{}_n\alpha(I)_A\ =\ {}_n\alpha(I)\quad (I\lhd A\in \mathfrak{M})\,,
\]
where ${GL_n(A^1)=U(M_n(A^1)), 
E=(\delta_{ij})_{i, j=1}^n=\sum\limits_{i=1}^n E_{ii}, 
E_{ij}=(\delta_{si}\delta_{tj})_{s, t=1}^n\in M_n(A^1), I=J(I)}$ 
for ${I\lhd A}$ is equivalent to ${M_n(I)=J(M_n(I))}$.
For all ${n\geq 3}$, ${k\geq 0}$, ${B\subseteq A}$ 
\[
B_n\ =\ \{t_{ij}(b)=E+b E_{ij}\mid 1\leq i\ne j\leq n,\ b\in B\}
\subseteq {}_n\alpha(A)\,,\quad g_k(B)_n\subseteq g^{gr}_k(B_n)\,,
\]
due to ${[t_{ij}(x), t_{jk}(y)]=t_{ik}(xy)}$, ${i\ne j\ne k}$, ${x, y\in A}$. 
If ${n\geq 3}$, ${{}_n\alpha(A)=T({}_n\alpha(A))}$, then for any 
${B\subseteq A}$, ${|B|< \infty}$, there is ${k=k(B)\geq 0}$, 
${g^{gr}_k(B_n)=\{E\}}$, ${g_k(B)=\{0\}}$ and, consequently, ${A=T(A)=LN(A)}$. 
If ${A=LN(A)}$, then ${M_n(A)=LN(M_n(A))}$, 
${g^{gr}_l(E+C)\subseteq E+\langle C\rangle^{2^l}}$ for all 
${C\subseteq M_n(A)}$, ${E+C\subseteq {}_n\alpha(A)}$, ${l\geq 0}$, for 
${|C|< \infty}$ there exists ${m\geq 0}$, ${\langle C\rangle^{2^m}=\{0\}}$, 
${g^{gr}_m(E+C)=\{E\}}$, ${{}_n\alpha(A)=E+M_n(A)=T({}_n\alpha(A))}$. 
As a consequence, ${}_n\alpha$, ${n\geq 3}$, connects the weakly solvable 
radical of groups $T$ and the locally nilpotent radical $LN$ of associative 
algebras (the prime radicals (lower nil-radicals) of groups and associative 
$PI$--algebras \cite{Gol7}, Theorem 2.6), ${LN=T'}$. In a similar way, 
the prime radicals of groups and associative algebras $\prr$ are related by 
means of ${}_n\alpha$, ${n\geq 3}$, ${\prr=\prr'}$, although $\prr$ is not a 
radical in the sense of Kurosh --- Amitsur on the class of all groups 
(algebras) (see Addition 2). If ${{}_n\alpha(A)=\prr({}_n\alpha(A))}$, then 
for any ${x_0=x, a_l, b_l, c_l\in A, l\geq 0}$ and 
${x_{l+1}=x_l a_l x_l b_l c_l}$,
\[
t_{ij}(x_{l+1})\ =\ 
[[t_{ij}(x_l), [t_{ji}(a_l), [t_{ij}(x_l), t_{jk}(b_l)]]], t_{kj}(c_l)]\in 
(t_{ij}(x_l))_{{}_n\alpha(A)}^2\quad (i\ne j\ne k)\,,
\]
${E\in \{t_{ij}(x_l)\}_{l=0}^{\infty}}$, ${0\in \{x_l\}_{l=0}^{\infty}}$, 
${x\in \prr(A)}$, ${A=\prr(A)}$. If ${A=\prr(A)}$, then  
\begin{gather*}
M_n(A)\ =\ \prr(M_n(A))\ =\ 
M_n(A)\cap \prr(M_n(A^1))\ =\ 
M_n(A)\cap M_n(\prr(A^1))\,,
\\
(E+(a_{ij}))_{{}_n\alpha(A)}\subseteq E+((a_{ij}))_{M_n(A)}\,,\  
(E+(a_{ij}))_{{}_n\alpha(A)}^2\subseteq E+((a_{ij}))_{M_n(A)}^2\ 
((a_{ij})\in M_n(A))\,,
\end{gather*}
${{}_n\alpha(A)=E+M_n(A)=\prr({}_n\alpha(A))}$ \cite{Gol7}, Proposition 3.6.

For the mapping ${}_n\beta$, ${n\geq 2}$, 
\[
{}_n\beta(A)\ =\ E(A^1, A)\,,\quad {}_n\beta(I)_A\ =\ E(A^1, I)\ =\ 
(E(I))_{E(A^1)}\quad (I\lhd A\in \mathfrak{M})\,,
\]
${E(B)=\langle B_n\rangle}$, ${B\subseteq A^1}$, 
${{}_n\beta(A)\subseteq {}_n\alpha(A)}$, Lemma 3.1 allows us to construct 
radicals on the class ${\mathfrak{M}'\subseteq \mathfrak{M}}$ (see above) 
based on the radicals of groups $\mathcal{T}$ with the conditions: for any 
${I\lhd A\in \mathfrak{M}'}$
\begin{enumerate}

\item ${{}_n\beta_A(I)=\mathcal{T}({}_n\beta_A(I))}$ if 
${{}_n\beta(I)=\mathcal{T}({}_n\beta(I))}$;

\item ${({}_n\beta(A)\cap (E+M_n(I)))/{}_n\beta_A(I)=
\mathcal{T}(({}_n\beta(A)\cap (E+M_n(I)))/{}_n\beta_A(I))}$\\ 
(this can be limited to the case 
${{}_n\beta(A/I)=\mathcal{T}({}_n\beta(A/I))}$, 
${{}_n\beta(I)=\mathcal{T}({}_n\beta(I))}$),

\end{enumerate}
since the canonical epimorphism ${\psi: A\longrightarrow A/I}$, 
${I\lhd A\in \mathfrak{M}}$, induces an epimorphism 
\[
\psi_{{}_n\beta}\ =\ \psi^1_n|_{{}_n\beta(A)}\,:\ 
{}_n\beta(A)\longrightarrow {}_n\beta(A/I)\,,\quad 
\Ker \psi_{{}_n\beta}\ =\ {}_n\beta(A)\cap (E+M_n(I))\,,
\]
where ${\psi^1_n: M_n(A^1)\longrightarrow M_n((A/I)^1)}$, 
${\psi^1_n((x_{ij}))=(\psi^1 x_{ij})}$, ${\psi^1(f+x)=f+x+I}$, ${f\in F}$, 
$x\in A$, ${(x_{ij})\in M_n(A^1)}$. In particular, this 
is true for $\mathcal{T}$ such that for any ${I\lhd A\in \mathfrak{M}'}$
\begin{enumerate}

\item ${A=J(A)}$ if ${{}_n\beta(A)=\mathcal{T}({}_n\beta(A))}$; 

\item ${{}_n\beta(I)_A\cap D_n(A, I)=
\mathcal{T}({}_n\beta(I)_A\cap D_n(A, I))}$ if 
${{}_n\beta(I)=\mathcal{T}({}_n\beta(I))}$;

\item ${D_n(A, I)=\mathcal{T}(D_n(A, I))}$ if 
${{}_n\beta(I)=\mathcal{T}({}_n\beta(I))}$, 
${{}_n\beta(A/I)=\mathcal{T}({}_n\beta(A/I))}$\\ (and so, ${A=J(A)}$), 

\end{enumerate}
where ${D_n(A, I)=\Bigl\{\sum\limits_{i=1}^n x_i E_{ii}\in 
{}_n\beta(A)\Bigl| x_i\in 1+I\Bigr\}}$ and for ${I=J(I)\lhd A}$
\[
{}_n\beta(I)_A\ =\ ({}_n\beta(A)\cap D_n(A, I)) E(I)\,,\quad 
{}_n\beta(A)\cap (E+M_n(I))\ =\ D_n(A, I) E(I)
\] 
are the semidirect products of ${{}_n\beta(A)\cap D_n(A, I)}$ and $E(I)$, 
${D_n(A, I)}$ and $E(I)$, respectively, 
${{}_n\beta(I)_A, E(I)\lhd {}_n\beta(A)\cap (E+M_n(I)), 
{}_n\beta(I)_A\cap D_n(A, I)\lhd D_n(A, I)}$ (reducing matrices from 
${E+M_n(I)}$ to diagonal form by multiplication by matrices from $I_n$).
Like ${}_n\alpha$, ${}_n\beta$, ${n\geq 3}$, relates the radicals $T$ 
and $\prr$ of groups and associative algebras.

For any $F$--algebra $R$ and ${I\lhd R}$, we select in the group $\Aut_F(R)$ 
the submonoid ${C(R, I)}$ and its subgroup ${U(R, I)}$,
\[
C(R, I)\ =\ \{\phi\in \Aut_F(R)\mid (R)(\phi-\Id_R)\subseteq I\}\,,\, 
U(R, I)\ =\ \{\phi\in C(R, I)\mid \phi^{-1}\in C(R, I)\}\,,
\] 
${C(R, I) C(R, J)\subseteq C(R, I+J)}$ for all ${I, J\lhd R}$, due to 
\[
\phi \phi'-\Id_R\ =\ \phi (\phi'-\Id_R)+\phi-\Id_R\in C(R, I+J)\quad 
(\phi\in C(R, I),\ \phi'\in C(R, J))\,, 
\]
${C(R, I)=U(R, I)}$ is the kernel of the 
homomorphism ${\pi_I: \Aut_F(R)\longrightarrow \Aut_F(R/I)}$ induced 
by the canonical epimorphism ${R\longrightarrow R/I}$, 
${(x+I)(\pi_I \phi)=x \phi+I}$, ${x\in R}$, ${\phi\in C(R, I)}$, 
for ${I\lhd_{\Aut} R}$.
If $F$ is an algebra over a field $\mathbb{F}$, ${\Ch \mathbb{F}=0}$, 
then in $\Aut_F(R)$ we can distinguish the subgroups 
\begin{gather*}
E(R)\ =\ \langle \exp(D)\mid D\in \Der_{nil}(R)\rangle\,,\quad 
\overline{E}(R)\ =\ \langle \exp(D)\mid D\in \Der_n(R)\rangle\,,
\\ 
G(R)\ =\ \langle \exp(D)\mid D\in \Mder_{nil}(R)\rangle\,,\quad 
\overline{G}(R)\ =\ \langle \exp(D)\mid D\in \Mder_n(R)\rangle\,,
\end{gather*}
where for any ${D\in \Der_{nil}(R)}$, ${x\in R}$, ${x D^{n-1}\ne x D^n=0}$ at 
some ${n=n(D, x)\geq 1}$,
$x \exp(D)=\sum\limits_{i=0}^{n-1} \frac{x D^i}{i!}$. 
To any ${D\in \Der_{nil}(R)}$ (${D\in \Der_n(R)}$) there corresponds 
${\ad_D\in \Der_{nil}(M(R))}$ ($\ad_D\in \Der_n(M(R))$), 
for all ${I\lhd R}$, ${(I)D\subseteq I}$, ${\psi\in M(I)}$, 
${\phi\in \Mder_*(I)}$, ${*=a, sa, nil, n}$,
\[
\exp(-D|_I) \psi \exp(D|_I)\ =\ \psi \exp(\ad_{D|_I})\in M(I)\,,\quad 
\phi \exp(\ad_{D|_I})\in \Mder_*(I)\,,
\]
${\exp(-D) \exp(\phi) \exp(D)=\exp(\phi \exp(\ad_D))\in G(I)}$ 
($\overline{G}(I)$) for ${\phi\in \Mder_{nil}(I)}$ ($\Mder_n(I)$) 
and so, ${G(R), \overline{G}(R)\lhd E(R)}$, 
${G(I), \overline{G}(I)\lhd G(R)_I}$ (${G(I), \overline{G}(I)\lhd E(R)_I}$) 
for any ${I\lhd R}$ (${I\lhd_{Der} R}$), $G(R)_I$ ($E(R)_I$) is a subgroup 
of $\Aut_F(I)$ consisting of restrictions of elements $G(R)$ ($E(R)$) to $I$. 

If ${I\lhd_{\Der} R}$, ${D\in \Der(R)}$ or (and) ${I\lhd R}$, 
${D\in \Mder(R)}$, then for any ${x\in R}$, ${k\geq 0}$,  
$\phi\in C(R, I)$ there is ${x_k\in I}$, ${x D^k \phi=x D^k+x_k}$,  
\[
x \phi^{-1} D^k \phi\ =\ (x-x_0 \phi^{-1}) D^k \phi\ =\ 
x D^k+x_k-x_0 (\phi^{-1} D \phi)^k\in x D^k+I
\]
(${\Der(R)=\psi \Der(R) \psi^{-1}}$, ${\Mder(R)=\psi \Mder(R) \psi^{-1}}$, 
${\psi\in \Aut_F(R)}$), and for  
\[
D\ =\ \sum_i t_{i1}\cdots t_{i n_i}\in \Mder_{nil}(R)\,,\quad 
C\ =\ \phi^{-1} D \phi-D\in \Mder(R)\cap (M^R(S))_{M(R)}\,,
\]
${t_{ij}\in \{t_{x_{ij}}\mid t=l, r,\ x_{ij}\in R\}}$, ${n_i\geq 1}$, 
${S=\{x_{ij} \phi-x_{ij}\}_{i, j}\subseteq I}$, 
\[
[\phi^{-1}, \exp(D)]\ =\ \exp(\phi^{-1} D \phi) \exp(-D)\ =\ 
\exp(D+C) \exp(-D)\in V(R, S)\subseteq G(R, (S)_R)\,,
\]
where for all ${B\subseteq R}$, ${J\lhd R}$ 
\begin{gather*}
V(R, B)\ =\ \{\phi\in G(R)\mid x \phi, x \phi^{-1}\in x+x (M^R(B))_{M(R)}\
\forall\, x\in R\}\,,
\\
G(R, J)\ =\ G(R)\cap U(R, J)\ =\ G(R)\cap C(R, J)\,,
\end{gather*}
${G(R, J), \overline{G}(R, J)=\overline{G}(R)\cap G(R, J), 
V(R, B), \overline{V}(R, B)=\overline{G}(R)\cap V(R, B)\lhd G(R),}$ in 
view of ${x G(R) M G(R)\subseteq x M\lhd R}$, ${x\in R}$, ${M\lhd M(R)}$,
${V(R, B) V(R, B')\subseteq V(R, B\cup B')}$, ${B, B'\subseteq R}$. Hence 
for any ${I\lhd R}$
\begin{multline*}
[U(R, I), G(R)]\ =\ 
\langle [\phi, \exp(D)]\mid \phi\in U(R, I),\ D\in \Mder_{nil}(R)\rangle
\subseteq
\\ 
\prod_{S\subseteq I,\ |S|< \infty} V(R, S)\subseteq V(R, I)\,,
\end{multline*} 
${[U(R, I), \overline{G}(R)]\subseteq \overline{V}(R, I)}$. 
Since for any associative ring $A$ with 1, ${a, b\in U(A)}$,
\begin{gather*}
[a, b]\ =\ 1+((a-1)(b-1)+(b-1) (a^{-1}-1)+(a-1) (b-1) (a^{-1}-1)) b^{-1}\,,
\\
V(R, B)^k\subseteq \{\phi\in G(R)\mid 
x \phi, x \phi^{-1}\in x+x (M^R(B))_{M(R)}^k\ \forall\, x\in R\}\quad 
(B\subseteq R,\ k\geq 1)\,.
\end{gather*}

Remark 2.1 in \cite{Gol9} requires clarification, which does not affect the 
remaining conclusions of \cite{Gol9} (they concern algebras in which powers 
of ideals are ideals). If $R$ is an algebra and ${k\geq 1}$ such that 
${J^l\lhd R}$ for all ${J\lhd R}$, ${l=1, \ldots, k}$, then to any 
${I\lhd R}$, ${I^{k+1}=\{0\}}$, there corresponds ${(M^R(I))_{M(R)}\lhd M(R)}$, 
${(M^R(I))_{M(R)}^{k+1}=\{0\}}$ (${A(I, R)^{k+1}=A(I, R, D)^{k+1}=\{0\}}$ in 
the notations of \cite{Gol9}; ${(M^R(I))_{M(R)}^n=\{0\}}$ implies 
${I^{n+1}=\{0\}}$, ${n\geq 1}$, without additional conditions). 

We call an ideal $I$ of an algebra $R$ \emph{$M$--nilpotent} if the ideal 
${(M^R(I))_{M(R)}\lhd M(R)}$ is nilpotent. We choose the sum $L(R)$ of 
all $M$--nilpotent ideals of $R$ and the smallest ideal $\prr_M(R)$ among 
all ${J\lhd R}$, ${L(R/J)=\{0\}}$, 
${\prr_M(R)=\bigcup\limits_{\alpha\geq 0} L_{\alpha}(R)}$, where 
${\{L_{\alpha}(R)\mid \alpha\geq 0\}}$ is a series of ideals of $R$, 
constructed for the mapping ${L: R\longmapsto L(R)}$ similarly to the Baer 
chain, ${\prr(R)\subseteq \prr_M(R)\subseteq \prr_N(R)}$ 
(${\prr_M(R)=\prr_N(R)}$ if in $R/\prr_M(R)$ the powers of ideals are ideals).
Following the elementwise description of $\prr$ \cite{ZShS}, 
Proposition 4, Theorem 6, p. 192, 193, it can be shown that  
\begin{multline*}
\prr_M(R)\ =\ 
\bigl\{x=x_0\in R\bigl| \forall\, 
x_{i+1}\in R (M^R(\{x_i\}))_{M(R)}^2,\ i\geq 0,\ 
0\in \{x_i\}_{i=0}^{\infty}\bigr\}\ =
\\
\bigl\{x=x_0\in R\bigl| \forall\, 
x_{i+1}\in R (M^R((x_i)_R))_{M(R)}^2,\ i\geq 0,\ 
0\in \{x_i\}_{i=0}^{\infty}\bigr\}\ =\ 
\bigcap\limits_{P\in \Spec_{\prr_M}(R)} P\,,
\end{multline*}
where ${\Spec_{\mathcal{T}}(R)=\{P\in \Spec(R)\mid \mathcal{T}(R/P)=\{0\}\}, 
\mathcal{T}=\prr_*, *=N, S, M}$, or any radical of $F$--algebras. Only 
${\prr_M(R/P)=\{0\}}$ for ${\{x_i\}_{i=0}^{\infty}\subseteq R\setminus \{0\}, 
x_{i+1}\in R (M^R((x_i)_R))_{M(R)}^2, i\geq 0,}$ and the maximal 
ideal $P$ among ${I\lhd R}$, ${I\cap \{x_i\}_{i=0}^{\infty}=\emptyset}$, 
needs explanation. It is sufficient to note that for all ${J\lhd R}$, 
${k\geq 1}$ 
\[
R (M^R(R (M^R(J))_{M(R)}^k))_{M(R)}^2\subseteq 
J (M^R(R (M^R(J))_{M(R)}^k))_{M(R)}\subseteq 
R (M^R(J))_{M(R)}^{k+1}
\]
and, as a consequence, if ${y=y_0\in J}$, 
${y_{i+1}\in R (M^R((y_i)_R))_{M(R)}^2}$, ${i\geq 0}$, 
${(M^R(J))_{M(R)}^n=\{0\}}$ for some ${n\geq 1}$, then ${y_l=0}$ for all 
${l\geq \max\{1, n-1\}}$. If $F$ is a field, ${\Ch F=0}$, ${I\lhd R}$ is 
$M$--nilpotent and ${D\in \Der(R)}$, then ${I+(I)D\lhd R}$ is $M$--nilpotent, 
due to  
\begin{multline*}
(M^R(I+(I)D))_{M(R)}\ =\ (M^R(I))_{M(R)}+(M^R((I)D))_{M(R)}\ =
\\ 
(M^R(I))_{M(R)}+((M^R(I))_{M(R)})\ad_D
\end{multline*}
and the nilpotency of ${J+(J)D}$ for any associative $F$--algebra $A$, 
nilpotent ${J\lhd A}$, ${D\in \Der(A)}$ \cite{Gol9}, the observations 
after Remark 1.1. Using transfinite induction, we get from here that 
${L_{\alpha}(R)\lhd_{\Der} R}$ for all ${\alpha\geq 0}$, 
${\prr_M(R)\lhd_{\Der} R}$. Since 
${(M^R(J))_{M(R)}=\sum\limits_a (M^R(J_a))_{M(R)}}$ for any
${J=\sum\limits_a J_a, J_a\lhd R}$, the finite sums of $M$--nilpotent 
ideals are $M$--nilpotent. 

\begin{lemma} 
If $F$ is an algebra over a field $\mathbb{F}$, ${\Ch \mathbb{F}=0}$, 
$R$ is a $F$--algebra and $I$ is an ideal of $R$ such that the ideal 
${(M^R(I))_{M(R)}\lhd M(R)}$ is locally nilpotent (nilpotent), then 
the group ${\overline{G}(R, I)^{(1)}}$ (${G(R, I)^{(1)}}$) is locally 
nilpotent (nilpotent) and, as a consequence, 
$G(R, \prr(R))\subseteq G(R, \prr_M(R))\subseteq \prr(G(R))$.
\end{lemma}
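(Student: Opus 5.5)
The plan is to pass from automorphisms to multiplication operators. Set $M=(M^R(I))_{M(R)}\lhd M(R)$. Every element of $G(R)$ is a product of exponentials $\exp(D)$ with $D\in \Mder_{nil}(R)$, so it lies in the associative algebra $M(R)'=F\Id_R+M(R)$, or more precisely in its local completion along $x\in R$. Accordingly, the first step is to show that for $\phi\in G(R, I)$ we have $\phi-\Id_R\in M(R)$ pointwise and, more importantly, that the commutator subgroup $G(R,I)^{(1)}$ consists of operators of the form $\Id_R+m$ with $m\in M$, acting pointwise.

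For this I will use the identity displayed just before the lemma,
\[
[a,b]=1+((a-1)(b-1)+(b-1)(a^{-1}-1)+(a-1)(b-1)(a^{-1}-1))b^{-1},
\]
together with the inclusion $[U(R,I),G(R)]\subseteq V(R,I)$ established there. Since $G(R,I)\subseteq U(R,I)$, every commutator $[\phi,\psi]$ with $\phi,\psi\in G(R,I)$ lies in $V(R,I)$. That is, $x[\phi,\psi]^{\pm1}\in x+xM$ for all $x\in R$, and the same holds for products by $V(R,B)V(R,B')\subseteq V(R,B\cup B')$. Hence $G(R,I)^{(1)}\subseteq V(R,I)$. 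The filtration $V(R,B)^k\subseteq\{\phi \mid x\phi^{\pm1}\in x+xM^k\}$ stated above is exactly a lower central series estimate: the $k$-th term of the lower central series of $V(R,I)$ moves $x$ only modulo $xM^k$.

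For the nilpotent case, if $M^n=\{0\}$, then $V(R,I)^n$ acts trivially, so $V(R,I)$, and hence $G(R,I)^{(1)}$, is nilpotent of class less than $n$. For the locally nilpotent case, take finitely many elements of $\overline{G}(R,I)^{(1)}$. Each is a finite product of commutators of finitely many $\exp(D)$ with $D\in\Mder_n(R)$ nilpotent, and of elements of $\overline{G}(R,I)$. By the computation preceding the lemma, each such commutator lies in $\overline{V}(R,S)$ for a finite $S\subseteq I$. The group they generate then lies in $\overline{V}(R,S)$ with $S$ finite, and I then pass to the finitely generated subalgebra of $M(R)$ generated by $M^R(S)$ and the finitely many multiplication operators occurring in the nilpotent $D$'s and their conjugates.

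The main obstacle is exactly this reduction. I need to show that, with nilpotent (not merely nil) derivations $D$, only a finitely generated piece $N\subseteq M$ of the locally nilpotent ideal is involved, so that $N^k=\{0\}$ and the inclusion $V^k\subseteq \Id+\cdot\, M^k$, applied with $M$ replaced by $N$, gives nilpotency of the generated subgroup. I would first attempt it by writing $\exp(D)$ as a polynomial in finitely many $t_x$ (possible since $D^m=0$) and conjugating.

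Finally, for the consequence: $\prr_M(R)$ is the union of the chain $L_\alpha(R)$ of $M$-nilpotent layers, and each finite sum of $M$-nilpotent ideals is $M$-nilpotent. Using the nilpotent case, the extensions $G(R,L_{\alpha+1})/G(R,L_\alpha)$ are, modulo their commutant, solvable of length at most $2$, and hence lie in the prime radical of groups. Transfinite induction along the Baer-type chain, using that $\prr$ of groups is closed under such extensions and unions, gives $G(R,\prr_M(R))\subseteq\prr(G(R))$. The inclusion $G(R,\prr(R))\subseteq G(R,\prr_M(R))$ follows from $\prr(R)\subseteq\prr_M(R)$.
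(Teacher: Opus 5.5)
Your nilpotent case matches the paper: $G(R,I)^{(1)}\subseteq[U(R,I),G(R)]\subseteq V(R,I)$ and $V(R,I)^n=\{\Id_R\}$. The locally nilpotent case, however, is left open, and the reduction you sketch would not close it.

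Placing the finitely many generators in $\overline{V}(R,S)$ with $S$ finite gains nothing. The ideal $(M^R(S))_{M(R)}$ is generated in $M(R)$ by finitely many elements of a locally nilpotent ideal, and such an ideal need not be nilpotent. Adjoining the operators $t_{x_{ij}}$ that occur in the $D$'s is worse: it leaves $(M^R(I))_{M(R)}$ altogether. Your closing remark, that $\exp(D)$ is a polynomial in multiplication operators, is the right germ. It should be used to place the commutators themselves in $\Id_R+(M^R(I))_{M(R)}$ as operators, not to enlarge a subalgebra.

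Concretely, for $D\in\Mder_n(R)$ one has $\exp(D)=\sum_{k<m}D^k/k!\in\Id_R+M(R)$, so $\overline{G}(R)\subseteq U(M(R)')$. For $\phi\in U(R,I)$, $\phi^{-1}D\phi=D+C$ with $C\in(M^R(I))_{M(R)}$ and $(D+C)^m=0$. Hence
$[\phi^{-1},\exp(D)]=\exp(D+C)\exp(-D)\in\Id_R+(M^R(I))_{M(R)}$.
Since $(M^R(I))_{M(R)}$ is an ideal of $M(R)'$, conjugation by $\overline{G}(R)$ preserves $\Id_R+(M^R(I))_{M(R)}$. This gives $\overline{G}(R,I)^{(1)}\subseteq[U(R,I),\overline{G}(R)]\subseteq\Id_R+(M^R(I))_{M(R)}$.

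It remains that $\Id_R+N$ is a locally nilpotent group whenever $N$ is a locally nilpotent associative algebra. Finitely many $\Id_R+m_j$ and their inverses lie in $\Id_R+N_0$, where $N_0=\langle m_j\rangle$ is nilpotent. The commutator identity puts the $k$-th term of the lower central series of $\Id_R+N_0$ into $\Id_R+N_0^k$. This is the paper's argument, and no finite $S$ is needed.

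The consequence also has two gaps.

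First, the layers $L(R/L_\alpha(R))$ are sums of $M$-nilpotent ideals and need not be $M$-nilpotent themselves. So your nilpotent case does not apply to them.

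Second, at a limit ordinal $G(R,L_\alpha(R))$ need not be the union of the $G(R,L_\beta(R))$, $\beta<\alpha$. The inclusion $(R)(\phi-\Id_R)\subseteq\bigcup_{\beta<\alpha}L_\beta(R)$ does not force $(R)(\phi-\Id_R)$ into a single $L_\beta(R)$. So closure of $\prr$ under unions is not what is needed.

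The paper handles both gaps through the localization $[U(R,I),G(R)]\subseteq\prod_{S\subseteq I,\,|S|<\infty}V(R,S)$:
\begin{itemize}
\item If $(M^R(I))_{M(R)}$ is a sum of nilpotent ideals of $M(R)$, each $V(R,S)$ with $S$ finite is a nilpotent normal subgroup of $G(R)$. This covers $I=L(R)$, and at successor steps $L(R/L_{\alpha-1}(R))$ in $R/L_{\alpha-1}(R)$.
\item At a limit ordinal each finite $S$ lies in some $L_\beta(R)$, which gives $G(R,L_\alpha(R))^{(1)}\subseteq\prod_{\beta<\alpha}G(R,L_\beta(R))$.
\end{itemize}
In both situations the relevant normal subgroup $G(R,\cdot)$ is abelian modulo a normal subgroup contained in $\prr(G(R))$. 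Since $G(R)/\prr(G(R))$ is semiprime, it then lies in $\prr(G(R))$.
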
 

\begin{proof}
The group ${\Id_R+(M^R(I))_{M(R)}}$ is locally nilpotent (nilpotent) and 
\[
\overline{G}(R, I)^{(1)}\subseteq [U(R, I), \overline{G}(R)]\subseteq 
\overline{G}(R)\cap (\Id_R+(M^R(I))_{M(R)})\subseteq 
\overline{V}(R, I)\,.
\]
If there is ${n\geq 1}$, ${(M^R(I))_{M(R)}^n=\{0\}}$, then  
${(G(R, I)^{(1)})^n=V(R, I)^n=\{\Id_R\}}$ (see above). If $(M^R(I))_{M(R)}$ is 
a sum (union) of nilpotent ideals $M(R)$, then the 
groups ${V(R, S)\lhd G(R)}$, ${S\subseteq I}$, ${|S|< \infty}$, are nilpotent,
\[
G(R, I)^{(1)}\subseteq [U(R, I), G(R)]\subseteq 
\prod_{S\subseteq I,\ |S|< \infty} V(R, S) 
\subseteq V(R, I)\subseteq G(R, I)\subseteq \prr(G(R))\,.
\]
Each canonical epimorphism ${R\longrightarrow R/J}$ induces a homomorphism 
${\pi_J: G(R)\longrightarrow G(R/J)}$, ${\Ker \pi_J=G(R, J)}$, 
${\pi_J(G(R, J'))\subseteq G(R/J, J'/J)}$ for any ${J, J'\lhd R}$, 
${J\subseteq J'}$. Therefore to the series of ideals 
${\{L_{\alpha}(R)\mid \alpha\geq 0\}}$ there 
corresponds a non-decreasing chain of normal subgroups 
${\{G(R, L_{\alpha}(R))\mid \alpha\geq 0\}}$ of the group $G(R)$ from 
${G(R, \{0\})=\{\Id_R\}}$ to ${G(R, \prr_M(R))}$,  
${\prr_M(R)=L_{\gamma}(R)=\bigcup\limits_{\alpha\geq 0} L_{\alpha}(R)}$ 
for some ${\gamma\geq 0}$, where for any non-limit $\alpha$  
\begin{multline*}
G(R, L_{\alpha}(R))/G(R, L_{\alpha-1}(R))
\cong 
\pi_{L_{\alpha-1}(R)}(G(R, L_{\alpha}(R)))\subseteq
\\
G(R/L_{\alpha-1}(R), N_{\alpha}(R)/L_{\alpha-1}(R))\ =\  
G(R/L_{\alpha-1}(R), L(R/L_{\alpha-1}(R)))\subseteq
\\ 
\prr(G(R/L_{\alpha-1}(R)))
\end{multline*}
and limit $\alpha$ 
${L_{\alpha}(R)=\bigcup\limits_{\beta< \alpha} L_{\beta}(R)}$,
\begin{multline*}
G(R, L_{\alpha}(R))^{(1)}\subseteq 
\prod_{S\subseteq L_{\alpha}(R),\ |S|< \infty} V(R, S)\subseteq 
\prod_{\beta< \alpha} V(R, L_{\beta}(R))\subseteq
\\ 
\prod_{\beta< \alpha} G(R, L_{\beta}(R))\subseteq G(R, L_{\alpha}(R))\,.
\end{multline*}
Using transfinite induction, we obtain that 
${G(R, L_{\alpha}(R))\subseteq \prr(G(R))}$ for all ${\alpha\geq 0}$ and 
so, ${G(R, \prr_M(R))\subseteq \prr(G(R))}$. Similar reasoning applies 
when replacing $\prr_M(R)$ and ${\{L_{\alpha}(R)\mid \alpha\geq 0\}}$ 
with $\prr(R)$ and the Baer chain ${\{B_{\alpha}(R)\mid \alpha\geq 0\}}$ 
(${(M^R(I))_{M(R)}^2=\{0\}}$ for all ${\{0\}=I^2\ne I\lhd R}$).
\end{proof}

\begin{prop}
If $R$ is an associative $PI$--ring with $1$, $G$ is a subgroup of $U(R)$, 
then ${\prr(G)=T(G)}$ is a solvable extension of the product 
of all nilpotent normal subgroups of $G$ \cite{Gol7}, Theorem 2.6.
\end{prop}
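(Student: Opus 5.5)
The statement is quoted from \cite{Gol7}, Theorem 2.6, so the plan is to reconstruct that argument. The aim is to squeeze $G/\prr(G)$ into a linear group of bounded degree over a commutative ring.

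First, since $\prr(R)$ is locally nilpotent for a $PI$--ring with 1 (Amitsur --- Levitzki), the group $G\cap(1+\prr(R))$ is contained in $1+LN(R)$. By the computation before Lemma 3.3 ($g^{gr}_l(1+C)\subseteq 1+\langle C\rangle^{2^l}$), this group is locally nilpotent. It is therefore a union of nilpotent normal subgroups, so it lies in $\prr(G)\cap T(G)$ and in the product of nilpotent normal subgroups.

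Next, pass to $\bar R=R/\prr(R)$. This is a semiprime $PI$--ring with $\pideg \bar R=d$, and it embeds into $\prod_{P\in\Spec(\bar R)}Q_{cl}(\bar R/P)$. Each factor is central simple of dimension at most $d^2$ over its center $K_P$ (Posner --- Rowen, as recalled in the introduction), hence embeds into $M_d(\overline{K_P})$. The image $\bar G$ of $G$ is thus a subdirect product of subgroups of $GL_d$ over fields, with $d$ uniform.

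For a linear group $H\le GL_d(K)$, Mal'cev --- Zassenhaus type results give the following. The largest solvable normal subgroup is solvable of derived length bounded by some $f(d)$. The locally nilpotent (= weakly solvable) radical is nilpotent-by-abelian of bounded class. Moreover, $\prr(H)=T(H)$ equals the maximal solvable normal subgroup, which is an extension of the Fitting subgroup by a group of derived length at most $f(d)$.

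The main obstacle is gluing these statements across the subdirect product, since the Fitting subgroups of the factors need not lift to nilpotent normal subgroups of $G$ with bounded class. The first attempt would be to use the uniform bound on $d$. Let $S_k$ be the $k$-th derived term of $G$ relative to the ideal-theoretic filtration, and show that $\prr(G)^{(f(d))}$ maps, in every factor, into the unipotent-by-diagonalizable part. Then use the identity $\st_{2d}=0$ on $\bar R$ to show that commutators of length $2^{f(d)}$ from $\prr(G)$ lie in $1+N$ with $N$ a nil ideal of bounded index generated locally. This yields that $\prr(G)^{(f(d))}$ is generated by nilpotent normal subgroups.

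For the inclusion $T(G)\subseteq\prr(G)$, I would argue as follows. A weakly solvable normal subgroup maps in each factor into the solvable radical, which is contained in the prime radical of that linear group by the bounded derived length. Transfinite induction over the Baer chain, as in the proof of Lemma 3.3, then lifts this back to $G$ through the kernel $G\cap(1+\prr(R))$ handled in the first step. The reverse inclusion $\prr(G)\subseteq T(G)$ is general: semiprime groups have no nonidentity solvable normal subgroups, and $T$ is special.
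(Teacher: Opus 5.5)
\textbf{There is a genuine gap, and it sits in your first step.} That step is the one that is supposed to deliver the ``product of all nilpotent normal subgroups''. From the local nilpotency of $K=G\cap(1+\prr(R))$ you conclude that $K$ is a union of nilpotent normal subgroups of $G$, i.e.\ $K\subseteq\mathrm{Fit}(G)$. Local nilpotency does not imply this, and here it is false, because an element of $\prr(R)$ need not generate a nilpotent ideal.

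Here is a counterexample. Take $C=\mathbb{Z}[y_2,y_3,\ldots]/(y_j^2\mid j\geq 2)$, $I=\sum_j y_jC$, and the $PI$--ring with $1$ given by $R=\{(a_{ij})\in M_2(C)\mid a_{12}\in I\}$.
\begin{itemize}
\item $M_2(I)$ is a union of the nilpotent ideals $M_2(I_0)$ of $R$, with $I_0\subseteq I$ finitely generated.
\item $R/M_2(I)$ is the ring of lower triangular matrices over $\mathbb{Z}$, so $E_{21}\in\prr(R)$.
\item Let $g=t_{21}(1)$ and $g_j=t_{12}(y_j)\,g\,t_{12}(-y_j)=1+E_{21}+y_j(E_{11}-E_{22})$. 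With $[x,y]=xyx^{-1}y^{-1}$, induction gives $[\ldots[[g,g_2],g_3],\ldots,g_k]=1+2^{k-1}y_2\cdots y_kE_{21}\neq 1$.
\end{itemize}
So the normal closure of $g$ in $G=U(R)$ is not nilpotent, and $g\in K\setminus\mathrm{Fit}(G)$.

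The proposition itself survives in this example. Indeed $G\cap(1+M_2(I))\subseteq\mathrm{Fit}(G)$ and $\prr(R)^2\subseteq M_2(I)$. What actually has to be proved is therefore that $K$ is a \emph{solvable extension} of $\mathrm{Fit}(G)$. One route would be $K^{(j)}\subseteq 1+\prr(R)^{2^j}$ combined with some ring-theoretic control of $\prr(R)$ modulo the sum of the nilpotent ideals of $R$. Your proposal contains nothing of this kind.

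Your fourth paragraph, meant to handle the gluing, is not an argument. The filtration $S_k$ is never defined or used, and ``unipotent-by-diagonalizable'' plays no role. Lying in $1+N$ for a nil ideal $N$, of bounded index or not, does not put an element into a nilpotent normal subgroup; the example above shows exactly this.

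\textbf{The equality $\prr(G)=T(G)$ can be saved} along the lines of your last paragraph. The paper only quotes the proposition from \cite{Gol7}, but it uses the same mechanism in the proof of Theorem 3.4.
\begin{itemize}
\item Weakly solvable subgroups of $GL_d$ over a field are solvable of derived length at most $l(d)$ (\cite{Gol7}, Proposition 1.33, Theorem 1.35). This bound is uniform over all prime factors.
\item Hence $T(G)^{(l(d))}$ maps to the identity in every $U(Q_{cl}(\bar R/P))$, so it lies in $K$.
\item $K\subseteq\prr(G)$ follows by induction along the Baer chain of $R$. This replaces the faulty justification in your first step.
\item Since $G/\prr(G)$ is semiprime, $T(G)\subseteq\prr(G)$.
\end{itemize}
Make the uniform derived-length bound explicit, since that is what lets the per-factor information lift to $G$.

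Two smaller corrections:
\begin{itemize}
\item In a linear group the weakly solvable radical is the solvable radical, not the locally nilpotent one; the upper triangular subgroup of $GL_2$ is a counterexample.
\item Local nilpotency of $1+\prr(R)$ comes from the $k$-th term of the lower central series of $1+B$ lying in $1+B^k$ for a nilpotent ring $B$. It does not come from $g^{gr}_l(1+C)\subseteq 1+\langle C\rangle^{2^l}$, which only gives weak solvability.
\end{itemize}
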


For any $F$--algebra $R$ there exists a homomorphism 
${{}_A I: \Aut_F(R)\longrightarrow \Aut_F(A)}$, 
\[
{}_A I \phi\,:\ \psi\longmapsto \phi \psi \phi^{-1}\quad 
(\phi\in \Aut_F(R),\ \psi\in A=\End_F(R), M(R), M(R)')\,.
\]
If $F$ is an algebra over a field $\mathbb{F}$, ${\Ch \mathbb{F}=0}$, then 
${{}_A I \exp(D)=\exp(-\ad_D)}$ for ${D\in \Der_{nil}(R)}$ with 
${\ad_D\in \Der_{nil}(A)}$ and ${{}_A I: E(R)\longrightarrow E(A), 
G(R)\longrightarrow \Inn(A), A=M(R), M(R)'}$, and for 
${D\in \Der_n(R)}$ with ${\ad_D\in \Inder_n(A)}$ and 
${{}_A I: \overline{E}(R)\longrightarrow \overline{\Inn}(A)}$, ${A=\End_F(R)}$, 
where for any $F$--algebra $R$ 
\[
\Inn(R)\ =\ \langle \exp(D)\mid D\in \Inder_{nil}(R)\rangle\,,\quad 
\overline{\Inn}(R)\ =\ \langle \exp(D)\mid D\in \Inder_n(R)\rangle\,.
\]
If ${\phi\in \Aut_F(R)}$, ${{}_A I \phi(t_x)=t_{x \phi^{-1}}=t_x}$ for any 
${t=l, r}$, ${x\in R}$, then 
\[
R (R)(\phi^{-1}-\Id_R)\ =\ (R)(\phi^{-1}-\Id_R) R\ =\ \{0\}\,,\quad 
\phi^{-1}|_{R^2}\ =\ \Id_{R^2}
\]
(${(x y) \phi^{-1}=(x \phi^{-1})(y \phi^{-1})=x y}$, ${x, y\in R}$). Therefore 
if ${\Ann R=\{0\}}$ or (and) ${R=R^2}$, then ${\Ker {}_A I=\{\Id_R\}}$, 
${}_A I$ is an embedding, ${A=\End_F(R), M(R), M(R)'}$. Everywhere below until 
the Additions, ${F=\mathbb{F}}$ is a field, ${\Ch \mathbb{F}=0}$, 
$R$ is a $\mathbb{F}$--algebra. 

If $A$ is a semiprime associative $PI$--algebra, then $M(A)$ is a semiprime 
$PI$--algebra and ${\pideg M(A)=(\pideg A)^2}$, since $M(A)$ is a subdirect 
product of $M(A/P)$, ${P\in \Spec(A)}$ 
(epimorphisms ${M(A)\longrightarrow M(A/P)}$ are induced by canonical 
epimorphisms ${A\longrightarrow A/P}$, $P\in \Spec(A)$), $P(A/P)$ for 
${A\ne P\in \Spec(A)}$ is a central simple finite-dimensional algebra over 
a field $\CM(A/P)$, $M(P(P/A))$ is a finite-dimensional primitive 
$\CM(A/P)$--algebra with an exact irreducible module $P(A/P)$ and so, 
\begin{multline*}
M(P(P/A))\ =\ \CM(A/P) M^{P(A/P)}(A/P)\ =
\\ 
\End_{\End(P(A/P))_{M(P(A/P))}}(P(A/P))\cong 
M_{\dim_{\CM(A/P)} P(A/P)}(\CM(A/P))\,, 
\end{multline*}
${M(A/P)\cong M^{P(A/P)}(A/P)}$ is prime and has homogeneous identities with 
integer coefficient in common with $M(P(A/P))$, 
${\pideg M(A/P)=\pideg M(P(A/P))=\dim_{\CM(A/P)} P(A/P)}$ 
(see Introduction, \cite{Gol10}, \cite{Lamb}, Proposition 3, p. 92). Recall 
that by the corollary of Regev's theorem on the tensor product of $PI$--rings, 
the algebras of multiplications of associative $PI$--algebras are 
$PI$--algebras \cite{Row}, Theorem 6.1.1, p. 239.

\begin{teor}
Let $R$ be a $\mathbb{F}$--algebra, ${B_0=\prr_M(R)}$, ${A_0=M(R/B_0)}$, 
${B_{i+1}=\prr(A_i)}$, ${A_{i+1}=M(A_i/B_{i+1})}$ for all ${i\geq 0}$. Then 
\begin{multline*}
G_0\ =\ G(R, B_0)\subseteq 
G_1\ =\ \pi_{B_0}^{-1} {}_{A_0} I^{-1}(G(A_0, B_1))
\subseteq \ldots \subseteq
\\ 
G_{i+1}\ =\ 
\pi_{B_0}^{-1} {}_{A_0} I^{-1} \pi_{B_1}^{-1} {}_{A_1} I^{-1}\cdots 
\pi_{B_i}^{-1} {}_{A_i} I^{-1}(G(A_i, B_{i+1}))\subseteq \ldots \subseteq 
\prr(G(R))\,.
\end{multline*}
If ${k\geq 0}$, $A_k/B_{k+1}$ is a $PI$--algebra 
(${\sup\limits_{R\ne P\in \Spec_{\prr_M}(R)} \dim_{\CM(R/P)} P(R/P)< \infty}$), 
$G$ is a subgroup of $G(R)$, then ${\prr(G)=T(G)}$ is a solvable extension of 
${G\cap G_{k+1}}$ (${G\cap G_0}$).
\end{teor}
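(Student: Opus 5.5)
The plan is to prove the chain of inclusions by induction on $i$, using Lemma 3.2 at each level together with the homomorphisms $\pi_{B_i}$ and ${}_{A_i}I$. For the PI part we then push a subgroup $G$ down the chain to a group of units of an associative $PI$--ring and apply Proposition 3.3.

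First, the chain. By Lemma 3.2, $G_0=G(R,B_0)\subseteq\prr(G(R))$.

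Put $\rho_0=\pi_{B_0}$, viewed as a map $G(R)\to G(R/B_0)$. It is well defined because $B_0=\prr_M(R)\lhd_{\Der}R$, so nil-multiplicative derivations descend. Compose it with ${}_{A_0}I$, which sends $G(R/B_0)$ into $\Inn(A_0)\subseteq G(A_0)$. This works because for $D\in\Mder_{nil}$ the derivation $\ad_D$ is an inner nil-derivation of $M(R/B_0)$, hence lies in $\Mder_{nil}(A_0)$, and ${}_{A_0}I\exp(D)=\exp(-\ad_D)$.

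Iterating, we obtain homomorphisms
\[
\theta_i={}_{A_i}I\,\pi_{B_i}\cdots{}_{A_0}I\,\pi_{B_0}\colon G(R)\longrightarrow G(A_i).
\]
By definition $G_{i+1}=\theta_i^{-1}(G(A_i,B_{i+1}))$. Since $B_{i+1}=\prr(A_i)$, Lemma 3.2 applied to the associative algebra $A_i$ gives $G(A_i,B_{i+1})\subseteq\prr(G(A_i))$. Here we use that $\prr(A_i)\subseteq\prr_M(A_i)$. In fact, for associative algebras $(M^{A}(I))_{M(A)}$ is nilpotent whenever $I$ is, so $\prr=\prr_M$ there.

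To get $G_{i+1}\subseteq\prr(G(R))$ we need to pull the prime radical back along $\theta_i$. This is the main obstacle, because preimages of prime radicals under arbitrary homomorphisms need not lie in the prime radical. I would argue as follows. The kernel of $\theta_i$ is contained in $G_i$: an element of $G(R)$ acting trivially on $A_{i-1}/B_i$ through conjugation lies in $\theta_{i-1}^{-1}(G(A_{i-1},B_i))$ up to the kernel of ${}_{A_i}I$, which consists of automorphisms trivial modulo an annihilator. Since $A_i/B_{i+1}$ is semiprime, that annihilator is zero, and then the kernel is controlled by the previous step. By the induction hypothesis $G_i\subseteq\prr(G(R))$.

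Next, $\prr$ of a group is the smallest normal subgroup with semiprime quotient. For a normal subgroup $N\subseteq\prr(G(R))$ and $G(R)/N$ embedded in a group whose relevant part is $\prr$-radical, the Baer chain lifts: each $\prr$-radical normal subgroup of the image pulls back to a normal subgroup which is an extension of a subgroup of $\prr(G(R))$ by a $\prr$-radical group. Hence it lies in $\prr(G(R))$. Monotonicity $G_i\subseteq G_{i+1}$ follows since $\theta_i(G_i)$ lands in the kernel of $\pi_{B_{i+1}}$, using $\prr$-invariance under derivations (Corollaries 2.2, 2.3).

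For the second part, suppose $A_k/B_{k+1}$ is a semiprime $PI$--algebra. Then $\theta_{k+1}$ maps $G$ into the unit group of the $PI$--ring with $1$ given by $M(A_k/B_{k+1})'$; this ring is $PI$ by Regev, as recalled before the theorem. Its kernel in $G$ is $G\cap G_{k+1}$. By Proposition 3.3 the image $\bar G$ satisfies $\prr(\bar G)=T(\bar G)$, and $\bar G$ is a solvable extension of nilpotent normal subgroups, hence $T$-radical modulo a solvable quotient.

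Now $G\cap G_{k+1}\subseteq\prr(G)\subseteq T(G)$, because $G\cap G_{k+1}$ is built from successive $G(A_i,B_{i+1})$-type layers that are locally nilpotent modulo the previous ones, by Lemma 3.2. Pulling back gives $\prr(G)=T(G)$, with $\prr(G)/(G\cap G_{k+1})$ solvable.

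In the dimension-bounded case, every $R/P$ with $P\in\Spec_{\prr_M}(R)$ has $P(R/P)$ of bounded dimension. Hence $M(R/B_0)$ is a subdirect product of prime $PI$--algebras of bounded $\pideg$, so it is itself $PI$ with $B_1=\{0\}$ after semiprimeness, and one may take $k=0$ with $G_0$ in place of $G_{k+1}$.
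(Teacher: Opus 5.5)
There is a genuine gap at the heart of the chain argument, and two more in the $PI$ part.

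\textbf{The chain of inclusions.} Your framework is sound. With $\theta_i={}_{A_i}I\,\pi_{B_i}\cdots{}_{A_0}I\,\pi_{B_0}$ one has $\Ker\theta_i=\theta_{i-1}^{-1}(G(A_{i-1},B_i))=G_i$. Injectivity of ${}_{A_i}I$ uses $\Ann(A_{i-1}/B_i)=\{0\}$, not a property of $A_i/B_{i+1}$. Monotonicity is just $\theta_i(G_i)=\{\Id\}$, and Corollaries 2.2, 2.3 play no role there. Also $\prr(G(R)/G_i)=\prr(G(R))/G_i$ once $G_i\subseteq\prr(G(R))$.

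The gap is the pull-back step. You justify it by claiming that a normal subgroup which is an extension of a subgroup of $\prr(G(R))$ by a $\prr$-radical group lies in $\prr(G(R))$. Already for $N=\{e\}$ this says that $\prr$-radical normal subgroups lie in the prime radical, and that is false for groups. Lemma 3.14 gives $G=\prr(G)\lhd Z$ with $G\not\subseteq\prr(Z)$, which is exactly why $\prr$ is not a Kurosh--Amitsur radical on groups (Theorem 3.17).

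What works, and what the paper uses, is the inclusion $H\cap\prr(K)\subseteq\prr(H)$ for an \emph{arbitrary} subgroup $H\le K$. It follows at once from the elementwise description of $\prr$, since $(x)_H^2\subseteq(x)_K^2$. Applied to $H=\theta_i(G(R))\le G(A_i)$, it gives $\theta_i(G_{i+1})\subseteq H\cap\prr(G(A_i))\subseteq\prr(H)$, and the preimage of $\prr(H)$ is $\prr(G(R))$. The paper does this one level at a time, $\pi_{B_i}^{-1}\,{}_{A_i}I^{-1}(\prr(G(A_i)))\subseteq\prr(G(A_{i-1}))$, using only $\Ker\pi_{B_i}\subseteq\prr(G(A_{i-1}))$ from Lemma 3.2, so it needs no induction on $G_i$.

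The same false principle reappears when you justify $G\cap G_{k+1}\subseteq\prr(G)$ by ``layers locally nilpotent modulo the previous ones''. Locally nilpotent normal subgroups need not lie in $\prr$: Lemma 3.14 again gives $LN(Z)\not\subseteq\prr(Z)$. The correct reason is the first part together with $G\cap\prr(G(R))\subseteq\prr(G)$.

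\textbf{The $PI$ part.} Two ingredients are missing.

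First, the relevant map is $\pi_{B_{k+1}}\theta_k\colon G\to G(A_k/B_{k+1})$. Its image lies in $U(M(A_k/B_{k+1})')$ only because $\Mder_{nil}=\Mder_n$ for semiprime algebras whose prime quotients have central closures of bounded dimension; the paper cites Markov, Rowen and Razmyslov for this. For a merely locally nilpotent $D$, $\exp(D)$ is defined only pointwise and need not belong to $M(\cdot)'$.

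Second, Proposition 3.3 gives $\prr(\bar G)=T(\bar G)$, a solvable extension of the product of all nilpotent normal subgroups of $\bar G$. Combined with $G\cap G_{k+1}\subseteq\prr(G)$, this does yield $\prr(G)=T(G)$. It does not make $\prr(G)/(G\cap G_{k+1})\cong\prr(\bar G)$ solvable, because that product of nilpotent normal subgroups is not a priori solvable.

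The paper gets solvability from a separate input. Weakly solvable subgroups of $GL_n(\Bbbk)$ over a field are solvable of degree at most some $l(n)$. Each $G(A_k/P)$ with $A_k\ne P\in\Spec(A_k)$ embeds in $GL_{n_P}(\CM(A_k/P))$, where $n_P\le m=(\pideg A_k)^2$. Intersecting over all such $P$ gives $\theta_k(T(G)^{(l(m))})\subseteq\bigcap_P G(A_k,P)=G(A_k,B_{k+1})$, that is, $T(G)^{(l(m))}\subseteq G\cap G_{k+1}$.

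Your reduction of the dimension-bounded case to $B_1=\{0\}$, so that $G_1=G_0$, is fine once this bound is supplied.
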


\begin{proof}
Since for all ${i\geq 0}$ with ${A_{-1}=R}$
\[
\pi_{B_i}\,:\ G(A_{i-1})\longrightarrow G(A_{i-1}/B_i)\,,\quad 
\Ker \pi_{B_i}\ =\ G(A_{i-1}, B_i)\subseteq \prr(G(A_{i-1}))
\]
and ${{}_{A_i} I: G(A_{i-1}/B_i)\hookrightarrow \Inn(A_i)}$ 
(see Lemma 3.2; ${\prr(A_i)=\prr_M(A_i)}$), 
\begin{multline*}
{}_{A_i} I^{-1}(G(A_i, B_{i+1}))\subseteq 
{}_{A_i} I^{-1}(\prr(G(A_i)))\ =\ 
{}_{A_i} I^{-1}({}_{A_i} I(G(A_{i-1}/B_i))\cap \prr(G(A_i)))
\subseteq
\\
\shoveright{ 
{}_{A_i} I^{-1}(\prr({}_{A_i} I(G(A_{i-1}/B_i))))\ =\ 
\prr(G(A_{i-1}/B_i))\,,}
\\
\shoveleft{
\pi_{B_i}^{-1}(\prr(G(A_{i-1}/B_i)))\ =\ 
\pi_{B_i}^{-1}(\pi_{B_i}(G(A_{i-1}))\cap \prr(G(A_{i-1}/B_i)))\subseteq}
\\
\shoveright{ 
\pi_{B_i}^{-1}(\prr(\pi_{B_i}(G(A_{i-1}))))\ =\ \prr(G(A_{i-1}))\,,}
\\
\shoveleft{
\pi_{B_i}^{-1} {}_{A_i} I^{-1}(G(A_i, B_{i+1}))\subseteq 
\pi_{B_i}^{-1} {}_{A_i} I^{-1}(\prr(G(A_i)))\subseteq \prr(G(A_{i-1}))\,,}
\\
G_{i+1}\ =\ \pi_{B_0}^{-1} {}_{A_0} I^{-1}\cdots 
\pi_{B_i}^{-1} {}_{A_i} I^{-1}(G(A_i, B_{i+1}))\subseteq 
\pi_{B_0}^{-1} {}_{A_0} I^{-1}\cdots 
\pi_{B_i}^{-1} {}_{A_i} I^{-1}(\prr(G(A_i)))\subseteq
\end{multline*}
\begin{multline*}
\pi_{B_0}^{-1} {}_{A_0} I^{-1}\cdots 
\pi_{B_{i-1}}^{-1} {}_{A_{i-1}} I^{-1}(\prr(G(A_{i-1})))\subseteq 
\ldots \subseteq \pi_{B_0}^{-1} {}_{A_0} I^{-1}(\prr(G(A_0)))\subseteq
\\ 
\prr(G(A_{-1}))\ =\ \prr(G(R))\,.
\end{multline*}
If $B$ is a semiprime algebra with 
${\sup\limits_{B\ne P\in \Spec(B)} \dim_{\CM(B/P)} P(B/P)< \infty}$ 
(in particular, a semi\-prime associative $PI$--algebra), then 
${\Mder_{nil}(B)=\Mder_n(B)}$ \cite{Mar, Row1}, \cite{Raz}, 
Theorem 4.1, p. 47. If $A_k/B_{k+1}$ is a $PI$--algebra for some 
${k\geq 0}$, $G$ is a subgroup of $G(R)$, then ${B_{k+j}=0}$, 
${G_{k+1}=G_{k+j}}$ for all ${j\geq 2}$, 
${G(A_k/B_{k+1})=\overline{G}(A_k/B_{k+1})\subseteq U(A_{k+1}')}$ 
for ${A_{k+1}'=M(A_k/B_{k+1})'}$ and ${\prr(G')=T(G')}$
for ${G'=\pi_{B_{k+1}} {}_{A_k} I\cdots \pi_{B_1} {}_{A_0} I \pi_{B_0}(G)}$, 
therefore ${\prr(G)=T(G)}$ (see the observation before Theorem 3.4, 
Proposition 3.3). Since for any field $\Bbbk$ weakly solvable subgroups of 
the group $GL_n(\Bbbk)$, ${n\geq 1}$, are solvable of degree at most some 
${l(n)\geq 1}$ \cite{Gol7}, Proposition 1.33, Theorem 1.35 and the 
group $G(A_k/P)$ is embeddable in the group $GL_{n_P}(\CM(A_k/P))$ for all 
${A_k\ne P\in \Spec(A_k)}$, 
\[
n_P\ =\ \dim_{\CM(A_k/P)} P(A_k/P)\leq 
m\ =\ (\pideg A_k)^2\,,
\]
$T(G_P)$ is a solvable radical of the group 
${G_P=\pi_P {}_{A_k} I \pi_{B_k}\cdots {}_{A_0} I \pi_{B_0}(G)
\subseteq G(A_k/P)}$ of solvability degree at most ${l(n_P)\leq l(m)}$ and 
\begin{multline*}
{}_{A_k} I \pi_{B_k} \cdots {}_{A_0} I \pi_{B_0}(T(G)^{(l(m))})\subseteq 
\bigcap_{A_k\ne P\in \Spec(A_k)} \pi_P^{-1}(T(G_P)^{(l(m))})
\ = 
\\
\bigcap_{A_k\ne P\in \Spec(A_k)} G(A_k, P)\ =\ G(A_k, B_{k+1})\,,
\end{multline*}
${T(G)^{(l(m))}\subseteq G\cap G_{k+1}\subseteq \prr(G)}$. For 
${\sup\limits_{R\ne P\in \Spec_{\prr_M}(R)} \dim_{\CM(R/P)} P(R/P)=n< \infty}$, 
the transition to $G(A_0)$ is not required, $A_0'$ satisfies ${\st_{2 n}=0}$, 
${\pi_{B_0}(G)\subseteq G(A_0)=\overline{G}(A_0)\subseteq U(A_0')}$,
${T(G)^{(l(n))}\subseteq G\cap G_0}$, ${G_1=G_j}$ for all ${j\geq 2}$ 
and in the presence of ${l\geq 1}$, ${(M^R(B_0))_{M(R)}^l=\{0\}}$, 
$T(G)$ is a solvable radical of $G$ of solvability degree at most ${l+l(n)+1}$ 
(see the proof of Lemma 3.2).
\end{proof} 

If $R$ is a $m.s.p.$--algebra, then  
${{}_{M(R)} I: \overline{E}(R)\hookrightarrow 
\overline{\Inn}(Q^s_m(M(R)))_{M(R)}}$ and for $R$ with a $SGPI$--algebra 
$M(R)$ ${{}_{M(R)} I: E_c(R)\hookrightarrow \Inn(Q^s_m(M(R)))_{M(R)}}$, where
\begin{multline*}
E_c(R)\ =\ 
\langle \exp(D)\mid D\in \Der_{nil}(R),\ [\overline{D}, \CM(R)]=\{0\}\rangle\,,
\\
\shoveleft{
\Inn(Q^s_m(M(R)))_{M(R)}\ =}
\\
\shoveright{ 
\langle \exp(D|_{M(R)})\mid D\in \Inder(Q^s_m(M(R))),\ 
(M(R))D\subseteq M(R),\ D|_{M(R)}\in \Der_{nil}(M(R))\rangle\,,}
\\
\shoveleft{
\overline{\Inn}(Q^s_m(M(R)))_{M(R)}\ =}
\\ 
\{\phi|_{M(R)}\mid \phi\in \langle \exp(D)\mid D\in \Inder_n(Q^s_m(M(R))),\ 
(M(R))D\subseteq M(R)\rangle\}
\end{multline*}
(see Corollaries 2.11, 2.13). For $R$ with a $PI$--algebra $M(R)$, 
${Q(M(R))=\hat{M(R)}}$, from $D\in \Mder(\hat{M(R)})$, 
${(M(R))D\subseteq M(R)}$, ${D|_{M(R)}\in \Der_{nil}(M(R))}$ it follows that 
${D\in \Mder_n(\hat{M(R)})}$,
\begin{multline*}
\Inn(\hat{M(R)})_{M(R)}\ =\ 
\overline{\Inn}(\hat{M(R)})_{M(R)}\cong 
\\
\langle \exp(D)\mid D\in \Inder_n(\hat{M(R)}),\ (M(R))D\subseteq M(R)\rangle
\subseteq U(M(\hat{M(R)}))\,, 
\end{multline*}
since ${\sup\limits_{\hat{M(R)}\ne P\in \Spec(\hat{M(R)})} 
\dim_{{}_{{\hat M(R)}/P} C} \widehat{\hat{M(R)}/P}=n< \infty}$, 
${((\hat{M(R)}+P)/P)D_P^n=\{0\}}$ for $D_P\in \Mder(\hat{M(R)}/P)$ 
induced by $D$, ${P\in \Spec(\hat{M(R)}), (\hat{M(R)})D^n=\{0\}}$.
Using $\pideg M(R)=\pideg \hat{M(R)}$, Lemma 2.7, 
Corollary 2.13 and the proof of Theorem 3.4, we can deduce from here 

\begin{co}
If $R$ is a $m.s.p.$--algebra with a $PI$--algebra $M(R)$, $G$ is a subgroup 
of $E_c(R)$, then ${E_c(R)=\overline{E}(R)}$, ${\prr(G)=T(G)}$ is a solvable 
radical of $G$ of solvability degree at most 
${n(\hat{M(R)})\leq l((\pideg M(R))^2)}$, where  $n(\hat{M(R)})$ is the 
maximal among all solvability degrees of subgroups of 
$U(M(\hat{M(R)}/P))$, ${\hat{M(R)}\ne P\in \Spec(\hat{M(R)})}$.
\end{co}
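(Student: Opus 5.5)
The plan has three parts: show that $E_c(R)=\overline{E}(R)$, embed $G$ into the unit group of a semiprime $PI$--algebra, and then run the final part of the proof of Theorem 3.4 on that image.

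\emph{Step 1: $E_c(R)=\overline{E}(R)$.} Since $M(R)$ is a semiprime $PI$--algebra, $Q(M(R))=\hat{M(R)}$ and the central closure has bounded dimension over the extended centroid on its prime quotients. Take $D\in\Der_{nil}(R)$ with $[\overline{D},\CM(R)]=\{0\}$. By Corollary 2.13, $\ad_D=\ad_Q$ on $Q^s_m(M(R))$, and $\ad_D$ is a nil-derivation of $M(R)$. By the observation preceding the corollary, $\ad_Q\in\Mder_n(\hat{M(R)})$: on each prime quotient $\hat{M(R)}/P$ the induced derivation is a nilpotent derivation of a central simple algebra of dimension at most $n$, and this gives $(\hat{M(R)})\ad_D^{n}=\{0\}$. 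By the remark after Remark 2.10 ($\Ann R=\{0\}$ for semiprime $R$), $D^n=0$, so $D\in\Der_n(R)$. This gives $E_c(R)\subseteq\overline{E}(R)$.

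For the reverse inclusion, Lemma 2.7, p.~1 applies to any $D\in\Der_n(R)$: $\ad_D$ is nilpotent on $\End(R)_{M(R)'}$, and $R$ is a semiprime algebra over a field of characteristic $0$, so there is no torsion. Hence $[D,\End(R)_{M(R)'}]=\{0\}$, which extends to $[\overline{D},\CM(R)]=\{0\}$ via Remark 2.4 and the discussion after the definition of $\overline{D}$.

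\emph{Step 2: embed $G$ in $U(M(\hat{M(R)}))$.} Using ${}_{M(R)}I$, which is injective since $\Ann R=\{0\}$, map $G$ into $\Inn(\hat{M(R)})_{M(R)}$. By the displayed isomorphism before the corollary, this group lies in $U(M(\hat{M(R)}))$, because each generator $\exp(\ad_Q)$ with $\ad_Q$ nilpotent is the unit $\exp(-l_Q)\exp(r_Q)$ of $M(\hat{M(R)})'$. The algebra $M(\hat{M(R)})$ is a semiprime $PI$--algebra by Regev's theorem and the remark on $\pideg M(A)=(\pideg A)^2$. Proposition 3.3 then gives $\prr(G)=T(G)$, since both radicals are preserved under isomorphism onto the image.

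\emph{Step 3: the solvability bound.} This follows the end of the proof of Theorem 3.4. For each prime $P\neq\hat{M(R)}$ of $\hat{M(R)}$, project $G$ into $U(M(\hat{M(R)}/P))$. This is a subgroup of $GL_{n_P}$ over the field ${}_{\hat{M(R)}/P}C$, with $n_P\le(\pideg M(R))^2$. The image of $T(G)$ is weakly solvable, so by \cite{Gol7}, Proposition 1.33 and Theorem 1.35, it is solvable of degree at most $n(\hat{M(R)})\le l((\pideg M(R))^2)$. Since $\hat{M(R)}$ is semiprime, $\bigcap P=0$, and the map $\hat{M(R)}\to\prod M(\hat{M(R)}/P)$ is injective on the multiplication algebra: an element acting as zero modulo every $P$ acts as zero. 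Therefore $T(G)^{(n(\hat{M(R)}))}$ is trivial. Since $T(G)$ is the largest weakly solvable normal subgroup, it is then the solvable radical.

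The main obstacle is Step 1. The concern is that $\ad_D$ being a nil-derivation of $M(R)$ realized as $\ad_Q$ must really force a uniform nilpotency index. The first thing to try is bounding, on each $P(M(R)/P)$, the nilpotency of $Q-\alpha_P$ via Remark 2.10 and the remarks after it, to get $\ad_Q^{2n-1}=0$ uniformly.
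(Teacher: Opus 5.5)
Your route is the paper's: Corollary 2.13 plus the bounded dimension of the prime quotients of $\hat{M(R)}$ give $E_c(R)\subseteq\overline{E}(R)$, Lemma 2.7 gives the reverse inclusion, then the embedding ${}_{M(R)}I$ into $U(M(\hat{M(R)}))$, Proposition 3.3, and the end of the proof of Theorem 3.4. The forward inclusion and your Steps 2--3 are fine. The reverse inclusion $\overline{E}(R)\subseteq E_c(R)$, however, has a gap.

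For $D\in\Der_n(R)$ you apply Lemma 2.7, p.~1, to $\End(R)_{M(R)'}$. This only shows that $D$ commutes with the centroid of $R$, and it does not give $[\overline{D},\CM(R)]=\{0\}$. The algebra $\CM(R)$ is built from $M(R)'$--maps defined only on essential ideals. It can be much larger than the centroid, which may be just $\mathbb{F}\Id_R$, and then your conclusion carries no information. Remark 2.4 only says that $\ad_{\overline{D}}$ maps $\CM(R)$ into itself. The $\tau_I$ remark needs a $D$--invariant essential ideal $I=\CM(R)I$ of $R$, which need not exist. Note that after Corollary 2.13 the paper draws this conclusion directly from Lemmas 2.7, 2.8 only when $R=P(R)$, where $\End(R)_{M(R)'}=\CM(R)$.

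The repair is to apply Lemma 2.7 where the extended centroid is an actual center.
\begin{enumerate}
\item If $D^n=0$, then $\ad_D^{2n-1}=0$ on $M(R)$.
\item The extension of a nilpotent derivation of the semiprime ring $M(R)$ to $Q(M(R))=\hat{M(R)}$ is nilpotent with the same index (observations before Corollary 2.11, \cite{ChuaL}).
\item Hence $\ad_D$ is nilpotent on $Z(\hat{M(R)})={}_{M(R)}C$, which is torsion-free since $\Ch\mathbb{F}=0$.
\item Lemma 2.7, p.~2, applied to $\hat{M(R)}\ni 1$, gives $({}_{M(R)}C)\ad_D=\{0\}$. Alternatively, apply Remark 2.5 to the commutative ring ${}_{M(R)}C$, which has no nilpotent elements.
\item So $\ad_D$ is ${}_{M(R)}C$--linear. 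By the remark following the definition of $\overline{D}$, this is equivalent to $[\overline{D},\CM(R)]=\{0\}$.
\end{enumerate}

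A smaller point in Step 2: the factorization $\exp(\ad_Q)=\exp(-l_Q)\exp(r_Q)$ presupposes that $l_Q$ and $r_Q$ are nilpotent, and you only know this for $\ad_Q$. The factorization is unnecessary anyway. Since $\ad_Q$ is nilpotent, $\exp(\ad_Q)$ is a polynomial in $\ad_Q\in M(\hat{M(R)})$ with inverse $\exp(-\ad_Q)$, so it is a unit of $M(\hat{M(R)})$.
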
 

Therefore, if $R$ is a $\mathbb{F}$--algebra, $G$ is a subgroup of $E(R)$, $I$ 
is a $G$--invariant ideal of $R$ such that the algebra $R/I$ under the 
conditions of Corollary 3.5 and ${\pi_I(G)\subseteq E_c(R/I)}$, then $T(G)$ is 
a solvable extension of ${T(G)\cap C(R, I)}$ of degree at most 
$n(\hat{M(R/I)})$. In particular, this can be formulated for 
${I=\prr(R), \prr_N(R), \prr_S(R), \prr_M(R)\lhd_{\Aut} R}$. In general, 
${C(R, I)\ne T(C(R, I))}$ even for such $I$. Indeed, if ${R^2=\{0\}}$, then 
\[
C(R, R)\ =\ \Aut_{\mathbb{F}}(R)\ =\ U(\End_{\mathbb{F}}(R))
\] 
and for ${\dim_{\mathbb{F}} R=n< \infty}$ ${C(R, R)\cong GL_n(\mathbb{F})}$, 
\[
E(R)\ =\ \overline{E}(R)\ =\  
\langle \exp(D)\mid D\in \Der(R)=\End_{\mathbb{F}}(R),\ D^n=0 \rangle\cong 
SL_n(\mathbb{F})\,,
\]
${T(GL_n(\mathbb{F}))=C(GL_n(\mathbb{F}))\cong U(\mathbb{F})=\mathbb{F}^*}$ 
(can be deduced from the Jordan --- Dickson theorem on the simplicity of 
$PSL_m(\Bbbk)$ over any field $\Bbbk$ with ${|\Bbbk|> 3}$ for ${m=2}$, the 
unsolvability of $SL_m(\Bbbk)$ under the same restrictions and 
\[
C(GL_n(A))\ =\ U(Z(A)) E\ =\ \{(x_{ij})\in GL_n(A)\mid 
[(x_{ij}), t_{kl}(x)]\in U(Z(A)) E\ \forall\, k\ne l,\ x\in A\}
\]
for any associative ring $A$ with 1; \cite{Gol7}, Theorem 1.35, 
Proposition 4.10).

If $A$ is a semiprime associative $\mathbb{F}$--algebra, then by analogy with 
the previous we have ${\overline{E}(A)\subseteq \overline{\Inn}(Q^s_m(A))_A}$, 
${E_c(A)\subseteq \Inn(Q^s_m(A))_A}$ for the $SGPI$--algebra $A$, where  
\begin{gather*}
\Inn(Q^s_m(A))_A\ =\ 
\langle \exp(D|_A)\mid D\in \Inder(Q^s_m(A)),\ (A)D\subseteq A,\ 
D|_A\in \Der_{nil}(A)\rangle\,,
\\
\overline{\Inn}(Q^s_m(A))_A\ =\ 
\{\phi|_A\mid \phi\in \langle \exp(D)\mid D\in \Inder_n(Q^s_m(A)),\ 
(A)D\subseteq A\rangle\}\,.
\end{gather*}
For the $PI$--algebra $A$ one has 
${E_c(A)=\overline{E}(A)\hookrightarrow U(M(\hat{A}))}$, any subgroup 
${G\subseteq \overline{E}(A)}$ possess a solvable radical ${\prr(G)=T(G)}$ 
of solvability degree at most 
${n(\hat{A})\leq l((\pideg A)^2)}$, ${\Der_{1, n}(A)=\Der_n(A)}$, 
${E_c(A)=\langle \exp(D)\mid D\in \Der_{1, n}(A)\rangle}$. The latter 
follows from the fact that for any ${D\in \Der_{1, n}(A)}$ and 
${A\ne P\in \Spec_{\mathcal{N}}(A)=\Spec(A) (\prr(A)=\mathcal{N}(A)), 
(P)D\subseteq P}$, ${S=Z(A/P)\setminus \{0\}\ne \emptyset}$, 
${P(A/P)=\hat{A/P}=S^{-1} A/P}$, 
\[
0\ =\ 1 D_P\ =\ s D_P\ =\ (s^{-1} D_P) s+s^{-1} (s D_P)\ =\ s^{-1} D_P\quad 
(s\in S)\,,
\]
${(Z(\hat{A/P}))D_P=(S^{-1} Z(A/P))D_P=\{0\}}$, where $D_P$ is the extension 
of ${D_P\in \Der(A/P)}$ indu\-ced by $D$ to $\hat{A/P}$ (see Corollary 2.3, 
Lemma 2.7, p. 2), ${D^m=0}$ for ${m=1}$ if ${A=\{0\}}$, and  
${m=\sup\limits_{A\ne P\in \Spec(A)} \dim_{Z(\hat{A/P})} \hat{A/P}< \infty}$, 
otherwise.

We will consider analogues of the obtained conclusions for algebraic (locally 
finite) deri\-vations of algebras. Let $\mathbb{F}((t))$ and $R((t))$ be a 
field and a $\mathbb{F}((t))$--algebra of Laurent series over a field 
$\mathbb{F}$ and a $\mathbb{F}$--algebra $R$ in a variable $t$, 
${\overline{R}((t))=\mathbb{F}((t)) R}$ be a $\mathbb{F}((t))$--subalgebra 
of $R((t))$ consisting of series with coefficients from finite-dimensional 
$\mathbb{F}$--subspaces of $R$, 
${M(\overline{R}((t)))=M^{\overline{R}((t))}(R)\cong \overline{M(R)}((t))}$ 
for ${\sum\limits_i \phi_i \psi_i\longmapsto \sum\limits_i \phi_i \psi_i|_R}$, 
${\phi_i\in \mathbb{F}((t))}$, ${\psi_i\in M^{\overline{R}((t))}(R)}$. We 
select in the group $\Aut_{\mathbb{F}((t))}(\overline{R}((t)))$ the subgroups 
\begin{gather*}
E_a(R, t)\ =\ \langle \exp(t D)\mid D\in \Der_a(R)\rangle\,,\quad 
\overline{E}_a(R, t)\ =\ \langle \exp(t D)\mid D\in \Der_{sa}(R)\rangle\,,
\\
G_a(R, t)\ =\ \langle \exp(t D)\mid D\in \Mder_a(R)\rangle\,,\quad
\overline{G}_a(R, t)\ =\ \langle \exp(t D)\mid D\in \Mder_{sa}(R)\rangle\,,
\\
G_a(R, I, t)\ =\ G_a(R, t)\cap C(\overline{R}((t)), \overline{I}((t)))\ =\ 
\Ker \pi_{\overline{I}((t))}\quad (I\lhd R)\,,
\end{gather*}
where the homomorphism 
${\pi_{\overline{I}((t))}: G_a(R, t)\longrightarrow G_a(R/I, t)}$ 
is induced by the canonical epimor\-phism 
${\overline{R}((t))\longrightarrow \overline{R/I}((t))\cong 
\overline{R}((t))/\overline{I}((t))}$, 
${\overline{I}((t))=I((t))\cap \overline{R}((t))}$,
\[
D\,:\ \sum_{i\geq m} t^i x_i\longmapsto \sum_{i\geq m} t^i (x_i D)\quad 
(x_i\in R,\ m\in \mathbb{Z},\ D\in \Der(R)\subseteq \Der(R((t))), 
\Der(\overline{R}((t))))\,, 
\]
${\Der_a(R)\subseteq 
\Der_a(\overline{R}((t)))\cap \Der_{a, \mathbb{F}}(\overline{R}((t)))}$
(for any ${D\in \Der_a(R)}$, 
${\sum\limits_{i=1}^k \phi_i x_i\in \overline{R}((t))}$, 
${\phi_i\in \mathbb{F}((t))}$, ${x_i\in R}$, ${k\geq 1}$ there are 
${f_i(t)\in \mathbb{F}[t]}$, ${x_i f_i(D)=0}$, 
${\Bigl(\sum\limits_{i=1}^k \phi_i x_i\Bigr) f(D)=0}$, 
${f(t)=\prod\limits_{i=1}^k f_i(t)}$).

For any ${B\subseteq R((t))}$, we denote by $B_0$ the set of the first 
non-zero coefficients of the series from $B$, supplemented by ${0\in R}$. 
If ${J\lhd R'}$, ${R'=\overline{R}((t)), R((t))}$, then ${J_0\lhd R}$, 
\[
J_0^n\subseteq (J^n)_0\,,\ J_0^{(m)}\subseteq (J^{(m)})_0\,,\ 
R(M^R(J_0))_{M(R)}^n\subseteq ((R'(M^{R'}(J))_{R'})^n)_0\quad 
(n\geq 1,\ m\geq 0)\,,
\]
the nilpotency (solvability, $M$--nilpotency) of $J$ is inherited by 
$J_0$. If ${J_0\subseteq J}$, then ${J\subseteq J_0((t))}$, 
${J=\overline{J_0}((t))}$ for ${J\subseteq \overline{R}((t))}$, the 
nilpotency (solvability, $M$--nilpotency) conditions for $J$ and $J_0$ 
are equivalent, nilpotent (solvable) $J$ and $J_0$ have equal nilpotency 
indices (solvability degrees), since for all ${I\lhd R}$ 
\begin{multline*}
\overline{I^n}((t))\ =\ \overline{I}((t))^n\subseteq 
I((t))^n\subseteq I^n((t))\,,\quad 
\overline{I^{(m)}}((t))\ =\ \overline{I}((t))^{(m)}\subseteq 
I((t))^{(m)}\subseteq I^{(m)}((t))\,,
\\
\shoveleft{
\overline{R}((t))(M^{\overline{R}((t))}(\overline{I}((t))))_{M(\overline{R}((t)))}^n
\ =\ 
\overline{R(M^R(I))_{M(R)}^n}((t))\subseteq}
\\ 
R((t))(M^{R((t))}(I((t))))_{M(R((t)))}^n\subseteq R (M^R(I))_{M(R)}^n((t))
\quad (n\geq 1,\ m\geq 0)\,.
\end{multline*}
Hence ${\mathcal{T}(\overline{R}((t))/\overline{\mathcal{T}(R)}((t)))=\{0\}}$, 
${\mathcal{T}(\overline{R}((t)))\subseteq \overline{\mathcal{T}(R)}((t))}$, 
${\mathcal{T}=\prr, \prr_*, *=N, S, M}$ and similarly for $R((t))$. 
If ${J=J(0)\lhd \overline{R}((t))}$ and 
${J(i+1)=J(i)+\overline{J(i)_0}((t))}$, ${i\geq 0}$, then for any  
${x=\sum\limits_{j\geq m} t^j x_j\in I}$, ${m\in \mathbb{Z}}$, there are 
${k\geq m}$, ${n\geq 0}$, 
${\sum\limits_{j\geq m} \mathbb{F} x_j=\sum\limits_{i=m}^k \mathbb{F} x_i, 
\{x_i\}_{i=m}^k\subseteq J(n)_0}$ and 
${x\in \overline{J(n)_0}((t))\subseteq J(n+1)}$. Since finite sums of solvable 
($M$--nilpotent) ideals of an algebra are solvable ($M$--nilpotent), it follows 
that ${\overline{S(R)}((t))=S(\overline{R}((t)))}$, 
${\overline{L(R)}((t))=L(\overline{R}((t)))}$ and by induction 
${\overline{\prr_*(R)}((t))=\prr_*(\overline{R}((t))), *=S, M}$ 
(for ${*=N}$ this is true for $R$ and $\overline{R}((t))$ from classes of 
$\mathbb{F}$--algebras and $\mathbb{F}((t))$--algebras closed under taking 
homomorphic images in which finite sums of nilpotent ideals are nilpotent; see 
also \cite{Gol8}, Remark 3.9).  

If ${P\in \Spec(R)}$, ${I, J\lhd \overline{R}((t))}$ and 
${I J\subseteq \overline{P}((t))\subseteq I\cap J}$, then  
${I_0 J_0\subseteq P}$, and hence ${I_0\subseteq P}$, 
${I=\overline{I_0}((t))\subseteq \overline{P}((t))}$ or (and) 
${J_0\subseteq P}$, ${J=\overline{J_0}((t))\subseteq \overline{P}((t))}$. 
So, ${\overline{P}((t))\in \Spec_{\mathcal{T}}(\overline{R}((t)))}$ 
for all ${P\in \Spec_{\mathcal{T}}(R)}$, ${\mathcal{T}=\prr, \prr_*}$, 
${*=N, S, M}$ (${\Spec_{\prr}(R)=\Spec(R)}$).

If ${I\lhd R}$, ${\phi\in C(\overline{R}((t)), \overline{I}((t)))}$,  
${D=\sum\limits_i t_{i1}\cdots t_{i n_i}\in \Mder_a(R)}$, ${C_0=0}$, ${C_1=C}$, 
\[
C_n\ =\ \phi^{-1} D^n\phi-D^n\ =\ (D+C)^n-D^n\in \Mder(\overline{R}((t)))\cap 
(M^{\overline{R}((t))}(S))_{M(\overline{R}((t)))}\quad (n\geq 0)\,,
\]
${t_{ij}\in \{t_{x_{ij}}\mid t=l, r,\ x_{ij}\in R\}}$, ${n_i\geq 1}$, $S$ is a 
basis of a finite-dimensional $\mathbb{F}$--space $I$ generated by the 
coefficients of the series 
${\{x_{ij} \phi-x_{ij}\}_{i, j}\subseteq \overline{I}((t))}$, then 
in the notation below 
\[
[\phi^{-1}, \exp(t D)]\ =\ 
\exp(t (D+C)) \exp(-t D)\in V_a(R, S, t)\subseteq G_a(R, (S)_R, t)\,,
\]
since ${\phi^{-1} D \phi\in 
\Mder_a(\overline{R}((t)))\cap \Mder_{a, \mathbb{F}}(\overline{R}((t)))}$ and 
for any ${x\in \overline{R}((t))}$ there exist ${k=k(x)}$, ${l=l(x)\geq 0}$, 
${x (D^n+C_n)\in \sum\limits_{i=0}^k \mathbb{F} x (D^i+C_i)}$, 
${x C_n\in \sum\limits_{i=0}^k \mathbb{F} x (D^i+C_i)+
\sum\limits_{i=0}^k \mathbb{F} x D^i}$ for all ${n> k}$, 
${\sum\limits_{n\geq 0} \mathbb{F} x C_n=\sum\limits_{n=0}^l \mathbb{F} x C_n}$, 
\begin{gather*}
x [\phi^{-1}, \exp(t D)]\ =\ 
x+x \sum_{n\geq 0} \frac{t^n}{n!} C_n \exp(-t D)\in 
x+x \sum_{n=0}^l \mathbb{F}((t)) C_n \exp(-t D)\,,
\\
x [\exp(t D), \phi^{-1}]\ =\ 
x+x \exp(t D) \sum_{n\geq 0} \frac{(-1)^n t^n}{n!} C_n\in 
x+x \exp(t D) \sum_{n=0}^{l'} \mathbb{F}((t)) C_n\,, 
\end{gather*}
${l'=l(x \exp(t D))}$, ${x [\phi^{-1}, \exp(t D)], x [\exp(t D), \phi^{-1}]\in 
x+x (M^{\overline{R}((t))}(S))_{M(\overline{R}((t)))}}$. Therefore 
\begin{multline*}
[U(\overline{R}((t)), \overline{I}((t))), G_a(R, t)]\ =\ 
\langle [\phi, \exp(t D)]\mid \phi\in U(\overline{R}((t)), \overline{I}((t))),\ 
D\in \Mder_a(R)\rangle \subseteq
\\ 
\prod_{S\subseteq I,\ |S|< \infty} V_a(R, S, t)\subseteq V_a(R, I, t)\,,
\end{multline*} 
where ${V_a(R, B, t)\lhd G_a(R, t)}$ for all ${B\subseteq R}$, 
\begin{gather*}
V_a(R, B, t)\ =\ \{\phi\in G_a(R, t)\mid x \phi, x \phi^{-1}\in 
x+x (M^{\overline{R}((t))}(B))_{M(\overline{R}((t)))}\
\forall x\in \overline{R}((t))\}\,,
\\
V_a(R, B, t)^k\subseteq \{\phi\in G_a(R)\mid 
x \phi, x \phi^{-1}\in x+x (M^{\overline{R}((t))}(B))_{M(\overline{R}((t)))}^k\ 
\forall x\in \overline{R}((t))\}\quad (k\geq 1)\,.
\end{gather*}
Consequently, as in Lemma 3.2, ${G_a(R, \prr_M(R), t)\subseteq \prr(G_a(R, t))}$. 

Inasmuch as ${\ad_D\in \Der_*(A)}$, ${\ad_D\in \Der_*(\overline{A}((t)))\cap 
\Der_{*, \mathbb{F}}(\overline{A}((t)))}$ for any ${D\in \Der_*(R)}$, 
${*=a}$, ${A=M(R)}$ and ${*=sa}$, ${A=\End_{\mathbb{F}}(R)}$ (see the 
observations before Corollary 2.16, Remark 2.9),
${{}_{\overline{A}((t))} I \exp(t D)=\exp(-t \ad_D)}$ for ${D\in \Der_a(R)}$, 
${{}_{\overline{A}((t))} I: E_a(R, t)\longrightarrow E_a(A, t)}$, 
${G_a(R, t)\longrightarrow \Inn_a(A, t)}$, ${A=M(R)}$, and ${D\in \Der_{sa}(R)}$, 
${{}_{\overline{A}((t))} I: 
\overline{E}_a(R, t)\longrightarrow \overline{\Inn}_a(A, t)}$, 
$A=\End_{\mathbb{F}}(R)$, where for any $\mathbb{F}$--algebra $S$ 
\[
\Inn_a(S, t)\ =\ \langle \exp(t D)\mid D\in \Inder_a(S)\rangle\,,\quad 
\overline{\Inn}_a(S, t)\ =\ \langle \exp(t D)\mid D\in \Inder_{sa}(S)\rangle\,,
\]
${}_{\overline{A}((t))} I$ is an embedding if ${\Ann R=\{0\}}$ or (and) 
${R=R^2}$ ($R$ can be replaced by $\overline{R}((t))$)
(see the observations before Theorem 3.4). 

The algebras $R$ and $\overline{R}((t))$ satisfy the same homogeneous 
identities over the field $\mathbb{F}$. If $A$ is a semiprime associative 
$\mathbb{F}$--algebra, then the associative algebra $\overline{A}((t))$ is 
semiprime and in the case of a $PI$--algebra $A$, $\overline{A}((t))$ and 
${M(\overline{A}((t)))\cong \overline{M(A)}((t))}$ are semiprime 
$PI$--algebras with 
${\pideg \overline{M(A)}((t))=\pideg M(\overline{A}((t)))=
\pideg M(A)=(\pideg A)^2}$ 
(see the observations above and before Theorem 3.4 taking into account 
${\prr=\prr_N=\prr_S=\prr_M}$ for associative algebras). 

\begin{teor}
Let $R$ be a $\mathbb{F}$--algebra, ${\tilde{C}=\overline{C}((t))}$ for 
${C=B_i, A_i}$, ${i\geq 0}$ from Theorem 3.4. Then  
\begin{multline*}
H_0\ =\ G_a(R, B_0, t)\subseteq 
H_1\ =\ 
\pi_{\tilde{B_0}}^{-1} {}_{\tilde{A_0}} I^{-1}(G_a(A_0, B_1, t))
\subseteq \ldots \subseteq
\\ 
H_{i+1}\ =\ 
\pi_{\tilde{B_0}}^{-1} {}_{\tilde{A_0}} I^{-1} 
\pi_{\tilde{B_1}}^{-1} {}_{\tilde{A_1}} I^{-1}\cdots 
\pi_{\tilde{B_i}}^{-1} {}_{\tilde{A_i}} I^{-1}(G_a(A_i, B_{i+1}, t))
\subseteq \ldots \subseteq \prr(G_a(R, t))\,.
\end{multline*}
If ${k\geq 0}$, $A_k/B_{k+1}$ is a $PI$--algebra 
(${\sup\limits_{R\ne P\in \Spec_{\prr_M}(R)} \dim_{\CM(R/P)} P(R/P)< \infty}$), 
$H$ is a subgroup of $G_a(R, t)$, then ${\prr(H)=T(H)}$ is a solvable 
extension of ${H\cap H_{k+1}}$ (${H\cap H_0}$).
\end{teor}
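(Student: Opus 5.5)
The proof will follow Theorem 3.4 step by step. Wherever that proof uses $G(\cdot)$, $\exp(D)$, $\Mder_{nil}$ and $R$, we use $G_a(\cdot,t)$, $\exp(tD)$, $\Mder_a$ and $\overline{R}((t))$. The facts established in the text right before the statement supply the translation.

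\textbf{Chain of inclusions.} First show that $H_{i+1}\subseteq\prr(G_a(R,t))$ by induction on $i$, with $A_{-1}=R$. Two ingredients are needed for each $i\geq 0$.

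(a) The homomorphism $\pi_{\tilde B_i}:G_a(A_{i-1},t)\to G_a(A_{i-1}/B_i,t)$ has kernel $G_a(A_{i-1},B_i,t)$, and this kernel lies in $\prr(G_a(A_{i-1},t))$. This is the analogue of Lemma 3.2, proved just above via the subgroups $V_a(R,S,t)$. For $i\geq1$ we also use $\prr(A_i)=\prr_M(A_i)$, which holds for associative $A_i$.

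(b) The map ${}_{\tilde A_i}I$ embeds $G_a(A_{i-1}/B_i,t)$ into $\Inn_a(A_i,t)\subseteq G_a(A_i,t)$. It is an embedding because $A_{i-1}/B_i$ is semiprime, hence $\Ann=\{0\}$.

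Given (a) and (b), the pullback computations in the proof of Theorem 3.4 go through verbatim. Preimages of the prime radical under a surjection, or under an embedding onto a subgroup, stay inside the prime radical, since $\prr(K)\supseteq K\cap\prr(G)$ for normal $K$. Iterating these pullbacks gives the monotone chain and the final inclusion in $\prr(G_a(R,t))$.

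\textbf{Second statement.} Let $A_k/B_{k+1}$ be PI.
\begin{enumerate}
\item $A_k/B_{k+1}$ is a semiprime associative PI-algebra. By the remarks before the theorem, so are $\overline{A_k/B_{k+1}}((t))$ and its multiplication algebra $\overline{M(A_k/B_{k+1})}((t))$, with $\pideg$ equal to $(\pideg A_k)^2$. The chain stabilizes after $k+1$.
\item For $D\in\Mder_a$ of such an algebra $B$, the element $\exp(tD)$ lies in $U(M(\overline{B}((t)))')$. Here $D$ is a finite sum of multiplication operators, and the Laurent series $\exp(tD)=\sum t^nD^n/n!$ is an element of $\overline{M(B)}((t))'$: its coefficients stay in the finite-dimensional span of the powers $D^n$ on each element. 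More precisely, on each prime quotient with finite-dimensional central closure, $D$ acts by an algebraic matrix, so all $D^n$ lie in a fixed finite-dimensional space.
\item Hence the image $H'$ of $H$ under $\pi_{\tilde B_{k+1}}{}_{\tilde A_k}I\cdots\pi_{\tilde B_0}$ is a subgroup of the unit group of an associative PI-ring with $1$. Proposition 3.3 then gives $\prr(H')=T(H')$. Pulling back along kernels contained in $\prr(H)$, as in the observation before Theorem 3.4, yields $\prr(H)=T(H)$.
\item For solvability, for each prime $P$ of $A_k$ embed the image of $H$ into $GL_{n_P}(\mathbb{F}((t))\CM(A_k/P))$ with $n_P\leq m=(\pideg A_k)^2$. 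Weakly solvable subgroups of $GL_n$ over a field are solvable of degree at most $l(n)$. Intersecting over $P$ gives $T(H)^{(l(m))}\subseteq H\cap H_{k+1}$, exactly as in the proof of Theorem 3.4.
\item The parenthetical case with bounded $\dim_{\CM(R/P)}P(R/P)$ works the same way, but directly in $A_0$ and without passing further along the chain.
\end{enumerate}

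\textbf{Main obstacle.} The delicate step is (2) together with the embedding into $GL_n$ over a field. We must check that $\exp(tD)$ for algebraic, not nilpotent, $D$ really is a unit of the Laurent-series extension of $M(\cdot)'$. We must also check that prime quotients of $\overline{A}((t))$ of the form $\overline{P}((t))$ suffice, i.e. that their intersection still detects $G_a(A_k,B_{k+1},t)$. I would handle this with the identity $\overline{\prr(A)}((t))=\prr(\overline{A}((t)))$ and the primeness of $\overline{P}((t))$ established above. The central closure of $\overline{A_k/P}((t))$ is then a central simple algebra of dimension $n_P$ over a field extension of $\mathbb{F}((t))$, into whose matrix group $\exp(tD)$ maps.
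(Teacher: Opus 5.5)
Your plan follows the paper's proof in its essentials. The first part uses the kernels $G_a(A_{i-1},B_i,t)\subseteq\prr(G_a(A_{i-1},t))$ and the embeddings ${}_{\tilde A_i}I$. The second part embeds each $H_P$ into $GL_{n_P}(\CM(\overline{A_k/P}((t))))$, one prime at a time, to get $T(H)^{(l(m))}\subseteq H\cap H_{k+1}$.

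One small correction in the first part. You need $K\cap\prr(G)\subseteq\prr(K)$ for subgroups $K$ that are not normal, namely the image of ${}_{\tilde A_i}I$ and $H$ itself. This holds for every subgroup, by the elementwise description of $\prr$, since $(x)_K^2\subseteq(x)_G^2$.

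\textbf{The gap: item 2, on which item 3 rests.} For a semiprime $PI$-algebra $B$ with infinitely many prime quotients, $D\in\Mder_a(B)$ need not be integral over $\mathbb{F}$. The polynomials killing $D$ on the various $B/P$ can have unbounded degree, so no fixed finite-dimensional space contains all $D^n$.

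\emph{Counterexample.} Let $B=\mathbb{F}[a]+\bigoplus_{n\geq1}M_2(\mathbb{F})\subseteq\prod_{n\geq1}M_2(\mathbb{F})$ with $a=(\diag(n,0))_n$. This $B$ is semiprime and $PI$.
\begin{itemize}
\item The derivation $\ad_a$ kills $\mathbb{F}[a]$ and is locally finite on the direct sum, so $\ad_a\in\Mder_a(B)$.
\item On the $n$-th summand it has eigenvalues $\pm n$, so the powers $\ad_a^j$ are linearly independent.
\item Comparing coefficients on $B\subseteq\overline{B}((t))$ then gives $\exp(t\ad_a)\notin M(\overline{B}((t)))'\cong\overline{M(B)}((t))'$.
\end{itemize}
The same failure occurs inside the theorem. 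Take $R=B$, $k=0$ and $H=\langle\exp(t\ad_a)\rangle$. The image of $\exp(t\ad_a)$ in $G_a(A_0,t)$ is not integral-type either, because $l_x f(\ad_D)=l_{xf(D)}$ and $\Ann B=\{0\}$. So your $H'$ is not contained in $U(M(\overline{A_0}((t)))')$.

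This is why the paper uses $G_a(A_k/P,t)=\overline{G}_a(A_k/P,t)$ only for single primes $P$. Before Corollary 3.7 it needs a finite subdirect decomposition to get $\Mder_a=\Mder_{sa}$ globally.

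\textbf{Repair.} Item 3 is not needed.
\begin{itemize}
\item By item 4 and the first part, $T(H)^{(l(m))}\subseteq H\cap H_{k+1}\subseteq H\cap\prr(G_a(R,t))\subseteq\prr(H)$.
\item So the image of $T(H)$ in the semiprime group $H/\prr(H)$ is a solvable normal subgroup, hence trivial.
\item Since $\prr(H)\subseteq T(H)$ always holds, this gives $\prr(H)=T(H)$. This is how the paper concludes.
\end{itemize}
Alternatively, item 3 can be rescued by replacing $M(\overline{B}((t)))'$ with the full Laurent series ring $M(B)'((t))$. That ring satisfies the multilinear identities of the $PI$-algebra $M(B)'$, and it contains every $\exp(tD)$, $D\in\Mder_a(B)$, as a unit.
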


\begin{proof}
Similar to Theorem 3.4 for ${A_{-1}=R}$ and all ${i\geq 0}$ 
\[
\pi_{\tilde{B_i}}\,:\ G_a(A_{i-1}, t)\longrightarrow G_a(A_{i-1}/B_i, t)\,,\quad 
\Ker \pi_{\tilde{B_i}}\ =\ G_a(A_{i-1}, B_i, t)\subseteq \prr(G_a(A_{i-1}, t))
\]
and ${{}_{\tilde{A_i}} I: G_a(A_{i-1}/B_i, t)\hookrightarrow \Inn_a(A_i, t)}$ 
(see above), 
\begin{gather*}
{}_{\tilde{A_i}} I^{-1}(G_a(A_i, B_{i+1}, t))\subseteq 
{}_{\tilde{A_i}} I^{-1}(\prr({}_{\tilde{A_i}} I(G_a(A_{i-1}/B_i, t))))\ =\ 
\prr(G_a(A_{i-1}/B_i, t))\,,
\\
\pi_{\tilde{B_i}}^{-1}(\prr(G_a(A_{i-1}/B_i, t)))
\subseteq
\pi_{\tilde{B_i}}^{-1}(\prr(\pi_{\tilde{B_i}}(G_a(A_{i-1}, t))))\ =\ 
\prr(G_a(A_{i-1}, t))\,,
\\
\pi_{\tilde{B_i}}^{-1} {}_{\tilde{A_i}} I^{-1}(G_a(A_i, B_{i+1}, t))\subseteq 
\pi_{\tilde{B_i}}^{-1} {}_{\tilde{A_i}} I^{-1}(\prr(G_a(A_i, t)))\subseteq 
\prr(G_a(A_{i-1}, t))\,,
\\
H_{i+1}\subseteq 
\pi_{\tilde{B_0}}^{-1} {}_{\tilde{A_0}} I^{-1}\cdots 
\pi_{\tilde{B_i}}^{-1} {}_{\tilde{A_i}} I^{-1}(\prr(G_a(A_i, t)))\subseteq 
\prr(G_a(R, t))\,.
\end{gather*}
If there exists ${k\geq 0}$ such that $A_k/B_{k+1}$ is a $PI$--algebra, $H$ is a 
subgroup of $G_a(R, t)$, then ${B_{k+j}=0}$ and ${H_{k+1}=H_{k+j}}$ for all 
${j\geq 2}$. Since for any ${A_k\ne P\in \Spec(A_k)}$ the group  
${G_a(A_k/P, t)=\overline{G}_a(A_k/P, t)}$ is embeddable in the group 
$GL_{n_P}(\CM(\overline{A_k/P}((t))))$, where 
\[
n_P\ =\ \dim_{\CM(\overline{A_k/P}((t)))} P(\overline{A_k/P}((t)))\leq 
m\ =\ (\pideg A_k)^2\,,
\]
$T(H_P)$ is a solvable radical of the group  
${H_P=\pi_{\overline{P}((t))} {}_{\tilde{A_k}} I \pi_{\tilde{B_k}} 
\cdots {}_{\tilde{A_0}} I \pi_{\tilde{B_0}}(H)\subseteq G_a(A_k/P, t)}$ 
of solvability degree at most ${l(n_P)\leq l(m)}$, and 
\begin{multline*}
{}_{\tilde{A_k}} I \pi_{\tilde{B_k}} \cdots {}_{\tilde{A_0}} I 
\pi_{\tilde{B_0}}(T(H)^{(l(m))})\subseteq 
\bigcap_{A_k\ne P\in \Spec(A_k)} \pi_{\overline{P}((t))}^{-1}(T(H_P)^{(l(m))})
\subseteq 
\\
\bigcap_{A_k\ne P\in \Spec(A_k)} G_a(A_k, P, t)\ =\ G_a(A_k, B_{k+1}, t)\,,
\end{multline*}
${T(H)^{(l(m))}\subseteq H\cap H_{k+1}\subseteq \prr(H)=T(H)}$ 
(see the observation before Theorem 3.6).

When ${\sup\limits_{R\ne P\in \Spec_{\prr_M}(R)} 
\dim_{\CM(R/P)} P(R/P)=n< \infty}$ $A_0'$ satisfies ${\st_{2 n}=0}$, 
${H_1=H_j}$ for all ${j\geq 2}$, similar reasoning without going to 
${G_a(A_0, t)}$ gives ${T(H)^{(l(n))}\subseteq H\cap H_0}$, 
$\prr(H)=T(H)$, in the presence of ${l\geq 1}$, 
${(M^R(B_0))_{M(R)}^l=\{0\}}$, 
${(M^{\overline{R}((t))}(B_0))_{M(\overline{R}((t)))}^l=\{0\}}$ and   
$T(H)$ is a solvable radical of $H$ of solvability degree at most 
${l+l(n)+1}$.
\end{proof}

If $R$ is a $m.s.p.$--algebra, then   
${{}_{\overline{M(R)}((t))} I: \overline{E}_a(R, t)\hookrightarrow 
\overline{\Inn}_a(Q^s_m(M(R)), t)_{\overline{M(R)}((t))}}$
and for $R$ with a $SGPI$--algebra $M(R)$,
${{}_{\overline{M(R)}((t))} I: E_{c, a}(R, t)\hookrightarrow 
\Inn_a(Q^s_m(M(R)), t)_{\overline{M(R)}((t))}}$, where  
\begin{multline*}
E_{c, a}(R, t)\ =\ 
\langle \exp(t D)\mid D\in \Der_a(R),\ [\overline{D}, \CM(R)]=\{0\}\rangle\,,
\\
\shoveleft{
\Inn_a(Q^s_m(M(R)), t)_{\overline{M(R)}((t))}\ =}
\\ 
\shoveright{
\langle \exp(t D)|_{\overline{M(R)}((t))}\mid D\in \Inder(Q^s_m(M(R))),\ 
(M(R))D\subseteq M(R),\ D|_{M(R)}\in \Der_a(M(R))\rangle\,,}
\\
\shoveleft{
\overline{\Inn}_a(Q^s_m(M(R)), t)_{\overline{M(R)}((t))}\ =}
\\ 
\{\phi|_{\overline{M(R)}((t))}\mid \phi\in \langle \exp(t D)\mid D\in 
\Inder_{sa, \mathbb{F}}(Q^s_m(M(R))),\ (M(R))D\subseteq M(R)\rangle\}
\end{multline*}
(see Remark 2.9, Corollary 2.11 and the observations before it, Corollary 2.13). 
For $R$ with a $PI$--algebra $M(R)$ ${Q(M(R))=\hat{M(R)}}$, 
\begin{multline*}
\overline{\Inn}_a(\hat{M(R)}, t)_{\overline{M(R)}((t))}\cong 
\langle \exp(t D)\mid D\in \Inder_{sa, \mathbb{F}}(\hat{M(R)}),\ (M(R))D\subseteq M(R)\rangle 
\subseteq 
\\
U(\overline{M(\hat{M(R)})}((t)))\cong 
U(M(\overline{\hat{M(R)}}((t))))\,,
\end{multline*}
${\pideg \overline{M(\hat{M(R)})}((t))=\pideg M(\hat{M(R)})= 
(\pideg M(R))^2}$. If, in addition, $\hat{M(R)}$ is a finite subdirect 
product of $\hat{M(R)}/P_i$, ${\hat{M(R)}\ne P_i\in \Spec(\hat{M(R)})}$, 
${i=1, \ldots, k}$, then from $D\in \Mder(\hat{M(R)})$, 
$(M(R))D\subseteq M(R), D|_{M(R)}\in \Der_a(M(R))$ it follows that 
${D\in \Mder_{sa, \mathbb{F}}(\hat{M(R)})}$ and 
${\Inn_a(\hat{M(R)}, t)_{\overline{M(R)}((t))}=
\overline{\Inn}_a(\hat{M(R)}, t)_{\overline{M(R)}((t))}}$, 
${E_{c, a}(R, t)=\overline{E}_a(R, t)}$. Indeed, in view of 
${\dim_{{}_{\hat{M(R)}/P_i} C} \widehat{\hat{M(R)}/P_i}< \infty}$, there 
exists ${f_i(t)\in \mathbb{F}[t]}$, ${((\hat{M(R)}+P_i)/P_i)f_i(D_i)=\{0\}}$ 
for ${D_i\in \Mder(\hat{M(R)}/P_i)}$ induced by $D$, ${i=1, \ldots k}$, 
${(\hat{M(R)})f(D)=\{0\}}$, ${f(t)=\prod\limits_{i=1}^k f_i(t)}$, and it 
only remains to apply the observations after Remark 2.10. Similar to 
Corollary 3.5, we get from here 

\begin{co}
If $R$ is a $m.s.p.$--algebra with a $PI$--algebra $M(R)$, then 
any subgroup $G\subseteq \overline{E}_a(R, t)$ has a solvable 
radical ${\prr(G)=T(G)}$ of solvability degree at most 
$n(\overline{\hat{M(R)}}((t)))\leq l((\pideg M(R))^2)$.
\end{co}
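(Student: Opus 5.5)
The plan is to repeat the proof of Corollary 3.5, with $\overline{E}_a(R,t)$ in place of $\overline{E}(R)$ and the Laurent-series extensions in place of the original algebras.

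\emph{First step: embed $G$ into a group of units.} Since $R$ is a $m.s.p.$--algebra, $\Ann R=\{0\}$. Hence ${}_{\overline{M(R)}((t))} I$ is injective on $\overline{E}_a(R,t)$, and its image lies in $\overline{\Inn}_a(Q^s_m(M(R)),t)_{\overline{M(R)}((t))}$; this was established just before the statement. Because $M(R)$ is a $PI$--algebra, $Q(M(R))=\hat{M(R)}$. So, by the isomorphism recorded above,
\[
G\hookrightarrow \overline{\Inn}_a(\hat{M(R)},t)_{\overline{M(R)}((t))}\subseteq U(\overline{M(\hat{M(R)})}((t)))\cong U(M(\overline{\hat{M(R)}}((t)))).
\]
Here $\overline{M(\hat{M(R)})}((t))$ is a semiprime associative $PI$--algebra with $1$ (unity $\Id$). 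Its $\pideg$ equals $(\pideg M(R))^2$ by the observations before Theorem 3.6, since $\hat{M(R)}$ is semiprime $PI$ with $\pideg \hat{M(R)}=\pideg M(R)$.

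\emph{Second step: $\prr(G)=T(G)$ is solvable.} Proposition 3.3 applied to the image of $G$ gives that $\prr(G)=T(G)$ is a solvable extension of the product of all nilpotent normal subgroups. To get an actual degree bound, I would argue prime by prime as in the proof of Theorem 3.4. For each proper $P\in\Spec(\hat{M(R)})$, the quotient $\overline{\hat{M(R)}/P}((t))$ has a finite-dimensional central closure over its Martindale centroid, of dimension $n_P\le(\pideg M(R))^2$. Its multiplication algebra therefore embeds into $M_{n_P}$ over a field, and the image of $T(G)$ there is a weakly solvable linear group. By \cite{Gol7}, Proposition 1.33, Theorem 1.35, that image is solvable of degree at most $l(n_P)$, which is at most $n(\overline{\hat{M(R)}}((t)))\le l((\pideg M(R))^2)$.

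\emph{Third step: intersect over primes.} The semiprime algebra $\overline{\hat{M(R)}}((t))$ is a subdirect product of its prime quotients; by the remarks before Theorem 3.6, the $\overline{P}((t))$ are prime ideals with zero intersection. It follows that $T(G)^{(n)}$ maps trivially into every factor, so it is trivial. Unlike Theorem 3.4, no $M$--nilpotent kernel $G\cap G_0$ appears, because $\overline{M(R)}((t))$ is already semiprime and the joint kernel is trivial. Finally, $\prr(G)\subseteq T(G)$ always holds, and the solvability of $T(G)$ gives the reverse inclusion.

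The main obstacle is the second step. One must check that the images of $G$ in each prime factor really are linear groups of the claimed dimension over a field, namely the centroid of $\overline{\hat{M(R)}/P}((t))$. This depends on the Laurent-series extension preserving primeness and $\pideg$. I would justify it with the statement preceding Theorem 3.6 that $\overline{A}((t))$ and $A$ satisfy the same homogeneous identities and that $\overline{P}((t))$ is prime for prime $P$.
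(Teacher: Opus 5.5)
Your proposal is correct and is essentially the paper's own argument. The paper proves this ``similar to Corollary~3.5'': it embeds $\overline{E}_a(R,t)$ via ${}_{\overline{M(R)}((t))} I$ into $U(M(\overline{\hat{M(R)}}((t))))$ using the observations just before the statement, and then runs the prime-by-prime argument from the proof of Theorem~3.4 (weakly solvable subgroups of $GL_{n_P}$ are solvable of degree at most $l(n_P)$, and by semiprimeness the kernels intersect trivially).

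One small slip is in your second step. The image of $T(G)$ in each prime factor has degree at most $n(\overline{\hat{M(R)}}((t)))$, and also at most $l(n_P)\leq l((\pideg M(R))^2)$; it is not true that $l(n_P)\leq n(\overline{\hat{M(R)}}((t)))$. Also, the appeal to Proposition~3.3 is redundant once you have the direct degree bound.
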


If $A$ is a semiprime associative $\mathbb{F}$--algebra, then one has   
${\overline{E}_a(A, t)\subseteq 
\overline{\Inn}_a(Q^s_m(A), t)_{\overline{A}((t))}}$, 
${E_{c, a}(A, t)\subseteq 
\Inn_a(Q^s_m(A), t)_{\overline{A}((t))}}$ for the $SGPI$--algebra $A$, where 
\begin{gather*}
\Inn_a(Q^s_m(A), t)_{\overline{A}((t))}\ =\ 
\langle \exp(t D)|_{\overline{A}((t))}\mid D\in \Inder(Q^s_m(A)),\ 
(A)D\subseteq A,\ D|_A\in \Der_a(A)\rangle\,,
\\
\overline{\Inn}_a(Q^s_m(A), t)_{\overline{A}((t))}\ =\ 
\{\phi|_{\overline{A}((t))}\mid \phi\in \langle \exp(t D)\mid 
D\in \Inder_{sa, \mathbb{F}}(Q^s_m(A)),\ (A)D\subseteq A\rangle\}\,,
\end{gather*}
and in the case of the $PI$--algebra $A$, ${\overline{E}_a(A, t)\hookrightarrow 
U(\overline{M(\hat{A})}((t)))\cong U(M(\overline{\hat{A}}((t))))}$, any 
subgroup ${G\subseteq \overline{E}_a(A, t)}$ has a solvable radical 
${\prr(G)=T(G)}$ of solvability degree at most 
${n(\overline{\hat{A}}((t)))\leq l((\pideg A)^2)}$, 
${E_{c, a}(A, t)=\overline{E}_a(A, t)}$ for the $PI$--algebra $A$ with 
$\hat{A}$ representable as a finite subdirect product of its prime quotient 
algebras (the observations before Corollary 3.7 with $M(R)$ replaced by $A$). 
If the field $\mathbb{F}$ is algebraically closed, $A$ is a $PI$--algebra and 
a finite subdirect product of ${A/P_i, A\ne P_i\in \Spec(A), 
i=1, \ldots, k}$, then ${\Der_{1, sa}(A)=\Der_{sa}(A)}$, 
${\overline{E}_a(A, t)=\langle \exp(t D)\mid D\in \Der_{1, sa}(A)\rangle}$.
It is sufficient to note that for any ${D\in \Der_{1, sa}(A)}$, 
${i=1, \ldots, k}$, ${(P_i)D\subseteq P_i}$, 
${(Z(A/P_i))D_i=(Z(\hat{A/P_i}))D_i=\{0\}}$, 
where $D_i$ is the extension to $\hat{A/P_i}$ of ${D_i\in \Der(A/P)}$ induced 
by $D$ (see the observations after Corollary 3.5, Corollary 2.3, 
Lemma 2.8), ${\dim_{Z(\hat{A/P_i})} \hat{A/P_i}< \infty}$ and  
${(\hat{A/P_i})f_i(D_i)=\{0\}}$ for suitable ${f_i(t)\in \mathbb{F}[t]}$, 
${(A)f(D)=\{0\}}$, ${f(t)=\prod\limits_{i=1}^k f_i(t)}$.

\section*{Addition 1. Speciality of the Slin'ko --- McCrimmon radical}

\emph{Absolute zero divisors} of quadratic Jordan and right-alternative 
algebras are elements whose quadratic multiplication operators are equal 
to zero; algebras from these classes that do not have non-zero absolute 
zero divisors are \emph{non-degenerate algebras}; radicals on these classes 
with non-degenerate semisimple algebras are \emph{non-degenerate radicals}. 
The non-degenerate radical with the largest class of semisimple algebras 
(the upper radical determined by the class of non-degenerate algebras) is the 
McCrimmon and Slin'ko --- McCrimmon radical of Jordan and right-alternative 
algebras, $Mc$. To each non-degenerate radical $\mathcal{T}$ of quadratic 
Jordan $F$--algebras there corresponds a non-degenerate radical of 
right-alternative $F$--algebras $\mathcal{T}^{+}$, 
${\mathcal{T}^{+}(R)=\mathcal{T}(R^+)}$ for any right-alternative $F$--algebra 
$R$, where ${R^{+}=(R, U, {}^2)}$ is the quadratic Jordan algebras of $R$ with   
${U: x\longmapsto U_x}$, ${y U_x=(x y) x}$, and ${{}^2: x\longmapsto x^2}$, 
${x, y\in R}$ \cite{AT2}, Theorem 5. In particular, ${Mc(R)=Mc^{+}(R)}$ 
is the smallest among the ideals of $R$ and $R^{+}$, the quotient algebras by 
which are non-degenerate. The speciality of $\mathcal{T}$ (the decomposabi\-lity 
of $\mathcal{T}$--semisimple algebras into subdirect products of prime 
$\mathcal{T}$--semisimple algebras ($\mathcal{T}$--semisimple algebras with 
non-zero intersections of any two non-zero ideals)) is inherited by 
$\mathcal{T}^{+}$ \cite{AT2}, Theorem 1 (the primeness of quadratic Jordan 
algebras is the absence of ideals $I$, $J$, ${I\cap J=\{0\}\ne I, J}$, and 
ideals ${I\ne \{0\}}$, ${(I)U_x=\{0\}}$, ${x^2=0}$ for all ${x\in I}$). 
If ${1/2\in F}$, then the radical $Mc$ of Jordan (linear and their quadratic 
representations) and right-alternative $F$--algebras are special, since 
$Mc(R)$ of any such algebra $R$ is equal to the set of its \emph{strict absolute 
zero divisors}
\[
SA(R)\ =\ 
\{x\in R\mid \forall \{x_i\}_{i=0}^{\infty},\ x_0=x,\ x_{i+1}\in (R)U_{x_i},\ 
\exists k\geq 0,\ x_k=0\}
\]
\cite{ZelM} (the conclusion of the speciality of $Mc$ from ${Mc(R)=SA(R)}$ 
is similar to the elementwise description of $\prr$).

\begin{rem}
The radical $Mc$ is special on the class of alternative algebras over any 
ring $F$, ${Mc(R)=SA(R)}$ for any alternative $F$--algebra $R$.
\end{rem}

\begin{proof} 
Since non-degenerate algebras do not contain non-zero strict absolute zero 
divisors, ${SA(R)\subseteq Mc(R)}$ in any case.

If ${x\in R\setminus SA(R)}$, $\{x_i\}_{i=0}^{\infty}$ is a chain of 
${0\ne x_{i+1}\in U_{x_i}(R)}$, ${i\geq 0}$, with ${x_0=x}$, $P$ is the 
maximal among ${I\lhd R}$, ${I\cap \{x_i\}_{i=0}^{\infty}=\emptyset}$, 
then ${P\in \Spec(R)}$ and when  
${Mc(R/P)\ne \{0\}}$ ${3 R/P=\{0\}}$ \cite{ZShS}, Theorem 9 and 
corollary, p. 229, 230, $R/P$ can be considered as an algebra over the 
integrity domain $F/\Ann_F R$ with $1/2$, 
${x_i+P\in Mc(R/P)=SA(R/P)}$ (the choice of $P$), ${x_j\in P}$ for some 
${i\geq 0}$, ${j\geq i}$?! Consequently, ${Mc(R/P)=\{0\}}$, 
$R/P$ is either associative or a Cayley --- Dickson ring (\cite{ZShS}, 
Theorem 9, p. 229 again), 
${R\setminus SA(R)\subseteq R\setminus \bigcap\limits_{P\in \Spec_{Mc}(R)} P}$ 
and ${Mc(R)=\bigcap\limits_{P\in \Spec_{Mc}(R)} P=SA(R)}$. 
\end{proof} 

The prime radical $\prr(R)$ of any alternative algebras $R$ is equal to its 
lower nil-radical $RN(R)$, ${\prr(R)=RN(R)}$, and if $R/\prr(R)$ is either 
associative or (and) has no $3$--torsion or (and) is finitely generated, 
then ${\prr(R)=Mc(R)}$ \cite{ZShS}, Corollary 2, p. 215, corollary\linebreak 
of Theorem 9, p. 230, Theorem 5', remark to it, p. 222, 223. Due to 
\cite{Pch, Pch1} and other well-known results on exceptional 
algebras, there exist alternative $PI$--algebras $R$ with $\prr(R)\ne Mc(R)$. 
Remark 3.8, the description of the prime non-degenerate alternative algebras 
and the coincidence of nil-radicals of associative $PI$--algebras 
\cite{ZShS}, corollary of Proposition 3 and Theorem 9, p. 229, \cite{AR}, 
Theorem 3, p. 86, \cite{Row}, Corollary 1.6.26, Theorem 1.6.36, p. 46, 48 
allows us to deduce

\begin{co}
${Mc(R)=\mathcal{N}(R)}$ for any alternative $PI$--algebra $R$. 
\end{co}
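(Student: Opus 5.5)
We have to prove ${Mc(R)=\mathcal{N}(R)}$ for an alternative $PI$--algebra $R$. Both radicals are available through their spectral descriptions. By Remark 3.8, $Mc$ is special on alternative algebras, so ${Mc(R)=SA(R)=\bigcap_{P\in \Spec_{Mc}(R)} P}$. The upper nil-radical $\mathcal{N}$ is defined and special on algebras with associative powers, in particular on alternative algebras. The plan is to prove the two inclusions separately.

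\emph{The inclusion ${Mc(R)\subseteq \mathcal{N}(R)}$.} We use ${Mc(R)=SA(R)}$. Let ${x\in SA(R)}$ and consider the subalgebra $\langle x\rangle$. It is associative (Artin), and along the chain ${x_{i+1}=x_i^3\in (R)U_{x_i}}$ some term vanishes. Hence ${x^{3^k}=0}$ for some $k$, so every element of $Mc(R)$ is nilpotent. Since $Mc(R)$ is an ideal, it is a nil-ideal, and therefore ${Mc(R)\subseteq \mathcal{N}(R)}$.

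\emph{The inclusion ${\mathcal{N}(R)\subseteq Mc(R)}$.} It suffices to show that ${\mathcal{N}(R/P)=\{0\}}$ for every ${P\in \Spec_{Mc}(R)}$. Since nil-ideals pass to quotients, this gives ${\mathcal{N}(R)\subseteq P}$ for all such $P$, hence ${\mathcal{N}(R)\subseteq \bigcap P=Mc(R)}$. So fix a prime non-degenerate alternative $PI$--algebra ${S=R/P}$. By the description of prime non-degenerate alternative algebras (\cite{ZShS}, Theorem 9, as used in the proof of Remark 3.8), $S$ is either associative or a Cayley--Dickson ring.

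\emph{Cayley--Dickson case.} The central closure of $S$ is an octonion algebra over a field, which is simple and has no nonzero nil-ideals. Suppose ${I=\mathcal{N}(S)\ne\{0\}}$. The central closure of $I$ is then a nonzero ideal of the octonion algebra, so it is the whole algebra. It remains to turn this into a contradiction with $I$ being nil. An element ${y\in I}$ satisfies a quadratic equation ${y^2-t(y)y+n(y)=0}$ over the center, and a nonzero nil ideal of a prime Cayley--Dickson ring would have to lie in the set of elements of zero trace and zero norm. The plan is to show this set contains no nonzero ideal, for example by using that ${n(y\,z)=n(y)n(z)}$ and that the norm form is nondegenerate. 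This case analysis, done via \cite{ZShS} and \cite{AR}, Theorem 3, is the step I expect to be the main obstacle.

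\emph{Associative case.} $S$ is a prime associative $PI$--algebra, and non-degeneracy means that ${xSx=\{0\}}$ forces ${x=0}$, so $S$ is semiprime. By the coincidence of nil-radicals of associative $PI$--algebras (\cite{Row}, Corollary 1.6.26, Theorem 1.6.36) we get ${\mathcal{N}(S)=\prr(S)=\{0\}}$. If one must also reconcile the $PI$ notion for alternative versus associative identities, this goes through the proper-image definition given in the Introduction: the identity of $R$ has a proper image in $F_{Ass}\langle X\rangle$, so the associative quotient $S$ is again a $PI$--algebra.

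Combining both cases gives ${\mathcal{N}(S)=\{0\}}$ for every ${P\in\Spec_{Mc}(R)}$, which completes the second inclusion and the proof.
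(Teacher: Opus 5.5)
Your route is the same as the paper's: $Mc(R)=SA(R)$ is nil, and $\mathcal{N}(R)$ lies in every $P\in\Spec_{Mc}(R)$. The latter holds because, by \cite{ZShS}, Theorem 9, $R/P$ is either a prime associative $PI$--algebra, where $\mathcal{N}=\prr=\{0\}$, or a Cayley--Dickson ring. The one step you leave open is the Cayley--Dickson case. The paper simply cites nil-semisimplicity of Cayley--Dickson rings from \cite{ZShS}, so as written your proof is incomplete there.

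The case is not really an obstacle, and it already follows from what you wrote. Put $Z=Z(S)$, so the octonion algebra is $Z^{-1}S$.
\begin{enumerate}
\item Every element of $Z^{-1}I$ can be written as $z^{-1}a$ with $a\in I$ and $0\ne z\in Z$.
\item Since $z^{-1}$ is central and in the nucleus, $(z^{-1}a)^n=z^{-n}a^n$. Hence $Z^{-1}I$ is itself a nil ideal of $Z^{-1}S$.
\item You already observed that $Z^{-1}I$ is the whole of $Z^{-1}S$, so it contains $1$, which is not nilpotent. This is a contradiction.
\end{enumerate}
Equivalently, $1=z^{-1}a$ gives $0\ne z=a\in I\cap Z$. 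Nonzero central elements of a prime alternative ring are not zero divisors, so $z$ is not nilpotent.

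The tool you suggest, multiplicativity $n(yz)=n(y)n(z)$, would not help, since $n$ already vanishes identically on $I$. If you prefer the trace route, the facts you need are these: $t(yu)=0$ for all $y\in I$ and $u\in Z^{-1}S$, because $yu$ lies in the nil ideal $Z^{-1}I$. Then use nondegeneracy of the bilinear form $(y,u)\longmapsto t(yu)$ on an octonion algebra.
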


This is also true for any right-alternative $PI$--algebra $R$ over a ring $F$ 
with $1/2$, since non-degenerate right-alternative algebras over such $F$ are 
alternative \cite{Skos}, Theorem 1. In addition, 
${Mc(R)=M(R)=T^{+}(R)=\mathcal{N}(R)}$, where ${M=LN^{+}}$ is the Mikheev 
radical (locally right-nilpotent in the sense of Shirshov radical), 
${\mathcal{N}=\mathcal{N}^{+}}$ is the upper nil-radical of right-alternative 
algebras \cite{AT2} taking into account \cite{ACZ} and \cite{Skos1}, 
Proposition 2, the weakly solvable radical $T$ of quadratic Jordan algebras 
is introduced for the operation ${x\cdot y=(x+y)^2-x^2-y^2}$ ($T^{+}$ is the 
weakly solvable radical defined for the operation 
${x\cdot y=x y+y x}$, ${T(R)\subseteq T^{+}(R)}$), ${M=LN=T}$ on the class 
of alternative algebras over any ring 
\cite{ZShS}, Theorem 5, p. 143 and\linebreak \cite{Sh}, the proof of Theorem 7 
(${LN=T}$ follows from its validity for associative algebras, the 
nil-semisimplicity of Cayley --- Dickson rings and the decomposability  
of $LN$--semisimple alternative algebras into subdirect products of prime 
$LN$--semisimple associative algebras and Cayley --- Dickson rings, 
\cite{ZShS}, Theorem 8, corollary of Proposition 3, 
Theorem 11, p. 197, 229, 232).

Remark 3.8 and Corollary 3.9 correspond in \cite{B7} to Corollary 3.1.16 of 
Theorem 3.1.15 on the non-degeneracy of prime quotient algebras of 
non-degenerate alternative algebras and to Lemma 3.3.1, but the conclusion 
of Remark 3.8 is different from \cite{B7}.  

\section*{Addition 2. Non-radicality of the prime radical of Lie 
algebras and groups}

The justification of the non-radicality of the prime radical $\prr$ on a class 
of algebras (groups) $\mathfrak{M}$ closed under taking ideals (normal 
subgroups) and homomorphic images is reduced to the construction of an example 
of ${R\in \mathfrak{M}}$ with ${I=\prr(I)\lhd R}$, ${I\not\subseteq \prr(R)}$.  

Let $\mathbb{F}_p$ be a field, ${\Ch \mathbb{F}_p=p> 0}$, 
${X=\{x_i\}_{i=1}^{\infty}}$, ${I_p=(x_i^p\mid i\geq 1)_{\mathbb{F}_p[X]}}$, 
${A_p=\mathbb{F}_p[X]/I_p}$ and ${\overline{f}=f+I_p}$, ${f\in \mathbb{F}_p[X]}$. 
Then ${\prr(A_p)=\overline{X} A_p}$ is locally nilpotent, not nilpotent and 
satisfies ${x^p=0}$, 
\begin{gather*}
\prr(M_n(A_p))\ =\ M_n(\prr(A_p))\ =\ \overline{X} M_n(A_p)\,,
\\
M_n(\prr(A_p))^{(-)}\ =\ \prr(M_n(\prr(A_p))^{(-)})\quad (n\geq 1)
\end{gather*}
(${\overline{x_i} M_n(A_p)\lhd M_n(A_p), (\overline{x_i} M_n(A_p))^p=\{0\}}$). 
As is known, any set ${\{f_i\}_{i=1}^{\infty}\subseteq \mathbb{F}[X]}$ 
corresponds to ${D\in \Der(\mathbb{F}_p[X])}$ with ${D=f_i, 
f D=\sum\limits_{j=1}^k f_j (f D_j), f\in \mathbb{F}_p[x_1, \ldots, x_k], 
D_i\in \Der(\mathbb{F}_p[X])}$ is the partial derivation with respect to $x_i$, 
${x_j D_i=\delta_{ij}}$, ${i\geq 1}$, and all ${D\in \Der(\mathbb{F}_p[X])}$ have 
this form. 

Let ${L_p=M_3(\prr(A_p))^{(-)}\rtimes W_p}$ be a split extension of a 
Lie algebra $M_3(\prr(A_p))^{(-)}$ by an Abelian Lie algebra 
${W_p=\sum\limits_{q\geq 0} \mathbb{F}_p \overline{H}_q\subset 
\Der(M_3(A_p))}$, where ${H_q\in \Der(\mathbb{F}_p[X])}$, ${x_i H_q=x_{i+q}}$, 
${i\geq 1}$, and ${\overline{H}_q\in \Der_{\mathbb{F}_p}(M_3(A_p))}$, 
${(\overline{f_{st}}) \overline{H}_q=
(\overline{f_{st} H_q})}$, ${f_{st}\in \mathbb{F}_p[X]}$, ${1\leq s, t\leq 3}$  
(${I_p\lhd_{\Der} \mathbb{F}_p[X]}$; $\overline{H}_q$ is the continuation 
of ${\overline{H}_q\in \Der(A_p)}$ induced by $H_q$ onto $M_3(A_p)$, 
${\overline{f}\, \overline{H}_q=\overline{f H_q}}$, ${f\in \mathbb{F}_p[X]}$).

\begin{rem}
For any ${m\geq 0}$ there is ${n\geq 1}$, ${\binom{m+n}n\not\equiv 0\pod p}$.
\end{rem}

\begin{proof}
The case ${m=0}$ is obvious. If ${m> 0}$ and 
${\binom{k}{k-m}=\binom{k}m\equiv 0\pod p}$ for all ${k> m}$, then 
${\binom{t+1}m-\binom{t}m=\binom{t}{m-1}\equiv 0\pod p}$ for all ${t> m}$ 
and therefore, ${m> 1}$ (${\binom{t}0=1}$), 
$\binom{t+1}{m-1}-\binom{t}{m-1}=\binom{t}{m-2}\equiv 0\pod p$ for all 
${t> m}$, ${m> 2}$ and so on, in the end ${m> m}$?! 
\end{proof}

\begin{lemma}
For any ${0\ne f\in \prr(A_p)}$ there are ${s, t\geq 1}$, 
${f \overline{H}_s^t f\ne 0}$.
\end{lemma}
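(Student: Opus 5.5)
The plan is to make every variable of a monomial of $f$ migrate under $\overline{H}_s^t$ to its own fresh variable, and to read off one surviving monomial of $f\,(f\overline{H}_s^t)$.

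\emph{Setup.} Write $f=\sum_a c_a\overline{x}^a$ in the monomial basis of $A_p$, where $\overline{x}^a=\overline{x_1}^{a_1}\cdots\overline{x_k}^{a_k}$ and all $a_i<p$. Since $f\in\prr(A_p)=\overline{X}A_p$, every monomial occurring in $f$ has positive degree. Fix a monomial $\overline{x}^a$ with $c_a\ne 0$ of \emph{minimal} total degree $d\geq 1$ among the monomials of $f$. Write it as an ordered product of $d$ factors $\overline{x_{i_1}}\cdots\overline{x_{i_d}}$, and take $s>k$. The choice $s>k$ means that any index $i+js$ with $1\leq i\leq k$, $j\geq 0$ determines the pair $(i,j)$.

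\emph{Action of $\overline{H}_s^t$.} We have $x_iH_s^j=x_{i+js}$, and $H_s$ is a derivation. So for a product of $r$ variables $y_1\cdots y_r$, $(y_1\cdots y_r)H_s^t$ equals the sum, over ordered tuples $(j_1,\ldots,j_r)$ with $\sum j_l=t$, of
\[
\frac{t!}{j_1!\cdots j_r!}\;\prod_l y_lH_s^{j_l}.
\]
We choose
\[
t=1+p+\cdots+p^{d-1},
\]
and we focus on the target monomial
\[
T=\prod_{l=1}^d \overline{x}_{\,i_l+p^{l-1}s}.
\]
This $T$ is squarefree and involves only indices $>k$, so $\overline{x}^a T\ne 0$ in $A_p$.

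\emph{Key step: the coefficient of $\overline{x}^aT$ in $f\,(f\overline{H}_s^t)$ is nonzero.} A contribution to $\overline{x}^aT$ comes from a monomial $\overline{x}^{b'}$ of $f$ (first factor) times a term of $\overline{x}^{a'}\overline{H}_s^t$ (second factor). In such a term, the factors with $j_l=0$ form an ``old'' part $\overline{x}^c$, and the shifted factors give a fresh part, which must equal $T$. Because $s>k$, $T$ determines the source indices of its factors, so the shifted factors come from exactly $\overline{x}^a$. Hence $a'=a+c$ and $b'+c=a$. Since $\deg b'\geq d=\deg a$ by minimality, this forces $c=0$ and $b'=a'=a$.

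It remains to count the contributions with $b'=a'=a$. The ordered assignments realising $T$ are the reassignments of the distinct powers $p^{l-1}$ among identical factors. These contribute the multinomial coefficient
\[
\frac{t!}{\prod_l (p^{l-1})!}
\]
times $\prod_i a_i!$. The multinomial coefficient is $\not\equiv 0\pmod p$ by Lucas' theorem: adding $1,p,\ldots,p^{d-1}$ involves no carries in base $p$. The factor $\prod_i a_i!$ is also nonzero mod $p$, since every $a_i<p$. So the coefficient of $\overline{x}^aT$ in $f\,(f\overline{H}_s^t)$ is $c_a^2$ times a unit of $\mathbb{F}_p$, which is nonzero. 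This gives $f\overline{H}_s^tf\ne 0$.

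\emph{Expected main obstacle.} The delicate point is the bookkeeping just described: ruling out cancellation from other monomials of $f$, and from partially shifted terms whose old variables might recombine with the first factor. Minimal degree of $a$, together with $s>k$, is what kills these. The naive choice $t=1$ (or $b$ taken different from $a$) can fail because exponents reach $p$, as in $f=\overline{x_1}^{p-1}$. Spreading the $t$ derivation steps as distinct powers of $p$ over the $d$ factors avoids both the exponent-$p$ collapse and the vanishing of factorials in characteristic $p$.
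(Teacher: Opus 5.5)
Your proof is correct and follows essentially the paper's argument. The paper also takes $s$ beyond the variable indices and $t=1+k_1+\dots+k_{d-1}$ with a multinomial coefficient nonzero mod $p$, and then isolates the fully shifted monomial multiplied by a monomial of $f$, using the lowest homogeneous component $f_1$ where you use a minimal-degree monomial. Your explicit choice $k_i=p^i$ via Lucas' theorem stands in for the paper's Remark 3.10, and the distinct shifts make $T$ squarefree, so your count is simply $\prod_i a_i!$ rather than the paper's multinomial $|T(\tilde m,\tilde t')|$.
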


\begin{proof}
The classes $\overline{1}$ and ${\prod\limits_{j=1}^k \overline{x_{i_j}}^{l_j}}$,
${k\geq 1}$, ${i_1< \ldots < i_k}$, ${1\leq l_j< p}$, form the 
$\mathbb{F}_p$--basis of the algebra 
${A_p=\bigoplus\limits_{d\geq 0} A_{p, d}}$, the subspaces 
\[ 
A_{p, d}\ =\ \sum\limits_{0\leq l_i< p,\ \sum\limits_{i\geq 1} l_i=d} 
\mathbb{F}_p \prod_{i\geq 1} \overline{x_i}^{l_i}\quad (d\geq 0)\,,
\]
are $W_p$--invariant. In view of ${f=f_1+\ldots+f_l}$ for some uniquely defined 
${0\ne f_i\in A_{p, d_i}}$ and ${1\leq d_1< \ldots< d_l}$, it is sufficient 
to choose ${s, t\geq 1}$, ${f_1 \overline{H}_s^t f_1\ne 0}$. 

By Remark 3.10, for any ${j\geq 1}$ there exist ${k_i\geq 1}$, 
${\binom{1+k_1+\ldots+k_i}{k_i}\not \equiv 0\pod p}$, ${i=1, \ldots, j}$, 
${\prod\limits_{i=1}^j \binom{1+k_1+\ldots+k_i}{k_i}=
\frac{(1+k_1+\ldots+k_j)!}{k_1!\cdots k_j!}\not \equiv 0\pod p}$.
We set ${t=1}$ for ${d_1=1}$, ${t=1+k_1+\ldots+k_{d_1-1}}$ for ${d_1> 1}$, 
${s=n+1}$ assuming  
${f_1\in \langle \overline{x_1}, \ldots, \overline{x_n}\rangle}$, 
${M=Z_{p-1, d_1}^n}$ and ${T=Z_{t, t}^{d_1}}$, where 
\[
Z_{k, l}^n\ =\ \{\tilde{z}=(z_1, \ldots, z_n)\mid 0\leq z_i\leq k,\ 
z_1+\ldots+z_n=l\}\quad (n\geq 1,\ k, l\geq 0)\,.
\] 

For any ${\tilde{m}=(m_1, \ldots, m_n)\in M}$ and  
${\tilde{t}=(t_1, \ldots, t_{d_1})\in T}$, we set 
${\{j_k\}_{k=1}^{k(\tilde{m})}=\{i\mid m_i\ne 0\}}$, 
${1\leq j_1< j_2< \ldots< j_{k(\tilde{m})}\leq n}$, 
${n_0=0}$, ${n_{k+1}=n_k+m_{j_{k+1}}}$, 
${k=0, \ldots, k(\tilde{m})-1}$ (${n_{k(\tilde{m})}=d_1}$), 
${\{t_{n_{k-1}+i}\}_{i=1}^{m_{j_k}}=\{t'_{n_{k-1}+i}\}_{i=1}^{m_{j_k}}}$, 
${t'_{n_{k-1}+1}\leq t'_{n_{k-1}+2}\leq \ldots \leq t'_{n_k}}$, 
${k=1, \ldots, k(\tilde{m})}$, and form 
$\tilde{t}'=\tilde{t}'(\tilde{m}, \tilde{t})=(t'_1, \ldots, t'_{d_1})$, 
${T(\tilde{m}, \tilde{t}')= 
\{\tilde{d}\in T\mid \tilde{d}'(\tilde{m}, \tilde{d})=\tilde{t}'\}}$,
${T'(\tilde{m})=\{\tilde{t}'(\tilde{m}, \tilde{t})\mid \tilde{t}\in T\}}$, 
where 
\[
|T(\tilde{m}, \tilde{t}')|\ =\ \prod_{k=1}^{k(\tilde{m})} 
\frac{m_{j_k}!}{l_{1k}!\cdots l_{l_k k}!}
\not\equiv 0\pod p
\]
for ${l_k=|\{t'_i\}_{i=n_{k-1}+1}^{n_k}|}$, 
${\{t'_{jk}\}_{j=1}^{l_k}=\{t'_i\}_{i=n_{k-1}+1}^{n_k}}$, 
${l_{jk}=|\{i\mid t'_i=t'_{jk},\ i=n_{k-1}+1, \ldots, n_k\}|}$, 
\[
g(\tilde{m})\ =\ \prod_{i=1}^n \overline{x_i}^{m_i}\ =\ 
\prod_{k=1}^{k(\tilde{m})} \overline{x_{i_{j_k}}}^{m_{j_k}}\,,  
\quad 
g(\tilde{m}, \tilde{t})\ =\ \prod_{k=1}^{k(\tilde{m})} 
\prod_{i=1}^{m_{j_k}} \overline{x_{i_{j_k}+s t_{n_{k-1}+i}}}\,.
\]
Since ${g(\tilde{m}, \tilde{t})=g(\tilde{m}, \tilde{d})}$ for 
${\tilde{t}, \tilde{d}\in T}$ is equivalent to 
${\tilde{t}'(\tilde{m}, \tilde{t})=\tilde{t}'(\tilde{m}, \tilde{d})}$ 
and ${i+s q=i'+s q'}$ for ${1\leq i, i'\leq n, q, q'\geq 0}$ is equivalent 
to ${i=i'}$, ${q=q'}$, ${g(\tilde{u}, \tilde{t})=g(\tilde{v}, \tilde{d})}$ 
for ${\tilde{u}, \tilde{v}\in M}$, 
${\tilde{t}, \tilde{d}\in T}$ if and only if ${\tilde{u}=\tilde{v}}$, 
${\tilde{t}'(\tilde{u}, \tilde{t})=\tilde{t}'(\tilde{v}, \tilde{d})}$. 
Then ${f_1=\sum\limits_{\tilde{m}\in M} b_{\tilde{m}} g(\tilde{m})}$, 
${\{0\}\ne \{b_{\tilde{m}}\}\subset \mathbb{F}_p}$, 
\begin{multline*}
f_1 \overline{H}_s^t\ =\ \sum_{\tilde{m}\in M} b_{\tilde{m}}
\biggl(\sum_{\tilde{t}=(t_1, \ldots, t_{d_1})\in T} 
\frac{t!}{t_1!\cdots t_{d_1}!} g(\tilde{m}, \tilde{t})\biggr)\ =
\\ 
\sum_{\tilde{m}\in M,\ \tilde{t}'=(t'_1, \ldots, t'_{d_1})\in T'(\tilde{m})} 
\frac{t!}{t'_1!\cdots t'_{d_1}!}
|T(\tilde{m}, \tilde{t}')| b_{\tilde{m}} g(\tilde{m}, \tilde{t}')\ =\ 
\sum_{\tilde{m}\in M,\ \tilde{t}'\in T'(\tilde{m})} c_{\tilde{m} \tilde{t}'}
g(\tilde{m}, \tilde{t}')\ne 0\,,
\end{multline*} 
since ${g(\tilde{u}, \tilde{d}')}$ for ${\tilde{u}\in M}$, 
${b_{\tilde{u}}\ne 0}$, and ${\tilde{d}'=\tilde{t}'(\tilde{u}, \tilde{d})}$, 
${\tilde{d}=(1, k_1, \ldots, k_{d_1-1})\in T}$, is included in the record 
of $f_1 \overline{H}_s^t$ with the coefficient 
${c_{\tilde{u}, \tilde{d}'}=\frac{t!}{1! k_1!\cdots k_{d_1-1}!} 
|T(\tilde{u}, \tilde{d}')| b_{\tilde{u}}\ne 0}$. It remains to be noted that 
${g(\tilde{u}) g(\tilde{u}, \tilde{d}')=g(\tilde{m}) g(\tilde{v}, \tilde{t}')}$ 
for ${\tilde{m}, \tilde{v}\in M}$, ${\tilde{t}'\in T'(\tilde{v})}$ is 
equivalent to ${\tilde{u}=\tilde{m}=\tilde{v}}$, ${\tilde{d}'=\tilde{t}'}$ 
(${k_i\geq 1}$), 
\[
f_1 \overline{H}_s^t f_1\ =\ 
b_{\tilde{u}} c_{\tilde{u}, \tilde{d}'} 
g(\tilde{u}) g(\tilde{u}, \tilde{d}')+
\sum_{\genfrac{}{}{0pt}{1}
{\tilde{v}, \tilde{m}\in M,\ \tilde{t}'\in T'(\tilde{m}),}
{(\tilde{v}, \tilde{m}, \tilde{t}')\ne (\tilde{u}, \tilde{u}, \tilde{d}')}}
b_{\tilde{v}} c_{\tilde{m}, \tilde{t}'} g(\tilde{v}) g(\tilde{m}, \tilde{t}')
\ne 0\,.
\]
\end{proof}

If $A$ is an associative commutative $F$--algebra, ${\Ann A=\Ann_t A=\{0\}}$, 
${t=l, r}$, ${n\geq 1}$, then  
${A\cong Z(M_n(A))=\{\diag(a, \ldots, a)\mid a\in A\}\cong A}$ 
(${a\longmapsto \diag(a, \ldots, a)}$, ${a\in A}$) and there is a homomorphism 
${\psi_n: \Der_F(M_n(A))\longrightarrow \Der(A)}$, 
\begin{gather*}
\diag(a (\psi_n D), \ldots, a (\psi_n D))\ =\ \diag(a, \ldots, a) D\quad 
(D\in \Der_F(M_n(A)),\ a\in A)\,,
\\
\Ker \psi_n\ =\ \{D\in \Der_F(M_n(A))\mid (Z(M_n(A)))D=\{0\}\}\,,
\end{gather*}
${\Ker \psi_n\cong \Der_A(M_n(A))=\Inder(M_n(A))}$ for $A$ with 1.

\begin{lemma}
The Lie algebra $L_p$ is weakly solvable, but not locally solvable, 
\[
M_3(\prr(A_p))^{(-)}\ =\ \prr(M_3(\prr(A_p))^{(-)})\not\subseteq 
\prr(L_p)\ =\ Z(M_3(\prr(A_p)))\,,
\]
\end{lemma}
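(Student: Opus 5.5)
The plan is to treat the three assertions separately, using the ideal $N=M_3(\prr(A_p))^{(-)}\lhd L_p$ and the abelian quotient $L_p/N\cong W_p$ throughout.

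\emph{Weak solvability.} The associative algebra $\prr(A_p)$ is locally nilpotent, hence so is $M_3(\prr(A_p))$. For finite $B\subseteq N$ the subalgebra $\langle B\rangle$ of $M_3(\prr(A_p))$ is nilpotent, and $g_k(B)\subseteq\langle B\rangle^{2^k}$, so $g_k(B)=\{0\}$ for large $k$. Thus $N$ is weakly solvable. The quotient $L_p/N$ is abelian, and the class of weakly solvable algebras is radical and closed under extensions (normal series with weakly solvable factors). Hence $L_p=T(L_p)$.

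\emph{Failure of local solvability.} Consider the finitely generated subalgebra $K=\langle \overline{H}_1,\ \overline{x_1}E_{12},\ \overline{x_1}E_{21}\rangle\subseteq L_p$. Bracketing repeatedly with $\overline{H}_1$ gives $(\overline{x_1}E_{12})\ad_{\overline{H}_1}^{\,j}=j!\,\overline{x_{1+j}}E_{12}+\ldots$, and more precisely all $\overline{x_i}E_{12},\overline{x_i}E_{21}\in K$ after a triangular argument; alternatively one can note directly that $\overline{x_1}E_{12}\ad_{\overline{H}_1}=\overline{x_2}E_{12}$. Products of distinct variables are never killed by $I_p$, so inside $K$ one has the $\mathfrak{sl}_2$-type relations
\[
[\overline{x_i}E_{12},\overline{x_j}E_{21}]=\overline{x_ix_j}(E_{11}-E_{22}),\qquad
[\overline{x_k}(E_{11}-E_{22}),\overline{x_l}E_{12}]=2\overline{x_kx_l}E_{12}.
\]
By induction on $k$ I would show that $K^{(k)}$ contains elements $\overline{m}E_{12}$ with $m$ a product of $2^k$ (or at least $k+1$) distinct variables. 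At each step two such elements with disjoint variable supports are combined, which is possible because the variables can be shifted by powers of $\ad_{\overline{H}_1}$. Then $K^{(k)}\ne\{0\}$ for all $k$, so $K$ is not solvable.

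\emph{Prime radicals.} The equality $N=\prr(N)$ is the displayed fact before Remark 3.10 for $n=3$. Set $Z=Z(M_3(\prr(A_p)))=\{\diag(a,a,a)\mid a\in\prr(A_p)\}$. This set is $W_p$-invariant and central in $N$, and $[W_p,Z]\subseteq Z$. Hence $Z\lhd L_p$ with $[Z,Z]=0$, so $Z\subseteq B(L_p)\subseteq\prr(L_p)$.

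For the reverse inclusion I would use the elementwise description of $\prr$: for every $x\in L_p\setminus Z$ it suffices to build a chain $x_0=x$, $x_{i+1}\in(x_i)_{L_p}^2$, with all $x_i\notin Z$. Equivalently, I would show that $L_p/Z$ is semiprime by proving that every ideal $J\not\subseteq Z$ satisfies $[J,J]\not\subseteq Z$.

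\emph{Reduction to $N$.} If $x$ has a nonzero $W_p$-component, bracket it with a suitable $\overline{x_i}E_{12}$ to land in $N\setminus Z$. For $y=(f_{st})\in N\setminus Z$, some off-diagonal entry is nonzero, or two diagonal entries differ. Bracketing with elements $\overline{x_i}E_{kl}$ then produces an element $fE_{12}\in(y)_{L_p}$ with $0\ne f\in\prr(A_p)$; multiplication by a fresh variable $\overline{x_i}$ is injective on the relevant graded piece.

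\emph{Core step, via Lemma 3.11.} With $s,t$ such that $f\overline{H}_s^tf\ne0$, both $fE_{12}$ and $(f\overline{H}_s^t)E_{23}$ lie in $(fE_{12})_{L_p}$. Their bracket is $f(f\overline{H}_s^t)E_{13}\ne0$, which is not scalar. This gives $x_{i+1}=f_1E_{13}$ with $0\ne f_1\in\prr(A_p)$, and the step can be iterated after conjugating the matrix position back using brackets with $\overline{x_j}E_{kl}$. The delicate point is to keep the product nonzero, and I would handle this with fresh variables via the $\overline{H}_q$.

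Hence $\prr(L_p)=Z$, and $N\not\subseteq Z$ gives the stated non-inclusion.

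\emph{Expected obstacle.} The main difficulty is this last iteration: each step must stay nonzero modulo $Z$ while remaining inside the square of the ideal. Lemma 3.11 is exactly the tool designed for this, and that is where the care goes.
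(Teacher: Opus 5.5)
\textbf{Gap 1: non-local solvability fails for $p=2$.} Your treatment of weak solvability, of $Z=\prr(A_p)E$ being an abelian ideal of $L_p$, and of the reduction to some $fE_{12}$ is fine and follows the paper. The lemma must hold for every prime $p$, because Theorem 3.13 covers every $F$ that is not a $\mathbb{Q}$--algebra, and at $p=2$ your argument breaks.

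Your test subalgebra $K=\langle\overline{H}_1,\overline{x_1}E_{12},\overline{x_1}E_{21}\rangle$ only sees an $\mathfrak{sl}_2$--block. Your induction needs $[\overline{x_k}(E_{11}-E_{22}),\overline{x_l}E_{12}]=2\overline{x_kx_l}E_{12}\ne0$. For $p=2$ this bracket vanishes, and $E_{11}+E_{22}$ commutes with $E_{12}$ and $E_{21}$. So $\mathbb{F}_2\overline{H}_1+\prr(A_2)E_{12}+\prr(A_2)E_{21}+\prr(A_2)(E_{11}+E_{22})$ is a subalgebra containing $K$. Hence $K^{(2)}\subseteq\prr(A_2)(E_{11}+E_{22})$ and $K^{(3)}=\{0\}$, so $K$ is solvable.

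The paper avoids this by using the trace-zero matrices $Z\subseteq M_3(\mathbb{F}_p)^{(-)}$, spanned by $Y$. This algebra is perfect in every characteristic, since $E_{ij}=[E_{ik},E_{kj}]$ for distinct $i,j,k$. Moreover $\overline{x_1}y\,\ad_{\overline{H}_1}^{\,i-1}=\overline{x_i}y$ exactly, with no factor $j!$. Hence the finitely generated subalgebra $\langle\overline{x_1}Y,\overline{H}_1\rangle$ contains $\langle\overline{X}Y\rangle=\prr(A_p)Z$. Then $(\prr(A_p)Z)^{(n)}=\prr(A_p)^{2^n}Z\ne\{0\}$, because $\prr(A_p)$ is not nilpotent.

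This also removes your ``shifting of products by $\ad_{\overline{H}_1}$''. On monomials of degree $>1$ that operation is not a shift at all; for example $\overline{x_1x_2}\,\overline{H}_1=\overline{x_2}^{\,2}+\overline{x_1x_3}$.

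\textbf{Gap 2: the core step for $\prr(L_p)\subseteq Z$.} The claim $(f\overline{H}_s^t)E_{23}\in(fE_{12})_{L_p}$ is false, already for $f=\overline{x_1}$.
\begin{itemize}
\item $L_p$ is graded by total degree in the $\overline{x_i}$.
\item The $\overline{H}_q$ preserve degree and act entrywise, and elements of $N$ have coefficients of positive degree.
\item One bracket with an element of $N$ moves $E_{12}$ only to positions $(1,l)$ or $(k,2)$.
\item So for homogeneous $f$, every $(2,3)$--entry of an element of $(fE_{12})_{L_p}$ has degree at least $\deg f+2$.
\item But $f\overline{H}_s^t$ has degree $\deg f$; for $f=\overline{x_1}$ it equals $\overline{x_{1+st}}$.
\end{itemize}
The fresh variables must come from brackets with elements of $N$, not from the $\overline{H}_q$. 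The paper's step is
\[
[[[[(z_q\overline{H}_s^t)E_{12},\overline{x_i}E_{23}],\overline{x_j}E_{21}],z_qE_{12}],\overline{x_k}E_{32}]\ =\ (z_q\overline{H}_s^t)\,z_q\,\overline{x_ix_jx_k}\,E_{12}\in(z_qE_{12})_{L_p}^2\,.
\]
This element is nonzero by Lemma 3.11 when $x_i,x_j,x_k$ are distinct and do not occur in $z_q(z_q\overline{H}_s^t)$. The chain then stays in position $(1,2)$, and Lemma 3.11 can be reapplied at every step. With these two repairs your outline becomes the paper's proof.
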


\begin{proof}
In this case ${\Ann \prr(A_p)=\{0\}}$, ${Z(M_3(\prr(A_p)))=\prr(A_p) E}$, 
$E=\diag(1, 1, 1)=E_{11}+E_{22}+E_{33}$, ${E_{ij}\in M_3(A_p)}$, and  
${\bigcap\limits_{q\geq 1} \Ker \overline{H}_q=M_3(\mathbb{F}_p)}$ 
(see Lemma 3.11). Let $(b_{ij})+\overline{H}\in \prr(L_p)\setminus \prr(A_p) E$ 
for ${(b_{ij})\in M_3(\prr(A_p))}$, ${\overline{H}\in W}$, 
${H\in \sum\limits_{q\geq 0} \mathbb{F}_p H_q}$. For 
${(b_{ij})\in \prr(A_p) E}$ we have ${[E_{12} \overline{x_1}, (b_{ij})+\overline{H}]=
E_{12} \overline{x_1 H}\ne 0}$, for ${(b_{ij})\notin \prr(A_p) E}$ there exists 
${\overline{D}\in W}$, $0\ne [(b_{ij})+\overline{H}, \overline{D}]=
(b_{ij}) \overline{D}\in M_3(\prr(A_p))\setminus \prr(A_p) E$ and, as a 
consequence, we can move on to 
${(c_{ij})\in (\prr(L_p)\cap M_3(\prr(A_p)))\setminus \prr(A_p) E}$, 
${[a E_{st}, (c_{ij})]=(a_{ij})=
\sum\limits_{l=1}^3 (a_{sl} E_{sl}+a_{lt} E_{lt})\ne 0}$ for suitable 
${a\in \prr(A_p)}$, ${1\leq s, t\leq 3}$. If there exists ${k\ne s, t}$, 
${a_{sk}\ne 0}$ (${a_{kt}\ne 0}$), one can find ${d, b\in \prr(A_p)}$, 
${[d E_{kk}, (a_{ij})]=d a_{kt} E_{kt}-d a_{sk} E_{sk}}$ and  
${[b E_{lk}, [d E_{kk}, (a_{ij})]]=b d a_{kt} E_{lt}\ne 0}$   
(${[b E_{kl}, [d E_{kk}, (a_{ij})]]=b d a_{sk} E_{sl}\ne 0}$) for  
${l=s}$ (${l=t}$) if ${s\ne t}$ and ${l\ne k, s=t}$, otherwise. 
If ${a_{sk}=a_{kt}=0}$ for all ${k\ne s, t}$, then either ${s=t}$, 
${(a_{ij})=a_{ss} E_{ss}}$, or ${s\ne t}$, 
${(a_{ij})=\sum\limits_{i, j=s, t} a_{ij} E_{ij}}$ and  
${[[u E_{kl}, (a_{ij})], v E_{rr}]=u v a_{lr} E_{kr}\ne 0}$ 
for some ${u, v\in \prr(A_p), l, r\in \{s, t\}, 
\{l, l'\}=\{s, t\}}$ and any ${k\ne s, t}$. Hence there exists 
${0\ne z E_{mn}\in \prr(L_p)\cap M_3(\prr(A_p))}$ and without loss 
of generality ${m=1, n=2}$ (the transition to such $m$, $n$ is 
possible, due to ${\Ann \prr(A_p)=\{0\}}$, 
${[a E_{ii}, b E_{ij}]=a b E_{ij}}$, ${[a E_{ij}, b E_{jk}]=a b E_{ik}}$, 
${i\ne j\ne k}$). Lemma 3.11 allows us to construct for ${z_0=z}$ a chain 
$\{z_q E_{12}\}_{q\geq 0}$, 
${z_{q+1}=(z_q \overline{H}_{s_q}^{t_q} z_q) \overline{x}_{i_{q+1}} 
\overline{x}_{j_{q+1}} \overline{x}_{k_{q+1}}\ne 0}$, 
${s_q, t_q, i_{q+1}, j_{q+1}, k_{q+1}\geq 1}$, 
\[
z_{q+1} E_{12}\ =\ [[[[z_q \overline{H}_{s_q}^{t_q} E_{12}, 
\overline{x}_{i_{q+1}} E_{23}], \overline{x}_{j_{q+1}} E_{21}], z_q E_{12}], 
\overline{x}_{k_{q+1}} E_{32}]\,,\quad 
z_q \overline{H}_{s_q}^{t_q} E_{12}\ =\ 
z_q E_{12} \ad_{\overline{H}_{s_q}}^{t_q}\,,
\]
and so, ${z_0 E_{12}\notin \prr(L_p)}$?! Consequently, 
${\prr(L_p)=\prr(A_p) E}$. 
           
By construction, the Lie algebra $L_p$ is weakly solvable. If $Z$ is a 
subalgebra of matrices from $M_3(\mathbb{F}_p)^{(-)}$ with zero trace, 
then ${Z=\mathbb{F}_p Y}$, 
\[
Y\ =\ \{E_{11}-E_{22},\ E_{22}-E_{33}\}\cup \{E_{ij}\mid 1\leq i\ne j\leq 3\}\,, 
\]
${Y\subseteq [Y, Y]}$, ${Z=[Z, Z]=Z^{(n)}}$, 
${(\prr(A_p) Z)^{(n)}=\prr(A_p)^{2^n} Z\ne \{0\}}$ for all ${n\geq 0}$ and 
\[
\prr(A_p) Z\ =\ \langle \overline{X} Y\rangle\subset 
\langle \overline{x_1} Y,\ \overline{H}_1\rangle\,. 
\]
Thus, $L_p$ is not locally solvable.
\end{proof}

For ${p> 2}$ the Lie algebra $L_p$ is also strongly degenerate, ${L_p=K(L_p)}$, since 
\begin{gather*}
M_3(\prr(A_p))^{(-)}\ =\ K(M_3(\prr(A_p))^{(-)})\subseteq K(L_p)\,,
\\ 
L_p/M_3(\prr(A_p))^{(-)}\ =\ K(L_p/M_3(\prr(A_p))^{(-)})\cong W_p\ =\ 
K(W_p)\,,
\end{gather*} 
where $K$ is the Kostrikin radical of Lie algebras \cite{Zel}, Lemma 4.

Since algebras over quotient rings of a ring $F$ can be considered as algebras 
over $F$, and ${\mathbb{N} 1\not\subseteq U(F)}$ ($F$ is not a 
$\mathbb{Q}$--algebra) is equivalent to the presence among the quotient rings 
of $F$ of a field of characteristic ${p> 0}$, it immediately follows that 

\begin{teor}
If a ring $F$ is not a $\mathbb{Q}$--algebra, then the prime radical 
$\prr$ is not a radical in the sense of Kurosh --- Amitsur on the 
class of all Lie $F$--algebras.
\end{teor}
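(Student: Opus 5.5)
The statement is Theorem 3.13: if $F$ is not a $\mathbb{Q}$--algebra, then $\prr$ is not a Kurosh --- Amitsur radical on the class of all Lie $F$--algebras. The plan is to reduce the theorem to Lemma 3.12. By the criterion at the beginning of Addition 2, it suffices to exhibit a Lie $F$--algebra $L$ and an ideal $I\lhd L$ with ${I=\prr(I)}$ but ${I\not\subseteq \prr(L)}$. Indeed, in a Kurosh --- Amitsur radical on a class closed under ideals, the radical ideals of a member are contained in its radical; and the class of all Lie $F$--algebras is closed under ideals and homomorphic images.

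First, I would pass from $F$ to a field. Since $F$ is not a $\mathbb{Q}$--algebra, some integer $m\geq 2$ is not invertible in $F$, so $F m\ne F$. Take a maximal ideal $\mathfrak{m}$ of $F$ containing $Fm$. Then ${F/\mathfrak{m}}$ is a field of characteristic ${p>0}$, where $p$ is some prime dividing $m$. Set ${\mathbb{F}_p=F/\mathfrak{m}}$. Any $\mathbb{F}_p$--algebra is an $F$--algebra via ${F\to F/\mathfrak{m}}$, and under this change of scalars its ideals, its subalgebras and the operations ${[\ ,\ ]}$ do not change. Consequently $\prr$, computed either over $F$ or over $\mathbb{F}_p$, is the same ideal: primeness is defined through products of ideals only, and an $F$--ideal is the same thing as an $\mathbb{F}_p$--ideal here.

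Next I would apply the construction of Addition 2 over this field $\mathbb{F}_p$. This gives the Lie algebra ${L_p=M_3(\prr(A_p))^{(-)}\rtimes W_p}$ with the ideal ${I=M_3(\prr(A_p))^{(-)}}$. Lemma 3.12 states that ${I=\prr(I)}$, while ${\prr(L_p)=Z(M_3(\prr(A_p)))=\prr(A_p)E}$, which is a proper subset of $I$ (for example ${\overline{x_1}E_{12}\notin \prr(A_p)E}$). Regarded as $F$--algebras, $L_p$ and $I$ therefore give the required counterexample.

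The only step needing care is the construction of $\mathbb{F}_p$: in the proof I would note that the construction of $A_p$ in Addition 2 only needs a field of characteristic $p$, so it does not have to be the prime field. I would also check that Lemmas 3.11 and 3.12 use only the characteristic, through binomial coefficients modulo $p$ and Remark 3.10, and not the finiteness of the field. With these points checked, the theorem is a direct corollary of Lemma 3.12, and I expect no real obstacle.
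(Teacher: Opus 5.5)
Your proposal is correct and follows the paper's argument: you pass to a field quotient $F/\mathfrak{m}$ of characteristic $p>0$, regard the Lie algebra $L_p$ of Lemma 3.12 (constructed over an arbitrary field of characteristic $p$) as a Lie $F$--algebra, and take $M_3(\prr(A_p))^{(-)}=\prr(M_3(\prr(A_p))^{(-)})\not\subseteq \prr(L_p)$ as the witness of non-radicality. Your additional checks, that ideals and hence $\prr$ are unchanged under restriction of scalars and that the construction uses only the characteristic, are exactly the points the paper leaves implicit.
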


Let $\{p_i\}_{i\geq 1}$ be the set of all primes ${p> 0}$, numbered in any 
way, ${\Bbbk_i=\mathbb{F}_{p_i}}$, ${R_i=\prr(L_{p_i})}$, 
${J_i\in \{(M_3(\overline{x_j}^{p_i-1} A_{p_i})^{(-)}+R_i)/R_i\mid j\geq 1\}}$,
\[
\{0\}\ =\ J_i^2\ \ne J_i\lhd V_i\ =\ M_3(\prr(A_{p_i}))^{(-)}/R_i\ =\ 
\prr(V_i)\lhd N_i\ =\ L_{p_i}/R_i\,,
\]
${K=\prod\limits_{i\geq 1} \Bbbk_i}$, ${N=\prod\limits_{i\geq 1} N_i}$, 
${V=\prod\limits_{i\geq 1} V_i}$, ${J=\prod\limits_{i\geq 1} J_i}$ and   
${N_D=N/U_D(N)}$ be the ultraproduct of algebras $\{N_i\}_{i\geq 1}$ 
with respect to the Frechet ultrafilter $D$ over the set of natural numbers 
$\mathbb{N}$, considered as an algebra over the ultraproduct 
${\Bbbk_D=K/U_D(K)}$ of fields $\{\mathbb{F}_i\}_{i\geq 1}$ by $D$, 
${U_D(N)=\{x\in N\mid \{i\geq 1\mid x(i)=0\}\in D\}}$ ($U_D(K)$ is the maximal 
among ${U\lhd K}$, ${\bigoplus\limits_{i\geq 1} \Bbbk_i\subseteq U}$, and 
${N_D\cong N/U_D(K) N}$). Then in the Lie algebra $N_D$ over the field 
$\Bbbk_D$, ${\Ch \Bbbk_D=0}$, 
\[
\{0\}\ =\ J'^2\ne J'\ =\ (J+U_D(N))/U_D(N)\lhd 
V'\ =\ (V+U_D(N))/U_D(N)\lhd N_D\,,
\]
${J'\cong J/U_D(J)}$, ${V'\cong V/U_D(V)}$, 
${\prr(V')\lhd_{\Der} V'}$, ${\prr(V')\lhd N_D}$. Since 
\begin{gather*}
\overline{x_i}^{p-1} y_1\cdots y_{p-2} \overline{x_i}^{p-1}\ =\ 0
\quad (i\geq 1,\ y_j\in A_p\cup \{\overline{H}_q\}_{q\geq 0},\ p> 2)\,,
\\
[(M_3(\overline{x_i}^{p-1} A_p))\ad(L_p)^{p-2}, 
M_3(\overline{x_i}^{p-1} A_p)]\ =\ \{0\}\quad (i\geq 1,\ p\geq 2)\,,
\end{gather*}
${(J)_N^2=\sum\limits_{y, z\in J,\ n\geq 1} y \ad(N)^n \ad_z \Ad(N)}$, 
${y(i) \ad(N_i)^n \ad_{z(i)} \Ad(N_i)=\{0\}}$ for $i$, ${n< p_i-1}$, and 
${(J')_{N_D}^2=\{0\}}$, ${(J')_{N_D}\subseteq \prr(N_D)}$, where the 
powers of $\ad(M)$, ${M=L_p, N, N_i}$, are their powers as submodules of 
the associative algebra $\Ad(M)$. Thus, unlike associative algebras, the 
semiprimeness of Lie algebras is not inherited by their ultraproducts.  

If ${\{z_k E_{12}\}_{k\geq 0}\subset L_p}$, ${z_0=\overline{x_1}}$, 
${z_k=z_{k-1}^2 \overline{x_{3 (k-1)+1}} \overline{x_{3 (k-1)+2}} 
\overline{x_{3 k}}}$,
\[
z_k E_{12}\ =\ 
[[[[z_{k-1} E_{12}, \overline{x_{3 (k-1)+1}} E_{23}], 
\overline{x_{3 (k-1)+2}} E_{21}], z_{k-1} E_{12}], 
\overline{x_{3 k}} E_{32}]\,,
\]
then ${z_{p-1}\ne z_p=0}$, 
${\{d_i\}_{i=0}^{\infty}\subset N\setminus U_D(N)}$ for 
${d_{i+1}=[[[[d_i, a_i], b_i], d_i], c_i]}$, ${d_i(j)=z_i E_{12}+R_j}$, 
${a_i(j)=\overline{x_{3 i+1}} E_{23}+R_j}$, 
${b_i(j)=\overline{x_{3 i+2}} E_{21}+R_j}$, 
${c_i=\overline{x_{3 (i+1)}} E_{32}+R_j\in N_j}$, 
${i\geq 0}$, ${j\geq 1}$. As a consequence, ${V'\ne \prr(V')}$.

The question of whether the prime radical is radical on the class of all Lie 
algebras over a field of characteristic zero remains open. Since the prime 
radical of algebras are equal to their prime radicals as rings, a negative 
answer to this question for one of such fields carries over to all fields of 
characteristic zero.

\bigskip

Let us move on to group analogues of the example of Lemma 3.12. Let 
${k\geq 2}$, ${Y=\{y_i\}_{i\in \mathbb{Z}}}$, 
${U_k=(y_i^k\mid i\in \mathbb{Z})_{F[Y]}}$, ${B_k=F[Y]/U_k}$, 
${\overline{f}=f+U_k}$, ${f\in F[Y]}$. As above, 
${\prr(B_k)=\overline{Y} B_k}$ is locally nilpotent, but not nilpotent, 
${\prr(M_3(B_k))=M_3(\prr(B_k))}$, 
\begin{multline*}
G\ =\ E+M_3(\prr(B_k))\ =\ \prr(G)\subset 
\\
\prr(GL_3(B_k))\ =\ 
\{(g_{ij})\in GL_3(B_k)\mid [(g_{ij}), GL_3(B_k)]\subseteq G\}\,,
\end{multline*}
\cite{Gol7}, Corollary 3.8. To bijections ${\sigma\in S(\mathbb{Z})}$ 
there correspond automorphisms ${\phi_{\sigma}\in \Aut_F(F[Y])}$ and  
${\overline{\phi}_{\sigma}\in \Aut_F(M_3(B_k))}$, 
${y_i \phi_{\sigma}=y_{\sigma(i)}}$, ${i\in \mathbb{Z}}$, 
${(\overline{f_{st}})\overline{\phi}_{\sigma}=
(\overline{f_{st} \phi_{\sigma}})}$, 
${f_{st}\in F[Y]}$, ${1\leq s, t\leq 3}$ 
($\overline{\phi}_{\sigma}$ is the continuation of  
${\overline{\phi}_{\sigma}\in \Aut_F(B_k)}$ induced by  
$\phi_{\sigma}$ (${U_k=(U_k)\phi_{\sigma}}$) onto $M_3(B_k)$, 
${\overline{f}\, \overline{\phi}_{\sigma}=\overline{f \phi_{\sigma}}}$, 
${f\in F[Y]}$). We set ${\phi_i=\phi_{\sigma_i}}$, 
${\overline{\phi}_i=\overline{\phi}_{\sigma_i}}$, 
${\sigma_i\in S(\mathbb{Z})}$, ${\sigma_i: j\longmapsto j+i}$, 
${i, j\in \mathbb{Z}}$. We identify the groups  
$G$ and ${\langle \overline{\phi}_1\rangle_{\infty}=
\{\overline{\phi}_i\}_{i\in \mathbb{Z}}}$ with the subgroups of their 
external semidirect product 
${Z=G\rtimes \langle \overline{\phi}_1\rangle_{\infty}}$ 
(${\prr(M_3(B_k))\lhd_{\Aut} M_3(B_k)}$), ${G\lhd Z}$, 
${\langle \overline{\phi}_1\rangle_{\infty}\cong Z/G}$, 
${(\overline{g_{st}})^{\overline{\phi}_i}=
(\overline{g_{st} \phi_i})}$, ${i\in \mathbb{Z}}$, 
${(\overline{g_{st}})\in G}$, where ${x^y=y x y^{-1}}$, 
${\overline{\phi}_i=\overline{\phi}_1^i}$. 

\begin{lemma}
The group $Z$ is weakly solvable, but not locally solvable, 
\[
G\ =\ \prr(G)\ =\ LN(Z)\not\subseteq \prr(Z)\,, 
\]
where $LN(Z)$ is the largest locally nilpotent normal subgroup 
of $Z$ (the Plotkin --- Hirsh radical of $Z$).
\end{lemma}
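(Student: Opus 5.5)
The argument will parallel the proof of Lemma 3.12, with the shift automorphisms $\overline{\phi}_i$ playing the role of the derivations $\overline{H}_q$. It has four steps.

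\emph{Step 1: weak solvability and the identification of $LN(Z)$.} The normal subgroup $G=E+M_3(\prr(B_k))$ is locally nilpotent. Indeed, for a finite $C\subseteq M_3(\prr(B_k))$ the subalgebra $\langle C\rangle$ lies in $M_3$ of a finitely generated nil ideal of a commutative ring, so it is nilpotent, and then $g^{gr}_l(E+C)\subseteq E+\langle C\rangle^{2^l}$ as in Section 3. Also $Z/G\cong\mathbb{Z}$ is abelian. Since weakly solvable groups are closed under extensions, $Z$ is weakly solvable. Moreover $G=\prr(G)$ by \cite{Gol7}, Corollary 3.8, as already recalled. Hence $G\subseteq LN(Z)$. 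If $LN(Z)$ were strictly larger, it would contain some $g\overline{\phi}_i$ with $i\ne 0$. The image of $LN(Z)$ in $Z/G$ is then nontrivial, so $\langle g\overline{\phi}_i, x E_{12}\rangle$ must be nilpotent for suitable $x$. This fails because $\overline{\phi}_i$ shifts indices: the iterated commutators $[\,\cdot\,,g\overline{\phi}_i]$ applied to $E+\overline{y_1}E_{12}$ produce elements $E+(\overline{y_1}-\overline{y_{1+ni}}+\ldots)E_{12}$ that never vanish. So $LN(Z)=G$.

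\emph{Step 2: $Z$ is not locally solvable.} Take the finitely generated subgroup $\langle E+\overline{y_0}Y', \overline{\phi}_1\rangle$, where $Y'$ consists of the elementary transvection generators $t_{ij}(1)$ used in Lemma 3.12. Conjugation by the powers of $\overline{\phi}_1$ gives all $t_{ij}(\overline{y_m})$. The relations $[t_{ij}(a),t_{jl}(b)]=t_{il}(ab)$ then give $g^{gr}_n\ni t_{12}(\text{product of }2^n\text{ distinct }\overline{y}\text{'s})\ne E$ for every $n$. So this subgroup is not solvable.

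\emph{Step 3: $G\not\subseteq\prr(Z)$, the main obstacle.} I would use the elementwise description of $\prr$ for groups. Starting from $u_0=t_{12}(z_0)$ with $z_0=\overline{y_0}$, I would build $u_{q+1}\in (u_q)_Z^2$ with $u_q=t_{12}(z_q)$, $z_q\neq0$ for all $q$. The difficulty is that in $B_k$ we have $z_q^k=0$, so a naive squaring $z_{q+1}=z_q^2\cdots$ dies after finitely many steps. The fix, mirroring Lemma 3.11, is to conjugate one copy of $u_q$ by a large power $\overline{\phi}_s$ first. That copy is $t_{12}(z_q\phi_s)$, and $(t_{12}(z_q))_Z$ contains it. With $s$ larger than all indices occurring in $z_q$, the variables in $z_q$ and $z_q\phi_s$ are disjoint. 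Then
\[
z_{q+1}=z_q\,(z_q\phi_s)\,\overline{y_a}\,\overline{y_b}\,\overline{y_c}
\]
is a product of distinct variables in fresh indices, hence nonzero in $B_k$. It is realized as the nested commutator
\[
t_{12}(z_{q+1})=[[[[t_{12}(z_q\phi_s),t_{23}(\overline{y_a})],t_{21}(\overline{y_b})],t_{12}(z_q)],t_{32}(\overline{y_c})],
\]
which lies in $[(u_q)_Z,(u_q)_Z]$. Here I need to check that this commutator identity holds modulo the correct terms in $GL_3$ with commuting nilpotent entries; this is the delicate computation. If it gives extra diagonal factors, I would instead use the $t_{ik}$ relations directly, in the pattern $[t_{13}(\cdot),t_{32}(\cdot)]$ with both entries in $(u_q)_Z$. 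One possible route is to first pass to a $t_{13}$ element via $[t_{12}(z_q),t_{23}(\overline{y_a})]\in(u_q)_Z$, and then take the commutator with the shifted copy $t_{32}(\cdot)\in(u_q)_Z$.

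\emph{Step 4: conclusion.} The chain $u_q$ never reaches $E$, so $u_0\notin\prr(Z)$. Therefore $G\not\subseteq\prr(Z)$, and the Lemma follows.
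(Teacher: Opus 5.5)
Your proposal follows the paper's proof. Step 3 is exactly its chain $t_{12}(g_{q+1})=[[[[t_{12}(g_q)^{\overline{\phi}_{s_q+3}},t_{23}(\overline{y_{s_q+1}})],t_{21}(\overline{y_{s_q+2}})],t_{12}(g_q)],t_{32}(\overline{y_{s_q+3}})]$ with $g_q=\prod_{i=1}^{s_q}\overline{y_i}$; you simply choose the fresh indices generically. Step 2 is its argument $E(B^{2^l})\subseteq E(B)^{(l)}$ for $\langle E(\overline{y_1}),\overline{\phi}_1\rangle$. There you should take the generators $t_{ij}(\overline{y_0})=E+\overline{y_0}E_{ij}$; the $t_{ij}(1)$ are not in $Z$.

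The identity you call delicate holds exactly. Use $[t_{ij}(a),t_{jl}(b)]=t_{il}(ab)$ and $[t_{il}(a),t_{ji}(b)]=t_{jl}(-ba)$ for distinct $i,j,l$. The four commutators then pass through $t_{13}$, $t_{23}$, $t_{13}$ and end at $t_{12}(z_q\,\overline{y_b}\,(z_q\phi_s)\,\overline{y_a}\,\overline{y_c})$. No diagonal factors appear, so your fallback route is not needed.

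The one step that is wrong as written is the identification $LN(Z)=G$ in Step 1. For $g\neq E$, the iterated commutators of $t_{12}(\overline{y_1})$ with $g\overline{\phi}_i$ are not of the form $E+(\cdot)E_{12}$. Already $[t_{12}(\overline{y_1}),g\overline{\phi}_i]=t_{12}(\overline{y_1})\,g\,t_{12}(-\overline{y_{1+i}})\,g^{-1}$. For $g=t_{21}(\overline{y_m})$ with $m\neq 1+i$, this matrix has $(2,2)$-entry $1-\overline{y_{1+i}}\,\overline{y_m}\neq 1$, so your claimed form fails.

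The missing reduction, used implicitly in the paper, takes one line. Since $G\subseteq LN(Z)$ and $LN(Z)$ is a subgroup, $g\overline{\phi}_i\in LN(Z)$ forces $\overline{\phi}_i=g^{-1}(g\overline{\phi}_i)\in LN(Z)$. Then $[\ldots[t_{12}(\overline{y_1}),\overline{\phi}_i]\ldots\overline{\phi}_i]=t_{12}\bigl(\sum_{j=0}^n(-1)^j\binom{n}{j}\overline{y}_{1+ji}\bigr)\neq E$ for all $n$. This contradicts the nilpotency of $\langle t_{12}(\overline{y_1}),\overline{\phi}_i\rangle\subseteq LN(Z)$.

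A smaller point: the inclusion $g^{gr}_l(E+C)\subseteq E+\langle C\rangle^{2^l}$ only yields weak solvability of $G$, not local nilpotency. For local nilpotency, let $N$ be the nilpotent subring generated by the entries of $C$. Then the $l$-th member of the lower central series of $\langle E+C\rangle$ lies in $E+M_3(N^l)$, because $[E+a,E+b]=E+(ab-ba)(E+a)^{-1}(E+b)^{-1}$. Alternatively, just cite $G=LN(G)$, as the paper does.
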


\begin{proof}
First, let us note that 
${\{t_{12}(g_q)\}_{q\geq 0}\subseteq G\setminus \prr(Z)}$ for 
${g_q=\prod\limits_{i=1}^{s_q} \overline{y_i}}$, ${s_q=2^{q+2}-3}$, 
since ${s_{q+1}=2 s_q+3}$,
\begin{multline*}
t_{12}(g_{q+1})\ =\ t_{12}((g_q \overline{\phi}_{s_q+3}) g_q 
\overline{y_{s_q+1}} \overline{y_{s_q+2}} \overline{y_{s_q+3}})\ =
\\
[[[[t_{12}(g_q)^{\overline{\phi}_{s_q+3}}, 
t_{23}(\overline{y_{s_q+1}})], t_{21}(\overline{y_{s_q+2}})], 
t_{12}(g_q)], t_{32}(\overline{y_{s_q+3}})]\quad (q\geq 0)\,.
\end{multline*}
To each ${z\in B_k}$ there corresponds a subgroup 
${E(z)=\langle t_{ij}(z)\mid 1\leq i\ne j\leq 3\rangle\subset GL_3(B_k)}$, 
\[
E(z+z')\subseteq \langle E(z),\, E(z')\rangle\,,\quad 
E(z z')\subseteq [E(z), E(z')]\quad (z, z'\in B_k)\,. 
\]
If $B$ is a subring of $B_k$, ${E(B)=\langle E(b)\mid b\in B\rangle}$, then 
${E(B^{2l})\subseteq E(B)^{(l)}}$ for any ${l\geq 0}$ and, as a consequence, 
the groups $E(\prr(B_k))$, ${E(\overline{Y \mathbb{Z}[Y]})=
\langle E(\overline{y_i})=E(\overline{y_1})^{\overline{\phi}_1^{i-1}}
\mid i\in \mathbb{Z}\rangle}$, 
${\langle E(\overline{y_1}), \overline{\phi}_1\rangle}$ are not solvable 
(${\prr(B_k)=F \overline{Y \mathbb{Z}[Y]}}$), the group $Z$ is not locally 
solvable, but weakly solvable by construction. In view of 
${G=LN(G)\subseteq LN(Z)}$ and 
\[
[\ldots [t_{12}(\overline{y_1}), 
\underbrace{\overline{\phi}_i]\ldots \overline{\phi}_i]}_n\ =\ 
t_{12}\biggl(\sum_{j=0}^n (-1)^j \binom{n}j \overline{y}_{1+j i}\biggr)\ne 
E\quad (0\ne i\in \mathbb{Z},\ n\geq 1)\,,
\]
${G=LN(Z)}$. 
\end{proof} 

It follows from the proof of Lemma 3.14 that 
${\tilde{G}=\prr(\tilde{G})=LN(\tilde{Z})\not\subseteq \prr(\tilde{Z})}$ for  
\begin{gather*}
\tilde{G}\ =\ \langle t_{12}(\overline{y_i}),\ t_{21}(\overline{y_i}),\ 
t_{23}(\overline{y_i}),\ t_{32}(\overline{y_i})\mid i\in \mathbb{Z}\rangle\,,
\\ 
\tilde{Z}\ =\ \tilde{G}\rtimes \langle \overline{\phi}_1\rangle_{\infty}\ =\ 
\langle t_{12}(\overline{y_1}),\ t_{23}(\overline{y_1}),\ 
t_{21}(\overline{y_1}),\ t_{32}(\overline{y_1}),\ \overline{\phi}_1\rangle\,.
\end{gather*}

Another similar example is the construction of A.~Yu.~Ol'shanskii, close to 
the example of Chan Van Hao, \cite{Chan}. Let $A$ be a semiprime associative 
ring with 1, $T$ be a group of infinite upper unitriangular matrices 
${(a_{ij})_{i, j\in \mathbb{Z}}}$, ${a_{ij}\in A}$, with a finite number of 
non-zero overdiagonal elements, ${a_{ii}=1}$, 
${a_{ij}=0}$ for ${i> j}$, ${|\{a_{ij}\ne 0\mid i< j\}|< \infty}$, $T_k$ be 
an Abelian subgroup of ${(a_{ij})\in T}$ with 
${\{(i, j)\mid i< j,\ a_{ij}\ne 0\}\subseteq \{(i, j)\mid i< k< j\}}$, 
${k\in \mathbb{Z}}$, ${T_k\lhd T}$, 
\[
T\ =\ \langle T_k\mid k\in \mathbb{Z}\rangle\ =\ 
\langle t_{st}(a)=E+a E_{st}\mid a\in A,\ s< t,\ s, t\in \mathbb{Z}\rangle
\ =\ \prr(T)\ =\ LN(T)
\]
for ${E=(\delta_{ij})_{i, j\in \mathbb{Z}}}$, 
${E_{st}=(\delta_{is} \delta_{jt})_{i, j\in \mathbb{Z}}}$, 
and ${\psi\in \Aut(T)}$, ${\psi((a_{ij}))=(b_{ij})}$,  
${b_{ij}=a_{i-1 j-1}}$, ${i, j\in \mathbb{Z}}$, ${(a_{ij})\in T}$, 
${\psi(T_k)=T_{k+1}}$, ${k\in \mathbb{Z}}$. We identify the groups $T$ and 
$\langle \psi\rangle_{\infty}$ with the subgroups of their external 
semidirect product ${S=T\rtimes \langle \psi\rangle_{\infty}}$, ${T\lhd S}$, 
${\langle \psi\rangle_{\infty}\cong S/T}$, 
${(a_{ij})^{\psi^k}=\psi^k((a_{ij}))}$, ${(a_{ij})\in T}$, ${k\in \mathbb{Z}}$. 
Let us recall one of the consequences of the presence of non-unitary elements 
of the center of a nilpotent group in its non-unitary normal subgroups:

\begin{rem}
If $B$ is an associative ring with 1, ${n> 1}$, $UT_n(B)$ is the group 
of upper unitriangular ${n\times n}$ matrices with coefficients from 
$B$,${\{E\}\ne P\lhd UT_n(B)}$, then there exists ${E\ne t_{1n}(b)\in P}$.
\end{rem}

\begin{proof} 
In this case ${C(UT_n(B))=E+B E_{1n}=\{t_{1n}(b)\mid b\in B\}}$ is the 
center of the nilpotent group $UT_n(B)$, ${E, E_{ij}\in M_n(B)}$.
\end{proof}

\begin{lemma}
The group $S$ is semiprime, weakly solvable, but not locally solvable, and  
${T=\prr(T)=LN(S)}$.
\end{lemma}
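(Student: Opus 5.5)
We have to prove four things about $S=T\rtimes\langle\psi\rangle_{\infty}$: it is weakly solvable, it is not locally solvable, it is semiprime, and $T=\prr(T)=LN(S)$. The plan is to treat them in that order, following the pattern of Lemma 3.14.

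\emph{Weak solvability.} This is the easiest part. $T$ is a union of the finitely generated subgroups $UT$ supported on finite index intervals, and these are nilpotent. Hence $T=LN(T)$ is weakly solvable, and the cyclic quotient $S/T$ is Abelian. The class of weakly solvable groups contains groups with normal series whose factors are weakly solvable, so $S$ is weakly solvable. The equality $T=\prr(T)$ holds because locally nilpotent groups are prime-radical: every element lies in a nilpotent normal-closure chain, which is a standard consequence for locally nilpotent groups.

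\emph{Non-local solvability.} Consider the finitely generated subgroup $\langle t_{01}(1),\psi\rangle$. It contains every $t_{i,i+1}(1)=t_{01}(1)^{\psi^i}$, and therefore all $t_{st}(1)$, $s<t$, obtained as iterated commutators of these. Using $[t_{ij}(a),t_{jk}(b)]=t_{ik}(ab)$, one shows that $g_l$ of suitable elementary generators $t_{i,i+1}(1)$ with widely spread indices gives some $t_{s,s+2^l}(1)\ne E$ (indeed, the derived series of $UT$ on intervals of length $2^l$ is nontrivial). So no $g_l$ vanishes on a fixed finite set, and the subgroup is not solvable.

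\emph{$LN(S)=T$.} The inclusion $T\subseteq LN(S)$ holds since $T\lhd S$ is locally nilpotent. For the reverse, any normal subgroup strictly larger than $T$ contains an element $h\psi^k$ with $k\ne0$. Then $\langle h\psi^k,t_{01}(1)\rangle$ is not nilpotent, since the iterated commutators $[\ldots[t_{01}(1),\psi^k],\ldots,\psi^k]$ are never $E$: they have nonzero entries in distinct shifted positions, by a binomial-coefficient computation like the one in Lemma 3.14, and we get a unitriangular matrix.

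\emph{Semiprimeness, the main obstacle.} We must show there is no nontrivial normal Abelian subgroup $P\lhd S$ with $[P,P]=\{E\}$; in fact we need $P^2\ne\{E\}$ for every nontrivial $P$. The plan has three steps.
\begin{enumerate}
\item If $P\not\subseteq T$, then $P$ contains $h\psi^k$ with $k\ne0$, and conjugating by $t_{01}(a)$ produces elements of $P\cap T$ that are not identity. So in all cases $P\cap T\ne\{E\}$ (using that $T$ has trivial centralizer-type behaviour).
\item Take $E\ne g\in P\cap T$ supported in some finite interval, and apply Remark 3.15 to the finite $UT_n(A)$ containing it. Then $P\cap UT_n(A)$ contains some $t_{1n}(b)\ne E$ after reindexing; call it $t_{st}(b)$.
\item Conjugating by $\psi$ gives $t_{s+m,t+m}(b)\in P$ for all $m$. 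Choose $m$ with $s<s+m<t<t+m$ overlapping appropriately, or rather with $t=s+m$. Then $[t_{st}(b), t_{t,t+m}(b')]$ already needs $b'$ from $P$, so instead use $[t_{st}(b),\,t_{t,u}(a)\,]\in P$ with $a\in A$ arbitrary, giving $t_{su}(ba)\in P$; similarly $t_{us}$-type factors are unavailable, so use left multiplication $t_{vs}(c)$ to get $t_{vu}(cba)\in P$. Then $[t_{vs}(c b a'),\,t_{s u}(b a)]=t_{vu}(cba'ba)$ lies in $P^2$, and semiprimeness of $A$ gives $a'$ with $ba'b\ne0$.
\end{enumerate}
So $P^2\ne\{E\}$. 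The delicate point is arranging the indices so that both commutator factors lie in $P$; this is done by using shifts by $\psi$ together with conjugation by elementary matrices from $T$.
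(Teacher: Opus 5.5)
Your outline is the paper's own, part by part. Weak solvability comes from $T=LN(T)$ and the cyclic quotient $S/T$. Non-local solvability comes from $g_l$ of $\psi$-conjugates of one elementary matrix. $LN(S)=T$ comes from non-terminating commutators with powers of $\psi$. Semiprimeness comes from $P\cap T\ne\{E\}$, then Remark~3.15 giving $t_{st}(b)\in P$ with $b\ne0$, then semiprimeness of $A$.

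\textbf{The gap is in the last step}, which is the only place where $\psi$ really matters. The normal closure of $t_{st}(b)$ in $T$ lies in the set of matrices $E+X$ with $X$ supported in $\{(i,j)\mid i\le s,\ j\ge t\}$. This set is an Abelian normal subgroup of $T$: for such $X,Y$ one has $XY=0$, since $s<t$. So from $t_{st}(b)$ and commutators with $T$ alone you can never produce a nontrivial element of $[P,P]$.

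In particular, your left factor $t_{vs}(cba')$ has column index $s<t$, so it is not among the elements you have shown to lie in $P$. It would have to come from a shifted copy $t_{s+k\,t+k}(b)$ with $t+k\le s$. You assert this is \emph{done by using shifts by $\psi$}, but the computation you actually write does not supply it.

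The clean way, as in the paper: for $u>t$, we have $t_{su}(ba')=[t_{st}(b),t_{tu}(a')]\in P$ and $t_{st}(b)^{\psi^{u-s}}=t_{u\,u+t-s}(b)\in P$. Hence
\[
[t_{su}(ba'),\,t_{u\,u+t-s}(b)]\ =\ t_{s\,u+t-s}(ba'b)\in[P,P]\,,
\]
and $a'$ is chosen with $ba'b\ne0$. In the paper's notation this is $t_{s+1\,s+m}(z)$, $u=s+2m$, and the shift $\psi^{2m-1}$.

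\textbf{Smaller repairs.}

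In the non-local solvability part, your claim \emph{no $g_l$ vanishes on a fixed finite set} contradicts the weak solvability you have just proved. The finite set must grow with $l$ and consist of consecutive elementary matrices. Indeed $g_l(t_{12}(1),t_{23}(1),\ldots,t_{2^l\,2^l+1}(1))=t_{1\,2^l+1}(1)$, which shows $\langle t_{12}(1),\psi\rangle^{(l)}\ne\{E\}$ for every $l$.

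The equality $T=\prr(T)$ should not rest on a general claim about locally nilpotent groups. It holds because $T=\langle T_k\mid k\in\mathbb{Z}\rangle$ with Abelian normal subgroups $T_k$, so already $B(T)=T$.

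In proving $LN(S)=T$, first pass from $h\psi^k$ to $\psi^k$, using $h\in T\subseteq LN(S)$. Then replace $k$ by $2k$ so that $|k|\ge2$. Now the factors $t_{1+jk\,2+jk}(\cdot)$ pairwise commute and $[\ldots[t_{12}(1),\psi^k]\ldots\psi^k]=E+\sum_{j=0}^{n}(-1)^j\binom{n}{j}E_{1+jk\,2+jk}\ne E$. This is why the paper first treats $|i|>1$ and only then deduces $\psi\notin LN(S)$.

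Finally, step~1 of semiprimeness needs a commutator rather than a conjugate, together with a reason it is $\ne E$. For example, $[t_{01}(1),h\psi^k]=t_{01}(1)\,h\,t_{k\,k+1}(-1)\,h^{-1}$ has first-superdiagonal entries $1$ at $(0,1)$ and $-1$ at $(k,k+1)$, so it is nontrivial for $k\ne0$.
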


\begin{proof}
If ${E \psi^0\ne (a_{ij}) \psi^n\in S}$, ${(a_{ij})\in T}$, 
${n\in \mathbb{Z}}$, where ${E \psi^{0}=E=\psi^0}$ is the unity of $S$, 
$T$, ${\langle \psi\rangle_{\infty}\subseteq S}$, then for 
${(a_{ij})\ne E}$ there exists ${k\geq 0}$, 
${(a_{ij}) \psi^k((a_{ij})^{-1})=[(a_{ij}) \psi^n, \psi^k]\ne E}$, 
for ${(a_{ij})=E}$ and ${n\ne 0}$ one has 
${[t_{ij}(a), E \psi^n]=E+a (E_{ij}-E_{i+n j+n})\ne E}$, ${i, j\in \mathbb{Z}}$, 
${0\ne a\in A}$. As a consequence, any non-unitary normal subgroup 
${Q\lhd S}$ contains ${E\ne (b_{ij})\in T\cap Q}$. Since 
${\{(i, j)\mid i< j,\ b_{ij}\ne 0\}\subset \{(i, j)\}_{i, j=s+1}^{s+m}}$ for 
some ${s\in \mathbb{Z}}$, ${m\geq 1}$, $(b_{ij})$ is the image of 
${(c_{pq}=b_{s+p s+q})_{p, q=1}^m\in UT_m(A)}$ under the action of the 
embedding ${\tau_s: UT_m(A)\longrightarrow T}$, 
\[
\tau_s((d_{pq}))\ =\ E+\sum_{1\leq p< q\leq m} d_{pq} E_{s+p\, s+q}\quad 
((d_{pq})\in UT_m(R))\,.
\]
By Remark 3.15, there is ${E\ne E+z E_{1m}\in ((c_{pq}))_{UT_m(A)}}$, 
\[
\tau_s(t_{1m}(z))\ =\ t_{s+1\, s+m}(z)\in \tau_s(((c_{pq}))_{UT_m(A)})
\subseteq ((b_{ij}))_T\subseteq Q\,.
\] 
The semiprimeness of the ring $A$ allows one to choose ${a\in A}$, 
${z a z\ne 0}$, 
\begin{multline*}
E\ne [[t_{s+1\, s+m}(z), t_{s+m\, s+2 m}(a)], t_{s+1\, s+m}(z)^{\psi^{2 m-1}}]
\ =
\\ 
[t_{s+1\, s+2 m}(z a), t_{s+2 m\, s+3 m-1}(z)]\ =\ 
t_{s+1\, s+3 m-1}(z a z)\in [Q, Q]\,,
\end{multline*}
Therefore the group $S$ is semiprime. By construction it is weakly solvable, but 
not locally solvable, due to  
\begin{multline*} 
g^{gr}_k(t_{12}(1), t_{12}(1)^{\psi}, \ldots, t_{12}(1)^{\psi^{2^k-1}})\ =
\\ 
g^{gr}_k(t_{12}(1), t_{23}(1), \ldots, t_{2^k\, 2^k+1}(1))\ =\ 
t_{1\, 2^k+1}(1)\in \langle t_{12}(1), \psi\rangle^{(k)}\quad (k\geq 0)\,.
\end{multline*}
It remains to be noted that for any ${i\ne \pm 1}$
\[
[\ldots[t_{12}(1), \underbrace{\psi^i]\ldots \psi^i]}_n\ =\ 
E+\sum_{j=0}^n (-1)^j \binom{n}j E_{1+ij\, 2+ij}
\]
(${1+i j\ne 2+i j'}$, ${j, j'\in \mathbb{Z}}$), ${\psi^i\notin LN(S)}$ for 
${|i|> 1}$, ${\psi\notin LN(S)}$, ${T=LN(S)}$.
\end{proof} 

From Lemmas 3.14, 3.16 it immediately follows

\begin{teor}
The prime radical $\prr$ is not a radical in the sense of Kurosh --- Amitsur 
on the class of all groups.
\end{teor}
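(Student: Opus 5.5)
The statement is the theorem that $\prr$ is not a Kurosh --- Amitsur radical on the class of all groups. I will reduce it, as at the start of Addition 2, to exhibiting a group $Z$ with a normal subgroup $I$ such that $I=\prr(I)$ but $I\not\subseteq\prr(Z)$. This suffices because a Kurosh --- Amitsur radical $\mathcal{T}$ contains every radical normal subgroup: if $I=\mathcal{T}(I)\lhd Z$, then $I\subseteq\mathcal{T}(Z)$.

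Lemma 3.14 already provides the witness. Take $Z=G\rtimes\langle\overline{\phi}_1\rangle_\infty$ and $I=G=E+M_3(\prr(B_k))$. The lemma gives $G=\prr(G)\lhd Z$ and $G\not\subseteq\prr(Z)$, and by the definitions above this is exactly the required configuration.

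For a second, independent witness I would use Lemma 3.16. Take $S=T\rtimes\langle\psi\rangle_\infty$ with $T=\prr(T)\lhd S$. Since $S$ is semiprime, $\prr(S)=\{E\}$. Choosing $A\neq\{0\}$, for example $A=\mathbb{Z}$, makes $T$ nontrivial, so $T\not\subseteq\prr(S)$.

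I would also write out the one-line justification that $\prr$, if it were a Kurosh --- Amitsur radical, would have to satisfy $\prr(I)\subseteq\prr(Z)$ for every $I\lhd Z$. The radical class is closed under extensions and contains every radical normal subgroup of $Z$ in its largest radical normal subgroup $\prr(Z)$. Combined with $I=\prr(I)$, this gives the contradiction. There is no real obstacle here, since all the work is done in Lemmas 3.14 and 3.16. The only point needing care is that the standard characterization of the largest radical normal subgroup is being applied to groups, which is consistent with the general remark in the Introduction that $\prr$ is radical on a class if and only if $\prr=RN$ there.
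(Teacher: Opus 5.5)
Your proposal is correct and is essentially the paper's own argument. The paper likewise reduces non-radicality (start of Addition 2) to finding $I=\prr(I)\lhd R$ with $I\not\subseteq\prr(R)$, since a Kurosh --- Amitsur radical contains every radical normal subgroup, and then reads the theorem off from the witnesses $G\lhd Z$ of Lemma 3.14 and $T\lhd S$ of Lemma 3.16; your remarks that semiprimeness of $S$ forces $\prr(S)=\{E\}$ and that a nonzero ring $A$ with $1$, such as $\mathbb{Z}$, makes $T\neq\{E\}$ are exactly the small points left implicit in the paper's ``immediately follows''.
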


This is true for any class of groups that is closed under taking normal 
subgroups and homomorphic images and contains a group of the form $Z$ 
($\tilde{Z}$) or (and) $S$ from Lemmas 3.14, 3.16. The existence of groups 
that are equal to their prime radicals and therefore weakly solvable but 
not locally solvable was established in \cite{KN}.


\begin{thebibliography}{99} 

\bibitem{AR} V.~A.~Andrunakievich, V.~M.~Ryabukhin Radicals of algebras and 
structure theory. --- Moscow: Nauka, 1979.

\bibitem{ACZ} J.~A.~Anquela, T.~Cort\'es, E.~Zelmanov Local Nilpotency of the 
McCrimmon radical of a Jordan system// Trudy Mat. Inst. im. V.~A.~Steklova. --- 
2016. --- Vol. 292. --- P. 7--15.

\bibitem{ArmS} E.~P.~Armendariz, S.~Steinberg Regular self-injective rings with 
polynomial identities// Trans. Amer. Math. Soc. --- 1974. --- Vol. 190. --- 
P. 417--425.

\bibitem{Baer} R.~Baer. Nilgruppen // Math. Z. --- 1955. --- Bd. 62, Hft. 4. 
--- S. 402--437. 

\bibitem{B3} K.~I.~Beidar Rings with generalized identities III// Vestnik MSU, 
Mat., Mech. --- 1978. --- no. 4. --- P. 66--73.

\bibitem{B4} K.~I.~Beidar Semiprime rings with generalized identity// 
Uspekhi Mat. Nauk. --- 1977. --- Vol. 32, no. 4. --- P. 249--250. 

\bibitem{B5} K.~I.~Beidar Classical quotient rings of $PI$--algebras// Uspekhi 
Mat. Nauk. --- 1978. --- Vol. 33, no. 6. --- P. 197--198.

\bibitem{B6} K.~I.~Beidar Rings of quotients of semiprime rings// Vestnik MSU, 
Mat., Mech. --- 1978. --- no. 5. --- P. 36--43. 

\bibitem{B7} K.~I.~Beidar Radicals, orthogonal completeness and ring struction. 
Diss. d. ph.-m. n. LSU. --- Moscow, 1989.

\bibitem{BGr} K.~I.~Beidar, P.~Grzeszczuk Actions of Lie algebras on rings 
without nilpotent elements// Algebra Colloq. --- 1995. --- Vol. 2, no. 2. --- 
P. 105--116.

\bibitem{BMar} K.~I.~Beidar, V.~T.~Markov A semiprime $PI$--ring having an 
exact module with a Krull dimension is a Goldie ring// Uspekhi Mat. Nauk. 
--- 1993. --- Vol. 48, no. 6. --- P. 141--142.
                                      
\bibitem{BMM} K.~I.~Beidar, W.~B.~Martindale, 3-rd, A.~V.~Mikhalev Rings with 
generalized identities. --- New York: Marcel Dekker, 1996.

\bibitem{BMich} K.~I.~Beidar, A.~V.~Mikhalev Orthogonal completeness and 
algebraic systems// Uspekhi Mat. Nauk. --- 1985. --- Vol. 40, no. 6. --- 
P. 79--115.

\bibitem{BP} K.~I.~Beidar, S.~A.~Pikhtil'kov On the prime radical of special 
Lie algebras// Uspekhi Mat. Nauk. --- 1994. --- Vol. 49, no. 1. --- P. 233.

\bibitem{ADN} J.~Brox, E.~Garc{\'\i}a, M.~G{\'o}mez~Lozano, 
R.~Mu{\~n}oz~Alc{\'a}zar, G.~Vera~De~Salas A description of Ad-nilpotent 
elements in semiprime rings with involution// Bull. Malays. Math. Sci. Soc. ---
2021. --- Vol. 44, no. 4. --- P. 2577--2602. ---
DOI 10.1007/s40840-020-01064-w.

\bibitem{Cab3} J.~C.~Cabello, M.~Cabrera, R.~Roura $\pi$--complementation 
in the unitisation and multiplication algebras of a semiprime algebra// Comm. 
Algebra. --- 2012. --- Vol. 40, no. 9. --- P. 3507--3531.

\bibitem{Cab4} M.~Cabrera, A.~Fernandez~Lopez, A.~Yu.~Golubkov, A.~Moreno 
Algebras whose multiplication algebra is $PI$ or $GPI$// J. Algebra. --- 
2016. --- no. 459. --- P. 213--237.

\bibitem{Chan} Chan Van Hao On the minimal radical class over a class of Abelian 
groups// Dokl. Akad. Nauk SSSR. --- 1963. --- Vol. 149, no. 6. --- P. 1270--1273.

\bibitem{Chua} C.-L.~Chuang GPIs having coefficients in Utumi quotients rings// 
Proc. Amer. Math. Soc. --- 1988. --- Vol. 103, no. 3. --- P. 723--728. 

\bibitem{ChuaL} C.-L.~Chuang, T.-K.~Lee Nilpotent derivations// J. Algebra. --- 
2005. --- Vol. 287. --- P. 381--401.

\bibitem{Chung} L.~O.~Chung, J.~Luh Nilpotency of derivatives on an ideal// 
Proc. Amer. Math. Soc. --- 1984. --- Vol. 90, no 2. --- P. 211--214.

\bibitem{Fe} K.~Faith Algebra: Rings, Modules and Categories I. --- 
Berlin; Heidelberg; New York: Springer --- Verlag, 1973; 
Algebra II: Ring Theory. --- Berlin; Heidelberg; New York: 
Springer --- Verlag, 1976.
                                              
\bibitem{Fish} J.~W.~Fisher Structure of semiprime $PI$--rings// Proc. Amer. 
Math. Soc. --- 1973. --- Vol. 39, no. 3. --- P. 465--467.

\bibitem{Gol1} A.~Yu.~Golubkov Constructions of special radicals of algebras// 
J. Math. Sci. --- 2017. --- Vol. 223, no. 5. --- P. 530--580. 

\bibitem{Gol6} A.~Yu.~Golubkov Quasi-regular radicals of nonassociative 
algebras// J. Math. Sci. --- 2024. --- Vol. 284, no. 4. --- P. 460--497.

\bibitem{Gol7} A.~Yu.~Golubkov Radical $RN$ and weakly solvable radical of 
linear groups over associative rings// J. Math. Sci. --- 2008. --- 
Vol. 154, no. 2. --- P. 143--203.

\bibitem{Gol8} A.~Yu.~Golubkov Heredity of radicals and ideals of algebras 
generated by subideals and subinvariant subalgebras// J. Math. Sci. --- 
2023. --- Vol. 269, no. 5. --- P. 708--724.

\bibitem{Gol9} A.~Yu.~Golubkov The weakly solvable radical and locally strongly 
algebraic derivations of locally generalized special Lie algebras// J. Math. 
Sci. --- 2022. --- Vol. 262, no. 5. --- P. 652--659.

\bibitem{Gol10} A.~Yu.~Golubkov Orthogonal completion of algebraic systems// 
(in print).

\bibitem{Grz} P.~Grzeszczuk On nilpotent derivations of semiprime rings// 
J. Algebra. --- 1992. --- Vol. 149. --- P. 313--321.

\bibitem{Her} I.~N.~Herstein Non-commutative rings. --- Math. Ass. Amer., 1968.

\bibitem{JacSR} N.~Jacobson Structure of rings. --- Amer. Math. Soc. Coll. Pub., 
Vol. 37, 1956.

\bibitem{Jain} S.~K.~Jain Prime rings having one-sided ideal with polynomial 
identity coincide with special Johnson rings// J. Algebra. --- 1971. --- 
Vol. 19. --- P. 125--130.

\bibitem{Har1} V.~K.~Kharchenko Differential identities of prime rings// 
Algebra i Logika. --- 1978. --- Vol. 17, no. 2. --- P. 220--238. 

\bibitem{Har2} V.~K.~Kharchenko Differential identities of semiprime rings// 
Algebra i Logika. --- 1979. --- Vol. 18, no. 1. --- P. 86--119.

\bibitem{Har3} V.~K.~Kharchenko The actions of groups and Lie algebras on 
non-commutative rings// Uspekhi Mat. Nauk. --- 1980. --- Vol. 35, no. 2. --- P. 67--90.

\bibitem{Har4} V.~K.~Kharchenko Generalized identities with automorphism// 
Algebra i Logika. --- 1975. --- Vol. 14, no. 2. --- P. 215--237.

\bibitem{KN} L.~G.~Kov{\'a}cs, B.~H.~Neumann An embedding theorem for some 
countable groups// Acta scient. math. --- 1965. --- Vol. 26, no. 1. --- 
P. 139--142.

\bibitem{Lamb} J.~Lambek Lectures on rings and modules. --- Blaisdell, 
Waltham, Mass., 1966. 

\bibitem{LeeL} P.~H.~Lee, T.-K.~Lee Note on nilpotent derivations// Proc. 
Amer. Math. Soc. --- 1986. -- Vol. 98, no. 1. --- P. 31--32.

\bibitem{Lee} T.-K.~Lee Semiprime rings with differential identities// Bull. 
Inst. Math. Acad. Sinica. --- 1992. --- Vol. 20, no. 1. --- P. 27--38.

\bibitem{LeeA} T.-K.~Lee Ad-nilpotent elements of semiprime rings with 
involution// Can. Math. Bull. --- 2018. --- Vol. 61, no. 2. --- P. 318--327.

\bibitem{Mar} V.~T.~Markov On the dimension of non-commutative affine algebras// 
Izv. Ak. Nauk SSSR. Ser. Mat. --- 1973. --- Vol. 37. --- P. 284--288.
        
\bibitem{Mar1} V.~T.~Markov On $PI$--rings having a faithful module with 
Krull dimension// Fundament. i prikl. Mat. --- 1995. --- Vol. 1, no. 2. --- P. 557--559.

\bibitem{Mar2} V.~T.~Markov $PI$--rings and semiprime rings having faithful 
modules with Krull dimension// Algebra i Logika. --- 1997. --- Vol. 36, 
no. 5. --- P. 562--572. 

\bibitem{Mar3} V.~T.~Markov Rings of quotients of semiprime $PI$--rings, and 
irreducible subdirect products// Uspekhi Mat. Nauk. --- 1975. --- Vol. 30, 
no. 4. --- P. 253--254.
 
\bibitem{Mart} W.~S.~Martindale, 3-rd Prime rings satisfying a generalized 
polynomial identity// J. Algebra. --- 1969. --- Vol. 12. --- P. 576--584.

\bibitem{Mart1} W.~S.~Martindale, 3-rd On semiprime P.I. rings// Proc. Amer. 
Math. Soc. --- 1973. --- Vol. 40, no. 2. --- P. 365--369.

\bibitem{Parf} V.~A.~Parfenov On a weakly solvable radical of Lie algebras// 
Sib. Mat. Zh. --- 1971. --- Vol. 12, no. 1. --- P. 171--176.

\bibitem{Pch} S.~V.~Pchelintsev Exceptional prime alternative algebras// 
Sib. Mat. Zh. --- 2007. --- Vol. 48, no. 6. --- P. 1322--1337.

\bibitem{Pch1} S.~V.~Pchelintsev Degenerate alternative algebras// Sib. Mat. 
Zh. --- 2014. --- Vol. 55, no. 2. --- P. 396--411.

\bibitem{Pos} E.~C.~Posner Prime rings satisfying a polynomial identity// 
Proc. Amer. Math. Soc. --- 1960. --- Vol. 11, no 2. --- P. 180--183. 

\bibitem{Raz} Yu.~P.~Razmyslov Identities of algebras and their 
representations. --- Moscow: Nauka, 1989.

\bibitem{Row} L.~H.~Rowen Polynomial identities in ring theory. --- London: 
Academic Press, 1980. --- (Pure Appl. Math.; Vol. 84). 

\bibitem{Row1} L.~H.~Rowen Some results on the center of ring with polynomial 
identity // Bull. Amer. Math. Soc. --- 1973. --- no. 1. --- P. 219--223.

\bibitem{Sch} K.~K.~Schukin The $RI^*$--solvable radical of groups// Mat. Sb. --- 
1960. --- Vol. 52, no. 4. --- P. 1024--1031.

\bibitem{Sh} A.~I.~Shirshov On some non-associative null-rings and algebraic 
algebras// Mat. Sb. --- 1957. --- Vol. 83, no. 3. --- P. 381--394.

\bibitem{Skos} V.~G.~Skosyrskii Right-alternative algebras// Algebra i Logika. 
--- 1984. --- Vol. 23, no. 2. --- P. 185--192.

\bibitem{Skos1} V.~G.~Skosyrskii Nilpotency in Jordan and right-alternative 
algebras// Algebra i Logika. --- 1979. --- Vol. 18, no. 1. --- P. 73--85.
 
\bibitem{AT2} A.~Thedy Radicals of right-alternative and Jordan rings// 
Comm. Algebra. --- 1984. --- Vol. 12, no. 7. --- P. 857--887. 

\bibitem{Ut} Y.~Utumi On quotient rings// Osaka Math. J. --- 1956. --- 
Vol. 8, no. 1. --- P. 1--18.

\bibitem{Zel1} E.~I.~Zel'manov Absolute zero divisors and algebraic Jordan 
algebras// Sib. Mat. Zh. --- 1980. --- Vol. 23, no. 6. --- P. 100--116. 
                                                                                                                               
\bibitem{Zel} E.~I.~Zel'manov Lie algebras with an algebraic adjoint 
representation// Mat. Sb. --- 1983. --- Vol. 121 (163), no. 4(8). --- P. 545--561.

\bibitem{ZelM} E.~I.~Zel'manov Characterization of the McCrimmon radical// 
Sib. Mat. Zh. --- 1984. --- Vol. 25, no. 5. --- P. 190--192.

\bibitem{ZShS} K.~A.~Zhevlakov, A.~M.~Slin'ko, I.~P.~Shestakov, A.~I.~Shirshov 
Rings that are nearly associative. --- New York: Academic Press, 1982. 

\end{thebibliography}
\end{document}